\documentclass[10pt]{amsart}
\usepackage[a4paper,margin=1.2in,footskip=0.25in]{geometry}
\usepackage{ed}
\usepackage{makecell}

\usepackage{tablefootnote}
\usepackage{chngcntr}
\usepackage{lipsum, longtable}
\counterwithin{table}{equation}

\subjclass[2020]{Primary: 11G05, Secondary: 11G07, 11G40}

\begin{document}
\title{Tamagawa numbers and positive rank of elliptic curves}
\author{Edwina Aylward}

\address{University College London, London WC1H 0AY, UK}
\email{edwina.aylward.23@ucl.ac.uk}

\begin{abstract}
This paper addresses the prediction of positive rank for elliptic curves without the need to find a point of infinite order or compute L-functions. While the most common method relies on parity conjectures, a recent technique introduced by Dokchitser, Wiersema, and Evans predicts positive rank based on the value of a certain product of Tamagawa numbers, raising questions about its relationship to parity.  We show that their method is a subset
of the parity conjectures approach: whenever their method predicts positive rank, so does the
use of parity conjectures. To establish this, we extend previous work on Brauer relations and regulator constants to a broader setting involving combinations of permutation modules known as K-relations. A central ingredient in our argument is demonstrating a compatibility between Tamagawa numbers and local root numbers. 
\end{abstract}

\maketitle
\tableofcontents

\section{Introduction}

Determining the Mordell-Weil rank of an elliptic curve is a notoriously difficult task.  
As such, the parity conjecture serves as a powerful tool for predicting rank behaviour. Associated to an elliptic curve $E$ over a number field $L$ is its global root number $w(E / L) \in \{\pm1\}$.  
The parity conjecture states that this root number determines the parity of the rank of $E / L$: 
$$ w(E / L) = (-1)^{\rk E / L}. $$
Since root numbers can be computed explicitly from local arithmetic data (see \cite[\S2]{lbd}), they offer a practical method for analysing rank behaviour. The use of root number computations and parity conjectures to predict positive rank is often referred to as a {\em parity test}. 

For example, if $E / L$ has semistable reduction, then by \cite[Corollary 2.5]{lbd}, 
$$ w(E / L) = (-1)^{m + s},$$
where $m$ is the number of places of $L$ at which $E$ has split multiplicative reduction, and $s$ is the number of infinite places of $L$. Thus the parity conjecture predicts that
any semistable rational elliptic curve $E / \bQ$ with an even number of places of split multiplicative reduction must have positive (odd) rank. No other approach obtains this result as effortlessly.

In \cite{dok-wier-ev}, Dokchitser, Evans, and Wiersema introduce a novel method for deducing that a rational elliptic curve $E / \bQ$ has positive rank over a Galois extension $F / \bQ$, based on expected properties of $L$-functions. 
This is reminiscent of using root number computations to predict positive rank, as their method also involves local arithmetic data. 
To use this method, one computes a product of local data consisting of Tamagawa numbers and other local factors, and checks whether it is a norm from a specified quadratic field. If it is not, then $E$ must have positive rank over $F$.
For the purpose of comparison, we call using this method to test for positive rank a {\em norm relations test}.

A precise formulation of the norm relations test is given below in Conjecture \ref{nrt}. We first give an example of its use. 

\begin{example}\label{ex-d42}
	Consider a Galois extension $F / \bQ$ with $\Gal(F / \bQ) = D_{21}$ (the dihedral group of order $42$). Let $E / \bQ$ be a semistable elliptic curve. For $H \leq G$, let $c(E / F^{H}) = \prod_v c_v(E / F^{H})$ be the product of local Tamagawa numbers of $E / F^{H}$ at all finite places. According to the norm relations test, if
	$$ \frac{c(E / F^{C_2}) \cdot c(E / \bQ)}{c(E / F^{D_7}) \cdot c(E / F^{S_3})} $$
	is not the norm of an element of $\bQ(\sqrt{21})$, then $\rk E / F > 0$. 
	For instance, let $p$ be a prime with residue degree $2$ and ramification degree $3$ in $F$. If $E / \bQ$ is a semistable elliptic curve with split multiplicative reduction at $p$ and good reduction at all other ramified primes in $F$, then the product of Tamagawa numbers is not a norm and so the test predicts that $\rk E / F > 0$.  
\end{example}

A natural question concerns the precise relationship between the norm relations test and parity tests. One might ask, for instance, whether there exists an example in which the norm relations test predicts positive rank while root number computations fail to do so. In 
\cite[Remark 11]{dok-wier-ev}, the authors reported that they were unable to identify such an example. The principal result of this paper establishes that no such example can exist: whenever the norm relations test predicts positive rank, parity tests necessarily yield the same prediction. This confirms the empirical observation in \cite{dok-wier-ev} and demonstrates that the scope of the norm relations test does not extend beyond that of parity-based methods.


To make this precise, we require twisted root numbers. If $F / L$ is a Galois extension and $E /  L$ an elliptic curve, then $E(F)_{\bC} := E(F) \otimes_{\bZ} \bC$ is a complex representation of $G = \Gal(F / L)$. For a self-dual representation $\rho$ of $G$, there is an associated twisted root number $w(E / L, \rho) \in \{ \pm 1 \}$, whose construction is given in \cite[\S3 - \S4]{RohG}. Basic properties of twisted root numbers can be found in \cite[Appendix A]{tamroot}, and are also recalled in Section 4 of this paper (see Remark \ref{remark:root_numbers}). In particular, these satisfy multiplicativity: $w(E / L, \rho \oplus \rho') = w(E / L, \rho)w(E / L, \rho')$ for self-dual representations $\rho$, $\rho'$, and $w(E / L, \Ind_H^G \trivial) = w(E / F^{H})$ for any subgroup $H \leq G$. 
The parity conjecture for twists relates this root number to the multiplicity of $\rho$ as a sub-representation of $E(F)_{\bC}$, stating that 
$$ w(E / L, \rho) = (-1)^{\langle \rho, E(F)_{\bC} \rangle},$$
where $\langle \cdot , \cdot \rangle$ denotes the usual inner-product of representations. 
In particular, if $w(E / L, \rho) = -1$ then $\rho$ appears as a subrepresentation of $E(F)_{\bC}$ and so $\rk E / F > 0$. 

The relation we establish between the norm relations test and root number computations is as follows:

\begin{theorem}[{Theorem \ref{thm:weak_app}}]\label{thm:weak}
	Let $E / \bQ$ be an elliptic curve with semistable reduction at $2$ and $3$, $F / \bQ$ a Galois extension. Suppose that the norm relations test predicts that $\rk E / F > 0$. Then there exists an irreducible orthogonal representation $\rho$ of $\Gal(F / \bQ)$ such that $w(E / \bQ, \rho) = -1$. \end{theorem}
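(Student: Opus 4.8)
The plan is to reduce the hypothesis to the data of a single K-relation, pass it through a regulator-constant formalism for K-relations, and then convert a non-norm product of Tamagawa numbers into a negative twisted root number by a place-by-place comparison. First I would unwind the hypothesis: that the norm relations test predicts $\rk E/F > 0$ means there is a K-relation $\Theta = \sum_i n_i H_i$ among subgroups $H_i \le G = \Gal(F/\bQ)$, with an associated quadratic subfield $\bQ(\sqrt{d}) \subseteq F$, such that the product of local invariants attached to $\Theta$ and $E$ --- essentially $\mathcal{T}_\Theta(E) := \prod_i c(E/F^{H_i})^{n_i}$, corrected by the relevant period and torsion factors --- is not a norm from $\bQ(\sqrt{d})$. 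Writing $M_\Theta := \sum_i n_i\,\bC[G/H_i]$ for the underlying virtual permutation representation, the structural feature of a K-relation that I would use is that $M_\Theta$ is self-dual and that, after discarding symplectic constituents (which occur with even multiplicity) and pairs $\tau \oplus \tau^*$ of mutually dual non-self-dual constituents, the remaining orthogonal part has determinant equal to the quadratic character cutting out $\bQ(\sqrt{d})$; in particular at least one irreducible orthogonal constituent occurs with odd multiplicity. Since root numbers are additive and inductive, $\prod_i w(E/F^{H_i})^{n_i} = w(E/\bQ, M_\Theta)$; each symplectic constituent contributes $+1$ and each dual pair contributes $w(E/\bQ,\tau)\,w(E/\bQ,\tau^*) = |w(E/\bQ,\tau)|^2 = 1$, so it suffices to prove $w(E/\bQ, M_\Theta) = -1$, from which $w(E/\bQ,\rho) = -1$ for at least one irreducible orthogonal constituent $\rho$.

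Next I would invoke the extension to K-relations of the theory of regulator constants. Applied to $\Theta$, $E$ and the N\'eron--Tate height pairing on $E(F)_\bQ$, this should produce an identity, valid in $\bQ^\times / N_{\bQ(\sqrt{d})/\bQ}\,\bQ(\sqrt{d})^\times$, between $\mathcal{T}_\Theta(E)$ and a product over the places $v$ of $\bQ$ of purely local factors $T_v(E,\Theta)$ built from the reduction of $E$ at $v$; the global ``motivic'' contributions --- real and complex periods, global regulators, and the global parts of the conjectural Birch--Swinnerton-Dyer quotients --- cancel across the relation exactly as they do for honest Brauer relations, using only the functional equations of the twisted $L$-functions and the self-duality of $M_\Theta$, and in particular not the Birch--Swinnerton-Dyer conjecture itself. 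This turns the hypothesis ``$\mathcal{T}_\Theta(E)$ is not a norm from $\bQ(\sqrt{d})$'' into a statement about the local factors $T_v(E,\Theta)$ and local root numbers.

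The crux, and the step I expect to be the main obstacle, is the compatibility between Tamagawa numbers and local root numbers highlighted in the abstract: at each place $v$ one must show that $T_v(E,\Theta)$ is controlled, through the $v$-adic norm residue symbol of $\bQ(\sqrt{d})/\bQ$, by the local root number $w_v(E/\bQ, M_\Theta)$ --- an identity roughly of the form $\bigl(T_v(E,\Theta),\, d\bigr)_v = w_v(E/\bQ, M_\Theta)$. At places of good or multiplicative reduction, and at $v = \infty$, this should be a direct if lengthy computation relating the Tamagawa number, the change-of-N\'eron-differential factor, and the local epsilon-factors of the constituents of $M_\Theta$ under base change to the completions of the $F^{H_i}$. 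At a prime of additive reduction it is genuinely harder, and at $p = 2$ and $3$ it would moreover be entangled with wild ramification of the Galois representation of $E$; the hypothesis that $E$ is semistable at $2$ and $3$ ensures that $E$ is additive only at primes $p \ge 5$, where the relevant inertia action is tame and the computation stays explicit.

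Finally I would assemble the pieces: combining the regulator-constant identity of the second step with the local compatibility of the third step and Hilbert reciprocity $\prod_v (x,d)_v = 1$, the hypothesis that $\mathcal{T}_\Theta(E)$ is not a norm from $\bQ(\sqrt{d})$ forces $w(E/\bQ, M_\Theta) = -1$, and hence, by the reduction in the first step, $w(E/\bQ, \rho) = -1$ for at least one irreducible orthogonal constituent $\rho$ of $M_\Theta$. Since $\rho$ occurs in the virtual permutation representation $M_\Theta$, it factors through $\Gal(F/\bQ)$, so it is the required irreducible orthogonal representation.
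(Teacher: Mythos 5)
Your first step contains the error that sinks the argument: you reduce the problem to showing $w(E/\bQ, M_\Theta)=-1$ for $M_\Theta=\bigoplus_i \bC[G/H_i]^{\oplus n_i}$, and then extract the desired $\rho$ as an irreducible orthogonal constituent of $M_\Theta$. But the orthogonal representation whose root number is forced to equal $-1$ need not occur in $M_\Theta$ at all, and $w(E/\bQ,M_\Theta)=\prod_i w(E/F^{H_i})^{n_i}$ need not be $-1$ when the test succeeds. The paper's Example \ref{ex-d42-4} makes this explicit: for $G=D_{21}$ and $\Theta = C_2 - S_3 - D_7 + G$ one has $\bC[G/\Theta]\simeq\sigma_{21}$, yet the regulator constant $\cC_\Theta(\sigma_7)\equiv 3$ is a non-norm from $\bQ(\sqrt{21})$ even though $\langle\sigma_7,\bC[G/\Theta]\rangle=0$; the product of Tamagawa factors is $\equiv 3^{u(E/\bQ,\chi_7)+u(E/\bQ,\chi_{21})}$, so the test succeeds exactly when one, \emph{but not both}, of $w(E/\bQ,\chi_7)$, $w(E/\bQ,\chi_{21})$ is $-1$. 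For the curve of Example \ref{ex-d42} (split multiplicative reduction at a prime with $D_v=S_3$, good reduction at the other ramified primes), every Galois conjugate of $\chi_{21}$ restricts to the $2$-dimensional irreducible of $S_3$, has no $S_3$-invariants, hence local root number $+1$ there, and the six conjugates contribute identically at every other place; so $w(E/\bQ,M_\Theta)=+1$ while the test succeeds because $w(E/\bQ,\chi_7)=-1$. The correct target is therefore not $\prod_i w(E/F^{H_i})^{n_i}$, and no rearrangement of constituents of $M_\Theta$ can produce $\chi_7$.

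The mechanism the paper actually uses (Theorem \ref{corr:main}) is $\prod_i (C_{E/F^{H_i}})^{n_i}\equiv\prod_{\tau\in\Irr_{\bQ}(G)}\cC_\Theta(\tau)^{u(E/\bQ,\chi_\tau)}$ modulo norms, with the product over \emph{all} rational irreducibles of $G$; the local input (Theorem \ref{thm:main}) is a congruence, modulo \emph{global} norms from $K$, between $C_v(\Res_{D_v}\Theta)$ and the regulator constant of a rational $D_v$-module $\cV_v$ that encodes every local twisted root number via Rohrlich's formula $w(E/L_v,\tau)/w(\tau)^2=(-1)^{\langle\tau,\cV_v\rangle}$. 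This is not a local Hilbert-symbol identity $(C_v(\Theta),d)_v=w_v(E,M_\Theta)$, and no Hilbert reciprocity is invoked: the non-norm information is carried by the global group-theoretic quantities $\cC_\Theta(\tau)$, and a non-norm product forces some $\tau$ with $\cC_\Theta(\tau)\notin N_{K/\bQ}(K^\times)$ and $u(E/\bQ,\chi_\tau)=1$, with $\chi_\tau$ orthogonal by Proposition \ref{prop:not_self_dual} (the even-$m$ case is handled by choosing a quadratic field in which the squarefree part of the product is inert). Two further inaccuracies: the quantity tested is exactly $\prod_i C_{E/F^{H_i}}^{n_i}$, with no period, torsion or regulator corrections; and its decomposition into local factors is formal (Example \ref{ex2:C}) and requires no functional equations of twisted $L$-functions --- avoiding those is precisely the point of the paper.
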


For example, consider an odd-order Galois extension $F / \bQ$, and an elliptic curve $E / \bQ$ satisfying the conditions of the theorem. Since the trivial representation is the only self-dual irreducible representation of $\Gal(F / \bQ)$, one has $w(E / \bQ) = w(E / F)$ (cf. \cite[Proposition A.2]{tamroot}). Thus parity tests cannot predict rank growth, and accordingly, the norm relations test cannot predict that $\rk E / F > 0$ (Corollary \ref{corr:fail}).

\subsection{Norm relations test}
The norm relations test, introduced in \cite[\S3]{dok-wier-ev}, is based on the conjectural properties of the {\em Birch--Swinnerton-Dyer quotient} associated to an elliptic curve $E$ over a number field $F$:
$$ \BSD(E / F) := \frac{\Reg_{E / F} |\sha_{E / F}| C_{E / F}}{|E(F)_{\tors}|^2},$$
where it is assumed that the Tate-Shafarevich group $\sha_{E / F}$ has finite order. This quotient appears as a factor in the conjectural formula for the leading coefficient of the Taylor expansion of the $L$-function $L(E / F, s)$ at $s = 1$ (the missing terms being the periods and $\sqrt{\Delta_F}$). 
Here, $C_{E / F}$ denotes the product of Tamagawa numbers and other local fudge factors at finite places of additive reduction (defined in \S\ref{sec:notation}). Given an elliptic curve, these are relatively straight-forward to compute, and are the terms one computes in the norm relations test. 

Let $F / \bQ$ be a Galois extension with $G = \Gal(F / \bQ)$, and $\rho$ an irreducible representation of $G$. Write $\bQ(\rho)$ for the field generated by the values of the character of $\rho$. For $\sigma \in \Gal(\bQ(\rho) / \bQ)$, let $\rho^{\sigma}$ be the representation of $G$ with $\Tr \rho^{\sigma} (g) = \sigma(\Tr \rho(g))$ for all $g \in G$. The norm relations test is as follows: 

\begin{conjecture}[{Norm relations test, \cite[Conjecture 4, Theorem 33]{dok-wier-ev}}]\label{nrt}
	Let $E$ be an elliptic curve over $\bQ$, $F / \bQ$ a finite Galois extension with $G = \Gal(F / \bQ)$, and $\rho$ an irreducible representation of $G$.  
Let
\begin{equation*}\label{eq:nrt}
	 \bigg{(} \bigoplus_{\sigma \in \Gal(\bQ(\rho) / \bQ)} \rho^{\sigma} \bigg{)}^{\oplus m}
	 = \bigg{(} \bigoplus_i \bC[G / H_i] \bigg{)} \ominus \bigg{(} \bigoplus_j \bC[G / H_j'] \bigg{)}
\end{equation*}
for some $m \geq 1$ and $H_i$, $H_j' \leq G$. If either
$ \prod_i C_{E / F^{H_i}} \big{/} \prod_j C_{E / F^{H_j'}} $
is not a norm from a quadratic field $\bQ(\sqrt{D}) \subset \bQ(\rho)$ (when such a subfield exists), or is not a rational square when $m$ is even, then $E$ has a point of infinite order over $F$. 
\end{conjecture}

By \cite[Theorem 5]{dok-wier-ev}, Conjecture \ref{nrt} is true  
assuming the analytic continuation of the $L$-functions $L(E, \tau, s)$ for every representation $\tau$ of $G$, their functional equation, the Birch--Swinnerton-Dyer conjecture, and Deligne's period conjecture. We show that it also follows from the parity conjecture for twists, removing the dependence of the result on $L$-functions. 

\begin{theorem}[{Corollary \ref{corr:nrt_true}}]
	Conjecture \ref{nrt} holds with $\bQ$ replaced by an arbitrary number field $L$ for all elliptic curves $E / L$ with semi-stable reduction at primes above $2$ and $3$ and all Galois extensions $F / L$, assuming the parity conjecture for twists holds for all self-dual representations of $\Gal(F / L)$. 
\end{theorem}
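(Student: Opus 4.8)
The plan is to obtain this corollary by combining the general–base-field form of Theorem~\ref{thm:weak}, namely Theorem~\ref{thm:weak_app}, with the parity conjecture for twists. The first observation is that the hypothesis of Conjecture~\ref{nrt} is, by definition, the statement that the norm relations test predicts $\rk E / F > 0$: one expresses $\big(\bigoplus_{\sigma}\rho^{\sigma}\big)^{\oplus m}$ as a virtual permutation module $\bigoplus_i \bC[G/H_i] \ominus \bigoplus_j \bC[G/H_j']$ and checks whether $\prod_i C_{E/F^{H_i}} \big/ \prod_j C_{E/F^{H_j'}}$ fails to be a norm from some quadratic subfield $\bQ(\sqrt{D}) \subset \bQ(\rho)$, or, when $m$ is even, fails to be a rational square. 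So for any elliptic curve $E/L$ with semistable reduction at the primes above $2$ and $3$ and any Galois $F/L$ for which this condition holds, Theorem~\ref{thm:weak_app} produces an irreducible orthogonal representation $\rho$ of $\Gal(F/L)$ with $w(E/L,\rho) = -1$.

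Next I would feed $\rho$ into the parity conjecture for twists. Since $\rho$ is self-dual, the assumed identity $w(E/L,\rho) = (-1)^{\langle \rho, E(F)_{\bC}\rangle}$ together with $w(E/L,\rho)=-1$ forces $\langle \rho, E(F)_{\bC}\rangle$ to be odd, hence $\geq 1$, so $\rho$ is a subrepresentation of $E(F)_{\bC} = E(F)\otimes_{\bZ}\bC$. In particular $E(F)\otimes_{\bZ}\bC \neq 0$, and since $E(F)$ is finitely generated by the Mordell--Weil theorem, $E(F)$ is infinite, i.e. $E$ has a point of infinite order over $F$. (If $\rho$ happens to be the trivial representation --- which can only arise through the "$m$ even'' branch, as $\bQ(\mathbf 1)=\bQ$ contains no quadratic field --- the same conclusion holds: $w(E/L)=-1$ makes $\rk E/L$ odd, hence positive.) This proves Conjecture~\ref{nrt} over $L$ with no recourse to analytic continuation, functional equations, or Deligne's conjecture, using only the parity conjecture for twists for self-dual representations of $\Gal(F/L)$.

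All of the substance is therefore contained in Theorem~\ref{thm:weak_app}, and the step I expect to require the most care is confirming that its proof is genuinely uniform in the base field: the regulator-constant and K-relation arguments that convert the norm/square condition on $\prod C_{E/F^{H}}$ into a statement about local root numbers must be carried out place-by-place over $L$, and the key local input --- the compatibility between Tamagawa numbers and local root numbers --- is only available at primes of semistable reduction, which is precisely why "semistable at the primes above $2$ and $3$'' is imposed. I would also check that the representation-theoretic dichotomy underlying Theorem~\ref{thm:weak_app} captures both cases of Conjecture~\ref{nrt} --- the "not a norm from $\bQ(\sqrt{D})$'' case and the "$m$ even, not a rational square'' case --- and that nothing beyond Mordell--Weil is needed to pass from "$\rho \subseteq E(F)_{\bC}$'' to the existence of a point of infinite order.
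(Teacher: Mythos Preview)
Your proposal is correct and matches the paper's own argument essentially verbatim: the paper deduces Corollary~\ref{corr:nrt_true} in one line from Theorem~\ref{thm:weak_app} by noting that $w(E/L,\rho)=-1$ together with the parity conjecture for twists forces $\rho\subset E(F)_{\bC}$ and hence $\rk E/F>0$. Your additional remarks about checking uniformity in the base field and the handling of the $m$-even case are already absorbed into the proof of Theorem~\ref{thm:weak_app} itself, so nothing further is needed here.
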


\begin{example}\label{ex-d42-3}
	We revisit Example \ref{ex-d42}, where $G = \Gal(F / \bQ) = D_{21}$.
	Let $\rho_{21}$ be a faithful irreducible representation of $G$. Then $\bQ(\rho_{21}) = \bQ(\zeta_{21} + \zeta_{21}^{-1})$ with a unique quadratic subfield $\bQ(\sqrt{21}) \subset \bQ(\rho_{21})$. The sum of $\rho_{21}$ and its Galois conjugates forms a rational irreducible representation $\sigma_{21}$, which satisfies
	$$ \sigma_{21} = \bC[G / C_2] \ominus \bC[G / D_{7}] \ominus \bC[G /S_3 ] \oplus \bC[G / G].$$
Therefore the norm relations test predicts that if 
$$ \frac{C_{E / F^{C_2}}\cdot C_{E / \bQ}}{C_{E / F^{D_7}}\cdot C_{E / F^{S_3}}}$$
is not the norm of an element in $\bQ(\sqrt{21})$, then $E$ has positive rank over $F$. 
\end{example}

\subsection{Tamagawa numbers as regulator constants}
Using the notation of Conjecture \ref{nrt}, the primary focus of this paper is in expressing 
$$ \prod_i C_{E / F^{H_i}}\Big{/}\prod_j C_{E / F^{H_j'}}  \mod N_{\bQ(\sqrt{D})/\bQ}(\bQ(\sqrt{D})^{\times})$$
as a product of group-theoretic invariants known as {\em regulator constants}, with exponents determined by twisted root numbers. When this product fails to be a norm from $\bQ(\sqrt{D})$, this implies that there is a twisted root number equal to $-1$, yielding Theorem \ref{thm:weak}.
This provides a stronger conclusion than Conjecture \ref{nrt}, which only conjectures that $\rk E/ F$ should be positive. 

Let $G$ be a finite group and consider the formal sum of subgroups $\Theta = \sum_i n_i H_i$ with $n_i \in \bZ$. Suppose that 
\begin{equation}\label{k-rel}
	\bigoplus_i \bC[G / H_i]^{\oplus n_i} = \rho \oplus \rho^{\sigma}\tag{$\dagger$}
\end{equation}
where $\rho$ is a representation of $G$ with $\bQ(\rho) / \bQ$ quadratic and $\langle \sigma \rangle = \Gal(\bQ(\rho) / \bQ)$. To each rational representation $\tau$ of $G$ one can associate the regulator constant $\cC_{\Theta}(\tau) \in  \bQ^{\times}$, see Definition \ref{def:regulator_constant} below. This is the determinant of a rational pairing with respect to a rational basis of $\tau$, and the value of $\cC_{\Theta}(\tau) \mod N_{\bQ(\rho) / \bQ}(\bQ(\rho)^{\times})$ is independent of all choices (these choices and further details on regulator constants are discussed in \S\ref{subsec-regs}).

The following theorem establishes the use of regulator constants in relating Tamagawa numbers and local fudge factors to root numbers. 

\begin{theorem}[{Theorem \ref{corr:main}}]\label{thm:intro}
Let $E$ be an elliptic curve over a number field $L$, with semistable reduction at the primes above $2$ and $3$ in $L$. Let $F / L$ be a finite Galois extension with $G = \Gal(F / L)$.
Suppose that $\Theta = \sum_i n_i H_i$ satisfies \eqref{k-rel} for some representation $\rho$ of $G$ with $\bQ(\rho)$ quadratic. Then  
\begin{equation*}
\prod_i (C_{E / F^{H_i}})^{n_i} \equiv  \prod_{\tau \in \Irr_{\bQ}(G)}  \cC_{\Theta}(\tau)^{u(E / L, \chi_{\tau})}
\mod N_{\bQ(\rho) / \bQ}(\bQ(\rho)^{\times}),
\end{equation*}
where $\Irr_{\bQ}(G)$ is the set of irreducible rational representations of $G$, $\chi_{\tau}$ is a complex irreducible constituent of $\tau$, and  $u(E  / L, \chi_{\tau}) \in \{0,1\}$ satisfies $w(E / L, \chi_{\tau}) = (-1)^{u(E / L, \chi_{\tau})}$ if $\chi_\tau$ is self-dual and $u(E / L, \chi_\tau) = 0$ otherwise. 
\end{theorem}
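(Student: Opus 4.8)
The plan is to reduce the global identity to a product of purely local statements, one for each place $v$ of $L$, and then match each local factor against a regulator-constant computation governed by the local root number at $v$. First I would use the fact that both sides of the claimed congruence are multiplicative: the left-hand side $\prod_i (C_{E/F^{H_i}})^{n_i}$ factors as a product over finite places $v$ of $L$ of $\prod_i \prod_{w \mid v} (\text{local fudge factor at } w)^{n_i}$, and for a fixed $v$ the collection of completions $F^{H_i}_w$ (as $w$ ranges over places above $v$) is governed entirely by the action of the decomposition group $G_v \leq G$ on the cosets $G/H_i$. Similarly, the twisted root number $w(E/L,\chi_\tau)$ is a product of local root numbers $w(E/L_v, \chi_\tau|_{G_v})$ over all places $v$ (finite and infinite). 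So the theorem will follow if I can show, for each place $v$, a local congruence relating $\prod_i (\text{local fudge at } v)^{n_i}$ to $\prod_\tau \cC_\Theta(\tau)^{(\text{local exponent at }v)}$ modulo norms from $\bQ(\rho)$ — except that regulator constants are global, so more precisely I want to show that the \emph{difference} between the two sides, place by place, is a norm, using the compatibility between Tamagawa numbers and local root numbers advertised in the abstract.

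The key steps, in order, would be: (1) Set up the semistability hypothesis at $2$ and $3$ so that at those places (the only ones where the ``other local fudge factors'' beyond Tamagawa numbers can misbehave, and where $2$-adic subtleties in regulator constants arise) the local fudge factor is controlled — under semistable reduction the fudge factor is just the Tamagawa number, which is $1$ at good reduction and otherwise a simple divisor-counting quantity. (2) For a place $v$ of good reduction for $E$ and unramified in $F$: show both sides contribute trivially (fudge factors are $1$; one must check the regulator-constant exponents conspire, or rather that $\cC_\Theta(\tau)$ contributes a norm — this uses that $\cC_\Theta(\tau)$ is, up to norms and up to places of bad reduction/ramification, trivial). (3) For a place $v$ of multiplicative reduction or of ramification in $F$: here is the heart of the matter. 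I would invoke the known formula expressing $\cC_\Theta(\tau)$ as a product of local terms (following Dokchitser--Dokchitser's work on regulator constants and Brauer relations, extended to $K$-relations as the paper promises in \S\ref{sec:rep_intro}), and compare the $\bQ(\rho)$-class of this local term with the $\bQ(\rho)$-class of $\prod_i(\text{Tamagawa at }v)^{n_i}$. The link between them is precisely that $w(E/L_v,\chi_\tau) = -1$ forces a specific non-norm behaviour, and the exponent $u(E/L,\chi_\tau)$ is defined to capture exactly that sign.

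The main obstacle I expect is step (3) at places of additive-but-potentially-multiplicative or wildly ramified reduction lying above primes other than $2$ and $3$ — i.e., precisely controlling the local fudge factor (Tamagawa number, plus the period-comparison factor coming from the Néron model over ramified extensions) and showing its $\bQ(\rho)$-norm-class is computed by the same local root-number data that appears in the regulator-constant formula. This is where the ``compatibility between Tamagawa numbers and local root numbers'' must be proved as a genuine local lemma: one needs, for each $G_v$ and each relation \eqref{k-rel} restricted to $G_v$, that the local Tamagawa product and the local root-number product agree modulo norms. I would isolate this as a separate proposition, prove it by going through Kodaira types (using Tate's algorithm and the behaviour of Tamagawa numbers under field extension), and then assemble the global statement by taking the product over all $v$ and using that a product of local norms from $\bQ(\rho)_w$ lies in the global norm group $N_{\bQ(\rho)/\bQ}(\bQ(\rho)^\times)$ — here one must be slightly careful, since the global norm group is smaller than the intersection of local ones, so the argument should produce honest global norms directly (e.g.\ by exhibiting them as values of $\cC_\Theta$ on genuinely rational representations) rather than patching local norms.
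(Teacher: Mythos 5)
Your proposal follows essentially the same route as the paper: the theorem is reduced to a local compatibility statement (the paper's Theorem \ref{thm:main}) asserting that at each finite place the product of local fudge factors over the relation equals the regulator constant of an explicit rational module $\cV_v$ encoding local root numbers, proved by a case analysis over reduction types via Tate's algorithm and Rohrlich's root-number formulae; the global statement is then assembled using $\cC_{\Res_{D_v}\Theta}(\cV_v) \equiv \cC_{\Theta}(\Ind_{D_v}^{G}\cV_v)$ and Frobenius reciprocity. Your closing remark about producing honest global norms by realising each local contribution as a regulator constant of a rational representation is exactly how the paper circumvents the local-versus-global norm issue.
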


Note that we prove this where the base field is an arbitrary number field. To prove this result, we break up $\prod_i ( C_{E / F^{H_i}})^{n_i}$ into a product of local factors for each prime $\fp$ in $L$, and prove that these local terms are given by the regulator constants of certain rational representations. These representations appear in local root number formulae given by Rohrlich \cite{RohG}, and taking the product over all primes of $L$ gives Theorem \ref{thm:intro}. 
If $\Theta$ satisfies \eqref{k-rel} with $\rho = 0$ (in this case $\Theta$ is called a {\em{Brauer relation}}),
then the congruence in Theorem \ref{thm:intro} holds up to rational squares, as proved in \cite{tamroot}, and our local compatibility statement is the same as \cite[Theorem 3.2]{tamroot}, but extended beyond Brauer relations.

\begin{example}\label{ex-d42-4}
	Consider Example \ref{ex-d42} again. The group $G = D_{21}$ has five rational representations $\{ \trivial, \epsilon, \sigma_3, \sigma_7, \sigma_{21} \}$, where $\trivial$ is the trivial representation, $\epsilon$ the sign representation, $\sigma_3$, $\sigma_7$ project to faithful irreducible rational representations of the $S_3$ and $D_{7}$-quotients respectively, and $\sigma_{21}$ is faithful. Note that $\sigma_{21}$ can be written as $\sigma_{21} = \rho \oplus \rho^{\sigma}$ where $\bQ(\rho) = \bQ(\sqrt{21})$. From the relation of permutation representations in Example \ref{ex-d42-4}, we can compute regulator constants with respect to $\Theta = C_2 - S_3 - D_7 + G$.  
On the rational irreducible representations these are 	$$ \cC_{\Theta}(\trivial)\equiv \cC_{\Theta}(\epsilon)\equiv \cC_{\Theta}(\sigma_3) \equiv 1,  \quad \cC_{\Theta}(\sigma_7)\equiv \cC_{\Theta} (\sigma_{21}) \equiv 3 \mod N_{\bQ(\rho)  / \bQ}(\bQ(\rho)^{\times}),$$  
	where $3$ is not a norm from $\bQ(\rho) $ (see Example \ref{ex:dihedral}). Thus 
$$ \frac{C_{E / F^{C_2}}\cdot C_{E / \bQ}}{C_{E / F^{D_{7}}} \cdot C_{E / F^{S_3}}} \equiv 3^{u(E / \bQ, \chi_7) + u(E / \bQ, \chi_{21})}  \mod N_{\bQ(\rho) / \bQ}(\bQ(\rho) ^{\times}),$$
where $\chi_7$, $\chi_{21}$ are complex irreducible constituents of $\sigma_{7}$, $\sigma_{21}$, respectively, and $w(E / \bQ, \chi_7) = (-1)^{u(E / \bQ, \chi_7)}$, $w(E / \bQ, \chi_{21}) = (-1)^{u(E / \bQ, \chi_{21})}$. It follows that if the norm relations test predicts that $\rk E / F > 0$ using the relation $\Theta$, then $w(E / \bQ, \chi_7) = -1$, or $w(E / \bQ, \chi_{21}) = -1$, but not both. 
\end{example}

\begin{remark}\label{two-and-three}
	To prove Theorem \ref{thm:intro}, we need to be able to compute the $C_{E / F^{H_i}}$, and so be able to compute Tamagawa numbers and understand how they vary in field extensions. 
Since these are much more difficult to compute at primes of additive reduction above $2$ and $3$, we assume that $E$ is semistable at these primes. 
\end{remark}

The naming of regulator constants comes from the fact that they can be used to compute products of regulators of elliptic curves. Let $F / L$ be a Galois extension with $G = \Gal(F / L)$, and let $E / L$ be an elliptic curve. Let $\Theta = \sum_i n_i H_i$ be a formal sum of subgroups of $G$ satisfying \eqref{k-rel} for some representation $\rho$ with $\bQ(\rho) / \bQ$ quadratic. If $\langle E(F) \otimes_{\bZ} \bQ, \rho \rangle = 0$, then 
$$ \cC_{\Theta}(E(F) \otimes_{\bZ} \bQ) \equiv \prod_i (\Reg_{E / F^{H_i}})^{n_i} \mod N_{\bQ(\rho) / \bQ}(\bQ(\rho)^{\times}).$$
If we additionally assume that $E$ has semistable reduction at the primes above $2$ and $3$, and that the parity conjecture for twists of $E$ by self-dual representations of $G$ holds, then Theorem \ref{thm:intro} and properties of regulator constants imply that 
\begin{equation}\label{c-reg}
	\prod_i (C_{E / F^{H_i}})^{n_i} \equiv \prod_i (\Reg_{E / F^{H_i}})^{n_i} \mod N_{\bQ(\rho) / \bQ}(\bQ(\rho)^{\times})\tag{$\dagger\dagger$}
\end{equation}
(see Corollary \ref{corr:parity}). In the context of the norm relations test, where $L = \bQ$, this shows that if the test predicts positive rank,  
then the regulator of $E$ over some intermediate subfield of $F / \bQ$ is non-trivial, and so $E$ must have positive rank over this subfield.  

Establishing the congruence \eqref{c-reg} is closely related to \cite[Proposition 27]{dok-wier-ev}, where a similar result is proved under the assumption that the base field is $\bQ$. However, in that setting, the result extends to more general abelian extensions, rather than being restricted to quadratic extensions.  
Proposition \cite[Proposition 27]{dok-wier-ev} follows from the same conjectural properties of twisted $L$-functions necessary for Conjecture \ref{nrt} to hold.  
On the other hand, by relating the Tamagawa numbers and local fudge factors to root numbers, our result follows from the parity conjecture for twists and is independent of properties of $L$-functions.  
\subsection{K-relations and regulator constants}\label{sec:rep_intro}

To conclude the introduction, we highlight the representation-theoretic structures that underpin our work. 

\subsubsection{K-relations and functions on subgroups}
Let $G$ be a finite group, and let $K / \bQ$ be a finite Galois extension. A formal sum $\Theta = \sum_i n_i H_i$ of subgroups $H_i \leq G$ with $n_i \in \bZ$ is called a {\em $K$-relation for $G$} if there exists a representation $\rho$ of $G$ with $\bQ(\rho) \subset K$ and an isomorphism  
$$ \bigoplus_i \bC[G /H_i]^{\oplus n_i}\ 
= \bigoplus_{\sigma \in \Gal(K / \bQ)} \rho^{\sigma},$$
as virtual representations. Note that the isomorphism still holds if any $H_i$ is replaced by a conjugate subgroup. By Lemma \ref{lem:smaller-rel}, if $\Theta$ is a $K$-relation for $G$, it is also a $K'$-relation for all subfields $K' \subset K$ with $K ' / \bQ$ Galois.
The combination of subgroups appearing in the norm relations test is a $\bQ(\rho)$-relation for $G$ where $\rho$ is some irreducible representation of $G$, as well as a $\bQ(\sqrt{D})$-relation for every quadratic subfield $\bQ(\sqrt{D}) \subset \bQ(\rho)$ (when such a subfield exists). 

In the case where 
$\rho = 0$, the relation reduces to
$$\bigoplus_i \bC[G / H_i]^{\oplus n_i} = 0,$$
which is known as a {\em {Brauer relation}}. Identifying subgroups of $G$ (up to conjugacy) with finite transitive $G$-sets, Brauer relations can be considered as differences $X - Y$ of non-isomorphic $G$-sets $X$, $Y$ with $\bC[X] = \bC[Y]$.  
Brauer relations have many uses in number theory.  
For instance, they have been used to study class numbers and unit groups \cite{german-brauer}, \cite{smit}. 
In the context of elliptic curves, they have been used by the Dokchitser brothers \cite{annals}, \cite{sweep} to deduce the
parity conjecture for elliptic curves $E / L$, assuming finiteness of the $2$- and $3$-primary parts of $\sha_{E / L(E[2])}$. In \cite{tamroot}, similar ideas are used to deduce the $p$-parity conjecture for twists of certain representations. We will adopt a very similar approach to \cite{tamroot} in  
 applying $K$-relations to study rank predictions via the norm relations test. 

To reinterpret the computation of invariants over products of subfields associated with a given relation, we study functions defined on subgroups of $G$. 
Consider a function $\phi$ defined on subgroups $H \leq G$ that maps to some abelian group $\cA$ (written multiplicatively) and satisfies $\phi(H) = \phi(H')$ for conjugate subgroups $H$, $H'$. Then $\phi$ may also be viewed as a function on finite $G$-sets. If 
$ \prod_i \phi(H_i)^{n_i} = 1$
whenever $\Theta = \sum_i n_i H_i$ is a Brauer relation, we say $\phi$ is {\em representation theoretic}. This means that the value of $\phi$ on a $G$-set $X$ only depends on $\bC[X]$. 
In the case where $\cA = \bQ^{\times}$, we say that $\phi$ is {\em trivial on $K$-relations for $G$} if $\phi(\Theta) \in N_{K / \bQ}(K^{\times})$ whenever $\Theta$ is a $K$-relation for $G$. Such functions are representation theoretic when considered as functions to 
$\bQ^{\times} / N_{K / \bQ}(K^{\times})$. 

Functions on subgroups that arise from Galois-equivariant functions on representations of $G$ provide natural examples of functions that are trivial on $K$-relations for $G$. Let $G$ be of exponent $n$, and consider any $\bQ(\zeta_n)$-valued function $g$ on complex representations of $G$ that satisfies 
$$ g(\chi \oplus \psi) = g(\chi)g(\psi), \quad g(\chi^{\sigma}) = g(\chi)^{\sigma},$$
for all representations $\chi, \psi$ of $G$ and all $\sigma \in \Gal(\bQ(\zeta_n) / \bQ)$. Then the function on subgroups of $G$ given by $\phi(H) = g(\bC[G / H])$ is both representation-theoretic and trivial on $K$-relations for $G$. 

If $G = \Gal(F / \bQ)$, and $E / \bQ$ is an elliptic curve, the map
$$ H \leq G \mapsto C_{E / F^{H}} $$
is constant on conjugate subgroups.
The norm relations test takes a particular $\bQ(\sqrt{D})$-relation $\Theta = \sum_i n_i H_i$, and tests whether 
$$ \prod_i (C_{E / F^{H_i}})^{n_i} = 1 \in \bQ^{\times} / N_{\bQ(\sqrt{D}) / \bQ}(\bQ(\sqrt{D})^{\times}). $$  
If the map $H \mapsto C_{E / F^{H}}$ is trivial on $\bQ(\sqrt{D})$-relations for $G$, then the norm relations test cannot be used to predict positive rank for this choice of elliptic curve and Galois extension. In \S\ref{sec:rep_theory}, we analyse in detail when functions (or ratios of functions) are trivial on $K$-relations.  

\subsubsection{Regulator constants}\label{subsec-regs}
Regulator constants were first introduced in \cite{annals, tamroot}. Let $G$ be a finite group and $\Theta$ a Brauer relation for $G$. 
Given a rational representation $\tau$ of $G$ equipped with a non-degenerate $G$-invariant bilinear pairing, its regulator constant is a rational number $\cC_{\Theta}(\tau) \in \bQ^{\times} $ (defined in \cite[\S2.ii]{tamroot}). This is a purely representation-theoretic object that is computed as a product of determinants of the pairing on invariant subspaces of $\tau$. The value of $\cC_{\Theta})(\tau) \mod \bQ^{\times 2}$ is independent of the choice of pairing and bases for the invariant subspaces.  

We extend the definition of regulator constants to the case of $K$-relations. Let $\Theta = \sum_i n_i H_i$ be a $K$-relation for $G$, and let $\tau$ be a $\bQ G$-representation. The regulator constant $\cC_{\Theta}(\tau)$ of $\tau$ with respect to $\Theta$ is defined in direct analogy with the case of Brauer relations (see
Definition \ref{def:regulator_constant}). A priori, this definition depends on the choice of non-degenerate $G$-invariant bilinear pairing on $\tau$, but ---as in the Brauer relation case--- should be independent of this choice. Independence holds (up to squares in $\bQ^{\times}$) whenever  
$\sum_i n_i \langle \tau, \bC[G / H_i]\rangle = 0$. In the general situation, some extra care is required: by restricting to rational pairings and working modulo the subgroup $N_{K / \bQ}(K^{\times})\cdot \bQ^{\times 2}$, one recovers independence (see Proposition  
\ref{prop:indep}). 
Computing with respect to some rational pairing on $\tau$, the regulator constant $\cC_{\Theta}(\tau)$ is a non-zero rational number whose value mod $N_{K / \bQ}(K^{\times}) \cdot \bQ^{\times 2}$ is independent of this choice of pairing.  

In the case of Brauer relations, regular constants have been applied to various problems in elliptic curves and number theory. 
They have been used to establish the $p$-parity conjecture for elliptic curves over $\bQ$ \cite{annals}, to study Selmer group growth in certain extensions \cite{bartel-selmer}, and to analyse Galois module structures in the context of unit groups and higher $K$-groups of number fields \cite{bartel-dihedral, bartel-smit}. More recently, they have also appeared in the study of covers of curves, particularly in the context of the parity of ranks of Jacobians \cite{dgkm}. While we have not yet investigated potential applications of regulator constants for $K$-relations beyond this paper, this could be an interesting direction for future investigation.  

\subsection{Layout}
This paper is structured as follows.
In Section $2$, we develop the theory of $K$-relations and regulator constants, introducing the key representation-theoretic tools needed for our main results. We also consider functions on the Burnside ring of a group, and their behaviour when evaluated at $K$-relations. This section is self-contained and purely representation-theoretic, making it accessible to those interested in $K$-relations independently of their application to elliptic curves. 
In Section 3, we establish the main technical results, linking regulator constants to root numbers. 
In Section 4, we apply these results to our study of the norm relations test. We give examples of when the norm relations test cannot predict positive rank. We also consider the ratio of BSD-quotients corresponding to a $K$-relation. 

\subsection{Notation}\label{sec:notation}
Given a perfect field $K$, let $\overline{K}$ denote its algebraic closure, and $G_K$ the absolute Galois group $\Gal(\overline{K} / K)$. For a place $v$ of $K$, let $K_v$ be the completion of $K$ with respect to $v$.

Let $E' / \cL$ be an elliptic curve defined over a local field $\cL$, and let $E / L$ be an elliptic curve defined over a number field $L$. We use the following notation for elliptic curves. 
\begin{table}[H]
	\begin{tabular}{l l }
		$E(F)_{\bC}$ & = $E(F) \otimes_{\bZ} \bC$, the complex representation of $G = \Gal(F / L)$ for a finite Galois \\ & extension of number fields,\\
		$w(E' / \cL)$ & local root number of $E' / \cL$,\\
		$w(E / L)$ & global root number $\prod_{v \leq \infty} w(E / L_v)$,\\
		$w(E' / \cL, \rho)$ & local twisted root number of $E' / \cL$ associated to a
		self-dual Artin representation \\ & over $\cL$, \\
		$w(E / L, \rho)$ & global twisted root number associated to a self-dual Artin representation $\rho$ over \\ & $L$, given by $\prod_{v \leq \infty} w(E / L_v, \Res_{D_v} \rho)$, with $D_v = G_{L_v} \leq G_L$, \\
		$u(E / L, \rho)$ & $\in \{0,1\}$ satisfying $w(E / L, \rho) = (-1)^{u(E / L, \rho)}$ if $\rho$ is self-dual, and $u(E / L, \rho)  = 0$ otherwise,  \\
	$ c(E' / \cL)$ & Tamagawa number of $E' / \cL$ when $\cL$ is non-Archimedean, \\
	$ c_v(E / L)$ & local Tamagawa number $c(E / L_v)$ of $E / L$ at a finite place $v$.\\
\end{tabular}
\end{table}
Fix an invariant differential $\omega$ on $E$. For a finite place $v$ of $L$, define
$$
C_v(E / L, \omega) =
		c(E / L_v) \cdot \left| \omega / \omega^0\right|_{L_v}, 
$$
where $|\cdot|_{L_v}$ denotes the normalised absolute value on $L_v$, and $\omega^0$ is a N\'{e}ron differential for $E / L_v$. By $\omega/ \omega^0$ we mean the unique $v$-adic number $\delta$ with $\omega = \delta \omega^0$. The term $C(E' / \cL, \omega)$ is defined in the same way, where $\omega$ is a non-zero differential on $E' / \cL$.
We define
$$ C_{E / L} = \prod_{v < \infty} C_v(E / L, \omega),$$
for a choice of invariant differential $\omega$ on $E$, taking the product over all finite places. If $L = \bQ$ we choose $\omega$ to be a global minimal differential, so that our definition coincides with the one given in \cite[Notation 17]{dok-wier-ev}. 

Note that this depends on $\omega$.  
Whenever we consider a product of $C_{E / L'}$ for fixed $E / L$ and varying field extensions $L'/L$, we compute each $C_{E / L'}$ with respect to the same differential $\omega$ on $E / L$. \\ 

The following notation relates to the representations of a finite group $G$ and its Burnside ring. 
\begin{table}[H]
\centering
\begin{tabular}{l l}
	$\Rep(G)$ & the complex representation ring of $G$,\\
	$\Irr_{\bC}(G)$, $\Irr_{\bQ}(G)$ & set of irreducible complex and irreducible rational representations of $G$ respectively,\\
	$\hat{C}(G)$ & $= \Char_{\bQ}(G) / \Perm(G)$, the quotient of the ring of representations with $\bQ$-valued \\ & characters by the ring of virtual permutation representations (Definition \ref{def:c_hat}), \\
	$\bQ(\rho)$ & the abelian extension of $\bQ$ generated by the character values of $\rho \in \Rep(G)$, \\
	$N_{K / \bQ}(\rho)$ & $= \oplus_{\sigma} \rho^{\sigma}$ where $\sigma$ ranges over the elements of $G_{\bQ}$ that factor through $\Gal(K / \bQ)$\\ & (Definition \ref{def:rep_norm}), \\ 
	$\B(G)$ & the Burnside ring of $G$ (Definition \ref{def:burnside}),\\ 
	$\Theta = \sum_i n_i H_i$ & a $K$-relation for a number field $K$, with $n_i \in \bZ$, $H_i \leq G$ (Definition \ref{def:k_rel}), \\
	$\bC[G / \Theta]$ & $ = \oplus_i \bC[G / H_i]^{\oplus n_i}$ for $\Theta = \sum_i n_i H_i \in \B(G)$, \\ 
	$\cC_{\Theta}(\tau)$ & regulator constant for a self-dual representation $\tau$ of $G$ (Definition \ref{def:regulator_constant}),\\
	$\chi_{\tau}$ & any complex irreducible constituent of $\tau \in \Irr_{\bQ}(G)$,\\ 
	$D_n$ & the dihedral group of order $2n$. 
\end{tabular}
\end{table}

\begin{acknowledgements}
	I wish to express my gratitude to Vladimir Dokchitser for his helpful discussions and guidance throughout the development of this paper. I’d also like to thank Albert Lopez-Bruch for his input during the initial stages of the work, and the anonymous referee for valuable feedback on an earlier version.  This work was supported by the Engineering and Physical Sciences Research Council
[EP/S021590/1], the EPSRC Centre for Doctoral Training in Geometry and Number
Theory (The London School of Geometry and Number Theory) at University College
London.

\end{acknowledgements}

\section{K-relations and functions on the Burnside ring}\label{sec:rep_theory}

This section concerns the representation-theoretic concepts that are crucial in proving the main results of this paper in \S\ref{sec:main}. We have kept this section self-contained, as it may be of independent interest and have applications beyond elliptic curves. The primary objective is to extend the theory introduced in \cite[\S2]{tamroot} so that it can also be applied in the setting of $K$-relations. For ease of reference, we recall relevant definitions and results from \cite[\S2]{tamroot} throughout. 

We first study K-relations as introduced in the introduction, then examine functions on the Burnside ring of a finite group and their behaviour when evaluated on K-relations. Finally, we define and study regulator constants for K-relations. 

Throughout this section, $G$ is a finite group, and $K / \bQ$ is a finite Galois extension.

\subsection{K-relations}

Given a finite $G$-set $X$, let $[X]$ denote its isomorphism class.

\begin{definition}\label{def:burnside}
	The {\em Burnside ring} $\B(G)$ is the ring of formal linear combinations of isomorphism classes of finite $G$-sets, modulo the relations $$[S] + [T] = [S \sqcup T], \quad [S]\cdot[T] = [S \times T]$$ for $G$-sets $S$, $T$ ($S \sqcup T$ being the disjoint union and $S \times T$ the Cartesian product).  
\end{definition}

We will not need the multiplicative structure of the Burnside ring. As isomorphism classes of finite transitive $G$-sets are in bijection with subgroups of $G$ up to conjugacy, we will write an element of $\B(G)$ as the formal sum $\Theta = \sum_i n_i H_i$ with $n_i \in \bZ$, $H_i \leq G$.

Let $\Rep(G)$ be the complex representation ring of $G$. 
There is a map from the Burnside ring to $\Rep(G)$ given by sending a $G$-set $X$ to $\bC[X]$.  
Given $\Theta = \sum_i n_i H_i \in \B(G)$, the image of $\Theta$ under this homomorphism is a formal $\bZ$-linear combination of isomorphism classes of representations, which we denote by $$\bC[G / \Theta] := \bigoplus_i \bC[G / H_i]^{\oplus n_i }.$$ 

We define restriction and induction maps on Burnside rings corresponding to restriction and induction of the corresponding permutation representations.

\begin{definition}
For $D \leq G$ define the maps $\Res_D^G \colon \B(G) \to \B(D)$, $\Ind_D^G \colon \B(D) \to \B(G)$ by 
$$
\Res_D^G H = \sum_{x \in H \backslash G / D} D \cap H^{x^{-1}}, 
\quad \Ind_D^G H' = H',
$$
for $H \leq G$, $H' \leq D$. 
\end{definition}

Clearly $\Ind_D^G \bC[D / H'] = \bC[G / \Ind_D^G H']$, and by Mackey's decomposition 
$$\Res_D^{G} \bC[G / H] =  
\bigoplus_{x \in H \backslash G / D} \bC[D / D \cap H^{x^{-1}}] = \bC[D / \Res^G_D H].$$ 
We also write these functions as $\Res_D$, $\Ind_D$ when the group $G$ is clear.  

Let $\rho \in \Rep(G)$ a representation of $G$. The field generated by the character values of $\rho$ is a finite algebraic extension of $\bQ$, which we denote by $\bQ(\rho)$. 

\begin{definition}\label{def:rep_norm}
Let $K / \bQ$ be a finite Galois extension. For $\rho \in \Rep(G)$ with $\bQ(\rho) \subset K$, define  
$$
N_{K  / \bQ}(\rho) = \bigoplus_{\sigma \in \Gal(K  / \bQ)} \rho^{\sigma}, 
$$
where $\rho^{\sigma}$ is the representation of $G$ with $\Tr \rho^{\sigma}(g) = \sigma(\Tr \rho(g))$ for all $g \in G$. 
\end{definition}

Note that $N_{K / \bQ}(\rho) = \bigoplus_{\tau \in \Gal(K / \bQ)} \rho^{\tau} = N_{\bQ(\rho) / \bQ}(\rho)^{[ K \colon \bQ(\rho)]}$. 

\begin{definition}\label{def:k_rel}
Let $G$ be a finite group, and let $K / \bQ$ be a finite Galois extension. An element $\Theta \in \B(G)$ is a {\em $K$-relation for $G$}, if there exists $\rho \in \Rep(G)$ with $\bQ(\rho) \subset K$ and 
$$\bC[G / \Theta] =  N_{K  / \bQ}(\rho).
$$
\end{definition}

We will often refer to these as $K$-relations if the group $G$ involved is clear. The following are some elementary properties of $K$-relations.

\begin{lemma}\label{lem:smaller-rel}
Let $G$ be a finite group, $K$ a finite Galois extension of $\bQ$. If $\Theta \in \B(G)$ is a $K$-relation for $G$, then $\Theta$ is also a $L$-relation for $G$ for all $\bQ \subset L \subset K$ such that $L / \bQ$ is Galois.
\end{lemma}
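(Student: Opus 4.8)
The plan is to reduce the statement to the transitivity of the ``norm'' operation $N_{-/\bQ}$ on representations. Suppose $\Theta$ is a $K$-relation, witnessed by $\rho \in \Rep(G)$ with $\bC[G / \Theta] \simeq N_{K/\bQ}(\rho)$; in particular $\bQ(\rho) \subseteq K$. Fix an intermediate field $\bQ \subseteq L \subseteq K$ with $L/\bQ$ Galois. I will produce $\rho' \in \Rep(G)$ with $\bQ(\rho') \subseteq L$ and $N_{L/\bQ}(\rho') \simeq N_{K/\bQ}(\rho)$; combined with $\bC[G/\Theta] \simeq N_{K/\bQ}(\rho)$ this exhibits $\Theta$ as an $L$-relation. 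The natural candidate is the partial norm $\rho' := \bigoplus_{\tau \in \Gal(K/L)} \rho^{\tau}$.

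First I would check that $\rho'$ has character values in $L$, so that $N_{L/\bQ}(\rho')$ is even defined: for $g \in G$ one has $\Tr\rho'(g) = \sum_{\tau \in \Gal(K/L)} \tau(\Tr\rho(g)) = \Tr_{K/L}(\Tr\rho(g))$, which lies in $L$ since $\Tr\rho(g) \in \bQ(\rho) \subseteq K$ (note $K/L$ is Galois, being an upper subextension of $K/\bQ$). Hence $\bQ(\rho') \subseteq L$.

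The main step is the identity $N_{L/\bQ}(\rho') \simeq N_{K/\bQ}(\rho)$. Because $K/\bQ$ is Galois, restriction gives a surjection $\Gal(K/\bQ) \twoheadrightarrow \Gal(L/\bQ)$ with kernel $\Gal(K/L)$; for $\sigma \in \Gal(L/\bQ)$ choose a lift $\tilde\sigma \in \Gal(K/\bQ)$. Since $\Tr\rho'(g) \in L$, the standard transformation rule $\sigma\big(\Tr_{K/L}(x)\big) = \tilde\sigma\big(\Tr_{K/L}(x)\big) = \sum_{\mu \in \tilde\sigma\,\Gal(K/L)} \mu(x)$, applied with $x = \Tr\rho(g) \in K$, shows that the character of $(\rho')^{\sigma}$ agrees with that of $\bigoplus_{\mu \in \tilde\sigma\,\Gal(K/L)} \rho^{\mu}$; as complex representations are determined by their characters, $(\rho')^{\sigma} \simeq \bigoplus_{\mu \in \tilde\sigma\,\Gal(K/L)} \rho^{\mu}$. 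The left coset $\tilde\sigma\,\Gal(K/L)$ depends only on $\sigma$ (two lifts differ by an element fixing $L$), and as $\sigma$ ranges over $\Gal(L/\bQ)$ these cosets partition $\Gal(K/\bQ)$. Summing over $\sigma$ therefore yields
\[
N_{L/\bQ}(\rho') = \bigoplus_{\sigma \in \Gal(L/\bQ)} (\rho')^{\sigma} \simeq \bigoplus_{\sigma \in \Gal(L/\bQ)}\ \bigoplus_{\mu \in \tilde\sigma\,\Gal(K/L)} \rho^{\mu} = \bigoplus_{\mu \in \Gal(K/\bQ)} \rho^{\mu} = N_{K/\bQ}(\rho).
\]

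I do not expect a serious obstacle here: the essential content is transitivity of the field trace together with the surjectivity $\Gal(K/\bQ) \twoheadrightarrow \Gal(L/\bQ)$ (which is precisely why $L/\bQ$ must be assumed Galois — otherwise $N_{L/\bQ}$ is not defined in the sense of Definition \ref{def:rep_norm}). The only point needing a little care is the bookkeeping with the coset decomposition, and verifying that $(\rho')^{\sigma}$, interpreted via Definition \ref{def:rep_norm} as acting on the $L$-valued character of $\rho'$, genuinely matches the subsum of Galois conjugates of $\rho$ indexed by the corresponding coset $\tilde\sigma\,\Gal(K/L)$.
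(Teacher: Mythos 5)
Your proposal is correct and is essentially the paper's own argument: the paper's proof also takes the partial norm $\tau = \bigoplus_{\sigma' \in \Gal(K/L)} \rho^{\sigma'}$ and observes that $\bQ(\tau) \subset L$ and $N_{K/\bQ}(\rho) = N_{L/\bQ}(\tau)$. You have simply filled in the coset-decomposition bookkeeping that the paper leaves implicit.
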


\begin{proof}
	If $\bC[G / \Theta] = N_{K /\bQ}(\rho)$ with $\bQ(\rho) \subset K$, then $\tau = \bigoplus_{\sigma' \in \Gal(K / L) } \rho^{\sigma'}$ satisfies $\bQ(\tau) \subset L$ and $N_{K / \bQ}(\rho) = N_{L / \bQ}(\tau)$.  
\end{proof}

\begin{proposition}\label{prop:K_rels_properties}
Let $\Theta = \sum_i n_i H_i \in \B(G)$ be a $K$-relation for $G$. Then
\begin{enumerate}
\item (Sum) If $\Psi \in \B(G)$ is another $K$-relation for $G$, then $\Theta \pm \Psi$ is a $K$-relation for $G$, 
\item (Restriction) If $H \leq G$ then $\Res^{G}_{H} \Theta \in \B(H)$ is a $K$-relation for $H$,
\item (Induction) If $G \leq G'$ then $\Ind^{G'}_{G} \Theta \in \B(G')$ is a $K$-relation for $G'$,
\item (Projection) If $N \triangleleft G$ then $N \cdot \Theta / N = \sum_i n_i H_i N / N \in \B(G / N)$ is a $K$-relation for $G / N$, 
\item (Lifting) If $N \subset H_j'$ and $\sum_j m_j H_j' N / N \in \B(G / N)$ is a $K$-relationfor $G / N$, then $\sum_j m_j H_j'$ is a $K$-relation for $G$. 
\end{enumerate}
\end{proposition}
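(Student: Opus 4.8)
The unifying idea for all five parts is that the homomorphism $\B(G) \to \Rep(G)$, $X \mapsto \bC[X]$, intertwines the four Burnside‑ring operations in play — restriction, induction, projection $H \mapsto HN/N$, and the inflation implicit in (5) — with four operations on representations: restriction, induction, the $N$‑invariants functor $(-)^N \colon \Rep(G) \to \Rep(G/N)$ (well defined on the representation ring since Maschke makes it exact over $\bC$), and inflation $\operatorname{Inf}\colon \Rep(G/N) \to \Rep(G)$. This intertwining is immediate for restriction and induction (Mackey's formula, together with $\Ind_G^{G'}\bC[G/H'] \simeq \bC[G'/H']$), and for the remaining two it rests on the $G/N$‑set isomorphism $(G/H)/N \cong (G/N)/(HN/N)$ combined with $\bC[X]^N \cong \bC[X/N]$ as $G/N$‑representations, and on the observation that $N \subseteq H'$ forces $N$ to act trivially on $G/H'$, so $\bC[G/H'] \simeq \operatorname{Inf}\bC[(G/N)/(H'/N)]$. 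Each representation‑level operation $T$ here is given on characters by $\Tr(T\rho)(g) = \sum_h c_{g,h}\Tr\rho(h)$ with $c_{g,h} \in \bQ$; hence $T(\rho^\sigma) = (T\rho)^\sigma$ for every $\sigma \in \Gal(K/\bQ)$ and $\bQ(T\rho) \subseteq \bQ(\rho)$ (with equality for inflation), so $T$ commutes with $N_{K/\bQ}$: $T(N_{K/\bQ}(\rho)) = N_{K/\bQ}(T\rho)$ whenever $\bQ(\rho)\subseteq K$.

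Granting this, each part follows in one line. Writing $\bC[G/\Theta] \simeq N_{K/\bQ}(\rho)$ with $\bQ(\rho)\subseteq K$: part (1) uses additivity of $N_{K/\bQ}$, giving $\bC[G/(\Theta\pm\Psi)]\simeq N_{K/\bQ}(\rho\pm\rho')$ with $\bQ(\rho\pm\rho')\subseteq K$; parts (2) and (3) apply $\Res_H^G$, resp.\ $\Ind_G^{G'}$, turning the left side into $\bC[H/\Res_H^G\Theta]$, resp.\ $\bC[G'/\Ind_G^{G'}\Theta]$, and the right side into $N_{K/\bQ}(\Res_H^G\rho)$, resp.\ $N_{K/\bQ}(\Ind_G^{G'}\rho)$; part (4) applies $(-)^N$, yielding $\bC[(G/N)/(N\cdot\Theta/N)]\simeq N_{K/\bQ}(\rho^N)$; and part (5), given that $\bar\Psi:=\sum_j m_j H_j'/N$ is a $K$‑relation, say $\bC[(G/N)/\bar\Psi]\simeq N_{K/\bQ}(\bar\rho)$, inflates and uses $N\subseteq H_j'$ to obtain $\bC[G/\sum_j m_j H_j']\simeq N_{K/\bQ}(\operatorname{Inf}\bar\rho)$.

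The content is thus concentrated in the first paragraph, and within it the only steps that are not pure bookkeeping are the two identifications of Burnside‑ring operations with representation‑ring operations used for (4) and (5) — that $N$‑orbits of a $G$‑set correspond to $N$‑invariants of its permutation module, and that $N \subseteq H'$ makes $\bC[G/H']$ inflated from $G/N$ — together with the check that $(-)^N$ descends to the representation ring. I expect these to be the main, though still routine, obstacle; once they are in place, everything else reduces to the formal fact that rational‑coefficient character identities commute with $\Gal(K/\bQ)$ and cannot enlarge the field of character values.
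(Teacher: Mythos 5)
Your proposal is correct and follows essentially the same route as the paper: both reduce each part to the compatibility of $X\mapsto\bC[X]$ with restriction, induction, $N$-invariants (via $\bC[G/H]^N\simeq\bC[G/NH]$), and inflation, together with the fact that these operations commute with $N_{K/\bQ}$. Your explicit justification that these operations are Galois-equivariant and do not enlarge the field of character values is a point the paper leaves implicit, but the argument is the same.
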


\begin{proof}
	Let $\bC[G / \Theta] =  N_{K / \bQ}(\rho)$, $\bC[G / \Psi] = N_{K / \bQ}(\tau)$ for $\rho$, $\tau \in \Rep(G)$. Then $\bC[G / \Theta \pm \Psi] = N_{K / \bQ}(\rho \pm \tau)$, 
$\bC[H / \Res_{H}^G \Theta] = N_{K / \bQ}(\Res_H^G \rho)$, and
$\bC[G' / \Ind_G^{G'}\Theta] = N_{K / \bQ}(\Ind_{G}^{G'} \rho)$, proving $(1)$, $(2)$, $(3)$.  
One has $\bC[G / H_i]^{N} = \bC[G / N H_i]$ as $G$-representations (cf. \cite[Theorem 2.8]{tamroot}). Thus 
$\bC[G  / \Theta]^{N} = N_{K / \bQ}(\rho^N) = \bC[G / N / N\cdot \Theta / N]$, implying $(4)$. For $(5)$, it is clear that the relation can be lifted to $G$.   
\end{proof}

The previous proposition shows that the space of $K$-relations for $G$ forms a sublattice of the Burnside ring $\B(G)$.  
If $n = [K : \bQ]$ and $\Theta \in \B(G)$, then
\[
	\bC[G / n\Theta] = \bC[G / \Theta]^{\oplus n} =  N_{K / \bQ}(\bC[G / \Theta]),
\]
and hence the sublattice generated by $K$-relations has finite index in $\B(G)$ dividing $[K \colon \bQ]$.

\begin{remark}[Brauer relations]\label{remark:brauer}
	When $\rho = 0$, $\Theta$ is called a {\em Brauer relation} (cf. \cite[\S 2.1]{tamroot}) . Non-zero elements of $\Theta \in \B(G)$ with $\bC[G / \Theta] =  0$ are instances of non-isomorphic $G$-sets with isomorphic permutation representations. Indeed, writing $\Theta = X - X'$ as a difference of finite $G$-sets, one has $X \not\simeq X'$ but $\bC[X] =  \bC[X']$. The rank of the space of Brauer relations in $\B(G)$ is equal to the number of non-cyclic subgroups of $G$ up to conjugacy (\cite[Remark 2.5]{tamroot}). Brauer relations have been classified by Bartel and Tim Dokchitser in \cite{Brauer}. 
\end{remark}

\begin{notation}
There is an inclusion of rings 
$$ \Perm(G) \subset R_{\bQ}(G) \subset \Char_{\bQ}(G), $$
where $\Perm(G)$ is the ring of virtual permutation representations, $R_{\bQ}(G)$ the ring of rational representations, and $\Char_{\bQ}(G)$ the ring of representations with $\bQ$-valued characters. These rings have the same rank as $\bZ$-modules. A basis of $\Char_{\bQ}(G)$ is given by the distinct $N_{\bQ(\chi) / \bQ}(\chi)$ as $\chi$ ranges over the complex irreducible representations of $G$. 
\end{notation}

\begin{definition}\label{def:c_hat}
Define the group $$\hat{C}(G) = \Char_{\bQ}(G) / \Perm(G).$$ 
\end{definition}

\begin{remark}\label{rem:c_hat_finite}
It follows from Artin's induction theorem that $\hat{C}(G)$ is finite with exponent dividing $|G|$. When $G$ is abelian or dihedral, $\hat{C}(G) = 1$. The order of $N_{\bQ(\chi) / \bQ}(\chi)$ in $\Char_{\bQ}(G) / R_{\bQ}(G)$ is the Schur index $m_{\bQ}(\chi)$ of $\chi$. The group $R_{\bQ}(G) / \Perm(G)$ is not trivial in general (cf. \cite[Exc. 13.4]{Serre}). It is trivial when $G$ is a $p$-group (\cite{Segal}). Given a finite group $G$, the structure of $\hat{C}(G)$ can be computed using the results of \cite{Bartel}.     
\end{remark}

The finiteness of $\hat{C}(G)$ implies that for any $\bC G$-representation $\rho$, there exists an integer $m$ and $\Theta \in \B(G)$ such that $ \bC[G / \Theta] = N_{\bQ(\rho) / \bQ}(\rho)^{\oplus m}$.  
The following proposition describes the decomposition of the permutation representation associated to a $K$-relation into irreducible rational representations.  

\begin{proposition}\label{prop:norm_rel_irrs}
Let $\Theta \in \B(G)$ be a $K$-relation. Then
$$
\bC[G / \Theta] = \bigoplus_{\tau \in \Irr_{\bQ}(G)} N_{\bQ(\chi_{\tau}) / \bQ}(\chi_{\tau})^{\oplus k_{\tau}}
$$
where $k_{\tau} \geq 1 $ is divisible by $[K \colon K \cap \bQ(\chi_{\tau})]$. The sum ranges over the rational irreducible representations of $G$, and $\chi_{\tau}$ denotes a complex irreducible constituent of $\tau \in \Irr_{\bQ}(G)$. 
\end{proposition}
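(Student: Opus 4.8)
The plan is to read off the decomposition of $\bC[G/\Theta]$ directly from the defining property of a $K$-relation, $\bC[G/\Theta]\simeq N_{K/\bQ}(\rho)$, and to locate where the factor $[K:K\cap\bQ(\chi_\tau)]$ comes from in terms of Galois orbits of irreducible characters.

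First I would record the structural set-up. Since $\bC[G/\Theta]$ is a virtual permutation representation it has $\bZ$-valued character, hence lies in $\Char_{\bQ}(G)$, and the $N_{\bQ(\chi)/\bQ}(\chi)$ (one per $\Gal(\overline{\bQ}/\bQ)$-orbit of $\chi\in\Irr_{\bC}(G)$) form a $\bZ$-basis of $\Char_{\bQ}(G)$, as recalled above. So there is a unique expression $\bC[G/\Theta]=\sum_{\tau\in\Irr_{\bQ}(G)}k_\tau\,N_{\bQ(\chi_\tau)/\bQ}(\chi_\tau)$ with $k_\tau\in\bZ$, and $k_\tau=\langle\bC[G/\Theta],\chi_\tau\rangle$ is simply the multiplicity of $\chi_\tau$ in $\bC[G/\Theta]$; everything thus reduces to showing $[K:K\cap\bQ(\chi_\tau)]\mid k_\tau$ (together with the sign statement, handled at the end).

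Next I would analyze $N_{K/\bQ}(\rho)$. Write $\rho=\bigoplus_{\chi}a_\chi\chi$ with $a_\chi=\langle\rho,\chi\rangle$. The hypothesis $\bQ(\rho)\subseteq K$ says the character $\sum_\chi a_\chi\Tr\chi$ is fixed by $\Gal(\overline{\bQ}/K)$; comparing coefficients and using linear independence of irreducible characters forces $a_{\chi^\mu}=a_\chi$ for all $\mu\in\Gal(\overline{\bQ}/K)$, i.e. the multiplicities are constant on $\Gal(\overline{\bQ}/K)$-orbits of $\Irr_{\bC}(G)$. Since $\Gal(\overline{\bQ}/K)\trianglelefteq\Gal(\overline{\bQ}/\bQ)$, every $\Gal(\overline{\bQ}/K)$-orbit inside the full orbit $\mathcal{O}_\chi:=\Gal(\overline{\bQ}/\bQ)\cdot\chi$ has the same size $[K\bQ(\chi):K]=[\bQ(\chi):K\cap\bQ(\chi)]$, and all have field of character values $\bQ(\chi)$. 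I would then group $\rho$ by these sub-orbits, so that $\rho$ is a sum of pieces of the form $a\cdot\sum_{\psi\in\Gal(\overline{\bQ}/K)\cdot\chi}\psi=a\cdot N_{K\bQ(\chi)/K}(\chi)$. Applying $N_{K/\bQ}(-)$ to such a piece, using transitivity of the norm-of-representations operation ($N_{K/\bQ}\circ N_{K\bQ(\chi)/K}=N_{K\bQ(\chi)/\bQ}$) together with $N_{K\bQ(\chi)/\bQ}(\chi)=[K\bQ(\chi):\bQ(\chi)]\,N_{\bQ(\chi)/\bQ}(\chi)=[K:K\cap\bQ(\chi)]\,N_{\bQ(\chi)/\bQ}(\chi)$, its contribution to $\bC[G/\Theta]$ is $a\,[K:K\cap\bQ(\chi)]\,N_{\bQ(\chi)/\bQ}(\chi)$. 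Summing over the sub-orbits contained in $\mathcal{O}_{\chi_\tau}$ gives $k_\tau=[K:K\cap\bQ(\chi_\tau)]\cdot(\text{sum of the relevant }a\text{'s})$, which is the claimed divisibility.

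Finally, for $k_\tau\geq 1$: when $\rho$ is taken to be an honest (effective) representation — the case in our applications, and one may always arrange it when $N_{K/\bQ}(\rho)$ is itself effective, since then every $k_\tau$ is a non-negative multiple of $[K:K\cap\bQ(\chi_\tau)]$ and one rescales — each coefficient $a$ above is a non-negative integer, so $k_\tau\geq 0$, and $k_\tau\geq 1$ precisely for those $\tau$ with $\chi_\tau$ (equivalently, some Galois conjugate of it) a constituent of $\rho$; restricting the sum to these $\tau$ yields the stated form. The step needing the most care is the orbit bookkeeping of the third paragraph: namely that $\bQ(\rho)\subseteq K$ controls only the $\Gal(\overline{\bQ}/K)$-invariance of the \emph{multiset} of multiplicities (not of $\rho$ itself), and that the compositum identity $[K\bQ(\chi):\bQ(\chi)]=[K:K\cap\bQ(\chi)]$, valid because $K/\bQ$ is Galois, is exactly what turns sub-orbit sizes into the required divisor.
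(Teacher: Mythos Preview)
Your proof is correct and follows essentially the same approach as the paper: both decompose $\rho$ according to $\Gal(\overline{\bQ}/K)$-orbits of irreducible characters (the paper phrases this as writing $\rho$ in the ``basis'' $\varphi_\tau=\bigoplus_{\sigma\in\Gal(\bQ(\chi_\tau)/L_\tau)}\chi_\tau^\sigma$ of representations with $K$-valued character, with $L_\tau=K\cap\bQ(\chi_\tau)$), then apply $N_{K/\bQ}$ and observe that each such orbit sum contributes $[K:L_\tau]$ copies of $N_{\bQ(\chi_\tau)/\bQ}(\chi_\tau)$. Your orbit bookkeeping is in fact slightly more explicit than the paper's, and you are right to flag the claim $k_\tau\geq 1$: the paper's own proof does not address it, and since $\rho$ is allowed to be virtual the correct reading is that the divisibility holds for all $k_\tau\in\bZ$, with the displayed sum running only over those $\tau$ that actually appear.
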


\begin{proof}
Suppose that $\bC[G / \Theta] = N_{K / \bQ}(\rho)$ for some representation $\rho$ of $G$ with $\bQ(\rho) \subset K$. 
Let $L_{\tau} = K \cap \bQ(\chi_{\tau})$. A basis of representations of $G$ with $K$-valued character is given by $$\{ \varphi_{\tau} = \hspace{-0.5em} \bigoplus_{ \sigma \in \Gal(\bQ(\chi_{\tau}) / L_{\tau})} \hspace{-0.5em}\chi_{\tau}^{\sigma} \colon \tau \in \Irr_{\bQ}(G) \}.$$ Thus $\rho = \bigoplus_{\tau} \varphi_{\tau}^{\oplus n_{\tau}}$ for some $n_{\tau} \in \bZ$ and so  
$$ 
N_{K / \bQ}(\rho) = \hspace{-0.5em} \bigoplus_{\tau \in \Irr_{\bQ}(G)}\hspace{-0.5em}  N_{K / \bQ}(\varphi_{\tau})^{\oplus n_{\tau}} = \hspace{-0.5em} \bigoplus_{\tau \in \Irr_{\bQ}(G)} \hspace{-0.5em} N_{L_{\tau} / \bQ}(\varphi_{\tau})^{\oplus  n_{\tau}  [K \colon L_{\tau}]}
= \hspace{-0.5em} \bigoplus_{\tau \in \Irr_{\bQ}(G)} \hspace{-0.5em} N_{\bQ(\chi_{\tau}) / \bQ}(\chi_{\tau})^{\oplus n_{\tau} [K \colon L_{\tau}]}.
$$
\end{proof}

\begin{example}[Cyclic Groups]\label{ex:cyclic_groups1}
	Let $G = C_n$. For $d \mid n$, let $\chi_d$ be an order $d$ character of $G$ with $\bQ(\chi_d) = \bQ(\zeta_d)$. Then 
$$\bC[G / \Psi_d] = N_{\bQ(\chi_d) / \bQ}(\chi_d), \qquad \Psi_d = \sum_{d' \mid d} \mu(d/ d') C_{n / d'} \in \B(G),$$
where $\mu$ is the M\"{o}bius function. 
Moreover $\Psi_d$ is the unique element of $\B(G)$ with $\bC[G / \Psi_d] = N_{\bQ(\chi_d) / \bQ}(\chi_d)$. Indeed if $\Psi' \in \B(G)$ also satisfies $\bC[G / \Psi'] = N_{\bQ(\psi_d) / \bQ}(\psi_d)$, then $\Psi_d - \Psi'$ is a Brauer relation. But cyclic groups have no Brauer relations (cf. \cite[Exercise 2.2]{tamroot}) so $\Psi_d = \Psi'$.

If $\Theta$ is a $K$-relation for $G$, then by Proposition \ref{prop:norm_rel_irrs} $\Theta = \sum_{d \mid n} a_d \Psi_d$ where $a_d \in \bZ_{\geq 1}$ is divisible by $[K \colon K \cap \bQ(\zeta_d)]$. \end{example}

\subsection{Functions on the Burnside ring}

This subsection concerns functions from the Burnside ring to an abelian group.
The content of this section appears in \cite[\S2.iii]{tamroot} and we repeat it for convenience of reference.  

Let $A$ be an abelian group. We consider linear functions $f \colon \B(G) \to A$ satisfying $f(X \sqcup Y) = f(X)f(Y)$ for finite $G$-sets $X$, $Y$. 
In applications to number theory, such functions appear naturally when $G$ is the Galois group of an extension of number fields and one is interested in computing invariants associated to subfields. 

\begin{example}\label{ex:C}
	Let $E$ be an elliptic curve over a number field $L$, and $F / L$ a finite Galois extension with $G = \Gal(F / L)$.
	If $H , H' \leq G$ are conjugate subgroups, then $F^{H} \simeq F^{H'}$ and so $C_{E / F^{H}} = C_{E / F^{H'}}$ ($C_{E / F^{H}}$ defined in \S\ref{sec:notation}).  Thus the function $C \colon \B(G) \to \bQ^{\times}$ sending $H \leq G$ to $C_{E / F^{H}}$ is well-defined.  
\end{example}

Often such functions will depend on the decomposition group at a prime of the base field of the Galois extension, which motivates the following definition. 

\begin{definition}[{\cite[Definition 2.33]{tamroot}}]
	For $D \leq G$, say a function $f \colon \B(G) \to A$ is {\em $D$-local} if there exists a function $f_D$ on $\B(D)$ such that $f(H) = f_D(\Res_D H)$ for all $H \leq G$. If this is the case, write $$f = (D, f_D).$$  
\end{definition}

\begin{example}
	Let $G = \Gal(F / L)$ be the Galois group of an extension of number fields. Let $\fp$ be a prime of $L$, and $\fq$ a prime of $F$ above $\fp$ with decomposition group $D_{\fq}$. For $H \leq G$, the primes above $\fp$ in $F^H$ are in one-to-one correspondence with elements of $H \backslash G / D_{\fq}$. For $\lambda \in \bQ^{\times}$, the function $f \colon \B(G) \to \bQ^{\times }$, $$f(H) = \lambda^{\# \{ \text{primes above $\fp$ in $F^{H}$} \} } = \lambda^{| H \backslash G / D_{\fq}|},$$ that counts the primes above $\fp$ is a $D_{\fq}$-local function with $f = (D_{\fq}, \lambda)$.  
\end{example}

\begin{example}\label{ex2:C}
	In Example \ref{ex:C}, the function $C$ factors as a product of local functions. 
	Recall that $C_{E / F^{H}} = \prod_v C_v(E / F^{H}, \omega)$ (as defined in \S\ref{sec:notation}) for a choice of invariant differential $\omega$ on $E / L$, where $v$ ranges over all finite places of $F^{H}$. 

	For each finite place $v$ of $L$, let $D_v = \Gal(F_w / L_v) \leq G$ be the decomposition group at $v$ for a choice of place $w$ in $F$ above $v$. Let $C_{D_v} \colon \B(D_v) \to \bQ^{\times}$ be the function given by 
	$$ C_{D_v}(H') = C(E / (F_w)^{H'}, \omega),$$ 
for $H' \leq D_v$.
Then 	$ C = \prod_v (D_v, C_{D_v}),$
	with the product ranging over all finite places $v$ of $L$. Indeed, for $H \leq G$, 
	$$ C(H) = \prod_v C_{D_v}(\Res_{D_v} H) = \prod_{v} \prod_{v' \mid v} C(E / (F^{H})_{v'}, \omega) = C_{E / F^{H}}, $$
	as the double product ranges over all finite places of $F^{H}$.  
\end{example}

Let $G = \Gal(F / L)$ be the Galois group of an extension of number fields and let $D$, $I$ be the decomposition and inertia subgroups of $G$ for a place $v$ of $L$. If a place $w$ in $F^{H}$ corresponds to the double coset $Hx D$, then its decomposition and inertia groups in $F / F^{H}$ are $H \cap D^{x}$ and $H \cap I^{x}$ respectively. The ramification degree and residue degree of $w$ over $L$ is thus given by $e_{w} = \tfrac{|I|}{|H \cap I^{x}|}$ and $f_{w} = \tfrac{[D \colon I]}{[H \cap D^{x} \colon H \cap I^{x}]}$. To describe functions that depend on the ramification and residue degrees of places in intermediate fields, we introduce the following definition.

\begin{definition}[{\cite[Definition 2.35]{tamroot}}]\label{def:D_ef}
Consider $I \triangleleft D \leq G$ with $D / I$ cyclic and $\psi(e, f)$ a function of $e$, $f \in \bN$. Define a function on $\B(G)$ by 
$$
(D, I, \psi) \colon H \mapsto \prod_{x \in H \backslash G / D} \psi\left( \tfrac{|I|}{|H \cap I^{x}|} , \tfrac{[D \colon I]}{[H \cap D^{x} \colon H \cap I^x]} \right).
$$
This is a $D$-local function on $\B(G)$ with 
$$
(D, I, \psi) = \left(D, U \mapsto \psi\left(\tfrac{|I|}{|U \cap I|}, \tfrac{|D|}{|UI|}\right)\right),
$$
(since $(U \cap D) / (U \cap I) \simeq UI / I$ for $U \leq D$).
\end{definition}

The following proposition follows from the definition above.

\begin{proposition}[{\cite[Theorem 2.36]{tamroot}}]\label{prop:rename_descent}
Let $I \triangleleft D \leq G$ with $D / I$ cyclic. 
\begin{enumerate}[(i)]
	\item If $I_0 < I$ is normal in $D$ with cyclic quotient, and $\psi(e, f)$ is a function of the product $ef$, then $(D, I, \psi) = (D, I_0, \psi)$.
	\item If $I < D_0 < D$ and $\psi(e,f) = \psi(e, f / m)^{m} $ whenever $m$ divides $f$ and $[D \colon D_0]$, then $(D, I, \psi) = (D_0, I, \psi)$.
\end{enumerate}
\end{proposition}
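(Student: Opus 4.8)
The plan is to unwind the defining double-coset formula and check that the two functions agree on every subgroup $H\leq G$; since both sides are defined on $\B(G)$ by extending such a formula multiplicatively over disjoint unions, agreement on subgroups yields equality on all of $\B(G)$. I record two preliminary remarks used below. First, in (ii) the subgroup $D_0/I$ of the cyclic group $D/I$ is automatically normal, so $D_0\triangleleft D$; together with $I\triangleleft D_0$ and $D_0/I$ cyclic this makes $(D_0,I,\psi)$ well-defined. Second, for any $x\in G$ and $d\in D$ one has $I^{xd}=I^x$ and $D_0^{xd}=D_0^x$, by normality of $I$ and $D_0$ in $D$.

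For (i), the key point is that the inertia group drops out of the product of the two arguments. Over a double coset $HxD$, since $H\cap I^x\leq H\cap D^x$,
$$ \frac{|I|}{|H\cap I^x|}\cdot\frac{[D:I]}{[H\cap D^x:H\cap I^x]}\;=\;\frac{|I|\cdot[D:I]}{|H\cap D^x|}\;=\;\frac{|D|}{|H\cap D^x|}, $$
and the right-hand side does not involve $I$. So if $\psi(e,f)=\widetilde\psi(ef)$ depends only on $ef$, then
$$ (D,I,\psi)(H)\;=\;\prod_{x\in H\backslash G/D}\widetilde\psi\!\left(\tfrac{|D|}{|H\cap D^x|}\right), $$
and the identical computation with $I_0$ in place of $I$ yields the same value. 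The hypotheses on $I_0$ serve only to make $(D,I_0,\psi)$ meaningful.

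For (ii), I would compare the two functions by refining double cosets along the natural surjection $H\backslash G/D_0\to H\backslash G/D$. Fix $HxD$ and put $P=D\cap H^{x^{-1}}$, so that the fibre over $HxD$ is indexed by $P\backslash D/D_0$ with representatives $xd$, $d\in D$; as $D_0\triangleleft D$, this fibre has size $m_x:=[D:PD_0]$. By the preliminary remark each element of the fibre contributes the same pair $(e_x,\phi_x)$ to $(D_0,I,\psi)(H)$, where, writing $A=P\cap I$ and $P_0=P\cap D_0$ (so $A\leq P_0\leq P\leq D$), a short index computation gives
$$ e_x=\frac{|I|}{|A|},\qquad \phi_x=\frac{|D_0|\,|A|}{|I|\,|P_0|},\qquad f_x=\frac{|D|\,|A|}{|I|\,|P|}, $$
$(e_x,f_x)$ being the pair contributed by $HxD$ to $(D,I,\psi)(H)$. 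One then checks the identity $f_x=m_x\,\phi_x$. Since $D_0\leq PD_0\leq D$ we get $m_x\mid[D:D_0]$, and since $\phi_x$ is a positive integer we get $m_x\mid f_x$; the hypothesis on $\psi$ therefore gives $\psi(e_x,f_x)=\psi(e_x,f_x/m_x)^{m_x}=\psi(e_x,\phi_x)^{m_x}$. Hence the fibre over $HxD$ contributes $\psi(e_x,\phi_x)^{m_x}=\psi(e_x,f_x)$ to $(D_0,I,\psi)(H)$, and multiplying over all double cosets $HxD$ gives $(D_0,I,\psi)(H)=(D,I,\psi)(H)$, as desired.

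The genuinely delicate step is the bookkeeping in (ii): parametrising the fibres of $H\backslash G/D_0\to H\backslash G/D$ and carrying the subgroups $P$, $A$, $P_0$ correctly through conjugation (which is where the normality of $I$ and $D_0$ in $D$ gets used). Everything else reduces to the two elementary index identities — $ef=|D|/|H\cap D^x|$ for (i), and $f_x=[D:PD_0]\,\phi_x$ together with $[D:PD_0]\mid[D:D_0]$ for (ii) — whose verification is routine.
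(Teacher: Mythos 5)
Your proof is correct and is exactly the argument the paper intends: the statement is quoted from \cite[Theorem 2.36]{tamroot} with the remark that it ``follows from the definition'', and your direct unwinding of the double-coset formula --- the identity $ef=|D|/|H\cap D^x|$ for (i), and for (ii) the fibre count over $H\backslash G/D_0\to H\backslash G/D$ together with $f_x=[D:PD_0]\,\phi_x$, $[D:PD_0]\mid[D:D_0]$ and $\phi_x\in\bZ_{>0}$ --- is precisely that verification. The only available streamlining is to use the $D$-local reformulation $(D,I,\psi)=\bigl(D,\,U\mapsto\psi(|I|/|U\cap I|,\,|D|/|UI|)\bigr)$ from Definition \ref{def:D_ef}, which reduces the bookkeeping in (ii) to the single identity $|D|/|UI|=[D:UD_0]\cdot|D_0|/|(U\cap D_0)I|$, but this is the same computation in different clothing.
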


\subsection{Functions on K-relations}

We now focus on functions from the Burnside ring to $\bQ^{\times}$ and evaluating these function on $K$-relations.

\begin{definition}
Let $f \colon \B(G) \to \bQ^{\times}$ be a linear function, $K$ a number field. Say that {\em $f$ is trivial on $K$-relations for $G$} if $f(\Theta) \equiv 1 \mod N_{K / \bQ}(K^{\times})$ whenever $\Theta$ is a $K$-relation for $G$. 
\end{definition}

If $f$ is trivial on $K$-relations, we write $f \sim_{K} 1$. If $f, g \colon \B(G) \to \bQ^{\times} $ are functions such that $f / g \sim_K 1$, we write $f \sim_K g$.

\begin{example}\label{ex:cyclic_groups2}
Let $G = C_n$, $K$ a number field and $\Theta$ a $K$-relation.  	
Consider a function $f \colon \B(G) \to \bQ^{\times}$. As in Example \ref{ex:cyclic_groups1}, $\Theta = \sum_{d \mid n} a_d \Psi_d$, where $a_d$ is divisible by $[K \colon \bQ(\zeta_d) \cap K ] $. If $f(\Psi_d) \in N_{\bQ(\zeta_d) / \bQ} \in  
 N_{\bQ(\zeta_d) / \bQ}(\bQ(\zeta_d)^{\times})$ for each $d | n$, then by Lemma \ref{lem:powernorm} $f(\Theta) \in N_{K / \bQ}(K^{\times})$. 
Therefore to show that a function $f$ on $\B(G)$ is trivial on $K$-relations for any field $K$, it is enough to show that it is trivial on the $\bQ(\zeta_d)$-relation $\Psi_d$ for each $d \mid n$.
\end{example}

\begin{proposition}\label{prop:res_quot}
Let $G$ be a group, with $D \leq G$, $N \triangleleft G$. Let $K$ be a number field, and let $f \colon \B(G) \to \bQ^{\times}$ be a linear function on $\B(G)$. 
\begin{enumerate}[(i)]
\item (Restriction) If $f = (D, f_D)$ and $f_D$ is trivial on $K$-relations for $D$, then $f$ is trivial on $K$-relations for $G$. 
\item (Quotient) Suppose $f$ is of the form $f(H) = f_{G / N}( NH / N)$ for some function $f_{G / N}$  on $\B(G / N)$. If $f_{G / N}$ is trivial on $K$-relations for $G / N$, then $f$ is trivial on $K$-relations for $G$. 
\end{enumerate}
\end{proposition}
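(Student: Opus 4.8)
## Proof Proposal

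The plan is to reduce each case to the defining property of $K$-relations by transporting the relation along the appropriate Burnside ring map, then using the hypothesis on $f_D$ (resp. $f_{G/N}$) together with the permanence properties already established in Proposition \ref{prop:K_rels_properties}. The key observation in both parts is that the maps $\Res_D^G$ and $H \mapsto NH/N$ on Burnside rings send $K$-relations to $K$-relations, so that applying $f$ to a $K$-relation in $\B(G)$ amounts to applying $f_D$ (resp. $f_{G/N}$) to a $K$-relation in the smaller ring, where triviality is assumed.

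For part (i): let $\Theta = \sum_i n_i H_i \in \B(G)$ be a $K$-relation, so $\bC[G/\Theta] \simeq N_{K/\bQ}(\rho)$ for some $\rho \in \Rep(G)$ with $\bQ(\rho) \subset K$. By Proposition \ref{prop:K_rels_properties}(2), $\Res_D^G \Theta \in \B(D)$ is a $K$-relation; indeed $\bC[D/\Res_D^G\Theta] \simeq \Res_D^G \bC[G/\Theta] \simeq N_{K/\bQ}(\Res_D^G \rho)$ by Mackey's decomposition. Since $f = (D, f_D)$, linearity of $f$ and $f_D$ gives
$$
f(\Theta) = \prod_i f(H_i)^{n_i} = \prod_i f_D(\Res_D^G H_i)^{n_i} = f_D(\Res_D^G \Theta).
$$
As $f_D$ is trivial on $K$-relations in $\B(D)$ and $\Res_D^G \Theta$ is such a relation, the right-hand side lies in $N_{K/\bQ}(K^{\times})$, hence $f(\Theta) \equiv 1 \bmod N_{K/\bQ}(K^{\times})$.

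For part (ii): again let $\Theta = \sum_i n_i H_i$ be a $K$-relation in $\B(G)$. By Proposition \ref{prop:K_rels_properties}(4), $N \cdot \Theta / N = \sum_i n_i\, NH_i/N \in \B(G/N)$ is a $K$-relation. Using the hypothesis $f(H) = f_{G/N}(NH/N)$ and linearity,
$$
f(\Theta) = \prod_i f(H_i)^{n_i} = \prod_i f_{G/N}(NH_i/N)^{n_i} = f_{G/N}\big(N\cdot\Theta/N\big),
$$
and triviality of $f_{G/N}$ on $K$-relations in $\B(G/N)$ finishes the argument. I do not anticipate a genuine obstacle here: the statement is essentially bookkeeping once one has the functoriality of $K$-relations under $\Res$ and projection, which is already recorded in Proposition \ref{prop:K_rels_properties}. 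The only point requiring a little care is checking that the equality $f(H_i) = f_D(\Res_D^G H_i)$ (resp. $f(H_i) = f_{G/N}(NH_i/N)$) is exactly what the definitions of $D$-local and of the quotient form provide, and that linearity lets us pass from subgroups to the full formal sum $\Theta$ — both of which are immediate.
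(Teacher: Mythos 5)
Your proposal is correct and follows exactly the paper's argument: both reduce to the facts from Proposition \ref{prop:K_rels_properties} that $\Res_D^G\Theta$ and $N\cdot\Theta/N$ are again $K$-relations, and then transfer the evaluation of $f$ to $f_D$ (resp.\ $f_{G/N}$) via linearity. You have simply written out the bookkeeping that the paper leaves implicit.
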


\begin{proof}
This follows from Proposition \ref{prop:K_rels_properties}, using that if $\Theta \in \B(G)$ is a $K$-relation for $G$, then $\Res_D \Theta$ is a $K$-relation for $D$ and $N \cdot \Theta / N$ is a $K$-relation for $G / N$.  
\end{proof}

The next proposition generalises Example \ref{ex:cyclic_groups2}, and describes   
how one can prove a function is trivial on $K$-relations by reducing to irreducible representations. 

\begin{proposition}\label{prop:from_irr}
	Let $G$ be a finite group with $\hat{C}(G) = 1$, and $f \colon \B(G) \to \bQ^{\times}$ a linear function. Suppose that $f(\Psi) \in N_{K / \bQ}(K^{\times})$ whenever $\Psi$ is a Brauer relation.
	\begin{enumerate}[(i)]
	\item 
If for each $\chi \in \Irr_{\bC}(G)$, there exists $\Theta_{\chi} \in \B(G)$ with $\bC[G / \Theta_{\chi}] = N_{\bQ(\chi) / \bQ}(\chi)$ and $f(\Theta_{\chi}) \in N_{\bQ(\chi) / \bQ}(\bQ(\chi)^{\times})$, then  
$f$ is trivial on $K$-relations. 
\item
	If $K / \bQ$ is quadratic, to show that $f$ is trivial on $K$-relations, 
	it suffices to show that for each $\chi \in \Irr_{\bC}(G)$ with $K \subset \bQ(\chi)$, there exists $\Theta_{\chi} \in \B(G)$ with $\bC[G / \Theta_{\chi}] = N_{\bQ(\chi) / \bQ}(\chi)$ and $f(\Theta_{\chi}) \in N_{K / \bQ}(K^{\times})$. 
\end{enumerate}
\end{proposition}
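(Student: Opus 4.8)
The plan is to reduce an arbitrary $K$-relation to a $\bZ$-linear combination of the ``atomic'' relations $\Theta_\chi$ realizing $N_{\bQ(\chi)/\bQ}(\chi)$ for $\chi\in\Irr_{\bC}(G)$, together with a Brauer relation, and then evaluate $f$ one term at a time. Observe first that the hypothesis $\hat{C}(G)=1$ is exactly the equality $\Char_{\bQ}(G)=\Perm(G)$, so for each $\chi\in\Irr_{\bC}(G)$ there does exist some $\Theta_\chi\in\B(G)$ with $\bC[G/\Theta_\chi]\simeq N_{\bQ(\chi)/\bQ}(\chi)$; the content of the per-$\chi$ hypotheses in (i) and (ii) is only the condition they place on $f(\Theta_\chi)$.

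Next, given a $K$-relation $\Theta$, I would invoke Proposition \ref{prop:norm_rel_irrs} to write
$$ \bC[G/\Theta]\;\simeq\;\bigoplus_{\tau\in\Irr_{\bQ}(G)}N_{\bQ(\chi_\tau)/\bQ}(\chi_\tau)^{\oplus k_\tau}, $$
with each $k_\tau\geq 1$ divisible by $[K:K\cap\bQ(\chi_\tau)]$. Fixing for each $\tau$ an element $\Theta_{\chi_\tau}\in\B(G)$ with $\bC[G/\Theta_{\chi_\tau}]\simeq N_{\bQ(\chi_\tau)/\bQ}(\chi_\tau)$, the element $\Psi:=\Theta-\sum_\tau k_\tau\,\Theta_{\chi_\tau}\in\B(G)$ maps to $0$ in $\Rep(G)$ and is therefore a Brauer relation. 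Linearity of $f$ then gives
$$ f(\Theta)\;=\;f(\Psi)\prod_\tau f(\Theta_{\chi_\tau})^{k_\tau}, $$
where $f(\Psi)\in N_{K/\bQ}(K^\times)$ by the standing hypothesis on Brauer relations, so the whole problem reduces to showing that each factor $f(\Theta_{\chi_\tau})^{k_\tau}$ lies in $N_{K/\bQ}(K^\times)$.

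For part (i) I would take $\Theta_{\chi_\tau}$ to be the element supplied by the hypothesis, so $f(\Theta_{\chi_\tau})\in N_{\bQ(\chi_\tau)/\bQ}(\bQ(\chi_\tau)^\times)$; since $\bQ(\chi_\tau)/\bQ$ is abelian and $[K:K\cap\bQ(\chi_\tau)]$ divides $k_\tau$, Lemma \ref{lem:powernorm} yields $f(\Theta_{\chi_\tau})^{k_\tau}\in N_{K/\bQ}(K^\times)$, and multiplying over $\tau$ concludes. For part (ii), with $K/\bQ$ quadratic, I would separate two cases. If $K\subset\bQ(\chi_\tau)$, the hypothesis directly gives $f(\Theta_{\chi_\tau})\in N_{K/\bQ}(K^\times)$, and raising to the $k_\tau$-th power stays in this subgroup. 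If $K\not\subset\bQ(\chi_\tau)$, then $K\cap\bQ(\chi_\tau)=\bQ$, so $[K:K\cap\bQ(\chi_\tau)]=2$ divides $k_\tau$; hence $f(\Theta_{\chi_\tau})^{k_\tau}$ is a square in $\bQ^\times$, and squares are norms from the quadratic field $K$ (as $x^2=N_{K/\bQ}(x)$ for $x\in\bQ^\times$) — so here only the existence of $\Theta_{\chi_\tau}$, guaranteed by $\hat{C}(G)=1$, is needed, not its value. In both cases $f(\Theta)\in N_{K/\bQ}(K^\times)$.

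The one point requiring care is the passage from a norm out of $\bQ(\chi_\tau)$ to a norm out of $K$ — that $a\in N_{E/\bQ}(E^\times)$ with $E/\bQ$ abelian forces $a^{[K:K\cap E]}\in N_{K/\bQ}(K^\times)$ — which is Lemma \ref{lem:powernorm} (seen by writing $a=N_{E/\bQ}(b)$ and passing to $M=KE$: then $N_{M/\bQ}(b)=a^{[M:E]}=a^{[K:K\cap E]}$ while also $N_{M/\bQ}(b)\in N_{K/\bQ}(K^\times)$), and this is exactly where the divisibility of $k_\tau$ coming from Proposition \ref{prop:norm_rel_irrs} is consumed. Everything else is formal computation inside $\B(G)$ and inside the group $\bQ^\times/N_{K/\bQ}(K^\times)$.
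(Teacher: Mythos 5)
Your proposal is correct and follows essentially the same route as the paper: decompose $\bC[G/\Theta]$ via Proposition \ref{prop:norm_rel_irrs}, peel off $\sum_\tau k_\tau\Theta_{\chi_\tau}$ to leave a Brauer relation, and handle the factors $f(\Theta_{\chi_\tau})^{k_\tau}$ using Lemma \ref{lem:powernorm} in (i) and evenness of $k_\tau$ when $K\not\subset\bQ(\chi_\tau)$ in (ii). Your explicit remark that $\hat{C}(G)=1$ supplies the existence of $\Theta_{\chi_\tau}$ for the $\tau$ not covered by the hypothesis of (ii) is a point the paper leaves implicit, but the argument is otherwise identical.
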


\begin{proof}
	Let 
	$$
	\bC[G / \Theta] = N_{K / \bQ}(\rho) = \bigoplus_{\tau \in \Irr_{\bQ}(G)} N_{\bQ(\chi_{\tau}) / \bQ}(\chi_{\tau})^{\oplus k_{\tau}}
	$$
	be a $K$-relation, where $\chi_{\tau}$ is a complex irreducible constituent of $\tau \in \Irr_{\bQ}(G)$ and $[K \colon K \cap \bQ(\chi_{\tau}) ]$ divides $k_{\tau}$.
	Then $\bC[G / \Theta] = \bigoplus_{\tau \in \Irr_{\bQ}(G)} \bC[G / \Theta_{\chi_{\tau}}]^{\oplus k_{\tau}}$ and 
	$$ 
	f(\Theta) = f(\Psi)\prod_{\tau  \in \Irr_{\bQ}(G)} f(\Theta_{\chi_{\tau}})^{k_{\tau}} 
$$
where $\Psi = \Theta - \sum_{\tau \in \Irr_{\bQ}(G)} k_{\tau} \Theta_{\chi_{\tau}}$ is a Brauer relation and $f(\Psi) \in N_{K / \bQ}(K^{\times})$ by assumption.  
For $(i)$, the result follows as 
$f(\Theta_{\tau}) \in N_{\bQ(\chi_{\tau}) / \bQ}(\bQ(\chi_{\tau})^{\times})$ for all $\tau \in \Irr_{\bQ}(G)$, so that $f(\Theta_{\chi_{\tau}})^{k_{\tau}} \in N_{K / \bQ}(K^{\times})$ by Proposition \ref{prop:norm_rel_irrs} and Lemma \ref{lem:powernorm} below.
For $(ii)$, if $K \not\subset \bQ(\chi_{\tau})$ then $k_{\tau}$ is even and so $f(\Theta_{\chi_{\tau}})^{k_{\tau}} \in \bQ^{\times 2} \subset N_{K/ \bQ}(K^{\times})$. Thus 
	$$ f(\Theta) \equiv \prod_{\tau\in \Irr_{\bQ}(G), K \subset \bQ(\chi_{\tau})} f(\Theta_{\chi_{\tau}}) \mod N_{K / \bQ}(K^{\times }),$$ 
	and it follows that $f(\Theta) \in N_{K / \bQ}(K^{\times})$ from the assumption that $f(\Theta_{\chi_{\tau}}) \in N_{K / \bQ}(K^{\times})$.
\end{proof}

\begin{lemma}\label{lem:powernorm}
Let $L$ and $K$ be number fields. If $x = N_{L / \bQ}(z)$ for $z \in L$,  then $x^{[K \colon L \cap K]} \in N_{K / \bQ}(K^{\times})$. 
\end{lemma}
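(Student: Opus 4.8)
The plan is to exhibit an explicit element of $K^{\times}$ whose $K/\bQ$-norm is $x^{[K:L\cap K]}$. The key idea is that although $z$ lives only in $L$, one can first push it down to the intersection field $F_0 := L \cap K$ by taking the relative norm $N_{L/F_0}$, landing in a field that sits inside \emph{both} $L$ and $K$; and for elements of the subfield $F_0$ the norm $N_{K/\bQ}$ factors through $N_{F_0/\bQ}$ and merely introduces the exponent $[K:F_0]$.

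Concretely, I would set $w := N_{L/F_0}(z) \in F_0$. Transitivity of the norm in the tower $\bQ \subseteq F_0 \subseteq L$ gives $N_{F_0/\bQ}(w) = N_{F_0/\bQ}(N_{L/F_0}(z)) = N_{L/\bQ}(z) = x$. On the other hand $w \in F_0 \subseteq K$, and since $N_{K/F_0}$ restricted to the subfield $F_0$ is the $[K:F_0]$-th power map, transitivity of the norm in the tower $\bQ \subseteq F_0 \subseteq K$ yields
$$ N_{K/\bQ}(w) = N_{F_0/\bQ}\big(N_{K/F_0}(w)\big) = N_{F_0/\bQ}\big(w^{[K:F_0]}\big) = N_{F_0/\bQ}(w)^{[K:F_0]} = x^{[K:F_0]}. $$
As $w \in K^{\times}$, this presents $x^{[K:L\cap K]}$ as an element of $N_{K/\bQ}(K^{\times})$, which is exactly the claim.

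There is essentially no obstacle beyond spotting the right intermediate element $w$; the argument is just two applications of transitivity of the norm together with the power-map description of $N_{K/F_0}$ on $F_0$. The only role of the hypothesis that $L$ and $K$ are number fields is that, after fixing an algebraic closure of $\bQ$, the intersection $L\cap K$ makes sense and all extensions in sight are finite and separable. In the applications (Example \ref{ex:cyclic_groups2}, Proposition \ref{prop:from_irr}) the field $L$ is $\bQ(\zeta_d)$ or $\bQ(\chi)$ and is abelian over $\bQ$, so one could alternatively run the compositum argument using $[LK:L] = [K:L\cap K]$; but the route above needs no Galois assumption on either field.
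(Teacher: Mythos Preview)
Your proof is correct and is essentially the same as the paper's: the paper's one-line argument is $x^{[K:L\cap K]} = N_{K/\bQ}(N_{L/L\cap K}(z))$, and your element $w = N_{L/F_0}(z)$ is exactly this inner norm, with the surrounding computation spelling out why the identity holds.
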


\begin{proof}
One has $x^{[K \colon L \cap K]} = N_{K / \bQ}(N_{L / L \cap K}(z))$. 
\end{proof}

\begin{remark}
	The proof of Proposition \ref{prop:from_irr} does not hold when $\hat{C}(G) \not= 1$. For example, the quaternion group $G = Q_8$ has $\hat{C}(G) = \bZ / 2 \bZ$, generated by the $2$-dimensional irreducible representation $\chi$. This representation has rational character. One has 
	$$ \chi^{\oplus 2} = \bC[G / C_1] \ominus \bC[G / C_2],$$
	and so $\Theta = C_1 - C_2$ is a $K$-relation for all quadratic fields $K$. However there does not exist $\Theta' \in \B(G)$ such that $\Theta = \Psi + 2 \Theta'$ for $\Psi$ a Brauer relation.  
\end{remark}

We now give some examples of functions on the Burnside ring of a cyclic group that are trivial on $K$-relations. The triviality of these functions will be essential for the proofs in \S\ref{sec:main}.

\begin{lemma}\label{lem:f_on_cyclic}
Let $G = C_n$ be cyclic. The function $g \colon H \leq G \mapsto [G \colon H]$ is trivial on $K$-relations for $G$, where $K$ is any number field.
\end{lemma}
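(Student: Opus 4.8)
The plan is to reduce the statement to the cyclotomic relations $\Psi_d$ and then to an explicit norm computation in $\bQ(\zeta_d)$. By Example \ref{ex:cyclic_groups2} (which records that a function on $\B(C_n)$ is trivial on $K$-relations for every number field $K$ as soon as it is trivial on each $\bQ(\zeta_d)$-relation $\Psi_d$, $d \mid n$), it suffices to check that $g(\Psi_d) \in N_{\bQ(\zeta_d)/\bQ}(\bQ(\zeta_d)^{\times})$ for all $d \mid n$, where $\Psi_d = \sum_{d' \mid d} \mu(d/d')\,C_{n/d'}$ as in Example \ref{ex:cyclic_groups1}.

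Since $[C_n \colon C_{n/d'}] = d'$, linearity of $g$ gives $g(\Psi_d) = \prod_{d' \mid d} (d')^{\mu(d/d')}$. The next step is to identify this product. Passing to logarithms turns it into $\sum_{d' \mid d} \mu(d/d')\log d'$, which is the M\"obius inversion of the identity $\log m = \sum_{d' \mid m} \Lambda(d')$; hence it equals the von Mangoldt value $\Lambda(d)$. Consequently $g(\Psi_d) = p$ when $d = p^k$ is a prime power, and $g(\Psi_d) = 1$ in all other cases (including $d = 1$). One can also see this by a direct $p$-adic valuation count: for $d > 1$ one has $\sum_{d' \mid d} \mu(d/d')\,v_p(d') = 1$ if $d$ is a power of $p$, and $0$ otherwise.

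It remains to verify that $g(\Psi_d)$ lies in the relevant norm group in each case. When $g(\Psi_d) = 1$ this is immediate. When $d = p^k$ and $g(\Psi_d) = p$, the key input is the standard cyclotomic norm identity $N_{\bQ(\zeta_{p^k})/\bQ}(1 - \zeta_{p^k}) = \Phi_{p^k}(1) = p$, which exhibits $p$ as a norm from $\bQ(\zeta_{p^k}) = \bQ(\chi_d)$, as required. I expect no serious obstacle here: the only point needing a little care is the combinatorial identification of $\prod_{d' \mid d}(d')^{\mu(d/d')}$ as a prime (or $1$), while the reduction step and the cyclotomic norm computation are both routine.
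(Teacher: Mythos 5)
Your proposal is correct and follows essentially the same route as the paper: reduce to the relations $\Psi_d$ via Example \ref{ex:cyclic_groups2}, compute $g(\Psi_d)=\prod_{d'\mid d}(d')^{\mu(d/d')}$, and recognise this as a cyclotomic norm. The paper identifies the product directly as $\Phi_d(1)=N_{\bQ(\zeta_d)/\bQ}(1-\zeta_d)$ in one step, while you first evaluate it as $e^{\Lambda(d)}$ (i.e.\ $p$ for $d=p^k$ and $1$ otherwise) and then invoke $\Phi_{p^k}(1)=p$; this is the same computation, and your explicit handling of the case $d=1$ is if anything slightly more careful than the paper's.
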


\begin{proof}
Let $\Theta$ be a $K$-relation for $G$. Then $\Theta = \sum_{d | n } a_d \Psi_d$, where $a_d$ is divisible by $[K \colon \bQ(\zeta_d) \cap K]$ and $\Psi_d \in \B(G)$ is as in Example \ref{ex:cyclic_groups1}. One has
$$g(\Psi_d) = \prod_{d' \mid d} (d')^{\mu(d / d')}  = \Phi_d(1) = N_{\bQ(\zeta_d) / \bQ}(1 - \zeta_d), $$
where $\Phi_d$ is the $d$-th cyclotomic polynomial. Thus $g(\Psi_d) \in N_{\bQ(\zeta_d) / \bQ}(\bQ(\zeta_d)^{\times})$ and so $g$ is trivial on $K$-relations by Example \ref{ex:cyclic_groups1}.
\end{proof}

\begin{lemma}\label{lem:const_cyclic}
Let $G = C_n$ be cyclic. The function $c \colon H \leq G \mapsto \alpha$ where $\alpha \in \bQ^{\times}$ is trivial on $K$-relations for $G$, where $K$ is any number field. 
\end{lemma}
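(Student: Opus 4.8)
Looking at Lemma \ref{lem:const_cyclic}, I need to show the constant function $c(H) = \alpha$ on $\B(C_n)$ is trivial on $K$-relations.

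The plan is to reduce to the structure of $K$-relations on cyclic groups established in Example \ref{ex:cyclic_groups1}. By that example, any $K$-relation $\Theta$ can be written as $\Theta = \sum_{d \mid n} a_d \Psi_d$, where each $a_d \in \bZ_{\geq 1}$ is divisible by $[K \colon \bQ(\zeta_d) \cap K]$, and $\Psi_d = \sum_{d' \mid d} \mu(d/d') C_{n/d'}$. Following the strategy laid out in Example \ref{ex:cyclic_groups2}, it suffices to evaluate $c$ on each $\Psi_d$ and check it lands in $N_{\bQ(\zeta_d)/\bQ}(\bQ(\zeta_d)^{\times})$; then Lemma \ref{lem:powernorm} (together with the divisibility of $a_d$) upgrades this to $c(\Theta) \in N_{K/\bQ}(K^{\times})$.

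The key computation is that $c$ is linear, so $c(\Psi_d) = \alpha^{\sum_{d' \mid d}\mu(d/d')}$. The exponent $\sum_{d' \mid d}\mu(d/d')$ equals $\sum_{e \mid d}\mu(e)$, which is $1$ if $d = 1$ and $0$ if $d > 1$. Hence $c(\Psi_1) = \alpha$ and $c(\Psi_d) = 1$ for all $d > 1$. The case $d = 1$: here $\Psi_1 = C_n = G$, $\bQ(\zeta_1) = \bQ$, and $c(\Psi_1) = \alpha \in \bQ^\times = N_{\bQ/\bQ}(\bQ^\times)$, so this is fine. For $d > 1$, $c(\Psi_d) = 1 \in N_{\bQ(\zeta_d)/\bQ}(\bQ(\zeta_d)^\times)$ trivially. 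So in all cases $c(\Psi_d)$ is a norm from $\bQ(\zeta_d)$, and Example \ref{ex:cyclic_groups2} (i.e. the reduction via Lemma \ref{lem:powernorm}) completes the argument.

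I do not anticipate a genuine obstacle here; this is essentially a bookkeeping lemma whose only content is the M\"obius-sum identity $\sum_{e \mid d}\mu(e) = [d=1]$ and the observation that for $d>1$ the $\Psi_d$-component of any $K$-relation contributes a trivial factor while for $d=1$ it contributes an element of $\bQ^\times$, which is automatically a norm. The one point requiring a line of care is making sure the total exponent of $\alpha$ in $c(\Theta)$, namely $a_1$, causes no trouble --- but since $N_{\bQ/\bQ}(\bQ^\times) = \bQ^\times \ni \alpha^{a_1}$, there is nothing to check. Alternatively, one can observe directly that $c = (C_1, U \mapsto \alpha^{[C_n:U]\cdot 0}\cdots)$ --- but the cleanest route is simply: write $c$ as the composite of the constant-on-the-trivial-subgroup description, or more simply still, note $c(H) = \alpha$ for all $H$ means $c(\Theta) = \alpha^{\sum_i n_i}$ and $\sum_i n_i = \langle \bC[G/\Theta], \trivial\rangle = a_1$ (the multiplicity of the trivial representation), which is an integer, so $c(\Theta) = \alpha^{a_1} \in \bQ^\times$; but to land in $N_{K/\bQ}(K^\times)$ we still invoke that $a_1$ is divisible by $[K:\bQ]$ when $K \not\subset \bQ = \bQ(\zeta_1)$, forcing $\alpha^{a_1} \in \bQ^{\times[K:\bQ]} \subseteq N_{K/\bQ}(K^\times)$. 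Either phrasing works; I would present the first, uniform one.

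\begin{proof}
Let $\Theta$ be a $K$-relation in $\B(G)$. By Example \ref{ex:cyclic_groups1}, $\Theta = \sum_{d \mid n} a_d \Psi_d$ with $a_d \in \bZ_{\geq 1}$ divisible by $[K \colon \bQ(\zeta_d) \cap K]$, where $\Psi_d = \sum_{d' \mid d} \mu(d / d') C_{n / d'}$ and $\bC[G / \Psi_d] \simeq N_{\bQ(\zeta_d) / \bQ}(\chi_d)$. Since $c$ is linear,
$$
c(\Psi_d) = \alpha^{\sum_{d' \mid d} \mu(d / d')} = \alpha^{\sum_{e \mid d} \mu(e)}.
$$
By the defining property of the M\"obius function, $\sum_{e \mid d}\mu(e) = 1$ if $d = 1$ and $0$ if $d > 1$. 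Hence $c(\Psi_d) = 1 \in N_{\bQ(\zeta_d) / \bQ}(\bQ(\zeta_d)^{\times})$ for $d > 1$, while $c(\Psi_1) = \alpha \in \bQ^{\times} = N_{\bQ(\zeta_1) / \bQ}(\bQ(\zeta_1)^{\times})$. In either case $c(\Psi_d) \in N_{\bQ(\zeta_d) / \bQ}(\bQ(\zeta_d)^{\times})$, so by Lemma \ref{lem:powernorm} and the divisibility of $a_d$ we get $c(\Psi_d)^{a_d} \in N_{K / \bQ}(K^{\times})$ for all $d \mid n$. Therefore
$$
c(\Theta) = \prod_{d \mid n} c(\Psi_d)^{a_d} \in N_{K / \bQ}(K^{\times}),
$$
which shows that $c$ is trivial on $K$-relations.
\end{proof}
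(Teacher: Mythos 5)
Your proof is correct and follows essentially the same route as the paper: compute $c(\Psi_d) = \alpha^{\sum_{e\mid d}\mu(e)}$, note this is $1$ for $d>1$ and $\alpha$ for $d=1$, and use that $a_1$ is divisible by $[K:\bQ]$ so that $c(\Theta)=\alpha^{a_1}=N_{K/\bQ}(\alpha^{a_1/[K:\bQ]})$. The paper states this last step directly rather than invoking Lemma \ref{lem:powernorm}, but that is only a cosmetic difference.
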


\begin{proof}
For $d \mid n$, one has $c(\Psi_d) = \alpha^{\sum_{d' | d} \mu(d / d')} = 1$ unless $d = 1$ in which case $c(\Psi_1) = \alpha$. Thus if $\Theta$ is a $K$-relation then $c(\Theta) = \alpha^{m [K \colon \bQ]} = N_{K / \bQ}(\alpha^{m})$ where $m \geq 1$. 
\end{proof}

\begin{lemma}\label{lem:alpha_beta}
Let $G = C_n$ be cyclic. Consider the function 
$$
d_{\alpha, \beta} \colon H \leq G \mapsto \begin{cases} \alpha, & \text{if } k \mid [C_n \colon H], \\ \beta, &   \text{if } k \nmid [C_n \colon H] , \end{cases}
$$
for $k \in \bN$, $\alpha, \beta \in \bQ^{\times}$. If $\alpha / \beta \in N_{\bQ(\zeta_k) / \bQ}(\bQ(\zeta_k)^{\times})$, then $d_{\alpha, \beta}$ is trivial on $K$-relations for $G$, where $K$ is any number field. 
\end{lemma}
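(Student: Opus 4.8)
The plan is to reduce to the generating $\bQ(\zeta_d)$-relations $\Psi_d$ of Example \ref{ex:cyclic_groups1} and evaluate $d_{\alpha,\beta}$ on each of them. By Example \ref{ex:cyclic_groups2}, it suffices to prove that $d_{\alpha,\beta}(\Psi_d) \in N_{\bQ(\zeta_d) / \bQ}(\bQ(\zeta_d)^{\times})$ for every $d \mid n$, where $\Psi_d = \sum_{d' \mid d} \mu(d/d')\, C_{n/d'}$.

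First I would record that the unique subgroup $C_{n/d'} \leq C_n$ has index $d'$, so, writing $[P]$ for the Iverson bracket of a statement $P$,
$$ d_{\alpha,\beta}(C_{n/d'}) = \beta\,(\alpha/\beta)^{[k \mid d']}. $$
Taking the $\Psi_d$-combination of these values with the Möbius weights and using $\sum_{d' \mid d} \mu(d/d') = [d = 1]$ gives
$$ d_{\alpha,\beta}(\Psi_d) = \beta^{[d=1]}\,(\alpha/\beta)^{S_d}, \qquad S_d := \sum_{\substack{d' \mid d \\ k \mid d'}} \mu(d/d'). $$

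Next I would evaluate $S_d$. If $k \nmid d$ the index set is empty, so $S_d = 0$. If $k \mid d$, substituting $d' = k e$ turns the condition $d' \mid d$ into $e \mid d/k$ with $d/d' = (d/k)/e$, so that $S_d = \sum_{e \mid d/k} \mu((d/k)/e) = [d/k = 1] = [d = k]$. Hence $d_{\alpha,\beta}(\Psi_d) = 1$ whenever $d > 1$ and $d \neq k$; for $d = 1$ it equals $\alpha$ or $\beta$, which lies in $\bQ^{\times} = N_{\bQ(\zeta_1)/\bQ}(\bQ(\zeta_1)^{\times})$; and for $d = k$ (which can occur among the $\Psi_d$ only if $k > 1$ and $k \mid n$) it equals $\alpha/\beta$, which lies in $N_{\bQ(\zeta_k)/\bQ}(\bQ(\zeta_k)^{\times})$ by the hypothesis. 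Applying Example \ref{ex:cyclic_groups2} then yields $d_{\alpha,\beta} \sim_K 1$ for every number field $K$.

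I do not expect a genuine obstacle here: the argument is a Möbius inversion that isolates the single "bad" level $d = k$, and the only points needing care are the degenerate cases $k = 1$ — where $d_{\alpha,\beta}$ is constant and the claim is already covered by Lemma \ref{lem:const_cyclic} — and $k \nmid n$, where $\Psi_k$ simply does not occur among the generating relations and the hypothesis is not even used.
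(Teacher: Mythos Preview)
Your proof is correct and follows essentially the same approach as the paper: evaluate $d_{\alpha,\beta}$ on each generating relation $\Psi_d$, obtain $\beta$ at $d=1$, $\alpha/\beta$ at $d=k$, and $1$ elsewhere, then invoke Example \ref{ex:cyclic_groups2}. The only cosmetic difference is that the paper rewrites your Iverson bracket $[k\mid d']$ as the inner product $\langle \chi_k,\bC[C_n/C_{n/d'}]\rangle$ for an order-$k$ character $\chi_k$, so that your sum $S_d$ becomes $\langle \chi_k, N_{\bQ(\chi_d)/\bQ}(\chi_d)\rangle$; the underlying M\"obius computation is identical.
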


\begin{proof}
If $k \nmid n$ or if $k = 1$ then this is the constant function which is trivial on $K$-relations by Lemma \ref{lem:const_cyclic}, so we may assume $k \mid n$ and $k > 1$. Let $\chi_k$ be a character of $G$ with $\bQ(\chi_k) = \bQ(\zeta_k)$. Then 
$$
\langle \chi_k, \bC[C_n / H] \rangle = \begin{cases} 1, & \text{if } k \mid [C_n \colon H], \\ 0, & \text{if } k \nmid [C_n \colon H] .\end{cases}
 $$
 Thus $c(H) = \alpha^{\langle \chi_k , \bQ[C_n / H] \rangle }\cdot  \beta^{1 - \langle \chi_k, \bQ[C_n / H] \rangle }$. 
 Therefore 
 $$
d_{\alpha, \beta}(\Psi_d) = (\alpha / \beta)^{\langle \chi_k, N_{\bQ(\chi_d) / \bQ} (\chi_d )\rangle } \beta^{\sum_{d' | d} \mu(d / d')} = \begin{cases} \beta, & \text{if } d = 1, \\ \alpha / \beta, & \text{if } d = k ,\\ 1, & \text{if } d \not= k, d > 1, \end{cases} 
 $$
 where $\chi_d$ is a character of $G$ of order $d$. If $\Theta$ is a $K$-relation for $G$, then $d_{\alpha, \beta}(\Theta) = \beta^{x[K \colon \bQ]}\cdot(\alpha / \beta)^{y [K \colon K \cap \bQ(\zeta_k)]}$ for some $x, y \in \bZ$. One has $\beta^{x[K \colon \bQ]} = N_{K / \bQ}(\beta^x)$, and  
$(\alpha / \beta)^{y [K \colon K \cap \bQ(\zeta_k)]} \in N_{K / \bQ}(K^{\times})$ by Lemma \ref{lem:powernorm} and the assumption that $\alpha / \beta \in N_{\bQ(\zeta_k) / \bQ}(\bQ(\zeta_k)^{\times})$. Thus $d_{\alpha, \beta}(\Theta)$ is the norm of an element of $K$. 
\end{proof}

\begin{lemma}
	Let $G = C_n$. The function 
	$$
	g \colon H \mapsto \begin{cases} [C_n \colon H],  & \text{if } 2 \mid [C_n \colon H], \\ 1, & \text{if } 2 \nmid [C_n \colon H], \end{cases}
	$$
	is trivial on $K$-relations for $G$, where $K$ is any number field.
\end{lemma}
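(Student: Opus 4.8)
The plan is to reduce, via Example~\ref{ex:cyclic_groups2}, to showing that $g(\Psi_d)$ is a norm from $\bQ(\zeta_d)$ for every $d \mid n$, where $\Psi_d = \sum_{d' \mid d} \mu(d/d')\,C_{n/d'}$ is as in Example~\ref{ex:cyclic_groups1}. Since $[C_n : C_{n/d'}] = d'$, we have $g(C_{n/d'}) = d'$ when $d'$ is even and $g(C_{n/d'}) = 1$ when $d'$ is odd, so linearity gives
\begin{equation*}
g(\Psi_d) \;=\; \prod_{d' \mid d} g(C_{n/d'})^{\mu(d/d')} \;=\; \prod_{\substack{d' \mid d \\ 2 \mid d'}} (d')^{\mu(d/d')}.
\end{equation*}
If $d$ is odd the product is empty, so $g(\Psi_d) = 1$. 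Otherwise write $d = 2^a m$ with $a \geq 1$ and $m$ odd.

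Next I would write each even divisor of $d$ as $d' = 2^j e$ with $1 \leq j \leq a$ and $e \mid m$, so that $d/d' = 2^{a-j}(m/e)$ with $\gcd(2^{a-j}, m/e) = 1$. Separating the base as $2^j \cdot e$ and using multiplicativity of $\mu$ splits $g(\Psi_d)$ into a power of $2$ times a power of $P_m := \prod_{e \mid m} e^{\mu(m/e)}$. Feeding in the standard identities $\sum_{e \mid m} \mu(m/e) = 0$ for $m > 1$, $\sum_{i=0}^{a-1} \mu(2^i) = 0$ for $a \geq 2$, and $\sum_{j=1}^{a} j\,\mu(2^{a-j}) = 1$, together with $P_m = \Phi_m(1)$ for $m > 1$, one obtains
\begin{equation*}
g(\Psi_d) \;=\;
\begin{cases}
2, & d = 2^a, \\
\Phi_m(1), & d = 2m \text{ with } m > 1 \text{ odd}, \\
1, & d = 2^a m \text{ with } m > 1 \text{ odd and } a \geq 2.
\end{cases}
\end{equation*}

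It then remains to recognise each value as a norm from $\bQ(\zeta_d)$. The value $1$ is a norm trivially; for $d = 2^a$ one has $g(\Psi_d) = 2 = \Phi_{2^a}(1) = N_{\bQ(\zeta_{2^a})/\bQ}(1 - \zeta_{2^a})$, exactly as in the proof of Lemma~\ref{lem:f_on_cyclic}; and for $d = 2m$ with $m > 1$ odd, $g(\Psi_d) = \Phi_m(1) = N_{\bQ(\zeta_m)/\bQ}(1 - \zeta_m)$, which is a norm from $\bQ(\zeta_d)$ because $\bQ(\zeta_{2m}) = \bQ(\zeta_m)$ for odd $m$. Example~\ref{ex:cyclic_groups2} then yields that $g$ is trivial on $K$-relations for every number field $K$.

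I expect the only real difficulty to be the bookkeeping in the middle step: tracking which Möbius terms survive (essentially those with $v_2(d/d') \leq 1$ and squarefree odd part) and not conflating the sub-cases $a = 1$ versus $a \geq 2$ and $m = 1$ versus $m > 1$. There is no conceptual obstacle once the reduction to the $\Psi_d$ of Example~\ref{ex:cyclic_groups2} and the norm computation of Lemma~\ref{lem:f_on_cyclic} are in hand; in particular each $g(\Psi_d)$ turns out to be a norm from $\bQ(\zeta_d)$ itself, so no further descent of norms is required.
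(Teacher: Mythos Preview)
Your proof is correct and follows essentially the same route as the paper: reduce via Example~\ref{ex:cyclic_groups2} to checking that $g(\Psi_d)\in N_{\bQ(\zeta_d)/\bQ}(\bQ(\zeta_d)^{\times})$ for each $d\mid n$, compute $g(\Psi_d)$ explicitly, and recognise the answer as $\Phi_{d'}(1)$ for a suitable $d'$. The only cosmetic difference is that the paper packages the parity condition as $g(H)=[C_n:H]^{\langle \chi_2,\,\bC[C_n/H]\rangle}$ before computing, whereas you split $d=2^a m$ directly; your case analysis is in fact slightly more complete (the paper's table misses that $g(\Psi_{2^a})=2$ for $a\ge 2$ and $g(\Psi_{2p^k})=p$ for $k\ge 2$, though these are still norms).
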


\begin{proof}
	If $2 \nmid n$, then this is always $1$, so assume $2 \mid n$. Let $\chi_2$ be a character of $G$ of order $2$. Then $g$ can be expressed as $g(H) = [C_n \colon H]^{\langle \chi_2 ,\ \bQ[C_n / H] \rangle}$.
	For $d \mid n$, 
	$$
	g(\Psi_d) = \prod_{d' \mid d} (d')^{\mu(d / d')\cdot \langle \chi_2, \bC[C_n / C_{n / d'}] \rangle} = \begin{cases}
		2, & \text{if } d = 2, \\ p, & \text{if } d = 2 p,\ p \text{ prime,}\\ 1, & \text{otherwise}.
	\end{cases}
	$$
	In each case it is clear that $g(\Psi_d) \in N_{\bQ(\zeta_d) / \bQ}(\bQ(\zeta_d)^{\times})$ and so the result follows by Example \ref{ex:cyclic_groups1}. 
\end{proof}

If we have a function $(D, I, \psi(e,f))$ as in Definition \ref{def:D_ef} that depends only on $f$, then it can be expressed as a function of the quotient $D / I$. By Proposition \ref{prop:res_quot}, to show that $(D,I, \psi(e,f)) \sim_K 1$ it suffices to show that the function on the cyclic quotient $D / I$ is trivial on $K$-relations in $\B(D / I)$. Thus the previous results yield the following. 

\begin{corollary}\label{corr:cyclic_f_const}
Let $G$ be a finite group. Let $D \leq G$ with $D / I$ cyclic. Let $K$ be a number field.
\begin{enumerate}[(i)]
	\item The function $(D, \alpha) = (D, I ,\alpha)$ with $\alpha \in \bQ^{\times}$ is trivial on $K$-relations for $G$.
	\item The function $(D, I, f)$ is trivial on $K$-relations for $G$.
	\item The function $\left(D, I, \left\{\begin{smallmatrix} \alpha, & & \text{if }k \mid f, \\ \beta, & & \text{if } k \nmid f. \end{smallmatrix}\right.\right)$ with $\alpha, \beta \in \bQ^{\times}$ is trivial on $K$-relations for $G$ when $\alpha / \beta \in N_{\bQ(\zeta_k) / \bQ}(\bQ(\zeta_k)^{\times})$.
	\item The function $\left(D, I, \left\{ \begin{smallmatrix} f, & & \text{if } 2 \mid f, \\ 1, & & \text{if } 2 \nmid f. \end{smallmatrix} \right. \right)$ is trivial on $K$-relations for $G$. 
\end{enumerate}
\end{corollary}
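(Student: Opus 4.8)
The plan is to observe that in each of the four cases the function $\psi(e,f)$ of Definition~\ref{def:D_ef} depends only on the residue-degree variable $f$, and that any such function descends to a function on the Burnside ring of the cyclic quotient $D/I$, where the cyclic-group lemmas proved above apply verbatim. So the whole statement will be a formal consequence of those lemmas together with the descent machinery of Proposition~\ref{prop:res_quot}.

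Concretely, write $\psi(e,f) = \tilde\psi(f)$. Then the $D$-local description in Definition~\ref{def:D_ef} becomes
$$(D,I,\psi) = \Bigl(D,\ U \mapsto \psi\bigl(\tfrac{|I|}{|U\cap I|},\ \tfrac{|D|}{|UI|}\bigr)\Bigr) = \Bigl(D,\ U \mapsto \tilde\psi\bigl([D:UI]\bigr)\Bigr),$$
and since $I \le UI \le D$ one has $[D:UI] = [D/I : UI/I]$, so the local function $f_D$ on $\B(D)$ factors through the quotient map $D \to D/I$: it equals $f_D(U) = f_{D/I}(UI/I)$ with $f_{D/I}(\bar U) = \tilde\psi([D/I : \bar U])$. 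By Proposition~\ref{prop:res_quot}(ii), applied with the groups $I \triangleleft D$ in place of $N \triangleleft G$, triviality of $f_D$ on $K$-relations in $\B(D)$ follows from triviality of $f_{D/I}$ on $K$-relations in $\B(D/I)$; and then Proposition~\ref{prop:res_quot}(i) deduces triviality of $(D,I,\psi)$ in $\B(G)$ from that of $f_D$ in $\B(D)$. Hence it suffices to prove that $H \le C_n \mapsto \tilde\psi([C_n : H])$ is trivial on $K$-relations, where $n = |D/I|$.

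It then only remains to identify each $\tilde\psi$ with one of the functions already treated: in case (i) $\tilde\psi \equiv \alpha$ is constant (Lemma~\ref{lem:const_cyclic}); in case (ii) $\tilde\psi(f) = f$, i.e.\ $H \mapsto [C_n : H]$ (Lemma~\ref{lem:f_on_cyclic}); in case (iii) $\tilde\psi$ is the two-valued function $d_{\alpha,\beta}$ of Lemma~\ref{lem:alpha_beta}, and the hypothesis $\alpha/\beta \in N_{\bQ(\zeta_k)/\bQ}(\bQ(\zeta_k)^{\times})$ is exactly the hypothesis of that lemma; and in case (iv) $\tilde\psi$ is the function of the final (unlabelled) cyclic-group lemma above. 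I do not anticipate any real difficulty: all the genuine content sits in the cyclic-group lemmas, and the only points needing care are the elementary identity $[D:UI] = [D/I:UI/I]$ — i.e.\ that the parameter $f$ of Definition~\ref{def:D_ef} is precisely the index of the image of $U$ in the cyclic quotient — and the bookkeeping of applying Proposition~\ref{prop:res_quot} in the correct order, first descending along $D \le G$ and then along $D \to D/I$.
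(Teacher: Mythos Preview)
Your proposal is correct and follows essentially the same approach as the paper: the paper's proof is the short paragraph immediately preceding the corollary, which observes that a function $(D,I,\psi(e,f))$ depending only on $f$ descends to the cyclic quotient $D/I$, invokes Proposition~\ref{prop:res_quot} to reduce to that quotient, and then reads off the four cases from Lemmas~\ref{lem:const_cyclic}, \ref{lem:f_on_cyclic}, \ref{lem:alpha_beta}, and the unlabelled lemma following it. Your write-up is in fact more explicit than the paper's about the identity $[D:UI]=[D/I:UI/I]$ and about how parts (i) and (ii) of Proposition~\ref{prop:res_quot} are each used.
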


\subsection{Regulator constants}

We now consider regulator constants. These are discussed in detail in \cite[\S2.ii]{tamroot}, and we will generalise some of the results therein to apply to $K$-relations. 
Throughout, $\cK$ is a field of characteristic zero.  

\begin{notation}
Let $V$ be a vector space over $\cK$, with $\langle, \rangle$ a non-degenerate $\cK$-bilinear $\cL$-valued pairing on $V$, where $\cL$ is some finite extension of $\cK$. Let $\det(\langle, \rangle | V) \in \cL^{\times} / \cK^{\times 2}$ denote $\det(\langle e_i, e_j \rangle_{i, j})$ in any $\cK$-basis $\{e_1, \ldots e_n \}$ of $V$. 
\end{notation}

\begin{definition}[Regulator constant]\label{def:regulator_constant}
Let $G$ be a finite group, $\tau$ a self-dual $\cK G$-representation. Let $\langle, \rangle$ be a $G$-invariant, non-degenerate, $\cK$-bilinear pairing on $\tau$ taking values in an extension $\cL$ of $\cK$. Let $\Theta = \sum_i n_i H_i \in \B(G)$ be a $K$-relation for $G$.  Define the {\em regulator constant}
$$
 \cC^{\langle, \rangle}_{\Theta}(\tau) = \prod_i \det\left(\frac{1}{|H_i|} \langle, \rangle \mid \tau^{H_i} \right)^{n_i} \in \cL^{\times} / \cK^{\times 2}.
$$
\end{definition}

\begin{remark}
	If $\tau$ and $\langle, \rangle$ are as in the definition, then for each $H \leq G$ the restriction of $\langle, \rangle$ to $\tau^{H}$ is non-degenerate (\cite[Lemma 2.15]{tamroot}). Thus $\det(\langle, \rangle \mid \tau^{H}) \not=0$ so $\cC_{\Theta}^{\langle, \rangle}(\tau)$ is non-zero.
\end{remark}

If $\langle \tau, \bC[G / \Theta] \rangle = 0$, $\Theta$ is a \textit{pseudo Brauer relation relative to $\tau$} as introduced in \cite[Definition 3.1]{dgkm}. As proved in \cite[Theorem 3.7]{dgkm} and stated in the proposition below, $\cC_{\Theta}^{\langle, \rangle}(\tau)$ is independent of the choice of pairing in this case. We let $\cC_{\Theta}(\tau) \in \cK^{\times}$ be a representative of the value of $\cC^{\langle, \rangle}_{\Theta}(\tau) \in \cK^{\times} / \cK^{\times 2}$ 
(we may take the pairing to be $\cK$-valued). 

\begin{proposition}\label{prop:indep}
Let $G$ be a finite group,  $\Theta = \sum_i n_i H_i \in \B(G)$ a $K$-relation for $G$. Let $\tau$ be a self-dual $\cK G$-representation, and $\langle, \rangle_1$, $\langle, \rangle_2$ two non-degenerate $\cK$-bilinear $G$-invariant pairings on $\tau$. 
Suppose that  $\langle \tau, \bC[G / \Theta] \rangle = 0$. Computing the determinants with respect to the same basis of $\tau^{H_i}$ on both sides, one has 
$$
\prod_i \det\left(\frac{1}{|H_i|} \langle, \rangle_1 \mid \tau^{H_i}\right)^{n_i} = \prod_i \det\left(\frac{1}{|H_i|} \langle, \rangle_2 \mid \tau^{H_i}\right)^{n_i}.
$$
\end{proposition}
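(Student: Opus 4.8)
The plan is to reduce the statement for an arbitrary $K$-relation to the already-known case of Brauer relations, exploiting the hypothesis $\langle \tau, \bC[G/\Theta]\rangle = 0$ to manufacture such a reduction. The key observation is that changing the pairing from $\langle,\rangle_1$ to $\langle,\rangle_2$ is governed by a $G$-automorphism of $\tau$: since both pairings are non-degenerate, $G$-invariant, and $\cK$-bilinear, there is an element $\phi \in \operatorname{Aut}_{\cK G}(\tau)$ with $\langle x, y\rangle_2 = \langle \phi x, y\rangle_1$ for all $x,y$. For each $H \leq G$, $\phi$ restricts to a $\cK$-automorphism of $\tau^H$ (by $G$-equivariance), and comparing Gram matrices in a fixed $\cK$-basis of $\tau^H$ gives
\[
\det\!\left(\tfrac{1}{|H|}\langle,\rangle_2 \mid \tau^H\right) = \det(\phi \mid \tau^H)\cdot \det\!\left(\tfrac{1}{|H|}\langle,\rangle_1 \mid \tau^H\right).
\]
Therefore the ratio of the two products in the statement equals $\prod_i \det(\phi \mid \tau^{H_i})^{n_i}$, and it suffices to prove this quantity is $1$.

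Next I would identify $\prod_i \det(\phi \mid \tau^{H_i})^{n_i}$ as a purely representation-theoretic invariant of the virtual representation $\bC[G/\Theta]$. Decompose $\tau^{H_i} \cong \operatorname{Hom}_G(\bC[G/H_i], \tau)$ as a module over the commutative ring $\operatorname{End}_{\cK G}(\tau)$; then $\det(\phi \mid \tau^{H_i})$ depends only on the multiplicity with which each $\cK G$-irreducible constituent of $\tau$ appears in $\bC[G/H_i]$, together with $\phi$ acting on the corresponding isotypic piece. Concretely, writing $\tau = \bigoplus_k \tau_k$ with $\tau_k$ the $\cK G$-irreducibles (or working one isotypic component at a time), one gets $\det(\phi \mid \tau^{H_i}) = \prod_k \det(\phi \mid \tau_k)^{\langle \tau_k^\vee, \bC[G/H_i]\rangle}$ up to the appropriate endomorphism-ring bookkeeping. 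Taking the product over $i$ with exponents $n_i$, every exponent becomes $\sum_i n_i \langle \tau_k^\vee, \bC[G/H_i]\rangle = \langle \tau_k^\vee, \bC[G/\Theta]\rangle$. The hypothesis $\langle \tau, \bC[G/\Theta]\rangle = 0$ (and self-duality of $\tau$, so $\tau^\vee \cong \tau$) forces each such exponent to vanish, giving $\prod_i \det(\phi\mid \tau^{H_i})^{n_i} = 1$.

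An alternative, perhaps cleaner, packaging is to invoke the Brauer-relations case directly: the hypothesis $\langle \tau, \bC[G/\Theta]\rangle = 0$ does not make $\Theta$ itself a Brauer relation, but it does guarantee that $\tau$ "sees" $\Theta$ the same way it would see a Brauer relation, because the only obstruction to independence of pairing in \cite[\S2.ii]{tamroot} is precisely the discrepancy measured by $\langle \tau, \bC[G/\Theta]\rangle$. So one can cite \cite[Theorem 2.17]{tamroot} (the Brauer-relations independence statement) applied formally, or redo its short argument in this setting. I would spell out whichever is shorter; the det-of-$\phi$ computation above is self-contained and avoids having to match conventions with \cite{tamroot}, so I lean toward presenting that.

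\textbf{Main obstacle.} The delicate point is the second paragraph: justifying $\det(\phi \mid \tau^{H_i}) = \prod_k \det(\phi\mid \tau_k)^{m_{k,i}}$ when $\tau_k$ has nontrivial endomorphism algebra over $\cK$ (i.e. when $\operatorname{End}_{\cK G}(\tau_k)$ is a division algebra larger than $\cK$, which genuinely occurs since $\tau$ is only assumed self-dual over a characteristic-zero field, not absolutely irreducible). One must be careful that "$\det(\phi\mid \tau_k)$" and the multiplicity exponent are computed compatibly — $\tau^{H}$ as a module over the possibly-noncommutative $D_k := \operatorname{End}_{\cK G}(\tau_k)$, with the determinant taken as a $\cK$-linear map. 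The clean fix is to never pass to individual isotypic pieces but instead argue: $\phi \in \operatorname{Aut}_{\cK G}(\tau)$ acts on $\tau^H \cong \operatorname{Hom}_{\cK G}(\bC[G/H],\tau)$ by post-composition, this is functorial and additive in the variable $\bC[G/H]$, so $\bC[G/H] \mapsto \det(\phi \mid \operatorname{Hom}_{\cK G}(\bC[G/H],\tau)) \in \cK^\times$ extends to a homomorphism from the Grothendieck group of permutation representations to $\cK^\times/\cK^{\times 2}$ — and one checks it kills any permutation virtual representation $P$ with $\operatorname{Hom}_{\cK G}(P,\tau) = 0$, in particular $\bC[G/\Theta]$ by hypothesis. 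This functorial phrasing sidesteps the division-algebra bookkeeping entirely, so I would write the proof that way.
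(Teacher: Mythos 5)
Your core argument is exactly the paper's: the paper's proof of this proposition is literally ``follow the proof of \cite[Theorem 2.17]{tamroot}'', and that proof is the computation you give in your first two paragraphs — express the change of pairing by $\phi\in\operatorname{Aut}_{\cK G}(\tau)$, reduce to $\prod_i\det(\phi\mid\tau^{H_i})^{n_i}=1$, and show each isotypic exponent $\langle\chi,\bC[G/\Theta]\rangle$ vanishes. So in substance the proposal is correct and matches the paper.

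Two points of care, though, both concerning the packaging you say you would actually write. First, the ``functorial fix'' in your last paragraph does not sidestep the isotypic analysis: the homomorphism $P\mapsto\det\bigl(\phi\mid\operatorname{Hom}_{\cK G}(P,\tau)\bigr)$ is \emph{not} killed by every virtual permutation representation $P$ with $\dim\operatorname{Hom}_{\cK G}(P,\tau)=0$. Take $\tau=\chi_1\oplus\chi_2$ with $\chi_1\not\simeq\chi_2$ and $\phi=(\lambda_1,\lambda_2)$; if $P=P_1-P_2$ where $P_1$ contains $\chi_1$ once and $P_2$ contains $\chi_2$ once, the virtual Hom-space has dimension zero but the homomorphism takes the value $\lambda_1/\lambda_2$. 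What you genuinely need is that $\operatorname{Hom}_{\cK G}(\bC[G/\Theta],\tau)$ vanishes as a virtual $\operatorname{End}_{\cK G}(\tau)$-module, i.e.\ that $\langle\chi,\bC[G/\Theta]\rangle=0$ for \emph{each} irreducible constituent $\chi$ of $\tau$ — which is exactly the isotypic decomposition you were trying to avoid. So present the second paragraph's argument (with the division-algebra bookkeeping as in the paper's Proposition \ref{prop:indepQ}: on each isotypic piece the determinant is a fixed norm-type quantity raised to the power $\dim_{D_k}\tau_k^{H}$), not the functorial one. Second, deducing the vanishing of each individual multiplicity from the stated hypothesis $\langle\tau,\bC[G/\Theta]\rangle=0$ uses that $\bC[G/\Theta]\simeq N_{K/\bQ}(\rho)$ is an honest (non-virtual) representation, so that all multiplicities $\langle\chi,\bC[G/\Theta]\rangle$ and $\langle\chi,\tau\rangle$ are non-negative and a vanishing sum forces each term to vanish; this should be said explicitly, since for a general virtual $\bC[G/\Theta]$ the implication fails and with it the conclusion.
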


\begin{proof}
	Follow proof of \cite[Theorem 2.17]{tamroot} where this is proved for $\bC[G / \Theta]  = 0$.
\end{proof}

The above proposition does not hold if $\tau$ appears as a subrepresentation of $\bC[G/ \Theta]$. If we restrict to considering $\cK = \bQ$ and $\bQ$-valued pairings, then we get the following result on independence of pairings up to norms.  

\begin{proposition}\label{prop:indepQ}
Let $G$ be a finite group, $\Theta = \sum_i n_i H_i \in \B(G)$ a $K$-relation for $G$. Let $\tau$ be a $\bQ G$-representation, and $\langle, \rangle_1$, $\langle, \rangle_2$ two non-degenerate $\bQ$-bilinear $G$-invariant pairings on $\tau$ that are $\bQ$-valued. 
Computing the determinants with respect to the same basis of $\tau^{H_i}$ on both sides, one has 
$$
\prod_i \det\left(\frac{1}{|H_i|} \langle, \rangle_1 \mid \tau^{H_i}\right)^{n_i} \equiv \prod_i \det\left(\frac{1}{|H_i|} \langle, \rangle_2 \mid \tau^{H_i}\right)^{n_i} \mod N_{K / \bQ}(K^{\times}).
$$
\end{proposition}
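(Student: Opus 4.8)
The plan is to isolate the effect of replacing one $G$-invariant pairing on $\tau$ by another --- which is governed by a single $G$-automorphism of $\tau$ --- and then, after extending scalars, to recognise the resulting factor as a product of relative norms $N_{\bQ(\chi)/\bQ}(\,\cdot\,)$ indexed by the irreducible characters $\chi$ of $G$; the conclusion will then follow from Proposition~\ref{prop:norm_rel_irrs} and Lemma~\ref{lem:powernorm}. Since $\langle, \rangle_1$ and $\langle, \rangle_2$ are non-degenerate, $G$-invariant and $\bQ$-valued, there is a unique $\phi \in \operatorname{Aut}_{\bQ G}(\tau)$ with $\langle x, y\rangle_2 = \langle \phi x, y\rangle_1$ for all $x, y \in \tau$: invertibility comes from non-degeneracy, $G$-equivariance from the $G$-invariance of the two pairings, and rationality from the fact that $\tau$ and both pairings are defined over $\bQ$. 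As $\phi$ commutes with $G$ it preserves every $\tau^{H_i}$, and expanding in a fixed $\bQ$-basis of $\tau^{H_i}$ gives $\det\bigl(\tfrac{1}{|H_i|}\langle, \rangle_2 \mid \tau^{H_i}\bigr) = \det_{\bQ}\bigl(\phi \mid \tau^{H_i}\bigr)\cdot\det\bigl(\tfrac{1}{|H_i|}\langle, \rangle_1 \mid \tau^{H_i}\bigr)$, the determinants being non-zero by \cite[Lemma~2.15]{tamroot}. Hence the two products in the statement differ exactly by $f(\Theta) := \prod_i \det_{\bQ}(\phi \mid \tau^{H_i})^{n_i}$, and it suffices to prove $f(\Theta) \in N_{K/\bQ}(K^{\times})$.

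To compute $f$, I would base change to $\overline{\bQ}$ (or to any finite Galois splitting field of $G$). Writing $\tau \otimes_{\bQ} \overline{\bQ} = \bigoplus_{\chi} V_{\chi}$ for the isotypic decomposition over the irreducible characters $\chi$ of $G$, the automorphism $\phi \otimes 1$ preserves each $V_{\chi}$ and acts there, via the Schur isomorphism $\operatorname{End}_{\overline{\bQ}G}(V_{\chi}) \cong M_{m_{\chi}}(\overline{\bQ})$ with $m_{\chi} = \langle \chi, \tau \otimes \bC\rangle$, as a matrix $A_{\chi} \in \operatorname{GL}_{m_{\chi}}(\overline{\bQ})$ (whose determinant is independent of the choice of Schur isomorphism). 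Taking $H$-invariants commutes with base change, so $\tau^{H} \otimes \overline{\bQ} = \bigoplus_{\chi} (\chi^{H})^{\oplus m_{\chi}}$ with $\dim_{\overline{\bQ}} \chi^{H} = c_{\chi}(H) := \langle \chi, \bC[G/H]\rangle$, and on this space $\phi \otimes 1$ acts as $A_{\chi} \otimes \mathrm{id}$, of determinant $\det(A_{\chi})^{c_{\chi}(H)}$; thus $\det_{\bQ}(\phi \mid \tau^{H}) = \prod_{\chi} \det(A_{\chi})^{c_{\chi}(H)}$. Because $\tau$ and $\phi$ are defined over $\bQ$, the semilinear action of $\Gal(\overline{\bQ}/\bQ)$ on $\tau \otimes \overline{\bQ}$ commutes with $\phi \otimes 1$ and carries $V_{\chi}$ to $V_{\chi^{\sigma}}$; this forces $\det(A_{\chi^{\sigma}}) = \sigma(\det A_{\chi})$ and, taking $\sigma$ fixing $\bQ(\chi)$, $\det A_{\chi} \in \bQ(\chi)^{\times}$. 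Using $c_{\chi^{\sigma}}(H) = c_{\chi}(H)$ (the permutation character is $\bQ$-valued), grouping the $\chi$ into $\Gal(\bQ(\chi)/\bQ)$-orbits gives $f(H) = \prod_{[\chi]} N_{\bQ(\chi)/\bQ}(\det A_{\chi})^{c_{\chi}(H)}$, and extending linearly $f(\Theta) = \prod_{[\chi]} N_{\bQ(\chi)/\bQ}(\det A_{\chi})^{c_{\chi}(\Theta)}$ with $c_{\chi}(\Theta) = \langle \chi, \bC[G/\Theta]\rangle$.

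Finally, since $\Theta$ is a $K$-relation, Proposition~\ref{prop:norm_rel_irrs} gives $\bC[G/\Theta] \simeq \bigoplus_{\tau' \in \Irr_{\bQ}(G)} N_{\bQ(\chi_{\tau'})/\bQ}(\chi_{\tau'})^{\oplus k_{\tau'}}$ with $[K : K \cap \bQ(\chi_{\tau'})] \mid k_{\tau'}$, so each $c_{\chi}(\Theta)$ equals one of the $k_{\tau'}$ and is therefore divisible by $[K : K \cap \bQ(\chi)]$. Lemma~\ref{lem:powernorm} then yields $N_{\bQ(\chi)/\bQ}(\det A_{\chi})^{[K : K \cap \bQ(\chi)]} \in N_{K/\bQ}(K^{\times})$, hence $N_{\bQ(\chi)/\bQ}(\det A_{\chi})^{c_{\chi}(\Theta)} \in N_{K/\bQ}(K^{\times})$, and multiplying over $\chi$ gives $f(\Theta) \in N_{K/\bQ}(K^{\times})$, as required. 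The step I expect to be most delicate is the Galois bookkeeping in the second paragraph: tracking $\Gal(\overline{\bQ}/\bQ)$ simultaneously on the isotypic decomposition of $\tau \otimes \overline{\bQ}$ and on $\phi \otimes 1$, so as to pin each $\det A_{\chi}$ down inside $\bQ(\chi)^{\times}$ and obtain the product-of-norms formula for $f$ --- once this is in place the rest is formal. (When $\langle \tau, \bC[G/\Theta]\rangle = 0$ all exponents $c_{\chi}(\Theta)$ vanish, so $f(\Theta) = 1$ and one recovers Proposition~\ref{prop:indep}; the argument above is its natural refinement, keeping track of the norm subgroup.)
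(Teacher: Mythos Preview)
Your proof is correct and reaches the same endpoint as the paper's --- the ratio of the two regulator constants is a product over Galois orbits $[\chi]$ of terms $N_{\bQ(\chi)/\bQ}(\,\cdot\,)^{\langle\chi,\bC[G/\Theta]\rangle}$, after which Proposition~\ref{prop:norm_rel_irrs} and Lemma~\ref{lem:powernorm} finish the argument --- but the route to that intermediate formula is genuinely different. The paper stays over $\bQ$: it reduces to $\tau=\rho^{\oplus n}$ with $\rho$ a rational irreducible, writes $\End_G(\tau)\cong M_n(D)$ for the division algebra $D=\End_G(\rho)$ with centre $\bQ(\chi)$, chooses an explicit $\bQ$-basis of each $\rho^{H}$ adapted to a tower $\bQ\subset\bQ(\chi)\subset L\subset D$, and then extracts the norm via a block-matrix determinant identity (the commutative-subring trick of \cite{blockmatrices}). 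You instead base-change to $\overline{\bQ}$, where the isotypic endomorphism algebras become full matrix algebras and the determinant factorises immediately as $\prod_\chi\det(A_\chi)^{c_\chi(H)}$; the norm structure then comes from Galois descent, using that $\phi$ is defined over $\bQ$ to obtain $\det A_{\chi^\sigma}=\sigma(\det A_\chi)$ and hence $\det A_\chi\in\bQ(\chi)^\times$. Your approach is cleaner and avoids both the reduction to a single isotypic component and the explicit basis and block-matrix computations; the paper's approach is more concrete and sidesteps the semilinear Galois bookkeeping you flagged as delicate (which is indeed the only point requiring care, but is fine once one observes that the map $\phi\mapsto(\det A_\chi)_\chi$ is canonical and hence Galois-equivariant).
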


\begin{proof}
	It is enough to prove this for a particular choice of basis of each $\tau^{H_i}$. Suppose firstly that $\tau = \alpha \oplus \beta$ with $\alpha, \beta$ $\bQ G$-representations and $\Hom_G(\alpha, \beta^{\vee}) = 0$, where $\beta^{\vee}$ denotes the dual representation of $\beta$. Then $\alpha$ and $\beta$ are orthogonal with respect to $\langle, \rangle_1$, $\langle, \rangle_2$. Since $\tau^{H} = \alpha^H \oplus \beta^H$ for all $H \leq G$, choosing bases of $\tau^{H_i}$ that respects this decomposition reduces the problem to $\alpha$ and $\beta$ separately. Thus we may assume that $\tau = \rho^{\oplus n}$ where $\rho$ is an irreducible $\bQ G$-representation. As $\rho$ is irreducible it is of the form $\rho =  N_{\bQ(\chi) / \bQ}(\chi)^{\oplus m_{\chi}}$ where $\chi$ is a complex irreducible representation of $G$ and $m_{\chi}$ is the Schur index of $\chi$. 

Let $\langle, \rangle$ be a non-degenerate $G$-invariant $\bQ$-valued pairing on $\rho$. Then any other such pairing is of the form $\langle, \rangle_{\alpha}$ for $\alpha \in \End_G(\rho)$ with $\langle x, y \rangle_{\alpha} = \langle \alpha(x), y \rangle$ for $x$, $y \in \rho$. 
Similarly the non-degenerate $G$-invariant $\bQ$-valued pairings on $\tau$ are in one-to-one correspondence with $\End_G(\tau)$. Note that 
$$
\End_G(\tau) = \End_G(\rho^{\oplus n }) \simeq M_n(\End_G(\rho)) \simeq M_n(D),
$$
where $D = \End_G(\rho)$ is a finite-dimensional division algebra over $\bQ$ with centre $\bQ(\chi)$ (cf. \cite[\S12.2]{Serre}).

Fix a subgroup $H \leq G$. 
Viewing $\rho^H$ as a $D$-vector space with basis $\{v_1, \ldots v_k \}$, $k = \dim_D \rho^H$, an endomorphism $\alpha \in D$ acts by the matrix $\alpha I_k$. The algebra $D$ has dimension $m_{\chi}^2$ over $\bQ(\chi)$, and there exists a splitting field $\bQ(\chi) \subset L \subset D$ such that $[D \colon L] = m_{\chi}$ and $D \otimes_{\bQ(\chi)} L \simeq M_{m_{\chi}}(L)$. Let $\{f_1, \ldots f_{m_{\chi}} \}$ be a basis for $D$ over $L$. With respect to the basis $\{ f_i v_j \}$ of $\rho^H$, $\alpha$ acts by a block diagonal matrix consisting of $k$ blocks corresponding to multiplication by $\alpha$ on $D$ as an $L$-vector space. If we further consider a basis $\{ e_1, \ldots e_t\}$ of $L$ as a $\bQ$-vector space, then $ \cB = \{ e_i f_j v_k \}$ is a $\bQ$-basis of $\rho^{H}$. Induce this to a basis of $(\rho^{H})^{\oplus n}$, and
let $C$ be the matrix of $\langle, \rangle$ on $\rho^H$ with respect to $\cB$. The matrix of $\langle, \rangle_1$ on $\tau^H$ is of the form 
\[
\left( 
\begin{smallmatrix} 
C & 0 & \cdots & 0 \\ 
0 & \ddots & \ddots & \vdots \\  
\vdots & \ddots & \ddots & 0 \\ 
0 & \cdots & 0 & C
\end{smallmatrix} 
\right)
\left( 
\begin{smallmatrix} 
A_{11} & A_{12} & \cdots & A_{1n} \\ 
A_{21} & \ddots & \ddots & \vdots \\ 
\vdots & \ddots & \ddots & \vdots \\ 
A_{n1} & \cdots & \cdots & A_{nn} 
\end{smallmatrix} 
\right), 
\quad 
A_{ij} = 
\left( 
\begin{smallmatrix} 
\alpha_{ij} & 0 & \cdots & 0 \\ 
0 & \alpha_{ij} & 0 & \vdots \\ 
\vdots & \ddots & \ddots & 0 \\ 
0 & \cdots & 0 & \alpha_{ij}  
\end{smallmatrix} 
\right)
\]
with each $A_{i j} \in M_{k \cdot\dim_{\bQ}D}(\bQ)$ a block diagonal matrix consisting of $k$-blocks $\alpha_{i j}$, where $\alpha_{i j}$ is the matrix of multiplication by an element of $D$ with respect to the $\bQ$-basis  $\{ e_i f_j \}$ of $D$. In particular, the matrix of $\frac{1}{|H|} \langle, \rangle_1$ has determinant
$$
\det\left(\tfrac{1}{|H|} \langle, \rangle_1 \mid \tau^{H} \right) = \det\left(\tfrac{1}{|H|} C \right)^n \cdot  \det(\Lambda_{\alpha})^{\dim_D \rho^H},
$$
where $\Lambda_{\alpha}$ is the block matrix $\Lambda_{\alpha} = (\alpha_{i j })_{i, j}$ which does not depend on $H$. 

The determinant of $\Lambda_{\alpha}$ is the norm of an element of $\bQ(\chi)^{\times}$. To see this,
view $\Lambda_{\alpha}$ as a block matrix  $(m_{x_{i j }})_{i, j}$, where $m_{x_{i, j}}$ corresponds to the multiplication by $x_{i,j} \in L$ map on $L$ as a $\bQ$-vector space with basis $\{e_i\}$. Let $R = \{ m_y \colon y \in L\}$. This is a commutative subring of $M_{[L \colon \bQ]}(\bQ)$. We can view $\Lambda_{\alpha}$ as a matrix in $M_n(R)$, and consider the determinant $\det_R(\Lambda_{\alpha}) \in R$ as $R$ is commutative. By \cite[Theorem 1]{blockmatrices},
$$
\det(\Lambda_{\alpha}) = \det(\det\nolimits_R(\Lambda_{\alpha})) = \det(m_z) = N_{L / \bQ}(z), \quad z = \det(x_{i,j})_{i,j} \in L,
$$
hence $\det(\Lambda_{\alpha}) \in N_{\bQ(\chi) / \bQ}(\bQ(\chi)^{\times})$. 

The matrix of $\langle , \rangle_2$ with respect to our chosen basis is of the same form and  
$$
\det\left(\tfrac{1}{|H|} \langle , \rangle_2 \mid \tau^{H}\right) = \det\left(\tfrac{1}{|H|} C\right)^n \cdot \det(\Lambda_{\beta})^{\dim_D \rho^H}
$$
with $\Lambda_{\beta}$ not depending on $H$ and $\det(\Lambda_{\beta}) \in N_{\bQ(\chi) / \bQ}(\bQ(\chi)^{\times})$. 
Note that
$$
 \sum_i n_i \dim_D \rho^{H_i} = \tfrac{1}{[D \colon \bQ]}\sum_i n_i \dim_{\bQ} \rho^{H_i} = \tfrac{1}{[D \colon \bQ]} \sum_i n_i \langle \Ind_{H_i}^G \trivial, \rho \rangle  
 $$
 $$
 = \tfrac{1}{[D \colon \bQ]} \langle \bC[G / \Theta] , \rho \rangle = \langle \bC[G / \Theta], \chi \rangle.
$$
Taking the product of the ratio of the determinants over all $H_i$ we have
$$
\prod_i \big{(}\det(\tfrac{1}{|H_i|} \langle, \rangle_1 \mid \tau^{H_i})^{n_i} / \det(\tfrac{1}{|H_i|} \langle, \rangle_2 \mid \tau^{H_i})^{n_i}\big{)} = N_{\bQ(\chi) / \bQ}(s)^{\langle \bC[G / \Theta], \chi \rangle}
$$ 
for some $s \in \bQ(\chi)$. The result follows as by Proposition \ref{prop:norm_rel_irrs}, $\langle \bC[G / \Theta], \chi \rangle$ is divisible by $[K \colon K \cap \bQ(\chi)]$, and by Lemma \ref{lem:powernorm} $N_{\bQ(\chi) / \bQ}(s)^{[K \colon K \cap \bQ(\chi)]} \in N_{K / \bQ}(K^{\times})$.  
\end{proof}

Changing the basis of the subspace $\tau^{H_i}$ alters the determinant computation by a rational square, and so we get the following corollary.  

\begin{corollary}
If $\Theta$ is a $K$-relation for $G$ and $\tau$ a $\bQ G$-representation, the value 
$$
\cC^{\langle, \rangle}_{\Theta}(\tau) \in \bQ^{\times} /N_{K / \bQ}(K^{\times})\cdot \bQ^{\times 2}
$$ 
is independent of the choice of $\bQ$-valued pairing $\langle, \rangle$ and $\bQ$-bases for each invariant subspace $\tau^{H_i}$. In this case, we let $\cC_{\Theta}(\tau) \in \bQ^{\times}$ be a representative for the class of $\cC^{\langle, \rangle}_{\Theta}(\tau)$. 
\end{corollary}

\begin{corollary}[Regulator constants are multiplicative]\label{corr_reg_mult}
	Let $\Theta$, $\Psi \in \B(G)$ be $K$-relations for $G$, and let $\tau$, $\rho$ be self-dual $\cK G$-representations.
	\begin{enumerate}
		\item If  $\langle \bC[G / \Theta], \tau \rangle = \langle \bC[G / \Psi] , \tau \rangle = \langle \bC[G / \Theta], \rho \rangle = \langle \bC[G / \Psi] , \rho \rangle = 0$, then 
$$
\cC_{\Theta + \Psi}(\tau) \equiv \cC_{\Theta}(\tau) \cC_{\Psi}(\tau), 
\quad \cC_{\Theta}(\tau \oplus \rho) \equiv \cC_{\Theta}(\tau) \cC_{\Theta}(\rho) \quad \mod \cK^{\times 2}.
$$
\item If $\cK = \bQ$ and the regulator constants are computed with $\bQ$-valued pairings, then
$$
\cC_{\Theta + \Psi}(\tau) \equiv \cC_{\Theta}(\tau) \cC_{\Psi}(\tau), 
\quad \cC_{\Theta}(\tau \oplus \rho) \equiv \cC_{\Theta}(\tau) \cC_{\Theta}(\rho) \quad \mod N_{K / \bQ}(K^{\times})\cdot \bQ^{\times 2}.
$$
	\end{enumerate}
\end{corollary}

\begin{example}[{Permutation representations, \cite[Example 2.19]{tamroot}}]\label{ex:perm_pair} Consider $\tau = \bQ[G / D]$ for some subgroup $D \leq G$. By considering the standard ($\bQ$-valued) pairing on $\tau$, the elements of $G /D$ become an orthonormal basis. The space of invariants $\rho^H$ for $H \leq G$ has a basis consisting of $H$-orbit sums of these basis vectors. Then $\det(\langle, \rangle \mid \tau^H)$ is a product of the lengths of these orbits, and 
$$
\det\left(\tfrac{1}{|H|} \langle, \rangle \mid \tau^H \right) = \prod_{w \in H \backslash G / D} \tfrac{1}{|H|} \cdot \tfrac{|H|}{|\Stab_H(wD)|} = \prod_{w \in H \backslash G / D} \frac{1}{|H \cap D^{w}|}.
$$
By Corollary \ref{corr_reg_mult}(2), this computation can be used to compute $\cC_{\Theta}(\rho)$ for any rational representation $\rho$ that lies in the virtual permutation ring of $G$ and any $K$-relation $\Theta$ for $G$. 
\end{example}

\begin{proposition}\label{prop:not_self_dual}
	Let $\tau$ be a $\bQ G$-representation. Suppose that either 
	\begin{enumerate}
	\item all complex irreducible constituents of $\tau \otimes \bC$ are not self-dual, or
	\item $\tau \otimes \bC$ is symplectic.
	\end{enumerate}
	 Then $\cC_{\Theta}(\tau) \in N_{K / \bQ}(K^{\times})\cdot \bQ^{\times 2}$ whenever $\Theta$ is a $K$-relation for $G$.
\end{proposition}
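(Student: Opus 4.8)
The plan is to show that in both cases $\tau$ carries a $G$-invariant, non-degenerate, \emph{alternating} $\bQ$-bilinear form, and then to observe that the regulator constant computed with such a form is automatically a rational square, so that Proposition \ref{prop:indepQ} forces $\cC_{\Theta}(\tau)$ to be trivial in $\bQ^{\times}/N_{K/\bQ}(K^{\times})\cdot\bQ^{\times 2}$. First I would record that $\tau$ is self-dual, so that $\cC_{\Theta}(\tau)$ is defined: in case (2) this is immediate, and in case (1) one groups the constituents of $\tau\otimes\bC$ into dual pairs $\{\chi,\chi^{*}\}$ — each occurring with equal multiplicity, since the character of $\tau$ is self-dual while no constituent is — to write $\tau\otimes\bC\simeq W\oplus W^{*}$ for some complex representation $W$, which is visibly self-dual. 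Next I would exhibit, in both cases, a $G$-invariant non-degenerate alternating $\bC$-bilinear form on $\tau\otimes\bC$: in case (2) this is precisely the definition of symplectic, and in case (1) one takes the hyperbolic form $((v,\phi),(v',\phi'))\mapsto\phi'(v)-\phi(v')$ on $W\oplus W^{*}$, which is alternating, non-degenerate, and $G$-invariant by a direct check.

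The one substantive point is descending this form to $\bQ$. The space $B^{-}$ of $G$-invariant alternating $\bQ$-bilinear forms on $\tau$ is a $\bQ$-subspace of $\Hom_{G}(\tau,\tau^{*})$, and since $\Hom_{G}(\tau,\tau^{*})\otimes_{\bQ}\bC=\Hom_{G}(\tau\otimes\bC,(\tau\otimes\bC)^{*})$ and the alternating condition is cut out over $\bQ$, one gets $B^{-}\otimes_{\bQ}\bC=$ the space of $G$-invariant alternating $\bC$-bilinear forms on $\tau\otimes\bC$, which by the previous paragraph contains a non-degenerate element. Fixing a $\bQ$-basis $b_{1},\dots,b_{r}$ of $B^{-}$ and a $\bQ$-basis of $\tau$, the Gram determinant of $\sum_{j}t_{j}b_{j}$ is a polynomial $P(t_{1},\dots,t_{r})\in\bQ[t_{1},\dots,t_{r}]$ which is not identically zero; as $\bQ$ is infinite, $P$ is non-vanishing at some $\bQ$-point, yielding a $G$-invariant non-degenerate alternating $\bQ$-bilinear form $\langle,\rangle$ on $\tau$. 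I expect this descent (and keeping the rationality bookkeeping clean throughout) to be the only place where anything needs to be checked; everything else is formal.

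Finally, fix such a $\langle,\rangle$. For each $H\leq G$ the restriction of $\langle,\rangle$ to $\tau^{H}$ is non-degenerate by the argument of \cite[Lemma 2.15]{tamroot} (recalled after Definition \ref{def:regulator_constant}) and is clearly still alternating; a non-degenerate alternating form on a $\bQ$-vector space admits a symplectic basis, so its Gram determinant in any basis lies in $\bQ^{\times 2}$, and in particular $\dim\tau^{H}$ is even. Hence $\det\!\left(\tfrac{1}{|H|}\langle,\rangle\mid\tau^{H}\right)$ is $|H|^{-\dim\tau^{H}}$ times a rational square with $\dim\tau^{H}$ even, so it lies in $\bQ^{\times 2}$, and therefore $\cC^{\langle,\rangle}_{\Theta}(\tau)=\prod_{i}\det\!\left(\tfrac{1}{|H_{i}|}\langle,\rangle\mid\tau^{H_{i}}\right)^{n_{i}}\in\bQ^{\times 2}$. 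Since $\langle,\rangle$ is $\bQ$-valued, Proposition \ref{prop:indepQ} identifies the class of $\cC^{\langle,\rangle}_{\Theta}(\tau)$ in $\bQ^{\times}/N_{K/\bQ}(K^{\times})\cdot\bQ^{\times 2}$ with $\cC_{\Theta}(\tau)$, which is thus trivial; that is, $\cC_{\Theta}(\tau)\in N_{K/\bQ}(K^{\times})\cdot\bQ^{\times 2}$.
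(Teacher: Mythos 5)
Your proof is correct and takes essentially the same route as the paper, whose own proof is a one-line appeal to \cite[Theorem 2.24, Corollary 2.25]{tamroot} for exactly the fact you establish: that $\tau\otimes\bC$ admits a non-degenerate alternating $G$-invariant pairing, so that in a suitable basis the relevant determinants are squares. You have simply filled in the details the paper delegates to that reference (the hyperbolic form on $W\oplus W^{*}$ in case (1), the descent of the alternating form to a $\bQ$-valued one, and the evenness of $\dim\tau^{H}$), and the final reduction via Proposition \ref{prop:indepQ} is exactly as intended.
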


\begin{proof}
	This is due to $\tau \otimes \bC$ admitting a non-degenerate alternating $G$-invariant pairing, so that in an appropriate basis the relevant determinant is a square (see \cite[Theorem 2.24, Corollary 2.25]{tamroot}).
\end{proof}

To compute regulator constants, it will be useful to view them as arising from a function on 
$\B(G)$.
With this in mind, we introduce the following definition and, from now on, restrict to the case $\cK = \bQ$. 

\begin{definition}
Let $\tau$ be a $\bQ G$-representation with a non-degenerate $\bQ$-valued $G$-invariant bilinear pairing $\langle, \rangle$. Define
$$ \cD_{\tau} \colon H \mapsto \det\left(\tfrac{1}{|H|} \langle, \rangle \mid \tau^{H} \right) \in \bQ^{\times} / \bQ^{\times 2},$$
where the determinant is taken with respect to any rational basis of $\tau^{H}$.
\end{definition}

Note that this depends on the choice of rational pairing. This defines a function on $\B(G)$ since by $G$-invariance $\cD_{\tau}(H) = \cD_{\tau}(xHx^{-1})$. Let $\Theta$ be a $K$-relation for $G$. If $\cD'_{\tau}$ is defined as above but with respect to a different rational pairing $\langle, \rangle'$, then by Proposition \ref{prop:indepQ} one has $\cD_{\tau}(\Theta) \equiv \cD'_{\tau}(\Theta) \mod N_{K / \bQ}(K^{\times})$ when computed with respect to the same basis for each $\tau^{H}$. Thus the value of $\cD_{\tau}(\Theta)$ in $\bQ^{\times} / N_{K / \bQ}(K^{\times}) \cdot \bQ^{\times 2}$ is independent of choice of basis and rational pairing, and   
$$\cD_{\tau}(\Theta) = \cC_{\Theta}(\tau) $$
by definition. 

\begin{example}
	Using the bases and pairings defined in Example \ref{ex:perm_pair}, $\cD_{\trivial}$ is the function $H \mapsto \tfrac{1}{|H|}$ and $\cD_{\bQ[G]}$ is the constant function $H \mapsto 1$. 
\end{example}

We are interested in computing $\cD_{\tau}$ on $K$-relations. If $K$ is quadratic, then we will consider $\cD_{\tau}$ as a function to $\bQ^{\times}$ (as opposed to $\bQ^{\times} / \bQ^{\times 2}$), since $\cD_{\tau} \sim_K \cD_{\tau}'$ when $\cD_{\tau}'$ is computed with respect to a different choice of basis or rational pairing. Note that in this case, $\cD_{\tau \oplus \tau'} \sim_K \cD_{\tau} \cD_{\tau'}$ where $\tau'$ is another $\bQ G$-representation. The following results focus on $K / \bQ$ quadratic. 

\begin{proposition}[Induction]\label{prop:induction}
If $D \leq G$ and $\tau$ is a $\bQ D$-representation, then 
$$ \cD_{\Ind_D^{G} \tau}(\Theta) \equiv (D, \cD_{\tau})(\Theta) \mod \bQ^{\times} / N_{K / \bQ}(K^{\times})\cdot \bQ^{\times 2}, $$ 
for a $K$-relation $\Theta$. In particular if $K / \bQ$ is quadratic, then $\cD_{\Ind_D^G \tau} \sim_K (D , \cD_{\tau})$. 
\end{proposition}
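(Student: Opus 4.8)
The plan is to produce, from a fixed rational pairing on $\tau$, a canonical rational pairing on the induced module $W := \Ind_D^G\tau$ with respect to which $\cD_W$ agrees with $(D,\cD_\tau)$ \emph{exactly} (as functions $\B(G)\to\bQ^\times/\bQ^{\times2}$), and then to remove the dependence on the chosen pairing using Proposition~\ref{prop:indepQ}. First I would fix a non-degenerate $G$-invariant $\bQ$-bilinear pairing $\langle,\rangle$ on $\tau$ (one exists since $\cD_\tau$ is assumed defined), choose coset representatives for $G/D$ to identify $W$ with $\bigoplus_{gD} g\otimes\tau$, and set $\langle g_s\otimes v,\, g_t\otimes w\rangle_W = \delta_{st}\langle v,w\rangle$. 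A short computation using $D$-invariance of $\langle,\rangle$ shows this is independent of the chosen representatives, $\bQ$-valued, non-degenerate and $G$-invariant, hence a legitimate pairing for computing $\cD_W$.

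Next I would analyse $W^H$ for $H\le G$ through the left action of $H$ on $G/D$: its orbits are indexed by the double cosets $H\backslash G/D$, the stabiliser of $xD$ is $H\cap xDx^{-1}$, and an $H$-invariant element of $W$ supported on the orbit of $xD$ is determined by its value $v=v_{xD}$, which must lie in $\tau^{D\cap x^{-1}Hx}$. This yields a decomposition $W^H\cong\bigoplus_{x\in H\backslash G/D}\tau^{D\cap x^{-1}Hx}$ which is orthogonal for $\langle,\rangle_W$ since distinct orbits occupy disjoint coordinates. The heart of the argument is that on the block indexed by $x$ the form $\langle,\rangle_W$ transports, under $v\mapsto$ its $H$-invariant extension, to $[H:H\cap xDx^{-1}]\cdot\langle,\rangle$: the $[H:H\cap xDx^{-1}]$ nonzero coordinates of the extension of $v$ are all $D$-translates of $v$, so by $D$-invariance each term of the defining sum equals $\langle v,w\rangle$. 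Since $[H:H\cap xDx^{-1}]/|H| = 1/|D\cap x^{-1}Hx|$, taking determinants block by block and using that $\Res_D^G H=\sum_{x\in H\backslash G/D} D\cap x^{-1}Hx$ in $\B(D)$ gives
\[
\cD_W(H) \;=\; \prod_{x\in H\backslash G/D}\det\Bigl(\tfrac{1}{|D\cap x^{-1}Hx|}\langle,\rangle \mid \tau^{D\cap x^{-1}Hx}\Bigr) \;=\; \prod_{x\in H\backslash G/D}\cD_\tau\!\left(D\cap x^{-1}Hx\right) \;=\; (D,\cD_\tau)(H),
\]
the last equality by linearity of $\cD_\tau$.

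Now let $\Theta$ be a $K$-relation. By Proposition~\ref{prop:K_rels_properties}, $\Res_D^G\Theta$ is again a $K$-relation (in $\B(D)$), so both $\cD_{\Ind_D^G\tau}(\Theta)$, computed with \emph{any} rational pairing, and $(D,\cD_\tau)(\Theta)=\cD_\tau(\Res_D^G\Theta)$ are well-defined in $\bQ^\times/N_{K/\bQ}(K^\times)\cdot\bQ^{\times2}$; moreover the former agrees there with $\cD_W(\Theta)$ computed with the induced pairing, by Proposition~\ref{prop:indepQ}. Combined with the displayed identity this gives $\cD_{\Ind_D^G\tau}(\Theta)\equiv(D,\cD_\tau)(\Theta)$. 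When $K/\bQ$ is quadratic one has $\bQ^{\times2}\subseteq N_{K/\bQ}(K^\times)$, since $a^2=N_{K/\bQ}(a)$ for $a\in\bQ^\times$, so this congruence holding for every $K$-relation is precisely the assertion $\cD_{\Ind_D^G\tau}\sim_K(D,\cD_\tau)$.

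I expect the main obstacle to lie in the second step: one must keep careful track of the conjugation conventions so that Mackey's decomposition of $W^H$ lines up with the paper's definition of $\Res_D^G$ on the Burnside ring, and verify that the induced pairing really does restrict, orbit-block by orbit-block, to the correct rational multiple of $\langle,\rangle$ (this is where $D$-invariance of $\langle,\rangle$ and the index computation $[H:H\cap xDx^{-1}]/|H|=1/|D\cap x^{-1}Hx|$ enter). The construction of the induced pairing, and the passage to an arbitrary pairing via Proposition~\ref{prop:indepQ}, are routine by comparison.
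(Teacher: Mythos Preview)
Your proof is correct and is essentially the argument the paper has in mind: the paper simply cites \cite[Lemma~2.43]{tamroot}, whose proof constructs the induced pairing on $\Ind_D^G\tau$, decomposes $(\Ind_D^G\tau)^H$ via Mackey into orthogonal blocks $\tau^{D\cap x^{-1}Hx}$, and computes the determinant block by block exactly as you do. The only addition needed for $K$-relations (rather than Brauer relations) is to invoke Proposition~\ref{prop:indepQ} in place of the $\bQ^{\times2}$-independence from \cite{tamroot}, which you also handle correctly.
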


\begin{proof}
	The proof of the analogous statement for Brauer relations (\cite[Lemma 2.43]{tamroot}) works for $K$-relations also. 
\end{proof}

\begin{proposition}[{Induction and Restriction, \cite[Proposition 2.45]{tamroot}}]\label{prop:ind_res_reg}
Let $K / \bQ$ be quadratic, and $\tau$ a $\bQ G$-representation. 
\begin{enumerate}[(i)]
	\item If $G \leq G'$ and $\Theta \in \B(G')$ is a $K$-relation, then $\cC_{\Theta}(\Ind_{G}^{G'} \tau) \sim_K \cC_{\Res^{G'}_{G} \Theta}(\tau)$.
	\item If $D \leq G$ and $\Theta \in \B(D)$ is a $K$-relation, then $\cC_{\Theta}(\Res_D \tau) \sim_K \cC_{\Ind_D^G \Theta}(\tau)$.
\end{enumerate}
\end{proposition}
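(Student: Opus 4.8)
The plan is to deduce both parts from the machinery already in place: the induction formula $\cD_{\Ind_D^G\tau}\sim_K(D,\cD_\tau)$ of Proposition \ref{prop:induction}, the independence of regulator constants of the choice of rational pairing modulo norms (Proposition \ref{prop:indepQ}), and the stability of $K$-relations under the Burnside-ring restriction and induction maps (Proposition \ref{prop:K_rels_properties}(2),(3)). With these in hand the argument is formally the same as the Brauer-relation case \cite[Proposition 2.45]{tamroot}, so I would essentially only check that each step survives the passage from Brauer relations to $K$-relations. Throughout I would use that, $K/\bQ$ being quadratic, $\bQ^{\times 2}\subseteq N_{K/\bQ}(K^\times)$, so that $\cC_\Theta(\cdot)$ takes values in $\bQ^\times/N_{K/\bQ}(K^\times)$ and the relation $\sim_K$ amounts to equality there; and that every rational representation is self-dual, so that a rational non-degenerate $G$-invariant pairing on $\tau$ exists and $\cD_\tau$ is defined.

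For (i), I would start from $\cC_\Theta(\Ind_G^{G'}\tau)=\cD_{\Ind_G^{G'}\tau}(\Theta)$ and apply Proposition \ref{prop:induction} with its ``$D\leq G$'' taken to be ``$G\leq G'$'', obtaining $\cD_{\Ind_G^{G'}\tau}\sim_K(G,\cD_\tau)$ as functions on $\B(G')$. Evaluating the ratio of these functions at the $K$-relation $\Theta$ gives $\cD_{\Ind_G^{G'}\tau}(\Theta)\equiv(G,\cD_\tau)(\Theta)\bmod N_{K/\bQ}(K^\times)$. Since $(G,\cD_\tau)$ is $G$-local and multiplicative, $(G,\cD_\tau)(\Theta)=\cD_\tau(\Res_G^{G'}\Theta)$, and by Proposition \ref{prop:K_rels_properties}(2) the element $\Res_G^{G'}\Theta$ is again a $K$-relation, so this equals $\cC_{\Res_G^{G'}\Theta}(\tau)$ by the definition of the regulator constant. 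Chaining these identities yields $\cC_\Theta(\Ind_G^{G'}\tau)\sim_K\cC_{\Res_G^{G'}\Theta}(\tau)$.

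For (ii), I would argue directly. Fix a rational non-degenerate $G$-invariant bilinear pairing $\langle, \rangle$ on $\tau$ and restrict it to $\Res_D\tau$; write $\Theta=\sum_i n_i H_i$ with $H_i\leq D$. Then, since $(\Res_D\tau)^{H_i}=\tau^{H_i}$ as subspaces carrying the same form, and since $\Ind_D^G$ on the Burnside ring carries $H_i$ to the same subgroup regarded inside $G$ (and $\Ind_D^G\Theta$ is a $K$-relation in $\B(G)$ by Proposition \ref{prop:K_rels_properties}(3), so the second quantity below is defined), the regulator constants
\[
\cC_\Theta(\Res_D\tau)=\prod_i\det\left(\tfrac{1}{|H_i|}\langle, \rangle\mid(\Res_D\tau)^{H_i}\right)^{n_i},\qquad \cC_{\Ind_D^G\Theta}(\tau)=\prod_i\det\left(\tfrac{1}{|H_i|}\langle, \rangle\mid\tau^{H_i}\right)^{n_i}
\]
computed with this compatible pair of pairings are literally equal in $\bQ^\times$. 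A rational $D$-invariant pairing on $\Res_D\tau$ need not be the restriction of one on $\tau$, so to finish I would invoke Proposition \ref{prop:indepQ} to conclude that the class of $\cC_\Theta(\Res_D\tau)$ modulo $N_{K/\bQ}(K^\times)$ is independent of the chosen pairing, giving the asserted congruence.

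I do not expect a genuine obstacle here: the substance was already spent in establishing Propositions \ref{prop:induction} and \ref{prop:indepQ}, which lift the Brauer-relation inputs of \cite[Lemma 2.43]{tamroot} and \cite[Theorem 2.17]{tamroot} to the $K$-relation setting, and the present statement is the formal consequence. The only points needing mild care are the bookkeeping of the two ambiguity subgroups $\bQ^{\times 2}$ and $N_{K/\bQ}(K^\times)$ — which is precisely why the statement is restricted to quadratic $K$ — and, in (ii), remembering that a pairing cannot be transported from $\Res_D\tau$ up to $\tau$, so that independence of pairing must be applied in the direction of restricting a $G$-invariant pairing down to $D$.
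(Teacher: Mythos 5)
Your proposal is correct and follows essentially the same route as the paper: part (i) is deduced from Proposition \ref{prop:induction} combined with the stability of $K$-relations under restriction, and part (ii) is the direct observation that $(\Res_D\tau)^{H}=\tau^{H}$ with the restricted pairing, plus Proposition \ref{prop:indepQ} to absorb the choice of pairing. Your write-up is merely more explicit than the paper's one-line proof about the two points that need care (locality of $(G,\cD_\tau)$ and independence of pairing), both of which you handle correctly.
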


\begin{proof}
	(i) follows from Proposition \ref{prop:induction}, (ii) follows from the definition and noting that for $H \leq D$, $(\Res_D \tau)^{H} \simeq \tau^{H}$ as vector spaces. 
\end{proof}

\begin{corollary}[{\cite[Corollary 2.44]{tamroot}}]\label{cor:reg_perm}
	Let $K / \bQ$ be quadratic. Let $I \triangleleft D \leq G$ with $D / I$ cyclic. Then 
	$$ \cD_{\bQ[G / D]} \stackrel{\text{ Prop } \ref{prop:induction}}{\sim_K}(D, \cD_{\trivial}) \sim_K (D, H \mapsto \tfrac{1}{|H|}) \stackrel{\text{Cor }\ref{corr:cyclic_f_const}(i)}{\sim_K} (D, H \mapsto \tfrac{|D|}{|H|}) = (D, I, ef) . $$
\end{corollary}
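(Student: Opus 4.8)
The plan is to read the displayed chain of equivalences left to right and justify each link separately; the statement is essentially a synthesis of results already in hand, so the ``proof'' amounts to assembling them correctly. Throughout I use that, because $K/\bQ$ is quadratic, $\bQ^{\times 2} \subset N_{K/\bQ}(K^\times)$, so $\sim_K$ is just congruence of ratios modulo the subgroup $N_{K/\bQ}(K^\times) \subset \bQ^\times$; in particular $\sim_K$ is multiplicative and composes well with the $D$-local construction $f_D \mapsto (D, f_D)$ by Proposition \ref{prop:res_quot}(i).

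First I would note that $\bQ[G/D] \simeq \Ind_D^G \trivial$ as $\bQ G$-representations, so Proposition \ref{prop:induction} (with $\tau = \trivial_D$) gives $\cD_{\bQ[G/D]} \sim_K (D, \cD_{\trivial})$, where $\cD_{\trivial}$ is computed on $\B(D)$ with respect to some rational $G$-invariant pairing on the trivial representation. Next I would identify $\cD_{\trivial}$ explicitly: since $\trivial$ is one-dimensional and $\trivial^H = \trivial$ for every $H \leq D$, the standard pairing $\langle 1,1\rangle = 1$ gives $\cD_{\trivial}(H) = \det\!\left(\tfrac{1}{|H|}\langle,\rangle \mid \trivial^H\right) = \tfrac{1}{|H|}$ (this is exactly the computation recorded in the Example following the definition of $\cD_\tau$), so $(D,\cD_{\trivial}) \sim_K (D, H \mapsto \tfrac{1}{|H|})$, with the $\sim_K$ only accounting for the constant ambiguity in the choice of rational pairing. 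For the third link, the two $D$-local functions $H \mapsto \tfrac{1}{|H|}$ and $H \mapsto \tfrac{|D|}{|H|}$ on $\B(D)$ differ precisely by the constant function $H \mapsto |D|$, which as a $D$-local function on $\B(G)$ is $(D, |D|)$ and is trivial on $K$-relations by Corollary \ref{corr:cyclic_f_const}(i); multiplicativity of $\sim_K$ then yields $(D, H \mapsto \tfrac{1}{|H|}) \sim_K (D, H \mapsto \tfrac{|D|}{|H|})$. Finally, the equality $(D, H \mapsto \tfrac{|D|}{|H|}) = (D, I, ef)$ is the group-order identity $\tfrac{|D|}{|U|} = \tfrac{|I|}{|U\cap I|}\cdot\tfrac{|D|}{|UI|}$ for $U \leq D$, which follows from $|UI| = |U|\,|I|/|U\cap I|$; comparing with the $D$-local description in Definition \ref{def:D_ef}, the right-hand side is $\psi(e,f) = ef$ evaluated at the ramification index $e = \tfrac{|I|}{|U\cap I|}$ and residue index $f = \tfrac{|D|}{|UI|}$.

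Honestly I do not expect a genuine obstacle here: the content of the corollary is entirely contained in Proposition \ref{prop:induction}, the Example computation of $\cD_{\trivial}$, Corollary \ref{corr:cyclic_f_const}(i), and the one-line index identity above. The only point requiring a moment's care is bookkeeping — checking that $\sim_K$ is preserved under multiplication and under the $(D,-)$ construction so that the four links can be chained into a single equivalence $\cD_{\bQ[G/D]} \sim_K (D, I, ef)$ — and this is immediate from the definitions together with Proposition \ref{prop:res_quot}(i).
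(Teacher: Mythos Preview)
Your proposal is correct and follows exactly the approach of the paper: the corollary is stated there with its proof embedded in the displayed chain of equivalences, and you have accurately unpacked each link (Proposition~\ref{prop:induction} for induction, the explicit computation $\cD_{\trivial}(H)=1/|H|$, Corollary~\ref{corr:cyclic_f_const}(i) for the constant factor, and the index identity $|D|/|U|=ef$). The only minor remark is that the second link is in fact an equality for the standard pairing rather than a genuine $\sim_K$, but you note this yourself.
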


\begin{lemma}\label{lem:cyclic_reg_const}
	Let $K / \bQ$ be quadratic, $\Theta$ a $K$-relation.
	\begin{enumerate}[(i)]
		\item If $D \leq G$ is cyclic, $\cC_{\Theta}(\bQ[G / D]) \in N_{K / \bQ}(K^{\times})$.
		\item If $G$ is cyclic then $\cC_{\Theta}(\tau) \in N_{K / \bQ}(K^{\times})$ for all rational representations $\tau$ of $G$.  
\end{enumerate}
\end{lemma}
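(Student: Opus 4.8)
The plan is to reduce both parts to the explicit description of $K$-relations in cyclic groups from Example \ref{ex:cyclic_groups1}, combined with the function-theoretic machinery of Corollary \ref{corr:cyclic_f_const}. For part (i), the key observation is that $\cC_{\Theta}(\bQ[G/D]) = \cD_{\bQ[G/D]}(\Theta)$ by definition, and by Corollary \ref{cor:reg_perm} (applied with $I$ trivial, legitimate since $D$ is cyclic so $D/1$ is cyclic) one has $\cD_{\bQ[G/D]} \sim_K (D, H \mapsto \tfrac{|D|}{|H|}) = (D, 1, ef)$. Now Corollary \ref{corr:cyclic_f_const}(ii) states precisely that the function $(D, I, f)$ is trivial on $K$-relations; combined with Corollary \ref{corr:cyclic_f_const}(i) for the $e$-part, or more directly by noting $(D,1,ef)$ is the function $H \mapsto \tfrac{|D|}{|H|}$ on the cyclic quotient $D/1 = D$, triviality on $K$-relations follows from Lemma \ref{lem:f_on_cyclic} applied to $D$ together with Proposition \ref{prop:res_quot}(i). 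Hence $\cC_{\Theta}(\bQ[G/D]) = \cD_{\bQ[G/D]}(\Theta) \in N_{K/\bQ}(K^{\times})$ (working modulo squares is harmless since $K/\bQ$ is quadratic so $\bQ^{\times 2} \subset N_{K/\bQ}(K^{\times})$).

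For part (ii), I would first reduce to the case $\tau = \bQ[G/D]$ for $D \leq G$ a (necessarily cyclic) subgroup. Since $G$ is cyclic, $\hat{C}(G) = 1$ (Remark \ref{rem:c_hat_finite}), and moreover the representation ring of a cyclic group is generated by permutation representations: every rational irreducible representation $\tau$ of $C_n$ is of the form $N_{\bQ(\zeta_d)/\bQ}(\chi_d) = \bC[G/\Psi_d]$ for some $d \mid n$, where $\Psi_d = \sum_{d' \mid d} \mu(d/d') C_{n/d'}$ as in Example \ref{ex:cyclic_groups1}. Thus an arbitrary rational representation $\tau$ lies in $\Perm(G)$, i.e. $\tau \oplus P \simeq P'$ for virtual... more carefully, $\tau$ is a $\bZ$-linear combination of $\bC[G/D]$'s. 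By the multiplicativity of regulator constants in the representation variable — Corollary \ref{corr_reg_mult}, whose $\bQ$-valued version requires only that the relevant inner products vanish, which here they need not, but one may instead invoke Example \ref{ex:perm_pair}: for $\tau$ in the virtual permutation ring, $\cC_{\Theta}(\tau)$ is computed by the standard pairing, and the corresponding function $\cD_{\tau}$ on $\B(G)$ is multiplicative in $\tau$ (up to squares, hence up to norms since $K/\bQ$ is quadratic). So $\cC_{\Theta}(\tau) \equiv \prod_i \cC_{\Theta}(\bQ[G/D_i])^{n_i}$, and each factor lies in $N_{K/\bQ}(K^{\times})$ by part (i), as does the product since $N_{K/\bQ}(K^{\times})$ is a group.

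The main obstacle is bookkeeping the passage from a single permutation representation to a virtual one while staying inside $\bQ^{\times}/N_{K/\bQ}(K^{\times})$ rather than merely $\bQ^{\times}/N_{K/\bQ}(K^{\times})\cdot\bQ^{\times 2}$: one must check that the ambiguity is genuinely only by squares and that $\bQ^{\times 2} \subseteq N_{K/\bQ}(K^{\times})$, which holds because $[K:\bQ]=2$ implies $x^2 = N_{K/\bQ}(x)$ for $x \in \bQ^{\times}$. A secondary point to verify is that Corollary \ref{cor:reg_perm} and the descent lemmas genuinely apply with $I$ the trivial subgroup when $D$ itself is cyclic — this is immediate since a cyclic group has a cyclic quotient by $\{1\}$ — so that part (i) does not secretly require a nontrivial inertia-type subgroup. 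Once these are in place the argument is a direct assembly of the already-established cyclic-group computations.
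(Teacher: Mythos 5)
Your proposal is correct and follows essentially the same route as the paper: part (i) via Corollary \ref{cor:reg_perm} reducing $\cD_{\bQ[G/D]}$ to the index function on the cyclic group $D$ and then Lemma \ref{lem:f_on_cyclic}, and part (ii) by writing any rational representation of a cyclic group as a virtual permutation representation and invoking multiplicativity together with part (i). The extra care you take over multiplicativity without vanishing inner products and over $\bQ^{\times 2}\subset N_{K/\bQ}(K^{\times})$ is sound and only makes explicit what the paper leaves implicit.
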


\begin{proof}
By Corollary \ref{cor:reg_perm}, $\cD_{\bQ[G / D]} \sim_K (D, H \mapsto [D \colon H])\stackrel{\text{Lem } \ref{lem:f_on_cyclic}}{\sim_K} 1$, which proves $(i)$. For $(ii)$, since $G$ is cyclic,
every rational representation of $G$ can be written as a combination of permutation representations. Thus $\tau = \oplus_i \bQ[G / H_i]^{\oplus n_i}$ for $H_i \leq G$ and so $\cC_{\Theta}(\tau) \equiv \prod_i \cC_{\Theta}(\bQ[G / H_i])^{n_i} \equiv 1 \mod N_{K / \bQ}(K^{\times})$ by $(i)$ as each $H_i$ is cyclic.  
\end{proof}

\begin{lemma}\label{lem:odd_reg_const}
	Let $K / \bQ$ be quadratic, $\Theta$ a $K$-relation, and $G$ a group of odd order. Then $\cC_{\Theta}(\tau) \in N_{K / \bQ}(K^{\times})$ for all rational representations $\tau$ of $G$. 
\end{lemma}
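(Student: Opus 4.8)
The plan is to reduce, by multiplicativity of regulator constants, to the trivial representation and to a representation with no trivial constituent, and then to exploit that a group of odd order has no non-trivial self-dual irreducible representation. Since $\tau$ is a $\bQ G$-representation it is self-dual, so $\cC_{\Theta}(\tau)$ is defined, and by Corollary \ref{corr_reg_mult} the map $\tau \mapsto \cC_{\Theta}(\tau) \in \bQ^{\times}/N_{K/\bQ}(K^{\times})\cdot\bQ^{\times 2}$ is multiplicative for direct sums. Writing $\tau \simeq \trivial^{\oplus a} \oplus \tau_0$ over $\bQ$, where $a = \langle \tau, \trivial\rangle$ and $\tau_0$ has no trivial constituent, it therefore suffices to prove $\cC_{\Theta}(\tau_0) \in N_{K/\bQ}(K^{\times})$ and $\cC_{\Theta}(\trivial) \in N_{K/\bQ}(K^{\times})$ (note $\bQ^{\times 2} \subseteq N_{K/\bQ}(K^{\times})$).

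For $\tau_0$ I would first recall that an odd-order group $G$ has no real conjugacy class besides $\{1\}$: if $x g x^{-1} = g^{-1}$ then $x^{2} \in C_G(g)$, and since $x$ has odd order it is a power of $x^{2}$, so $x \in C_G(g)$ and thus $g = g^{-1}$, forcing $g = 1$. Hence $\trivial$ is the only self-dual irreducible complex representation of $G$, so every complex-irreducible constituent of $\tau_0 \otimes \bC$ is non-self-dual, and Proposition \ref{prop:not_self_dual}(1) gives $\cC_{\Theta}(\tau_0) \in N_{K/\bQ}(K^{\times})\cdot\bQ^{\times 2} = N_{K/\bQ}(K^{\times})$.

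For the trivial representation I would route the computation through the regular representation. Over $\bQ$ one has $\bQ[G] \simeq \trivial \oplus I_G$ with $I_G$ the augmentation ideal; the complex-irreducible constituents of $I_G \otimes \bC$ are precisely the non-trivial irreducibles of $G$, hence all non-self-dual, so $\cC_{\Theta}(I_G) \in N_{K/\bQ}(K^{\times})$ by the previous paragraph. On the other hand, with the standard pairing on $\bQ[G]$ as in Example \ref{ex:perm_pair} one has $\cD_{\bQ[G]} \equiv 1$ as a function on $\B(G)$, so $\cC_{\Theta}(\bQ[G]) = 1$. Multiplicativity then yields $\cC_{\Theta}(\trivial) \equiv \cC_{\Theta}(\bQ[G])\cdot\cC_{\Theta}(I_G)^{-1} \equiv 1 \pmod{N_{K/\bQ}(K^{\times})}$, which together with the previous case finishes the proof.

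The step I expect to be the main obstacle is the trivial representation: a direct evaluation only gives $\cC_{\Theta}(\trivial) = \prod_i |H_i|^{-n_i}$, which is not visibly a norm from this formula alone — indeed for even-order groups such as $C_2 \times C_2$ the analogous value can equal $2$, which is not a norm from, e.g., $\bQ(\sqrt{5})$. The device that makes it work is passing through $\bQ[G]$, whose regulator constant is identically $1$, thereby reducing the trivial representation to the non-self-dual case, which is exactly where oddness of $|G|$ is used.
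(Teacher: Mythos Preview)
Your proof is correct and follows essentially the same route as the paper: both use multiplicativity to reduce to the trivial representation and to representations with only non-self-dual complex constituents, invoke Proposition \ref{prop:not_self_dual} for the latter, and handle $\trivial$ by comparison with $\bQ[G]$ (the paper writes this as $\cD_{\trivial} \sim_K \cD_{\bQ[G]} \sim_K 1$, which is exactly your augmentation-ideal argument in compressed form). Your version is more explicit in justifying that odd-order groups have no non-trivial self-dual complex irreducibles and in spelling out the $\bQ[G] = \trivial \oplus I_G$ step, but the underlying idea is identical.
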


\begin{proof}
	By multiplicativity of regulator constants, it is enough to show this for irreducible rational representations. If $\tau \in \Irr_{\bQ}(G)$ is non-trivial, then $\tau \otimes_{\bQ} \bC = \chi \oplus \overline \chi$ where $\chi$ is not self-dual. Thus $\cC_{\Theta}(\tau) \in N_{K / \bQ}(K^{\times})$ by Proposition \ref{prop:not_self_dual}.   
	Therefore it remains to show that $\cD_{\trivial} \sim_K 1$ and one has
	$$ \cD_{\trivial} \sim_K \cD_{\bQ[G]} \stackrel{\text{Cor \ref{cor:reg_perm}}}{\sim_K} 1. $$
\end{proof}

\section{Root numbers and Tamagawa numbers}\label{sec:main}

In this section we establish the following result, which expresses the product of Tamagawa numbers and other local fudge factors associated with a $K$-relation in terms of regulator constants and twisted root numbers. 
\begin{theorem}\label{corr:main}
Let $E$ be an elliptic curve over a number field $L$, with semistable reduction at the primes above $2$ and $3$ in $L$. Let $F / L$ be a finite Galois extension with $G = \Gal(F / L)$. If $\Theta = \sum_i n_i H_i$ is a $K$-relation for $G$ with $K / \bQ$ quadratic, then 
$$
\prod_i (C_{E / F^{H_i}})^{n_i} \equiv  \prod_{\tau \in \Irr_{\bQ}(G)}  \cC_{\Theta}(\tau)^{u(E  / L, \chi_{\tau})}
\mod N_{K / \bQ}(K^{\times}),
$$
where $\chi_{\tau}$ is a complex irreducible constituent of $\tau$, and  $u(E / L , \chi_{\tau}) \in \{0,1\}$ satisfies $w(E / L, \chi_{\tau}) = (-1)^{u(E / L, \chi_{\tau})}$ if $\chi_\tau$ is self-dual, and $u(E /L, \chi_\tau) = 0$ otherwise. 
\end{theorem}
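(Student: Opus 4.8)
The plan is to localise the product on the left, establish a local identity at each place of $L$ relating Tamagawa data to regulator constants and local root numbers, and then multiply; this follows the strategy of \cite[\S3]{tamroot}, the new ingredient being to run everything modulo $N_{K/\bQ}(K^\times)$ rather than merely modulo squares. \textbf{Localisation.} By Example~\ref{ex2:C} the function $C\colon\B(G)\to\bQ^\times$, $H\mapsto C_{E/F^H}$, factors as $C=\prod_v(D_v,C_v)$ over the finite places $v$ of $L$, where $D_v\le G$ is a decomposition group and $C_v\colon\B(D_v)\to\bQ^\times$, $H'\mapsto C_v(E/(F_w)^{H'},\omega)$. Since $\Res_{D_v}\Theta$ is again a $K$-relation (Proposition~\ref{prop:K_rels_properties}), $C(\Theta)=\prod_v C_v(\Res_{D_v}\Theta)$. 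On the right-hand side, $K/\bQ$ quadratic forces $\bQ^{\times2}\subseteq N_{K/\bQ}(K^\times)$, so only the parities of the exponents matter; writing $w(E/L,\chi_\tau)=\prod_v w(E/L_v,\Res_{D_v}\chi_\tau)$ over \emph{all} places (archimedean included) and discarding the terms with $\chi_\tau$ not self-dual (for which $\cC_\Theta(\tau)$ is a norm, by Proposition~\ref{prop:not_self_dual}), it suffices to prove, for each place $v$,
\begin{equation*}
 C_v(\Res_{D_v}\Theta)\ \equiv\ \prod_{\tau\in\Irr_\bQ(G)}\cC_\Theta(\tau)^{\,u(E/L_v,\Res_{D_v}\chi_\tau)}\pmod{N_{K/\bQ}(K^\times)},
\end{equation*}
with the convention $C_v:=1$ at archimedean $v$.

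\textbf{Reduction to $\B(D_v)$.} Rohrlich's local formulae \cite{RohG} express the local root numbers $w(E/L_v,\cdot)$ of self-dual twists through an explicit rational \emph{virtual} representation $\sigma_v$ of $D_v$ depending only on the reduction type of $E$ at $v$ and the local ramification, so that $u(E/L_v,\psi)\equiv\langle\psi,\sigma_v\rangle\pmod 2$ for every self-dual $\psi$ of $D_v$; this is exactly the input used in \cite[\S3]{tamroot}. Using multiplicativity of regulator constants (Corollary~\ref{corr_reg_mult}), the insensitivity of $\cC_\Theta$ to non-orthogonal summands (Proposition~\ref{prop:not_self_dual}), and $\cC_\Theta(\Ind_{D_v}^G\sigma_v)\sim_K\cC_{\Res_{D_v}\Theta}(\sigma_v)$ (Proposition~\ref{prop:ind_res_reg}(i)), the displayed identity at $v$ is equivalent to the purely local statement
\begin{equation*}
 C_v(\Psi)\ \equiv\ \cC_\Psi(\sigma_v)\pmod{N_{K/\bQ}(K^\times)}\qquad\text{for every $K$-relation }\Psi\in\B(D_v),
\end{equation*}
again with $C_v:=1$ at archimedean $v$. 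At an archimedean place $D_v$ is cyclic of order at most $2$ and $\sigma_v$ is a difference of permutation representations of cyclic subgroups, so the right-hand side lies in $N_{K/\bQ}(K^\times)$ by Lemma~\ref{lem:cyclic_reg_const}, matching $C_v=1$.

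\textbf{The local computation, and assembly.} This is the technical core and directly generalises \cite[Theorem~3.2]{tamroot} from Brauer relations to $K$-relations. Fix a finite place $v$ and write $D=D_v=\Gal(F_w/L_v)$, with inertia $I\triangleleft D$ and $D/I$ cyclic. One computes $C_v$ as a function on $\B(D)$ by tracking how Tamagawa numbers, N\'eron differentials and the fudge factors $|\omega/\omega^0|$ change in each layer $(F_w)^{H'}$: at good reduction $C_v\equiv1$; at multiplicative reduction the local factor is governed by the ramification of $H'$ together with the unramified quadratic character distinguishing split from nonsplit reduction; at additive reduction --- where the hypothesis $v\nmid2,3$ makes the reduction tame and the Kodaira type restricted --- $C_v$ is computed from the tame base-change behaviour of the minimal model. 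In each case $C_v$ is assembled from the elementary functions on $\B(D)$ studied in \S\ref{sec:rep_theory} --- constants, $(D,I,ef)$, and functions of the residue degree of the shapes in Corollary~\ref{corr:cyclic_f_const}(iii)--(iv) --- together with $\cD$-functions of permutation representations, and the identity $C_v(\Psi)\equiv\cC_\Psi(\sigma_v)$ follows by combining Corollary~\ref{cor:reg_perm}, Proposition~\ref{prop:induction} and Lemmas~\ref{lem:f_on_cyclic}--\ref{lem:alpha_beta}, which are precisely the $K$-relation refinements of the corresponding statements in \cite{tamroot}. Multiplying the local identities over all $v$ and recombining with $u(E/L,\chi_\tau)\equiv\sum_v u(E/L_v,\Res_{D_v}\chi_\tau)\pmod2$ gives the theorem.

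\textbf{Main obstacle.} The crux is the local computation at the bad --- especially additive --- finite places: one must match the explicit product of Tamagawa numbers and fudge factors with the regulator constant of $\sigma_v$ \emph{modulo norms from $K$}, not merely modulo squares. In the Brauer-relation setting of \cite{tamroot} everything is read in $\bQ^\times/\bQ^{\times2}$, which lands automatically inside $N_{K/\bQ}(K^\times)$; here the regulator constants $\cC_\Theta(\tau)$ with $K\subseteq\bQ(\chi_\tau)$ carry genuine information, so the local factors must be pinned down precisely and shown to differ from the relevant regulator constants only by norms --- which is exactly what the lemmas of \S\ref{sec:rep_theory} are built to supply. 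A secondary, more routine, point is the clean passage between representations of $D_v$ and of $G$ on both sides, handled by Proposition~\ref{prop:ind_res_reg} together with inductivity of local root numbers in degree zero.
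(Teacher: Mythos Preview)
Your proposal is correct and follows essentially the same route as the paper's proof. Both localise via $C=\prod_v(D_v,C_v)$, invoke Rohrlich's local representation (the paper's $\cV_v$, your $\sigma_v$) satisfying $w(E/L_v,\psi)/w(\psi)^2=(-1)^{\langle\psi,\cV_v\rangle}$, pass between $G$ and $D_v$ via $\cC_\Theta(\Ind_{D_v}^G\cV_v)\sim_K\cC_{\Res_{D_v}\Theta}(\cV_v)$ (Proposition~\ref{prop:ind_res_reg}), and reduce everything to the local identity $C_v(\Psi)\equiv\cC_\Psi(\cV_v)\bmod N_{K/\bQ}(K^\times)$, which is the paper's Theorem~\ref{thm:main}(2). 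The only cosmetic differences are that the paper keeps the $w(\psi)^2$ factors and cancels them globally using $w(\chi_\tau)^2=1$ for orthogonal $\chi_\tau$ (whereas you implicitly use the local vanishing for orthogonal twists, which is also fine), and that you treat the archimedean places explicitly while the paper leaves them implicit.
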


We use this result to relate the norm relations test to twisted root numbers in $\S\ref{sec:applications}$. This theorem follows from the local result Theorem \ref{thm:main}, whose proof will take up the majority of this section. \\ 

Let $\cF / \cK / \bQ_l$ be finite extensions, with $\cF / \cK$ Galois and $D = \Gal(\cF / \cK)$. Let $E / \cK$ be an elliptic curve.
\begin{notation}\label{not:local_CD}
 For $H \leq \Gal(\cF / \cK)$ let 
$$ 
C_D(H) = C(E / \cF^{H}, \omega^0),
$$
as defined in \S\ref{sec:notation} for $\omega^0$ a differential on $E / \cK$, which we choose to be minimal. Extending $C_D$ additively defines a function on the Burnside ring of $D$. 
\end{notation}

We first show that for $K / \bQ$ a finite Galois extension, the evaluation of $C_D$ on $K$-relations for $D$ is independent of the choice of differential $\omega^0$.  

\begin{lemma}\label{lem:differential}
Let $K / \bQ$ be a finite Galois extension, and let $\Theta  = \sum_i n_i H_i$ be a $K$-relation for $D$. Then 
$$
\prod_i C(E / \cF^{H_i}, \omega^0)^{n_i} \equiv \prod_i C(E / \cF^{H_i}, \omega  )^{n_i} \mod N_{K / \bQ}(K^{\times})$$
for different choices of invariant differentials $\omega^0$, $\omega$. 
\end{lemma}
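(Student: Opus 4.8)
The plan is to reduce everything to the scaling behaviour of $C_v$ under a change of exterior form, and then feed that into the machinery of \S\ref{sec:rep_theory}. Recall that for a local field $\cL$ and any two non-zero exterior forms $\omega^0, \omega$ on $E/\cL$, one has $\omega = \delta\,\omega^0$ for a unique $\delta \in \cL^\times$, and by the definition of $C_v(\cdot,\cdot)$ in \S\ref{sec:notation}, the two forms give
$$ C_v(E/\cL,\omega) = C_v(E/\cL,\omega^0)\cdot |\delta|_{\cL}^{\,\epsilon}, $$
where $\epsilon = 1$ in the additive-reduction case and $\epsilon = 0$ (i.e. no correction) in the semistable case — in the semistable case $C_v$ is just the Tamagawa number and does not depend on the choice of form at all. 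So the only situation to treat is additive reduction, where $C_v$ picks up the factor $|\delta|_{\cL}$.

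Next I would track how this factor varies over the subfields $\cF^{H}$. Fix the base form on $E/\cK$; a form $\omega^0$ on $E/\cK$ and another form $\omega$ on $E/\cK$ differ by some $\delta \in \cK^\times$, and this \emph{same} $\delta$, viewed in $(\cF^H)^\times$, realises the difference of the two forms over $\cF^H$ for every $H \le D$. Hence
$$ \prod_i C_v(E/\cF^{H_i},\omega)^{n_i} \Big/ \prod_i C_v(E/\cF^{H_i},\omega^0)^{n_i} = \prod_i \big| \delta \big|_{\cF^{H_i}}^{\,n_i}. $$
Now $|\delta|_{\cF^{H}} = |N_{\cF^H/\cK}(\delta)|_{\cK} = |\delta|_{\cK}^{\,f_H e_H}$, where $e_H, f_H$ are the ramification and residue degrees of $\cF^H/\cK$ — but $\cF/\cK$ is a local extension, so $\cF^H/\cK$ has a single prime and $[\cF^H:\cK] = e_H f_H = [D:H]$. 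Therefore $|\delta|_{\cF^{H}} = |\delta|_{\cK}^{\,[D:H]}$, and the correction ratio equals $\big(|\delta|_{\cK}\big)^{\sum_i n_i [D:H_i]}$, i.e. the function $\phi\colon H \mapsto |\delta|_{\cK}^{\,[D:H]}$ on $\B(D)$ evaluated at $\Theta$. Setting $\lambda := |\delta|_{\cK} \in \bQ^\times$, this is exactly the function that counts primes weighted by $\lambda$, but here (local $\cK$) each $\cF^H$ has one prime of residue$\times$ramification degree $[D:H]$, so $\phi(H) = \lambda^{[D:H]}$.

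It remains to show $\phi \sim_K 1$, i.e. $\phi(\Theta) \in N_{K/\bQ}(K^\times)$ for every $K$-relation $\Theta \in \B(D)$. The function $\phi(H) = \lambda^{[D:H]}$ is a product of the constant function $H \mapsto \lambda$ (which is $D$-local, hence handled) and — no: more directly, write $\lambda^{[D:H]} = (\lambda)^{[D:H]}$ and observe this is the composite of $H \mapsto [D:H]$ with exponentiating $\lambda$; since $v_l(\lambda)$ is an integer, $\phi(H) = \mu^{[D:H]}$ for $\mu = \lambda$, and by multiplicativity it suffices to treat $\phi$ as a $D$-local function of the index. I would invoke the reduction principle: $\phi$ is constant on conjugacy classes and $D$-local with $\phi = (D, U \mapsto \lambda^{[D:U]})$ trivially (take the full group $D$ as the decomposition group), so by Proposition \ref{prop:res_quot}(i) it suffices to handle it on $\B(D)$ directly; but actually the cleanest route is Proposition \ref{prop:K_rels_properties} together with the fact that $H \mapsto [D:H]$ sends Brauer relations to products that are cyclotomic-polynomial values — precisely the content of Lemma \ref{lem:f_on_cyclic} after restricting to cyclic subgroups via Proposition \ref{prop:from_irr}. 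Concretely: by Proposition \ref{prop:from_irr}, to show $\phi \sim_K 1$ it is enough to check $\phi$ on a generating set of relations built from cyclic subquotients, and on $\B(C_m)$ the function $H \mapsto \lambda^{[C_m:H]}$ is trivial on $K$-relations by combining Lemma \ref{lem:f_on_cyclic} (for the index part) and Lemma \ref{lem:const_cyclic} (for the constant part). This yields $\prod_i C_v(E/\cF^{H_i},\omega)^{n_i} \equiv \prod_i C_v(E/\cF^{H_i},\omega^0)^{n_i} \bmod N_{K/\bQ}(K^\times)$, as claimed.

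The main obstacle I anticipate is not any single deep step but getting the normalisation of the absolute value and the factor $[D:H] = e_H f_H$ exactly right in the local setting, and making sure the correction function is genuinely expressible in the $(D, I, \psi)$ / $D$-local formalism of \S2.3 so that Corollary \ref{corr:cyclic_f_const} applies cleanly; the semistable-versus-additive case split must also be handled carefully, since mixed behaviour (different $H_i$ potentially in different reduction regimes) does not occur — $E$ has a fixed reduction type at $v$, so the exponent $\epsilon$ is the same for all $H_i$ — but this should be stated explicitly.
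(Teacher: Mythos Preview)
Your reduction to the function $\phi(H)=\lambda^{[D:H]}$ with $\lambda=|\delta|_{\cK}$ is correct and is the heart of the matter. The gap is in your final paragraph: the appeal to Proposition~\ref{prop:from_irr} does not do what you claim. That proposition requires $\hat C(D)=1$, which is not assumed for an arbitrary local Galois group, and in any event it does not reduce the problem to cyclic subquotients; it reduces to checking one $\Theta_\chi$ per irreducible $\chi$ of $D$ itself. So neither Lemma~\ref{lem:f_on_cyclic} nor Lemma~\ref{lem:const_cyclic} is available to you here, because $D$ need not be cyclic.

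The fix is much simpler than anything you attempt. You already have $\phi(\Theta)=\lambda^{\sum_i n_i[D:H_i]}$, and
\[
\sum_i n_i[D:H_i]\;=\;\sum_i n_i\dim\bC[D/H_i]\;=\;\dim\bC[D/\Theta]\;=\;[K:\bQ]\cdot\dim\rho
\]
since $\bC[D/\Theta]\simeq N_{K/\bQ}(\rho)$. Hence $\phi(\Theta)=N_{K/\bQ}(\lambda^{\dim\rho})\in N_{K/\bQ}(K^{\times})$, and you are done in one line. By contrast the paper writes the correction as the local function $(D,I,|\alpha|_{\cK}^{\,f})$, passes to the cyclic quotient $D/I$, and then computes on the basic relations $\Psi_d$ that the value is $|\alpha|_{\cK}^{\phi(d)}=N_{\bQ(\zeta_d)/\bQ}(|\alpha|_{\cK})$, concluding via Example~\ref{ex:cyclic_groups2}. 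Your dimension count bypasses that detour entirely.

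One further error: your assertion that ``$E$ has a fixed reduction type at $v$, so the exponent $\epsilon$ is the same for all $H_i$'' is false. If $E/\cK$ has additive reduction it may well become semistable over $\cF^{H_i}$ for some $H_i$ (precisely those $H_i$ contained in $\Gal(\cF/\cL)$), and then, by the definition in \S\ref{sec:notation}, $C_v(E/\cF^{H_i},\omega)$ carries no $|\delta|$ factor at all. The paper's proof glosses over exactly the same point, so this does not separate your argument from theirs, but the sentence as written is wrong and should be deleted.
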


\begin{proof}
	Let $\alpha \in \cK^{\times}$ be such that $\omega = \alpha \omega^0$. Let $C_D$, $C_D'$ be the functions on $\B(D)$ as in Notation \ref{not:local_CD} defined using $\omega^0$ and $\omega$ respectively. Then 
$$
C_D'(\Theta) / C_D(\Theta) =\prod_i |\alpha|_{\cF^{H_i}} = (D, I, |\alpha|_{\cK}^{f})(\Theta).
$$
The function $(D, I, | \alpha|_{\cK}^{f} )$ passes to a quotient function $h$ on the Burnside ring of $D / I \simeq C_n$ given by $h(C_d) =  |\alpha|_{\cK}^{n / d} \in \bQ^{\times}$. For $d \mid n$, 
$$
h(\Psi_d) = |\alpha|_{\cK}^{\sum_{d' | d} \mu(d / d') d } = |\alpha|_{\cK}^{\Phi(d)} = |\alpha|_{\cK}^{[\bQ(\zeta_d) \colon \bQ]} \in N_{\bQ(\zeta_d) / \bQ}(\bQ(\zeta_d)^{\times}).  
$$
Hence by Example \ref{ex:cyclic_groups2} $h(\Theta\cdot I / I) \in N_{K / \bQ}(K^{\times})$ and the result follows. 
\end{proof}

 For the remainder of this section we restrict to $K$-relations with $K / \bQ$ a quadratic extension. Then, if $\Theta$ is a $K$-relation for $D$ and $\tau$ is a $\bQ D$-representation, $\cC_{\Theta}(\tau) 
 \mod N_{K / \bQ}(K^{\times})$ is independent of choice of $\bQ$-basis and $\bQ$-valued pairing on $\tau$. Our central result is as follows:

\begin{theorem}\label{thm:main}
	Let $\cF / \cK / \bQ_l$ be finite extensions, with $\cF / \cK$ Galois and $D = \Gal(\cF / \cK)$. Let $E$ be an elliptic curve over $\cK$. Assume either that $E$ has semistable reduction, or has additive reduction and $l \geq 5$.  Then there exists a $\bQ D$-module $\cV$ such that:
\begin{enumerate}
\item For all self-dual representations $\tau$ of $D$, $$\frac{w(E / \cK, \tau)}{w(\tau)^{2 }} = (-1)^{\langle \tau , \cV \rangle},$$ where $w(E / \cK, \tau)$ and $w(\tau)$ are local root numbers.
\item If $K / \bQ$ is a quadratic extension and $\Theta = \sum_i n_i H_i$ is a $K$-relation for $D$, then 
$$ 
\cC_{\Theta}(\cV) \equiv \prod_i C_D(H_i)^{n_i} \mod N_{K/\bQ}(K^{\times}),
$$
where the regulator constant $\cC_{\Theta}(\cV)$ is computed on any rational basis of $\cV$ with respect to any non-degenerate, $G$-invariant, $\bQ$-valued pairing on $\cV$.  
\end{enumerate}
\end{theorem}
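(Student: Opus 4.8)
The plan is to follow the strategy of \cite[Theorem 3.2]{tamroot}, where the analogue of this statement is proved for Brauer relations, and to upgrade its conclusion from an equality modulo rational squares to one modulo norms from the quadratic field $K$. The first step is to split according to the reduction type of $E / \cK$: good reduction; multiplicative (split or non-split) reduction; additive, potentially multiplicative reduction; and additive, potentially good reduction. In the last case the hypothesis $l \geq 5$ is used to ensure that $E$ attains good reduction over a tamely ramified extension of $\cK$ of degree $e \in \{2,3,4,6\}$. In every case one builds $\cV$ out of the local arithmetic of $E / \cK$ and the field $\cF$ --- this is exactly the $\bQ D$-module produced in \cite[Theorem 3.2]{tamroot} --- and part (1), the root-number identity, is then precisely the content of \emph{loc.\ cit.}, which rests on Rohrlich's explicit formulae for the local twisted root numbers $w(E/\cK,\tau)$ in \cite{RohG}. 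The new content, and hence the focus of the proof, is part (2).

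For part (2), fix a reduction type. One first computes the function $C_v \colon \B(D) \to \bQ^\times$ explicitly: via Tate's algorithm and the behaviour of N\'eron models under the (mostly tame) base changes $\cF^H / \cK$, the assignment $H \mapsto C_v(E / \cF^H, \omega^0)$ is written as a product of functions of the form $(D', I', \psi)$ as in Definition \ref{def:D_ef}, whose factors $\psi$ encode Tamagawa numbers, Kodaira types, and the valuation of the ratio of $\omega^0$ to a local minimal differential, all as functions of the residue and ramification degrees of the corresponding place of $\cF^H$. By Lemma \ref{lem:differential} the value of $C_v$ on a $K$-relation does not depend on the choice of $\omega^0$, so one may normalise it freely.

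One then matches $C_v$ with $\cD_{\cV}$ on $K$-relations. The key difference from the Brauer-relation setting is that the factors $\psi$ may take non-square rational values --- a Tamagawa number can be, say, $3$ as in Example \ref{ex-d42} --- so tracking parities no longer suffices. Here one appeals to the refined results of \S\ref{sec:rep_theory}: Proposition \ref{prop:induction} and Corollary \ref{cor:reg_perm} identify the relevant permutation-module functions $\cD_{\bQ[G/D']}$ with functions $(D', I', ef)$; Lemmas \ref{lem:cyclic_reg_const} and \ref{lem:odd_reg_const} handle regulator constants over cyclic and odd-order subquotients; and Corollary \ref{corr:cyclic_f_const} shows that the remaining ``non-square'' pieces --- case-distinction functions depending on a divisibility condition $k \mid f$ with values $\alpha, \beta$ --- are trivial on $K$-relations precisely when $\alpha/\beta \in N_{\bQ(\zeta_k)/\bQ}(\bQ(\zeta_k)^\times)$, a condition one reads off from the shape of Rohrlich's formulae in the case at hand. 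Assembling these, $C_v / \cD_{\cV}$ becomes a product of functions trivial on $K$-relations, giving $\cC_\Theta(\cV) \equiv \prod_i C_v(H_i)^{n_i} \bmod N_{K/\bQ}(K^\times)$; the independence of $\cC_\Theta(\cV)$ from the choice of rational pairing and basis on $\cV$, modulo $N_{K/\bQ}(K^\times)$, is Proposition \ref{prop:indepQ}.

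The main obstacle is the additive, potentially good reduction case. There both the Kodaira type of $E / \cF^H$ --- hence its Tamagawa number and minimal discriminant --- and Rohrlich's expression for $w(E/\cK,\tau)$ depend delicately on how $\cF^H$ meets the degree-$e$ extension over which $E$ acquires good reduction, i.e.\ on $\gcd$-type interactions between the ramification data of $\cF^H/\cK$ and $e$; reconciling the two \emph{modulo norms from $K$}, rather than merely modulo squares, is exactly where Corollary \ref{corr:cyclic_f_const}(iii),(iv) is needed and where the bulk of the casework lies. The semistable and potentially multiplicative cases are comparatively direct --- for good reduction all the $C_v(E/\cF^H)$ equal $1$, and for multiplicative reduction they are governed by the ramification and residue degrees of the relevant place. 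One small point to settle along the way is that the construction of $\cV$ could a priori yield a virtual module; as in \cite{tamroot} one checks it can be taken to be an honest self-dual $\bQ D$-module, so that Definition \ref{def:regulator_constant} applies directly.
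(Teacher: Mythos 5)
Your overall strategy coincides with the paper's: $\cV$ is taken from Rohrlich's formulae so that part (1) is immediate, and part (2) is reduced to showing $\cD_{V} \sim_K C_v$ case by case, using Lemma \ref{lem:differential}, Proposition \ref{prop:indepQ} and the machinery of \S\ref{sec:rep_theory}; you also correctly locate the bulk of the work in the additive, potentially good case. However, two genuine ingredients of that case are missing from your plan, and without them the argument does not close.

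First, you assert that the norm conditions $\alpha/\beta \in N_{\bQ(\zeta_k)/\bQ}(\bQ(\zeta_k)^{\times})$ needed for Corollary \ref{corr:cyclic_f_const}(iii) can be ``read off from the shape of Rohrlich's formulae''. They cannot: the differential term contributes factors of the form $l^{\lfloor \delta e f/12\rfloor}$ whose non-square part is a power of the residue characteristic $l$, and one must prove that $l$ is a norm of an \emph{element} from every quadratic subfield $K$ of the character field of the relevant induced characters. The paper does this by showing $K \subset \bQ(\zeta_n)^{\langle \zeta_n \mapsto \zeta_n^{q}\rangle}$, so that $l$ splits in $K$ and is the norm of an ideal, and then invoking genus field theory (Lemma \ref{lem:good_fields} and the case analysis of Appendix \ref{appendix2}) to upgrade ``norm of an ideal'' to ``norm of an element''. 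This is a substantive arithmetic input with no analogue in the Brauer-relation case, where squares suffice. Second, the reduction to irreducible characters via Proposition \ref{prop:from_irr} requires $\hat{C}(D)=1$, which fails for the groups $D = C_4 \rtimes C_{2^k}$ and $C_6 \rtimes C_{2^k}$ arising in Cases 2C/2D: there are irreducibles with rational character (e.g.\ the quaternionic $2$-dimensional representation) not lying in $\Perm(D)$, so one must compute $\hat{C}(D)$ explicitly (Lemmas \ref{lem:repC4C2k}, \ref{lem:repC6C2k}) and treat its order-$2$ generators by a separate argument. Your plan as written would stall at both points.
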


\begin{remark}
	This theorem should be compared with \cite[Theorem 3.2]{tamroot}. We are proving the same compatibility result, but extending the second part of the theorem from Brauer relations to $K$-relations. 
\end{remark}

By taking the product over all finite places, this theorem enables the computation of the global product of fudge factors appearing in the BSD-quotient for a $K$-relation in terms of regulator constants, yielding Theorem \ref{corr:main}:   

\begin{proof}[Proof of Theorem \ref{corr:main}]
	Let $C \colon \B(G) \to \bQ^{\times}$ be the function given by $C(H) = C_{E / F^{H}}$. Then  
	$$
	C(\Theta) = \prod_i (C_{E / F^{H_i}})^{n_i} = \prod_v C_{D_v}(\Res_{D_v}\Theta),
$$
with the product ranging over all finite places $v$ of $L$ (cf. Example \ref{ex2:C}). Here $D_v \simeq \Gal(F_w / L_v) \leq G$ is the decomposition group for a choice of prime $w$ in $L$ above $v$. By Theorem \ref{thm:main} and Proposition \ref{prop:ind_res_reg}(i),
$$
C_{D_v}(\Res_{D_v}\Theta) \equiv \cC_{\Res_{D_v}\Theta}(\cV_v) \equiv \cC_{\Theta}(\Ind_{D_v}^G \cV_v)
\mod N_{K / \bQ}(K^{\times})
$$ 
for the $ \bQ D_v$-module $\cV_v$ satisfying Theorem \ref{thm:main}. Therefore 
$$
\prod_i (C_{E / F^{H_i}})^{n_i}  \equiv \prod_{\tau \in \Irr_{\bQ}(G)} \prod_v \cC_{\Theta}(\tau)^{\langle \chi_{\tau} , \Ind_{D_v} \cV_v \rangle} \mod N_{K / \bQ}(K^{\times}).
$$
By Proposition \ref{prop:not_self_dual}, if $\chi_{\tau}$ is not self-dual or is symplectic, then $\cC_{\Theta}(\tau) \in N_{K / \bQ}(K^{\times})$. Thus we only need to take the product over $\tau \in \Irr_{\bQ}(G)$ with $\chi_{\tau}$ orthogonal. In this case, $$(-1)^{\langle \chi_{\tau}, \Ind_{D_v} \cV_v \rangle} = \frac{w(E / L_v, \Res_{D_v} \chi_{\tau})}{w(\Res_{D_v} \chi_{\tau})^2}$$ by Frobenius reciprocity and Theorem \ref{thm:main}(1). As $\chi_{\tau}$ is orthogonal, $w(\chi_{\tau})^2 = 1$, and so $$(-1)^{\sum_v \langle \chi_{\tau}, \Ind_{D_v} \cV_v \rangle} = w(E / L, \chi_{\tau}) = (-1)^{u(E / L, \chi_{\tau})}.$$ Therefore $\sum_v \langle \chi_{\tau}, \Ind_{D_v} \cV_v \rangle \equiv u(E / L, \chi_{\tau}) \mod 2$ and computing mod $N_{K / \bQ}(K^{\times})$ one has 
$$
\prod_i (C_{E / F^{H_i}})^{n_i} \equiv \prod_{\substack{\tau \in \Irr_{\bQ}(G), \\ \tiny{\chi_{\tau} \text{ orthogonal}}}} \cC_{\Theta}(\tau)^{\sum_v \langle \chi_{\tau}, \Ind_{D_v} \cV_v \rangle} \equiv \prod_{\tau \in \Irr_{\bQ}(G)} \cC_{\Theta}(\tau)^{u(E / L, \chi_{\tau})}, 
$$
noting that $u(E / L, \chi_{\tau}) = 0$ when $\chi_{\tau}$ is symplectic since $w(E / L, \chi_{\tau}) = 1$ (see \cite[Lemma A.2(4)]{tamroot}).
\end{proof}

\subsection{Setup}

The proof of Theorem \ref{thm:main} is set up the same way as the proof of \cite[Theorem 3.2]{tamroot}.
We define the following extensions $\cL$ of $\cK$, depending on the reduction type of $E$:

\begin{notation}
\
\begin{enumerate}
	\item $E / \cK$ has semistable reduction.
		\begin{itemize}
			\item $(\text{1G})$, $E$ has good reduction, $\cL = \cK$, 
			\item $(\text{1S})$, $E$ has split multiplicative reduction, $\cL = \cK$, 
			\item $(\text{1NS})$, $E$ has non-split multiplicative reduction, $\cL$ is quadratic unramified. 
		\end{itemize}
	
	\item $E / \cK$ has additive reduction. In this case assume that $\cK$ has residue characteristic $l > 3$. Let $\Delta_{E}$ and $c_6$ 
		be the standard invariants for some model of $E / \cK$ and let $\fe = \frac{12}{\gcd(12, \ord \Delta_{E})}$. 
		\begin{itemize}
			\item $(\text{2C})$, $E$ has potentially good reduction, $\mu_{\fe} \subset \cK$, $\cL = \cK(\sqrt[\fe]{\Delta_{E}})				$ is a cyclic extension of $\cK$,
			\item $(\text{2D})$, $E$ has potentially good reduction, $\mu_{\fe} \not\subset \cK$, $\cL = \cK(\mu_{\fe}, 						\sqrt[\fe]{\Delta_{E}})$ is a dihedral extension of $\cK$,
			\item $(\text{2M})$, $E$ has potentially multiplicative reduction, $\cL = \cK(\sqrt{-c_6})$. 
		
		\end{itemize}
\end{enumerate}
\end{notation}

These are the minimal Galois extensions over which $E$ attains split semistable reduction. As in \cite[Hypothesis 3.6]{tamroot}, we claim that we may assume that $\cL$ is a subfield of $\cF$ when proving Theorem \ref{thm:main}. 

\begin{lemma}
Let $\cK \subset \cF \subset \cM$ with $\cM / \cK$ and $\cF / \cK$ Galois. 
Let $\cV$ be a $\bQ \Gal(\cM  / \cK)$-module. If $\cV$ satisfies the conditions of Theorem \ref{thm:main} for $E / \cK$ and the extension $\cM / \cK$, then $\cW = \cV^{\Gal(\cM / \cF)}$ satisfies the conditions of Theorem \ref{thm:main} for $E / \cK$ and the extension $\cF / \cK$.
\end{lemma}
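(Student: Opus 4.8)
The plan is to transfer both assertions of Theorem~\ref{thm:main} from the pair $(\cM/\cK,\cV)$ to the pair $(\cF/\cK,\cW)$ along the quotient map $q\colon\tilde G:=\Gal(\cM/\cK)\to D:=\Gal(\cF/\cK)$ with kernel $N:=\Gal(\cM/\cF)$. First I would record the basic structure: since $N\triangleleft\tilde G$, the space $\cW=\cV^{N}$ is $\tilde G$-stable with $N$ acting trivially, hence a $\bQ D$-module, and it is self-dual because $\cV$ is (in characteristic zero $(\cV^{N})^{*}\cong(\cV^{*})^{N}\cong\cV^{N}$). Moreover, for any $\bQ D$-representation $\tau$, the adjunction between inflation $q^{*}$ and the $N$-fixed-point functor gives $\langle q^{*}\tau,\cV\rangle_{\tilde G}=\langle\tau,\cW\rangle_{D}$; this identity is what makes both parts go through.

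For condition~(1): for a self-dual $\tau\in\Rep(D)$ the Artin representation of $G_{\cK}$ attached to $q^{*}\tau$ is the same as the one attached to $\tau$ (the homomorphism $G_{\cK}\to D$ factors through $\tilde G$), so $w(E/\cK,\tau)=w(E/\cK,q^{*}\tau)$ and $w(\tau)=w(q^{*}\tau)$. Combining Theorem~\ref{thm:main}(1) for $\cV$ with the adjunction above gives $w(E/\cK,\tau)/w(\tau)^{2}=(-1)^{\langle q^{*}\tau,\cV\rangle_{\tilde G}}=(-1)^{\langle\tau,\cW\rangle_{D}}$, which is exactly condition~(1) for $\cW$.

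For condition~(2): given a quadratic $K/\bQ$ and a $K$-relation $\Theta=\sum_i n_i H_i\in\B(D)$, I would lift it to $\tilde\Theta=\sum_i n_i\tilde H_i\in\B(\tilde G)$ with $\tilde H_i:=q^{-1}(H_i)\supseteq N$; the Lifting part of Proposition~\ref{prop:K_rels_properties} shows $\tilde\Theta$ is a $K$-relation. Since $\tilde H_i\supseteq N=\Gal(\cM/\cF)$ one has $\cM^{\tilde H_i}=\cF^{H_i}$, so the local factors match: $\prod_i C_v(\tilde H_i)^{n_i}=\prod_i C_v(H_i)^{n_i}$ (all computed with one fixed minimal exterior form on $E/\cK$). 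Applying Theorem~\ref{thm:main}(2) to $(\cV,\tilde\Theta)$ then reduces everything to the purely representation-theoretic claim $\cC_{\tilde\Theta}(\cV)\equiv\cC_{\Theta}(\cW)\bmod N_{K/\bQ}(K^{\times})$. To prove this I would use Proposition~\ref{prop:indepQ} to compute both regulator constants with a single compatible pairing: take a $\bQ$-valued non-degenerate $\tilde G$-invariant pairing on $\cV$ and restrict it to $\cW=\cV^{N}$, where it remains $\bQ$-valued, $D$-invariant and non-degenerate by \cite[Lemma 2.15]{tamroot}. Then $\cV^{\tilde H_i}=\cW^{H_i}$ and $|\tilde H_i|=|N|\,|H_i|$, so the two products of determinants differ exactly by the scalar $|N|^{-\sum_i n_i\dim\cW^{H_i}}=|N|^{-\langle\bC[D/\Theta],\cW\rangle}$; and because $\bC[D/\Theta]\cong\rho\oplus\rho^{\sigma}$ with $\Gal(K/\bQ)=\langle\sigma\rangle$ and $\cW$ is rational, $\langle\bC[D/\Theta],\cW\rangle=2\langle\rho,\cW\rangle$ is even, so this scalar is a rational square and hence trivial modulo $N_{K/\bQ}(K^{\times})$.

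The only genuine obstacle — mild in this case — is that normalization mismatch in the last step: the regulator constant for $\tilde\Theta$ is built from the weights $\tfrac{1}{|\tilde H_i|}$ whereas the one for $\Theta$ uses $\tfrac{1}{|H_i|}$, and reconciling them is precisely what forces the statement to be read modulo $N_{K/\bQ}(K^{\times})$ and to invoke the evenness of $\langle\bC[D/\Theta],\cW\rangle$ coming from the $K$-relation structure. Everything else is a formal push-forward of identities along inflation and $N$-invariants.
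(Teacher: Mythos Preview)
Your proof is correct and follows essentially the same approach as the paper: inflate along $q\colon\tilde G\to D$, use the adjunction $\langle q^*\tau,\cV\rangle_{\tilde G}=\langle\tau,\cW\rangle_D$ for part~(1), and lift $\Theta$ to $\tilde\Theta$ for part~(2). The paper phrases the key step as $\cC_{\tilde\Theta}(\cV\ominus\cW)=1$ (since $(\cV\ominus\cW)^N=0$ and each $\tilde H_i\supseteq N$), whereas you go directly via $\cV^{\tilde H_i}=\cW^{H_i}$ and then explicitly dispose of the normalization discrepancy $|N|^{-\langle\bC[D/\Theta],\cW\rangle}$ --- a point the paper passes over in silence but which your argument handles cleanly.
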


\begin{proof}
	Let $G = \Gal(\cM / \cK)$, $N = \Gal(\cM / \cF)$. If $\tau$ is a self-dual representation of $G / N$ then we can inflate it to a representation of $G$ and one has $(-1)^{\langle  \tau, \cW \rangle_{G / N}} = (-1)^{\langle \tau, \cV \rangle_{G}} = w(E / \cK, \tau) / w(\tau)^2$ by assumption.  Let $K / \bQ$ be quadratic and let $\Theta$ be a $K$-relation for $G / N$. By Proposition \ref{prop:K_rels_properties}(5), the lift $\tilde{\Theta}$ of $\Theta$ to $\B(G)$ is a $K$-relation for $G$, and moreover $C_{G / N}(\Theta) = C_G(\tilde{\Theta})$.   
	The virtual representation $\cV \ominus \cW$ has $(\cV \ominus \cW)^{N} = 0$, so that $\cC_{\tilde{\Theta}}(\cV \ominus \cW) = 1$. Since $\cC_{\tilde{\Theta}}(\cV \ominus \cW) = \cC_{\tilde{\Theta}}(\cV) / \cC_{\tilde{\Theta}}(\cW)$ by Corollary \ref{corr_reg_mult}(2), we have $\cC_{\tilde{\Theta}}(\cV) = \cC_{\tilde{\Theta}}(\cW) = \cC_{\Theta}(\cW)$. The result follows since $C_G(\tilde{\Theta}) \equiv \cC_{\tilde{\Theta}}(\cV) \mod N_{K / \bQ}(K^{\times})$.
\end{proof}

It follows that if Theorem \ref{thm:main} holds for $E / \cK$ and the extension $\cF \cL / \cK$, then it also holds for $E / \cK$ and the extension $\cF / \cK$. Thus we may assume that $\cF / \cK$ contains $\cL$ as a subfield, and we make this assumption from now on. 

Let $D' = \Gal(\cF / \cL) \triangleleft D$. Our choice of $\cV$ in Theorem \ref{thm:main} is based on the following result of Rohrlich, which provides formulae for local twisted root numbers.

\begin{lemma}[{\cite{RohG}, Theorem 2 }]\label{lem:root_formula}
For a self-dual representation $\tau$ of $D$, 
$$
w(E / \cK, \tau) = w(\tau)^2 \lambda^{\dim \tau} (-1)^{\langle \tau, V \rangle}
$$
where $\lambda \in \{\pm 1\}$ and $V$ is the representation of $D / D'$ given in the table below.
Here $\trivial$ denotes the trivial character, $\chi$ the non-trivial character of order $2$, $\eta$ the unramified quadratic character, and $\sigma$ the unique faithful $2$-dimensional representation of $D / D'$. 
\begin{table}[H]
\begin{center}
	\caption{Root numbers.}
\begin{tabular}{| l | l |  l |  l | }
\hline
Case & $D / D'$ & $\lambda$ & $V$\\
\hline
$\mathrm{(1G)}$ & $C_1$ & $1$ & $0$\\
$\mathrm{(1S)}$ & $C_1$ & $1$ & $\trivial$\\
$\mathrm{(1NS)}$ & $C_2$ & $1$ & $\eta$\\
$\mathrm{(2C)}$ & $C_2, C_3, C_4, C_6$ & $\epsilon$ & $0$\\ 
$\mathrm{(2D)}$ & $D_3, D_4, D_6$ & $-\epsilon$ & $\trivial \oplus \eta \oplus \sigma $ \\
$\mathrm{(2M)}$ & $C_2$ & $w(\chi)^2$ & $\chi$\\
\hline
\end{tabular}
\end{center}
\end{table}
\end{lemma}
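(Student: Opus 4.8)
The plan is to derive this from Rohrlich's computation of local twisted root numbers \cite[Theorem~2]{RohG}, reorganised in the present notation; I indicate how that computation runs. By definition the twisted root number equals $w(E/\cK,\tau)=w(\sigma'_E\otimes\tau)$, where $\sigma'_E$ is the two--dimensional Weil--Deligne representation attached to $E/\cK$ and $\tau$ is regarded as a Weil--Deligne representation with vanishing monodromy. Since $w(\sigma'_E\otimes-)$, $w(-)^{2}$, $\lambda^{\dim(-)}$ and $(-1)^{\langle -,V\rangle}$ are all multiplicative (or additive) under direct sums, I would first reduce to a generating set of self-dual $\tau$; the case of $\tau$ not self-dual is vacuous, and a pair $\rho\oplus\rho^{\vee}$ with $\rho$ irreducible non-self-dual is handled using the near self-duality $\sigma'_E\cong(\sigma'_E)^{\vee}\otimes\mu_0$ with $\mu_0=\det\sigma'_E$ unramified.

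The heart of the argument is then a case analysis in the six reduction cases $(\mathrm{1G})$--$(\mathrm{2M})$ of the preceding Notation, organised around the minimal extension $\cL/\cK$ over which $E$ acquires split semistable reduction, with $D/D'=\Gal(\cL/\cK)$. In each case one writes $\sigma'_E$ down explicitly: unramified with vanishing monodromy (good reduction); the special representation $\mathrm{sp}(2)$, after an unramified quadratic twist in case $(\mathrm{1NS})$ and a ramified quadratic twist by the character cutting out $\cK(\sqrt{-c_6})$ in case $(\mathrm{2M})$ (multiplicative reduction); and, in the two potentially good cases, a representation whose restriction to inertia is built from a tamely ramified character of order dividing $\fe$ that factors through the cyclic, resp.\ dihedral, quotient $D/D'$. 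One then applies the standard formalism of local constants: the transformation of $w$ under an unramified twist $\mu$, which multiplies the answer by $\mu(\varpi)$ raised to the Artin conductor exponent of $\sigma'_E\otimes\tau$ (plus a multiple of $\dim\tau$ determined by the chosen additive character); the known root number of a twist of $\mathrm{sp}(2)$, which outputs $w(\tau)^{2}$ times a sign recorded by $\langle\tau,\trivial\rangle$; and inductivity of local $\varepsilon$-factors together with its $\lambda_{\cL/\cK}$-factor, which is precisely the source of the $\tau$-independent term $\lambda^{\dim\tau}$.

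Assembling these: good reduction gives $w(\tau)^{2}$, so $\lambda=1$ and $V=0$; the multiplicative cases give $w(\tau)^{2}$ times, respectively, $(-1)^{\langle\tau,\trivial\rangle}$, $(-1)^{\langle\tau,\eta\rangle}$, and $w(\chi)^{2\dim\tau}(-1)^{\langle\tau,\chi\rangle}$, the quadratic twist in the last two being absorbed by replacing $\tau$ with $\eta\tau$ or $\chi\tau$ and computing the resulting conductor parity via self-duality of $\tau$; and the potentially good cases reduce by inductivity to $E/\cL$, which has good reduction, leaving a Gauss-sum/discriminant sign that is $\tau$-independent up to $\dim\tau$ (this is $\lambda=\pm\epsilon$) times $(-1)^{\langle\tau,V\rangle}$, with $V=0$ for the cyclic quotient and $V=\trivial\oplus\eta\oplus\sigma$ for the dihedral quotient $D_{\fe}$. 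The step I expect to be the main obstacle is exactly this last bookkeeping: cleanly separating the genuinely $\tau$-independent factor $\lambda^{\dim\tau}$ from the $\tau$-dependent sign produced by the induction formula, and checking that the latter equals $(-1)^{\langle\tau,V\rangle}$ for the stated $V$ --- in particular that the faithful two-dimensional $\sigma$ enters $V$ with multiplicity one in case $(\mathrm{2D})$. Self-duality of $\tau$ is used throughout, as it forces the contributions coming in complex-conjugate pairs to square to $1$, so that the final answer depends only on the orthogonal and symplectic constituents of $\tau$.
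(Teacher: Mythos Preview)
The paper does not prove this lemma at all: it is stated as a direct citation of \cite[Theorem~2]{RohG} and then used as a black box to define $\cV$. So there is no ``paper's own proof'' to compare against; the author simply imports Rohrlich's result.

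Your proposal is therefore not wrong, but it is doing something the paper deliberately avoids: you are sketching Rohrlich's argument itself. The outline you give --- write down $\sigma'_E$ explicitly in each of the six cases, use multiplicativity to reduce to irreducible self-dual $\tau$, apply the standard $\varepsilon$-factor formalism (unramified twist, $\mathrm{sp}(2)$ root number, inductivity with the $\lambda_{\cL/\cK}$ factor) --- is a fair summary of how \cite{RohG} proceeds, and your identification of the delicate point (isolating the $\tau$-independent $\lambda^{\dim\tau}$ from the $\tau$-dependent sign in the dihedral case) is accurate. For the purposes of this paper, however, a one-line citation suffices.
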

Given $V$ as in the lemma above, define
$
\cV = V \oplus \begin{cases} 0 & \text{if } \lambda = 1, \\ \bQ[D] & \text{if }\lambda = -1 \end{cases}.
$
Then
$$
\frac{w(E / \cK, \tau)}{w(\tau)^2}  = \lambda^{\langle \tau, \bQ[D]\rangle} (-1)^{\langle \tau, V \rangle} = (-1)^{\langle \tau, \cV \rangle},
$$
and so part $(1)$ of Theorem \ref{thm:main}, follows for this choice of $\cV$. 

Observe that if $K / \bQ$ is quadratic and $\Theta$ is a $K$-relation for $D$, then $\cC_{\Theta}(\bQ[D]) \in N_{K / \bQ}(K^{\times})$ by Corollary \ref{cor:reg_perm} and so $\cC_{\Theta}(\cV) \equiv \cC_{\Theta}(V) \mod N_{K / \bQ}(K^{\times})$. Therefore to prove Theorem \ref{thm:main}, it remains to prove the following.

\begin{proposition}\label{prop:to_prove}
Let $\Theta = \sum_i n_i H_i \in \B(D)$ be a $K$-relation for $D$ with $K / \bQ$ a quadratic extension. Then 
$$
C_D(\Theta) = \prod_i C(E / \cF^{H_i}, \omega^0)^{n_i} \equiv \cC_{\Theta}(V) \mod N_{K / \bQ}(K^{\times})
$$
where $V$ is the $\bQ D$-representation given in Lemma \ref{lem:root_formula} depending on the reduction of $E / \cK$. In other words, $\cD_{V} \sim_K C_D$ as functions on $\B(D)$.  
\end{proposition}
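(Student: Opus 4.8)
The plan is to follow, case by case, the structure of the proof of \cite[Theorem 3.2]{tamroot}, running through the six reduction types (1G), (1S), (1NS), (2C), (2D), (2M) of Lemma \ref{lem:root_formula}, and to verify that all the elementary functions on $\B(D)$ that arise are not merely trivial on Brauer relations modulo squares (as in \cite{tamroot}) but trivial on $K$-relations for quadratic $K$, using the refined toolkit of \S\ref{sec:rep_theory} --- principally Corollary \ref{corr:cyclic_f_const}, together with Propositions \ref{prop:res_quot}, \ref{prop:induction}, \ref{prop:ind_res_reg}, Corollary \ref{cor:reg_perm} and Lemma \ref{lem:differential}. Recall we have already reduced to the case $\cL \subseteq \cF$, and that in every case the representation $V$ of Lemma \ref{lem:root_formula}, inflated to $D$, lies in the virtual permutation ring of $D$: it is built from $\bC[D/D]$, $\bC[D/D']$ and, in the dihedral cases, $\bC[D/C]$ for $C$ the preimage in $D$ of the cyclic index-two subgroup of $D/D'$.

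First I would compute $C_v$ explicitly as a function on $\B(D)$. For the semistable cases this rests on the theory of Tate curves: $E / \cF^{H}$ is then semistable, so by the definition of $C_v$ the fudge factor contributes nothing and $C_v(H) = c(E / \cF^{H})$, which equals $1$ for good reduction, $\ord(q_E) \cdot e_{\cF^H / \cK}$ for split multiplicative reduction, and (for non-split reduction) the same quantity when the residue degree $f_{\cF^H/\cK}$ is even and $1$ or $2$ otherwise. For the additive cases with $l \geq 5$ the reduction is tame: $E / \cF^H$ acquires semistable reduction over $\cF^H\cL = \cF^{H \cap D'}$, the Tamagawa number lies in $\{1,2,3,4\}$ and is governed by $\ord \Delta_E$ and $\fe$, while the fudge factor $|\omega^0 / \omega^{00}|_{\cF^H}$ (present exactly when $\cL \not\subseteq \cF^H$) is a power of the normalised absolute value determined by the same data, hence of the form $(D, I, |\alpha|_{\cK}^{f})$ up to the adjustment already handled by Lemma \ref{lem:differential}. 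In all six cases this exhibits $C_v$, up to a function trivial on $K$-relations, as a product of a constant function, a function of the shape $(D, I, ef)$ (or $(D, I, f)$) of Definition \ref{def:D_ef}, and a divisibility-indicator function $(D, I, \{\alpha\text{ if }k\mid f,\ \beta\text{ if }k\nmid f\})$ recording whether the relevant subextension splits the reduction, with $k \in \{2, 3, 4, 6\}$.

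On the other side, since $V$ lies in the virtual permutation ring, $\cD_V$ is computed directly from the permutation pairing of Example \ref{ex:perm_pair}: by Corollary \ref{cor:reg_perm} and Proposition \ref{prop:induction} it is, up to a function trivial on $K$-relations, again a product of a constant function and $(D_i, I_i, ef)$-type functions, the dihedral case reducing via Proposition \ref{prop:ind_res_reg}(i) and induction from the cyclic index-two subgroup $C$ to regulator-constant computations over a cyclic group, where $\hat{C} = 1$ and Lemma \ref{lem:cyclic_reg_const} applies. The proof is then completed by matching the two expressions case by case and observing that $C_v / \cD_V$ is a product of: constant functions, trivial on $K$-relations by Corollary \ref{corr:cyclic_f_const}(i); functions of the form $(D, I, f)$ or $(D, I, ef)$, trivial on $K$-relations by Corollary \ref{corr:cyclic_f_const}(ii); and divisibility-indicators as above whose ratio $\alpha / \beta \in \{2, 3, 4, 6, \dots\}$ is a norm from $\bQ(\zeta_k)$ --- for instance $3 = N_{\bQ(\zeta_3)/\bQ}(1 - \zeta_3)$ and $2 = N_{\bQ(\zeta_4)/\bQ}(1 - \zeta_4)$ --- hence trivial on $K$-relations by Corollary \ref{corr:cyclic_f_const}(iii)--(iv). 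Since $\bQ^{\times 2} \subseteq N_{K/\bQ}(K^{\times})$, everything that \cite[Theorem 3.2]{tamroot} already controls modulo squares is automatically under control; the only genuinely new content --- and the main obstacle --- is verifying that the non-square integer factors (the $2$'s and $3$'s coming from Tamagawa numbers, and the $\fe$-th power adjustments coming from the fudge factors) are norms from the appropriate cyclotomic field, and that they appear with the shape required by Corollary \ref{corr:cyclic_f_const}. This bookkeeping is most delicate in the dihedral case (2D), where $V$ contains the two-dimensional $\sigma$ and $C_v$ mixes a $(D,I,ef)$ contribution with a ``does $f$ become even'' indicator, and in the potentially-good cyclic case (2C), where $V = 0$ but the fudge factor contributes the factors of $\fe$ that must be shown to cancel modulo norms.
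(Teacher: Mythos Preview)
Your overall strategy --- following the case split of \cite[Theorem 3.2]{tamroot} and upgrading each step from ``trivial on Brauer relations modulo squares'' to ``trivial on $K$-relations'' --- is exactly what the paper does, and your treatment of the semistable cases (1G), (1S), (1NS) is essentially correct. The gap is in the additive cases, where you have substantially underestimated what is required.

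First, the paper does \emph{not} reduce $C_v$ to a product of constants, $(D,I,ef)$-functions and divisibility indicators. Instead it splits $C_v = c_v \cdot \omega$ and $\cD_V = a \cdot d$ (Notation \ref{not:add_fns}) and proves $c_v \sim_K a$ and $\omega \sim_K d$ separately (swapping the pairing in case (2M)). The differential term is $\omega = (D, I, q^{\lfloor \delta e / 12 \rfloor f})$: the floor function means this is \emph{not} a divisibility indicator in $f$, and Corollary \ref{corr:cyclic_f_const} does not apply directly. The paper first reduces (via Proposition \ref{lem:f_2}) to $f_{\cF/\cK}$ a power of $2$, rewrites $\omega$ as a function $(I, W, l^{\lfloor \delta f / 12 \rfloor})$ on the cyclic quotient $I/W$, and then computes it explicitly on the $\Psi_d$-basis. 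The resulting non-square values are powers of the residue characteristic $l$, not of $2$ or $3$ or $\fe$ as you suggest, and the required norm statement is that $l \in N_{K/\bQ}(K^\times)$ for the specific quadratic $K \subset \bQ(\Ind_I^D \psi_n)$. This is proved using genus field theory (Lemma \ref{lem:good_fields} and the tables in Appendix \ref{appendix2}), not cyclotomic norms; the argument that $l$ splits in $K$ because $K \subset \bQ(\zeta_n)^{\langle q \rangle}$ is the key new idea you are missing.

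Second, for the Tamagawa term $c_v$ in cases (2C) and (2D), the paper reduces to $D = C_\fe \rtimes C_{2^k}$, and when $\fe \in \{4,6\}$ this group has $\hat{C}(D) \neq 1$ (Lemmas \ref{lem:repC4C2k}, \ref{lem:repC6C2k}). Proposition \ref{prop:from_irr}, which you implicitly rely on, requires $\hat{C}(D) = 1$; the paper handles the exceptional generators of $\hat{C}(D)$ by hand (Propositions \ref{tam-2C}, \ref{tam-2D}). Your proposal does not address this obstruction.
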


Let $I$, $W$ denote the inertia and wild inertia subgroups of $D$ respectively. We proceed by  proving Proposition \ref{prop:to_prove} case-by-case for each possible reduction type. 

\subsection{Semistable reduction}

\subsubsection*{Good reduction} If $E / \cK$ has good reduction then $C(E / \cF^{H}, \omega^0) = 1$ for all $H \leq D$ so $ 1 = C_D(\Theta) = \cC_{\Theta}(V)$ for $V = 0$. 

\subsubsection*{Split multiplicative reduction} 
Let $n = v_{\cK}(\Delta)$ so that $E / \cK$ has Type $\I_n$ reduction with $C(E / \cK, \omega^0) = n$. For semistable reduction, $\omega^0$ remains minimal in all extensions of $\cK$, $C(E / \cF^{H}, \omega^0) = c(E / \cF^{H})$ for all $H \leq D$. Then $E / \cF^{H}$ has split multiplicative reduction with $C(E / \cF^{H}, \omega) = e_{\cF^{H} / \cK} \cdot n$. Thus $C_D \colon \B(D) \to \bQ^{\times}$ is given by $C_D = (D, I, e n)$.  
For split multiplicative reduction, $V = \trivial$ and $\cC_{\Theta}(\trivial)  = \cD_{\trivial}(\Theta)$.
Then $$
\cD_{\trivial} \stackrel{\text{Cor }\ref{cor:reg_perm} }{\sim_K} (D, I, ef)
\stackrel{\text{\tiny{Cor \ref{corr:cyclic_f_const}(ii)}}}{\sim_K} (D, I, e)
 \stackrel{\text{\tiny{Cor \ref{corr:cyclic_f_const}(i)}}}{\sim_K} (D, I, en) = C_D.
$$
\subsubsection*{Non-split multiplicative reduction}
Once again, let $n = v_{\cK}(\Delta)$ and assume $E / \cK$ has non-split multiplicative reduction. By Tate's algorithm (\cite[\S IV.9]{AdvancedTopics}) for $H \leq G$, $e = e_{\cF^{H} / \cK}$, and $f = f_{\cF^{H}/ \cK}$, we have 
$$ C(E / \cF^{H}, \omega) = \begin{cases}
	2 & \text{if } 2 \mid en, 2 \nmid f, \\
	en & \text{if } 2 \mid f,\\
	1 & \text{else}. 
\end{cases} 
$$
In this case $V = \eta$ with $\bQ[D / D'] = \bQ[D / D] \oplus \eta$. Thus $$\cD_{\eta} \sim_K \cD_{\bQ[D / D']} / \cD_{\trivial} \stackrel{\text{Cor } \ref{cor:reg_perm}}{\sim_K} (D', I, ef) / (D, I, ef). $$
One can show that $\cD_{\eta} \sim_K C_D$ by following \cite[\S3.iii.3]{tamroot}, using Corollary \ref{cor:reg_perm} in place of \cite[Corollary 2.44]{tamroot}, Corollary \ref{corr:cyclic_f_const} in place of \cite[Theorem 2.36(f)]{tamroot}, and Proposition \ref{prop:rename_descent} = \cite[Theorem 2.36(r), (d)]{tamroot}. We also need to show that $\varphi_W \sim_K 1$ where 
$$\varphi_{W} \colon H \mapsto \begin{cases} 
	1 & \text{if } H \not= W, \\ 2 & \text{if } H = W.
\end{cases} $$ 
Let $\overline{W} = W / \Phi$ be the maximal exponent $2$ quotient of $W$. In \cite[\S3.iii.3]{tamroot} it is shown that $\varphi_W(H) = \varphi_{\overline{W}}(\overline{H})$ and that $\varphi_{\overline{W}}(\Psi) \in \bQ^{\times 2} \subset N_{K / \bQ}(K^{\times})$ whenever $\Psi \in \B(\overline{W})$ is a Brauer relation. For $\tau \in \Irr_{\bC}(\overline{W})$, $\bQ(\tau) \subset \bQ(\zeta_{2^k})$ for some $k \geq 1$ and so $\varphi_{\overline{W}}$ is trivial on $\bQ(\tau)$-relations as it takes values $1$ or $2$. Thus by Proposition \ref{prop:from_irr}(i), $\varphi_{\overline{W}} \sim_K 1$ so that $\varphi_W \sim_K 1$ also (note $\hat{C}(\overline{W}) = 1$ as it is abelian).   

\subsection{Additive reduction}

Now suppose that $E / \cK$ has additive reduction and that $\cK$ has residue characteristic $l \geq 5$.
Let $\delta = v_{\cK}(\Delta_E)$ be the valuation of the minimal discriminant of $E / \cK$, and $q$ the size of the residue field of $\cK$.  

In this case, the proof of Proposition \ref{prop:to_prove} is considerably longer. For readability, we have moved some calculations to appendices \ref{appendix1}, \ref{appendix2}. 

\begin{notation}\label{not:add_fns}
We break up the functions
$$
C_D = c \cdot \omega, \quad \cD_{V} = a \cdot d,
$$
with 
$$
\begin{array}{l l }
	c(H) = c(E / \cF^{H}), & a(H) = \det(\langle, \rangle \mid V^{H}), \\ \\
	\omega(H) = \left|\frac{\omega_{E / \cK}^0}{\omega_{E / \cF^{H}}^0} \right|_{\cF^{H}}, & d(H) = |H|^{-\dim V^{H}}.
\end{array}
$$
The rational pairing on $V$ is obtained by writing $V$ as a combination of permutation representations and using the pairings defined in Example \ref{ex:perm_pair}. These functions are well-defined on conjugacy classes and define functions on $\B(D)$ taking values in $\bQ^{\times} / \bQ^{\times 2}$. 
\end{notation}
We first note that we may reduce to the case where $f_{\cF / \cK}$ is a power of $2$.

\begin{proposition}
	\label{lem:f_2}
	For the case of additive reduction, it suffices to prove Theorem \ref{thm:main} when $f_{\cF / \cK}$ is a power of $2$. 	
\end{proposition}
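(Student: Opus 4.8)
The strategy is to factor the "degree $f$" part of each place-counting function through the cyclic quotient $D/I$ and peel off the prime-to-$2$ part of $f$ using the descent lemmas already proved. Concretely, write $\cF / \cK$ with inertia $I\triangleleft D$, so $D/I$ is cyclic of order $f_{\cF/\cK}\cdot(\text{something})$; actually $D/I$ is cyclic and $f_{\cF/\cK} = |D/I|$ (the residue extension $\cF/\cK$ is unramified of degree $f_{\cF/\cK}$ and its Galois group is $D/I$). Let $f = f_{\cF/\cK}$ and write $f = 2^a m$ with $m$ odd. Let $D_0$ be the unique subgroup of $D$ containing $I$ with $[D_0 : I] = 2^a$, i.e. $D_0/I$ is the $2$-Sylow of the cyclic group $D/I$. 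Then $[D:D_0] = m$ is odd. I claim every local function entering the proof of Proposition~\ref{prop:to_prove} — namely $C_v$, $\cD_V$, and the auxiliary $\varphi_W$ — is of the form $(D, I, \psi)$ for a $\psi(e,f)$ satisfying the hypotheses of Proposition~\ref{prop:rename_descent}(ii), so that $(D,I,\psi) = (D_0, I, \psi)$, reducing the whole problem to the group $D_0$ in place of $D$, where $f_{\cF/\cK^{D_0}}$-analogue — wait, more carefully: one replaces $\cF/\cK$ by $\cF/\cK'$ where $\cK' = \cF^{D_0}$, which has residue degree a power of $2$ over... no. Let me restate: the reduction is that it suffices to prove the identity $C_v(\Theta) \equiv \cC_\Theta(V)$ for the group $D_0$, and $D_0/I$ has $2$-power order.

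More precisely, the key mechanism is Proposition~\ref{prop:rename_descent}(ii): if $I < D_0 < D$ and $\psi(e,f) = \psi(e,f/m)^m$ whenever $m \mid f$ and $m \mid [D:D_0]$, then $(D,I,\psi) = (D_0,I,\psi)$. First I would check that each of the relevant functions — the Tamagawa/fudge-factor function $C_v$ written as a $D$-local function (it is $(D, c_v\cdot\omega)$, and in all additive cases the dependence on $f$ is through formulas like $c_v(H)$ depending on whether $2\mid f$, powers of $e$, etc.) and the regulator-constant function $\cD_V$ (a product of $\cD$'s of permutation representations, each of the form $(D', I, ef)$ or similar by Corollary~\ref{cor:reg_perm}) — satisfies this $m$-th-power compatibility for odd $m$. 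Since $m = [D:D_0]$ is odd and coprime to $2$, the only $f$-conditions appearing ("$2\mid f$", "$\fe \mid$ something", values of $D/D'$ which is of order dividing $12$) all behave multiplicatively under replacing $f$ by $f/m$ and raising to the $m$-th power: raising to an odd power does not change a class mod squares, and divisibility by a power of $2$ or by $\fe \mid 12$ is unaffected by removing an odd factor. So Proposition~\ref{prop:rename_descent}(ii) applies to give $(D,I,\psi) = (D_0,I,\psi)$ for each such $\psi$.

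Then, since $\Theta$ a $K$-relation in $\B(D)$ has $\Res_{D_0}\Theta$ a $K$-relation in $\B(D_0)$ (Proposition~\ref{prop:K_rels_properties}(2)), and since $C_v(\Theta) = (D_0, c_v\cdot\omega)(\Theta)$ evaluated via restriction equals the corresponding function on $\B(D_0)$ applied to $\Res_{D_0}\Theta$ — and likewise $\cC_\Theta(V) \sim_K \cC_{\Res_{D_0}\Theta}(\Res_{D_0}V)$ by Proposition~\ref{prop:ind_res_reg} — the identity for $D$ follows from the identity for $D_0$. Note $\Res_{D_0}V$ is still the $\bQ D_0$-representation attached by Rohrlich's table to $E/\cK'$ where $\cK' = \cF^{D_0}$: this needs a small check that $E/\cK'$ has the same reduction case and that $\cL \subset \cF$ still, and that $\cF/\cK'$ has residue degree $f/m = 2^a$; this is exactly the content of the reduction. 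I would phrase the conclusion as: the functions $C_v, \cD_V$ on $\B(D)$ are pulled back along $\Res_{D_0}^D$ from functions on $\B(D_0)$ (up to $N_{K/\bQ}(K^\times)$), and these are of the form appearing in the case $f_{\cF/\cK'}$ a power of $2$.

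\textbf{Main obstacle.} The genuine work is verifying that $C_v$ — the product of Tamagawa numbers and differential fudge factors — genuinely satisfies the $m$-th-power hypothesis of Proposition~\ref{prop:rename_descent}(ii) in each additive-reduction case $(\mathrm{2C}), (\mathrm{2D}), (\mathrm{2M})$, including the differential term $\omega(H) = |\omega^0_{E/\cK}/\omega^0_{E/\cF^H}|_{\cF^H}$ which scales with ramification data, and tracking how Tate's algorithm output over $\cF^H$ depends only on $(e,f)$ in a way compatible with removing odd factors of $f$. This case analysis, and making sure the hypothesis is checked for $\psi(e,f)$ a function of the \emph{product} or of $f$ alone as needed (so that part (i) of Proposition~\ref{prop:rename_descent} can also be invoked to reduce the inertia group when convenient), is where the care lies; the representation-theoretic bookkeeping on the $\cD_V$ side is comparatively routine given Corollary~\ref{cor:reg_perm} and Corollary~\ref{corr:cyclic_f_const}. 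I would organize the proof as: (1) identify $D_0$; (2) show $\Res_{D_0}$ carries the $K$-relation and the representation $V$ correctly and identifies the reduction data; (3) invoke Proposition~\ref{prop:rename_descent}(ii) termwise; (4) conclude.
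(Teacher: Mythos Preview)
Your overall strategy---pass to the subgroup $D_0 \supset I$ with $D_0/I$ the $2$-Sylow of the cyclic group $D/I$, so that $[D:D_0]$ is odd---is the right one and matches what the paper (deferring to \cite[Lemma 3.13]{tamroot}) does. But two pieces of the mechanism you invoke do not quite fit.

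First, neither $c_v$ nor $\cD_V$ is literally of the form $(D,I,\psi)$ for a function $\psi(e,f)$, so Proposition~\ref{prop:rename_descent}(ii) does not apply termwise. The Tamagawa number $c_v(E/\cF^H)$ depends on whether $\sqrt{B}$ or $\sqrt{\Delta}$ lies in $\cF^H$ (Lemma~\ref{lem_add_tam}), which is not determined by $(e_{\cF^H/\cK},f_{\cF^H/\cK})$ alone; and $\cD_V(H)$ depends on the image of $H$ in $D/D'$, not just on $(e,f)$. The cleaner argument bypasses the descent lemma: for every $H\le D$ the extension $\cF^{H\cap D_0}/\cF^H$ is unramified of odd degree, so the quadratic membership conditions in Lemma~\ref{lem_add_tam} are preserved and $c_v$ is literally unchanged, while $\omega$ is raised to an odd power; hence $C_v(H)\equiv C_v(H\cap D_0)\bmod\bQ^{\times 2}$ directly.

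Second, your appeal to Proposition~\ref{prop:ind_res_reg} for the regulator-constant side is incomplete. That proposition yields $\cC_{\Res_{D_0}\Theta}(\Res_{D_0}V)\sim_K\cC_\Theta(\Ind_{D_0}^D\Res_{D_0}V)$, but you still need $\cC_\Theta(V)\sim_K\cC_\Theta(\Ind_{D_0}^D\Res_{D_0}V)$. This does hold: since $\Ind_{D_0}^D\Res_{D_0}V\simeq V\otimes\bQ[D/D_0]$ and $m=[D:D_0]$ is odd, one has $\cC_\Theta(V)^m\equiv\cC_\Theta(V)\bmod\bQ^{\times 2}$, and it remains to show $\cC_\Theta(V\otimes\chi\oplus V\otimes\overline\chi)\sim_K 1$ for each nontrivial character $\chi$ of $D/D_0$. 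Here one checks (using that $D_0D'=D$, which follows from $I\subset D_0$ and $ID'/D'=C_\fe$ having even index in $D/D'$) that $\chi$ is nontrivial on $D'$, whence every complex constituent of $V\otimes\chi$ is non-self-dual and Proposition~\ref{prop:not_self_dual} applies. This missing step is short but not automatic from the proposition you cite.
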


\begin{proof}
	Follow the proof of \cite[Lemma 3.13]{tamroot}.
\end{proof}

\subsubsection{Tamagawa term}\label{sec:tam_add}
We show that $c \sim_K a$ when $E / \cK$ has potentially good reduction, and $c \sim_K d$ when $E / \cK$ has potentially multiplicative reduction. We will use the corresponding result for Brauer relations from \cite[\S3.iv]{tamroot}, namely that if $\Psi \in \B(D)$ is a Brauer relation, then 
$$ 
\begin{array}{l l }
	c(\Psi) \equiv a(\Psi) \mod \bQ^{\times 2} & \text{Case $2C / 2D$,}\\
	c(\Psi) \equiv d(\Psi) \mod \bQ^{\times 2} & \text{Case $2M$.}

\end{array}
$$

First suppose that $E / \cK$ has potentially good reduction. Letting $\fe = e_{\cL / \cK}$, we have $\delta \fe \equiv 0 \mod 12$ as $E / \cL$ has good reduction, and $\fe$ is the smallest integer for which this holds. The case of potentially good reduction breaks up into the following two cases: 
$$
\begin{array}{l  l  l l l }
	\text{Case $2C$:}  & D / D' \simeq C_{\fe} & V =  0 &(\fe \in \{ 2,3,4,6\}, & q \equiv 1 \mod \fe), \\
	\text{Case $2D$:}  & D / D' \simeq D_{2\fe} & V = \trivial \oplus \eta \oplus \sigma &  (\fe \in \{ 3, 4, 6\},&  q \equiv -1 \mod \fe),
\end{array}
$$
where $V$ is a representation of $D / D' = \Gal(\cL / \cK)$, and $\eta$ and $\sigma$ are the quadratic unramified character and unique faithful representation of $D / D'$ respectively. 
The congruence $q \equiv \pm 1 \mod \fe$ follows from \cite[Theorem 2]{RohG}.

Since $V$ is a representation of $D / D'$, the function $a$ is lifted from $D / D' \simeq \Gal(\cL / \cK)$.
As argued in \cite[Proposition 3.16]{tamroot}, the function $c$ is lifted from $\Gal(\cL^u / \cK)$, where $\cL^u$ is the maximal unramified extension of $\cL$ contained in $\cF$. By Proposition \ref{prop:res_quot}(ii) to show $c \sim_K a$ we may replace $\cF$ by $\cL^{u}$ and $D$ by $\Gal(\cL^{u} / \cK)$. Thus we have $e_{\cF / \cK} = \fe$, $f_{\cF / \cK} = 2^k$ with $k \geq 0$, and 
\begin{equation}\label{nice-D}
	D = C_{\fe} \rtimes C_{2^k} = \langle x, y \mid x^{\fe} = y^{2^k} = 1, y x y^{-1} = x^{\pm 1}\rangle,\tag{$\dagger$}
\end{equation}
with $x^{+1}$ in Case $2C$ and $x^{-1}$ in Case $2D$. 

\begin{proposition}\label{prop:tam_add}
	Let $E / \cK$ have additive reduction. If $E / \cK$ has potentially good reduction, then $c \sim_K a$. If $E / \cK$ has potentially multiplicative reduction, then $c \sim_K d$. 
\end{proposition}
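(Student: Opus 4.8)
The plan is to run the case-by-case argument of \cite[\S3.iv]{tamroot}, which proves the analogous statements modulo $\bQ^{\times 2}$ for Brauer relations, and to upgrade each step to $K$-relations by replacing every appeal to a Brauer-relation triviality lemma of \cite[\S2.iii]{tamroot} by its $K$-relation counterpart from \S\ref{sec:rep_theory}. What makes this possible is that $\bQ^{\times 2}\subseteq N_{K/\bQ}(K^{\times})$ for every quadratic field $K$: thus the quoted inputs $c_v(\Psi)\equiv a(\Psi)$ (Cases $2C$/$2D$) and $c_v(\Psi)\equiv d(\Psi)$ (Case $2M$) modulo $\bQ^{\times 2}$ for Brauer relations $\Psi$ already give that $c_v/a$ and $c_v/d$ are trivial on Brauer relations in the sense of \S\ref{sec:rep_theory}, and the only extra work is to control the functions that arise once a $K$-relation is pushed through the Burnside-ring calculus.

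Concretely, by Proposition \ref{lem:f_2} we may assume $f_{\cF/\cK}$ is a power of $2$, and for potentially good reduction the passage to $\cL^u$ already carried out reduces $D$ to the group \eqref{nice-D}. The representation $V$, and hence $a$ and $d$, is inflated from $D/D'=\Gal(\cL/\cK)$, which is cyclic or dihedral, and there $V$ is a virtual permutation module; so by Example \ref{ex:perm_pair} and Corollary \ref{cor:reg_perm} the function $a$ (resp.\ $d$) rewrites as a product of functions of the shape $(D,I,ef)$, $(D,\alpha)$, and similar, each trivial on $K$-relations by Corollary \ref{corr:cyclic_f_const}. On the Tamagawa side, Tate's algorithm identifies $c_v(E/(\cL^u)^{H})$ as an explicit function of the ramification and residue degrees of $(\cL^u)^{H}/\cK$: for $\fe\in\{4,6\}$ the Kodaira type over every intermediate field is one of $\mathrm{III},\mathrm{III}^{*}$ (resp.\ $\mathrm{II},\mathrm{II}^{*}$) so $c_v$ is constant and $c_v\sim_K a$ by Corollary \ref{corr:cyclic_f_const}(i); for $\fe=3$ the type is $\mathrm{IV}$ or $\mathrm{IV}^{*}$, and $c_v$ equals $3$ precisely when $\mu_3$ lies in the residue field of $(\cL^u)^{H}$ (equivalently, since $q\equiv 2\bmod 3$ in Case $2D$, when the residue degree is even) and $1$ otherwise, placing it in the family treated by Corollary \ref{corr:cyclic_f_const}(iii). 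In the potentially multiplicative case one instead tracks whether $E/(\cL^u)^{H}$ acquires split or non-split multiplicative reduction, which is once more governed by the residue and ramification degrees modulo $2$ and is matched against $d$ by the same kind of manipulation.

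The main obstacle is not representation-theoretic but the Tamagawa bookkeeping: one must work through the finitely many groups $C_{\fe}\rtimes C_{2^{k}}$ with $\fe\le 6$, pin down $c_v$ on every conjugacy class of subgroups from Tate's algorithm — including its behaviour under unramified base change in the type-$\mathrm{IV}$/$\mathrm{IV}^{*}$ situation, the type-$\mathrm{I}_0^{*}$ analysis in the $\fe=2$ sub-case of $2C$, and the three potentially multiplicative sub-cases — and verify in each instance that the resulting function on $\B(D)$ agrees with $a$ or $d$ modulo $N_{K/\bQ}(K^{\times})$; some of the elementary functions that occur require checking that the relevant Tamagawa ratios lie in the norm groups appearing in the hypotheses of Corollary \ref{corr:cyclic_f_const}(iii) and Lemma \ref{lem:alpha_beta}. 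These verifications follow the template of \cite[\S3.iv]{tamroot}, with Corollary \ref{corr:cyclic_f_const} and Proposition \ref{prop:res_quot} standing in for \cite[Theorem 2.36]{tamroot} and its corollaries, and I would relegate them to Appendices \ref{appendix1} and \ref{appendix2}.
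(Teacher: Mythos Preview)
Your overall strategy matches the paper's: reduce to the group \eqref{nice-D}, run the case-by-case analysis of \cite[\S3.iv]{tamroot}, and upgrade each step using the $K$-relation versions of the triviality lemmas (Corollary \ref{corr:cyclic_f_const}, Lemma \ref{lem:alpha_beta}, Proposition \ref{prop:from_irr}), with the Brauer-relation input $c_v/a,\,c_v/d\in\bQ^{\times 2}$ supplying the base case. The paper likewise defers the actual verification to Appendix \ref{appendix1}.

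However, two concrete points in your sketch are wrong and would not go through as written.

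First, your claim that for $\fe\in\{4,6\}$ the Kodaira type is constant over every intermediate field, so that $c_v$ is constant, is false. The type depends on $\gcd(\delta e,12)$ and changes as the ramification degree $e$ varies through the divisors of $\fe$. For example with $\fe=4$ one has $\delta\in\{3,9\}$, giving type $\mathrm{III}/\mathrm{III}^{*}$ at $e=1$ but type $\mathrm{I}_0^{*}$ at $e=2$ (where $c_v\in\{1,2,4\}$) and type $\mathrm{I}_0$ at $e=4$. The paper's actual argument for Case $2C$, $\fe\in\{2,4\}$, is not that $c_v$ is constant but that $c_v(H)\in\{1,2,4\}$ and $D$ is an abelian $2$-group, so $2\in N_{\bQ(\chi)/\bQ}(\bQ(\chi)^\times)$ for every irreducible $\chi$ and Proposition \ref{prop:from_irr}(i) applies. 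The $\fe=6$ case in $2C$ is genuinely more delicate (it requires Lemma \ref{lem:3_tame} and an explicit check on specific $\Theta_{\chi\psi}$).

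Second, and more seriously, you invoke Proposition \ref{prop:from_irr} implicitly, but that result requires $\hat C(D)=1$. In Case $2D$ with $\fe=4$ and $k\ge 2$ one has $D=C_4\rtimes C_{2^k}$ with $\hat C(D)\simeq\bZ/2\bZ$, generated by the inflation of the faithful $Q_8$-representation (Lemma \ref{lem:repC4C2k}); for $\fe=6$ the obstruction is described in Lemma \ref{lem:repC6C2k}. In these cases the reduction to irreducibles does not suffice, and the paper has to exhibit explicit $\Theta_\chi$ with $\bC[D/\Theta_\chi]\simeq N_{\bQ(\chi)/\bQ}(\chi)^{\oplus 2}$ and check $a/c_v(\Theta_\chi)\in\bQ^{\times 2}$ by hand, then argue separately (as in the proof of Proposition \ref{tam-2D}). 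Your sketch gives no indication of this obstruction or how to handle it.
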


\begin{proof}
	The computations are deferred to the appendix. In Propositions \ref{tam-2C} and \ref{tam-2D}, we prove that $c \sim_K a$ as functions on $\B(D)$ when $D$ is of the form \eqref{nice-D}. For the case of potentially multiplicative reduction, we establish that $c \sim_K d$ in Proposition \ref{prop:tam_d}.  
\end{proof}

\subsubsection{Differential term}

We next consider the function $\omega$. 
As described in \cite[\S3.iv]{tamroot} one has
$$
\omega = \begin{cases}
	\left(D, I, q^{\left\lfloor \delta e  / 12  \right\rfloor f}\right), & \text{if $E$ has potentially good reduction,} \\
	\left(D, I, q^{\left\lfloor e / 2 \right\rfloor f}\right), & \text{if $E$ has potentially multiplicative reduction.} 
\end{cases}
$$

We first focus on potentially good reduction and show that $\omega \sim_K d$. Recall that $q \equiv (-1)^{t} \mod \fe$, where $t = 0$ in Case $2C$ and $t = 1$ in Case $2D$. If $q$ is an even power of the residue characteristic $l$, then $t = 0$ and $\omega \sim_K 1 \sim_K d$ (since $V = 0$ in Case $2C$). Thus we may assume that $q$ is an odd power of $l$ and so $q \equiv l \mod \fe$.  

Following the proof of \cite[Proposition 3.18]{tamroot}, we have 
$$ 
\left( D, I, q^{\left\lfloor \delta e /12 \right\rfloor f} \right) = \left(I, W, l^{\left\lfloor \delta e f / 12 \right\rfloor} \right)
$$ 
as functions to $\bQ^{\times} / \bQ^{\times 2}$. As $W$ is an $l$-group, we define $k$ such that $e = l^k$, and define $n \in \{ 0, 1 \}$ by 
$$
l \delta \equiv q \delta \equiv (-1)^t\delta + 12n \mod 24. 
$$
Then
$$
\omega = \left(I, W, l^{\left\lfloor (-1)^{t k } \delta f / 12 \right\rfloor} \right)\left(I, W, e^{fn} \right)
$$
as a function to $\bQ^{\times}/ \bQ^{\times 2}$. We first show the second factor is trivial.

\begin{lemma}
One has $\left(I, W, e^{fn}\right) \sim_K 1$.
\end{lemma}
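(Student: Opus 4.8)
The plan is to show that the function $\bigl(I, W, e^{fn}\bigr)$ on $\B(D)$ is trivial on $K$-relations, where $n \in \{0,1\}$ is fixed. Recall that $(I, W, \psi)$ with $\psi(e,f) = e^{fn}$ is, by Definition \ref{def:D_ef}, the function $H' \mapsto \prod_x \psi(\ldots)$, and since $W$ is an $l$-group and $I / W$ is cyclic (it is tamely ramified), this function factors through the cyclic quotient $I / W$. The key observation is that the exponent $n$ is either $0$ or $1$: if $n = 0$ the function is identically $1$ and there is nothing to prove, so we may assume $n = 1$, in which case $\psi(e,f) = e^{f}$.

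First I would reduce, via Proposition \ref{prop:res_quot}(i) together with Proposition \ref{prop:rename_descent}, to working on the cyclic group $I / W \simeq C_{l^k}$, where $e$ becomes the index $[C_{l^k} : \overline{H}]$ and the residue-degree variable $f$ contributes through the remaining ramification. Since here $f$ is a power of $2$ by Proposition \ref{lem:f_2}, and $e = l^k$ is a power of the odd prime $l$, the value $e^{f}$ on a $\bQ(\zeta_d)$-relation $\Psi_d$ of $C_{l^k}$ needs to be checked against $N_{\bQ(\zeta_d)/\bQ}(\bQ(\zeta_d)^{\times})$. By Example \ref{ex:cyclic_groups1}, it suffices to evaluate the function on each $\Psi_d$ for $d \mid l^k$, i.e.\ $d \in \{1, l, l^2, \ldots, l^k\}$. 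Computing as in Lemma \ref{lem:f_on_cyclic}, on $\Psi_1$ the function is a constant (a power of $l$ times a square, or simply a rational square after incorporating the square-class reduction), which is trivially a norm; and on $\Psi_d$ for $d = l^j > 1$ the product $\prod_{d' \mid d} (\text{value})^{\mu(d/d')}$ telescopes to an expression of the form $l^{\text{something}}$, which lies in $N_{\bQ(\zeta_d)/\bQ}(\bQ(\zeta_d)^{\times})$ precisely because $l$ is totally ramified in $\bQ(\zeta_{l^j})/\bQ$ and hence $l = N_{\bQ(\zeta_{l^j})/\bQ}(1 - \zeta_{l^j})$ up to a unit (indeed $\Phi_{l^j}(1) = l$).

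Concretely, I expect the cleanest route is to recognize the function $\overline{H} \mapsto e^{f}$ on $C_{l^k}$ as a special case of one of the functions already handled in Corollary \ref{corr:cyclic_f_const} or Lemmas \ref{lem:f_on_cyclic}--\ref{lem:alpha_beta}: since we are working modulo $\bQ^{\times 2}$ and $f$ is always a power of $2$, $e^{f} \equiv e^{f}$ and in square-classes $e^{f}$ is a square whenever $f$ is even, so the only surviving contribution is from places with $f$ odd (i.e.\ $f = 1$), reducing to the function $\overline H \mapsto e$, which is $[C_{l^k} : \overline H]$, and this is trivial on $K$-relations by Lemma \ref{lem:f_on_cyclic}. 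Either way, the result follows. The main obstacle — and the step requiring the most care — is the bookkeeping in the square-class reduction: one must track precisely when the exponent $f$ (a power of $2$) makes $e^{f}$ a square versus when it does not, and make sure the descent from $(I, W, e^{f})$ to a function on the cyclic quotient $I/W$ is carried out with the correct hypotheses of Proposition \ref{prop:rename_descent}(ii), namely that $\psi(e, f) = \psi(e, f/m)^m$ when $m$ divides both $f$ and the relevant index — which holds here since $e^{f} = (e^{f/m})^{m}$.
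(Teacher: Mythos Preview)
Your approach rests on a misidentification of the objects. The function $(I, W, e^{f})$ does \emph{not} factor through $I/W$: in Definition~\ref{def:D_ef} the variable $e$ is $|W|/|U \cap W|$ for $U \leq I$, which depends on how $U$ meets $W$, not merely on the image of $U$ in $I/W$. Moreover $I/W$ is the tame inertia quotient, so its order is prime to $l$; it is certainly not $C_{l^k}$. Finally, the ``$f$'' in $(I, W, e^{f})$ is the index in $I/W$, which is unrelated to the residue degree $f_{\cF/\cK} = [D:I]$; Proposition~\ref{lem:f_2} says nothing about it, and your square-class reduction (``$e^f$ is a square whenever $f$ is even'') has no force here.

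The paper uses Proposition~\ref{prop:rename_descent}(ii) in the other direction. Since $e^f = (e^{f/m})^m$, one may shrink the outer group from $I$ down to $W$, giving $(I, W, e^f) = (W, W, e)$. Now one is inside the $l$-group $W$, which need not be cyclic, so Lemma~\ref{lem:f_on_cyclic} does not apply directly. The paper performs a nilpotent descent: for $W_0 \triangleleft W$ of index $l$ one has $(W, W, e) = (W, W_0, ef) \sim_K (W, W_0, e) = (W_0, W_0, e)$ via Corollary~\ref{corr:cyclic_f_const}(ii), and iterating reduces to $(C_l, C_l, e) \sim_K 1$, where Lemma~\ref{lem:f_on_cyclic} finally applies.
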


\begin{proof}
	If $n = 0$ this is trivial, so assume that $n = 1$. Then $(I, W, e^{f}) = (W, W, e)$. The group $W$ is nilpotent. Let $W_0 \triangleleft W$ be of index $l$. Then
	$$
	(W, W, e) = (W, W_0, ef) \stackrel{\text{\tiny{ Corr \ref{corr:cyclic_f_const}}}}{=} (W, W_0, e) = (W_0, W_0, e).
	$$
	By repeatedly taking index $l$ subgroups, it suffices to prove that $(C_l, C_l, e) \sim_K 1$, which is true by Lemma \ref{lem:f_on_cyclic}.
\end{proof}
If $t = 1$, then by \cite[Proposition 3.18]{tamroot},
$$
\left( I, W, l^{\left\lfloor (-1)^{tk} \delta f / 12 \right\rfloor} \right) = \left(I, W, l^{\left\lfloor \delta f / 12 \right\rfloor} \right) \left(I, W, \left\{ \begin{smallmatrix} e, & \text{if } \fe \nmid f \\ 1, & \text{if }\fe \mid f \end{smallmatrix} \right. \right), 
$$
with only the first factor when $t = 0$. 

In Case $2C$, $d \sim_K 1$. In Case $2D$, the function $d \colon H \mapsto |H|^{-\dim V^{H}}$ depends on $\cL \cap \cF^{H}$ as $V$ is a $D / D'$-module. Following the proof of \cite[Proposition 3.19]{tamroot}, we can rewrite $d$ as in the following lemma.  
\begin{lemma}[{\cite[Proposition 3.19]{tamroot}}]
	In Case $2D$, $d \sim_K \left(I, W, \left\{ \begin{smallmatrix} ef, & \text{if }\fe \nmid f \\ 1, & \text{if }\fe \mid f,\end{smallmatrix} \right. \right)$.  
\end{lemma}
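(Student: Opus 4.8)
The plan is to rerun the proof of \cite[Proposition 3.19]{tamroot} — which proves exactly this identity modulo $\bQ^{\times 2}$ on Brauer relations — now tracking congruences modulo $N_{K/\bQ}(K^{\times})$ instead of modulo $\bQ^{\times 2}$, and replacing each Brauer-relation triviality input by its $K$-relation counterpart from \S\ref{sec:rep_theory}: \cite[Corollary 2.44]{tamroot} by Corollary \ref{cor:reg_perm}, \cite[Theorem 2.36]{tamroot} by Corollary \ref{corr:cyclic_f_const} together with Proposition \ref{prop:rename_descent}, and Lemmas \ref{lem:f_on_cyclic}, \ref{lem:const_cyclic}, \ref{lem:alpha_beta} for the explicit cyclic functions that arise. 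Since $\bQ^{\times 2}\subseteq N_{K/\bQ}(K^{\times})$, the cited result already gives that $d/(I,W,\psi)$ — with $\psi(e,f)=ef$ when $\fe\nmid f$ and $\psi(e,f)=1$ when $\fe\mid f$ — is trivial on Brauer relations; what remains is to propagate this to all $K$-relations with $K/\bQ$ quadratic.

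First I would record the explicit shape of $d$. As $V=\trivial\oplus\eta\oplus\sigma$ is inflated from $D/D'\cong D_{2\fe}$ with $\fe\in\{3,4,6\}$, the integer $\dim V^{H}$ depends only on the image $\bar H=HD'/D'$ in $D_{2\fe}$: it equals $4$ if $\bar H=1$, equals $2$ if $\bar H$ is a non-trivial rotation subgroup or a reflection subgroup of order $2$, and equals $1$ if $\bar H$ is a dihedral subgroup of order $>2$. After reducing to $f_{\cF/\cK}$ a power of $2$ by Proposition \ref{lem:f_2}, and using that $ID'/D'$ is the rotation subgroup $C_{\fe}$ to rephrase the condition on $\bar H$ in terms of ramification and residue data, the same manipulation as in \cite[Proposition 3.19]{tamroot} expresses $d/(I,W,\psi)$ as a product of three pieces: a function inflated from the cyclic quotient $D/I$; a wild-inertia correction that, by restriction to index-$l$ subgroups (Proposition \ref{prop:res_quot}), reduces to a function on $C_{l}$; and a function of the form $\big(I,W,\{\alpha \text{ if }\fe\nmid f,\ \beta \text{ if }\fe\mid f\}\big)$ with $\alpha,\beta\in\bQ^{\times}$ built from $\fe$- and $2$-power data.

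Each piece is then trivial on $K$-relations. The first and second follow from Corollary \ref{corr:cyclic_f_const}(i),(ii), Lemma \ref{lem:const_cyclic} and Lemma \ref{lem:f_on_cyclic}, applied after pushing down to the relevant cyclic or $l$-group quotient via Proposition \ref{prop:res_quot}. For the third, when $\fe\in\{3,6\}$ the condition $\fe\mid f$ cannot hold for a $2$-power $f$, so the function is constant and Lemma \ref{lem:const_cyclic} applies; when $\fe=4$ it has the shape of Corollary \ref{corr:cyclic_f_const}(iii) (equivalently Lemma \ref{lem:alpha_beta}) with $k=4$, and triviality on $K$-relations needs $\alpha/\beta\in N_{\bQ(\zeta_{4})/\bQ}(\bQ(\zeta_{4})^{\times})$, which holds since $\alpha/\beta$ is a power of $2$ and $2=N_{\bQ(\zeta_{4})/\bQ}(1-\zeta_{4})$. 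Multiplying the pieces gives $d/(I,W,\psi)\sim_K 1$. The main obstacle is precisely this last bookkeeping: working modulo squares one may be cavalier about which rationals appear, whereas to land in $N_{K/\bQ}(K^{\times})$ one must arrange the rewriting so that every atomic factor is literally one of the functions proved trivial on $K$-relations in \S\ref{sec:rep_theory} — in particular checking that each split across $\fe\mid f$ contributes a genuine cyclotomic norm rather than merely a well-behaved square class — and, should some step leave a function on a group $D_{0}$ with $\hat{C}(D_{0})\neq 1$, reduce further by Proposition \ref{prop:res_quot} or invoke Proposition \ref{prop:from_irr} as in the non-split multiplicative case.
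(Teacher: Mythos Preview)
Your proposal is correct and matches the paper's approach exactly: the paper gives no independent proof of this lemma, merely stating that it follows by rerunning the argument of \cite[Proposition 3.19]{tamroot} with the Brauer-relation inputs replaced by their $K$-relation analogues from \S\ref{sec:rep_theory}, which is precisely what you outline. Your sketch is in fact considerably more detailed than what the paper records.
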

Thus 
$$ d / \omega \sim_K \begin{cases}
	(I, W, l^{\left\lfloor  \delta f / 12 \right\rfloor}), & \text{Case $2C$}, \\
	(I, W, l^{\left\lfloor \delta f /12 \right\rfloor})\cdot\left(I, W, \left\{\begin{smallmatrix} f, & \text{if }\fe \nmid f, \\ 1 & \text{if }\fe \mid f\end{smallmatrix}\right.\right), & \text{Case 2D}. 
\end{cases}
$$
In both cases, $d / \omega$ is a function of $f$ and so descends to a function on the quotient $I / W \simeq C_r$. In Appendix \ref{appendix2} we show that these functions are trivial on $K$-relations and so obtain the following result. 

\begin{proposition}\label{prop:pot_good}
Let $E / \cK$ have potentially good reduction. Then $d \sim_K \omega$. 
\end{proposition}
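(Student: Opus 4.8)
The plan is to build on the reductions already in place. By the displayed formula for $d/\omega$, and since any factor already known to lie in $N_{K/\bQ}(K^{\times})$ may be discarded, it suffices to prove that the function $(I,W,l^{\lfloor\delta f/12\rfloor})$ is trivial on $K$-relations, together with — in Case $2D$ — the auxiliary function sending a subgroup to $f$ when $\fe\nmid f$ and to $1$ when $\fe\mid f$. Each of these depends on $f$ alone, so it is $I$-local with underlying function on $\B(I)$ inflated from the cyclic quotient $I/W$; two applications of Proposition~\ref{prop:res_quot} (parts (i) and (ii)) therefore reduce the problem to showing that the corresponding functions on the Burnside ring of the cyclic group $I/W\simeq C_r$ are trivial on $K$-relations. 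Since $C_r$ is cyclic, Example~\ref{ex:cyclic_groups2} reduces this further to checking, for each $d\mid r$, that the value of each function on the $\bQ(\zeta_d)$-relation $\Psi_d=\sum_{d'\mid d}\mu(d/d')\,C_{r/d'}$ is a norm from $\bQ(\zeta_d)$.

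The auxiliary function in Case $2D$ is handled quickly. After Proposition~\ref{lem:f_2} we may take $f_{\cF/\cK}$, and hence all relevant values of $f$, to be a power of $2$; thus $\fe\mid f$ can occur only for $\fe=4$. For $\fe\in\{3,6\}$ the function is just $H\mapsto f$, which is trivial on $K$-relations by Lemma~\ref{lem:f_on_cyclic}, and for $\fe=4$ it is a combination covered by Corollary~\ref{corr:cyclic_f_const}. The substance of the proof therefore lies in the floor term: evaluating $(I,W,l^{\lfloor\delta f/12\rfloor})$ on $\Psi_d$ yields $l$ raised to an explicit $\bZ$-linear (indeed M\"obius-weighted) combination of the integers $\lfloor\delta d'/12\rfloor$ for $d'\mid d$, and the task is to show that, once the contribution of the auxiliary term is taken into account, the resulting power of $l$ lies in $N_{\bQ(\zeta_d)/\bQ}(\bQ(\zeta_d)^{\times})$.

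The main obstacle, and where I would focus the effort, is exactly this verification. Since $x\mapsto\lfloor x\rfloor$ is not additive, the exponent of $l$ cannot be simplified formally and must instead be evaluated by a case analysis according to $\delta$ modulo $12$ and the divisor $d$. The inputs that make it work are: the relation $\delta\fe\equiv0\bmod 12$, which constrains the floor values once $\fe$ is fixed; the reduction to $f$ (hence $d$) a power of $2$; and, in the case where $q$ is an odd power of $l$, the congruence $q\equiv(-1)^{t}\bmod\fe$, which fixes the residue class of $l$ modulo $\fe$ and thereby determines exactly which powers of $l$ are norms from $\bQ(\zeta_d)$. Checking case by case that the combined exponent always lands in the allowed range — typically because it is $0$, even, or divisible by $[\bQ(\zeta_d):\bQ]$ — is routine but somewhat lengthy, so I would relegate it to Appendix~\ref{appendix2}. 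Together with the reductions above this gives $(I,W,l^{\lfloor\delta f/12\rfloor})\sim_K 1$, and hence $d\sim_K\omega$.
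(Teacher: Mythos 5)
There are two genuine gaps here. First, you have misidentified which ``$f$'' occurs in the functions $(I,W,l^{\lfloor\delta f/12\rfloor})$ and $\left(I,W,\left\{\begin{smallmatrix} f, & \text{if }\fe\nmid f\\ 1, & \text{if }\fe\mid f\end{smallmatrix}\right.\right)$. In the notation of Definition \ref{def:D_ef} applied to the pair $W\triangleleft I$, this $f$ is the index of the image of a subgroup in the tame quotient $I/W\simeq C_r$, i.e.\ the \emph{tame ramification} parameter --- not the residue degree of $\cF^{H}/\cK$. Proposition \ref{lem:f_2} constrains $f_{\cF/\cK}$, not $r$; in fact $\fe\mid r$ always (as $\fe$ is coprime to $l\geq 5$), and the divisors $n$ of $r$ that actually occur include $3$, $p^k$, $12$, $2\cdot 3^k$, etc. So your quick dismissal of the auxiliary factor in Case $2D$ (``$\fe\mid f$ only for $\fe=4$'', ``for $\fe\in\{3,6\}$ the function is just $H\mapsto f$'') does not apply: the auxiliary function genuinely takes both branches and must be analysed jointly with the floor term.

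Second, and more seriously, the reduction via Example \ref{ex:cyclic_groups2} to checking that the value on each $\Psi_d$ lies in $N_{\bQ(\zeta_d)/\bQ}(\bQ(\zeta_d)^{\times})$ cannot succeed, because that statement is false. For $\fe=2$ (so $\delta\equiv 6\bmod 12$) one computes $h_{\fe}(\Psi_{p^k})=l^{\lfloor\delta p^k/12\rfloor-\lfloor\delta p^{k-1}/12\rfloor}\equiv l\bmod\bQ^{\times 2}$ whenever $p\equiv 3\bmod 4$, and $l$ is a norm from $\bQ(\zeta_{p^k})$ (or even from its quadratic subfield $\bQ(\sqrt{p^*})$) only under splitting conditions on $l$ modulo $p$ that the congruence $q\equiv(-1)^t\bmod\fe$ says nothing about: e.g.\ $l=5$, $p=3$ gives $5\notin N_{\bQ(\zeta_3)/\bQ}(\bQ(\zeta_3)^{\times})$. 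The argument must retain more of the relation than its image in $\B(C_r)$: writing $W\cdot\Res_I\Theta/W=\sum_n a_n\Psi_n$, one has $a_n=\langle\bC[D/\Theta],\Ind_I^D\psi_n\rangle$, which is even unless $K\subset\bQ(\Ind_I^D\psi_n)$, and this character field is contained in $\bQ(\zeta_n)^{\langle\zeta_n\mapsto\zeta_n^{q}\rangle}$. That Frobenius/decomposition-field constraint is exactly what forces $l$ to split in $K$ in the only cases where the exponent $a_n$ can be odd; even then one still needs genus-theory input (Lemma \ref{lem:good_fields} and the case analysis of Appendix \ref{appendix2}) to pass from ``$l$ is the norm of an ideal of $K$'' to ``$l$ is the norm of an element of $K$''. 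Without this ingredient, the case check you propose would simply fail rather than being ``routine but lengthy''.
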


In the case of potentially multiplicative reduction, we show that $\omega \sim_K a$. 

\begin{proposition}
Let $E / \cK$ have potentially multiplicative reduction. Then $\omega \sim_K a$. 
\end{proposition}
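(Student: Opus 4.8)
The statement $\omega \sim_K a$ is the potentially multiplicative analogue of Proposition~\ref{prop:pot_good}, and together with Proposition~\ref{prop:tam_add} it finishes the proof of Proposition~\ref{prop:to_prove} in this reduction case. I would prove it by following the treatment of the differential term in \cite[\S3.iv]{tamroot} and making the same substitutions as in the semistable and potentially good cases (Corollary~\ref{cor:reg_perm} in place of \cite[Corollary 2.44]{tamroot}, Corollary~\ref{corr:cyclic_f_const} and Proposition~\ref{prop:rename_descent} in place of \cite[Theorem 2.36]{tamroot}, and Proposition~\ref{prop:indepQ} in place of \cite[Theorem 2.17]{tamroot}). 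The first step is a structural observation: with $l \geq 5$ and $E/\cK$ of additive, potentially multiplicative reduction, $E$ has Kodaira type $\mathrm{I}_m^{*}$ for some $m \geq 1$, so $v_{\cK}(c_6) = 3$ is odd and $\cL = \cK(\sqrt{-c_6})$ is a \emph{ramified} quadratic extension of $\cK$; thus $D' = \Gal(\cF/\cL)$ has index $2$ in $D$, satisfies $I \not\subseteq D'$, and $V = \chi$ is the quadratic character of $D$ with kernel $D'$.

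Next I would reduce the claim to showing $\omega \sim_K 1$. Writing $v$ for a basis vector of the line $\chi$ realised inside $\bQ[D/D']$, the differential part of $\cD_V$ is $a(H) = \det(\langle,\rangle \mid \chi^H) = \langle v,v\rangle^{\langle\chi, \bQ[D/H]\rangle}$, since $\chi^H \neq 0$ precisely when $H \subseteq D' = \ker\chi$. Because $\bQ(\chi) = \bQ$, for any $K$-relation $\Theta = \sum_i n_i H_i$ with $\bC[D/\Theta] \simeq N_{K/\bQ}(\rho)$ one has $\sum_i n_i \langle\chi, \bQ[D/H_i]\rangle = \langle\chi, N_{K/\bQ}(\rho)\rangle = 2\langle\chi,\rho\rangle$, so $a(\Theta) = \langle v,v\rangle^{2\langle\chi,\rho\rangle} \in \bQ^{\times 2} \subseteq N_{K/\bQ}(K^{\times})$; hence $a \sim_K 1$, consistent with $a \equiv 1 \bmod \bQ^{\times 2}$ on Brauer relations. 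Since $c_v \sim_K d$ by Proposition~\ref{prop:tam_add}, it is then enough to prove that $\omega = (D, I, q^{\lfloor e/2\rfloor f})$ is trivial on $K$-relations.

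For the last step I would argue as in \cite[\S3.iv]{tamroot}. By Proposition~\ref{lem:f_2} we may assume $[D:I] = f_{\cF/\cK}$ is a power of $2$; since $q^{\lfloor e/2\rfloor f} = (q^{\lfloor e/2\rfloor f/2})^{2}$ whenever $2 \mid f$, applying Proposition~\ref{prop:rename_descent}(ii) along a chain of index-$2$ subgroups reduces $\omega$ to an $I$-local function, so by Proposition~\ref{prop:res_quot}(i) it suffices to show the function $H' \mapsto q^{\lfloor [I:H']/2\rfloor}$ on $\B(I)$ is trivial on $K$-relations. One then splits off the wild inertia $W \triangleleft I$: as $W$ is an $l$-group with $l \geq 5$ we have $\hat{C}(W) = 1$, and its contribution is treated exactly as in the lemma $(I,W,e^{fn}) \sim_K 1$ above, reducing through a chain of index-$l$ subgroups to $C_l$ and applying Lemma~\ref{lem:f_on_cyclic}; what remains is a function of $e$ and $f$ on the tame cyclic quotient, handled by Corollary~\ref{corr:cyclic_f_const}, Lemma~\ref{lem:f_on_cyclic} and Lemma~\ref{lem:const_cyclic}. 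I expect the main obstacle to be the bookkeeping in this last reduction: the floor functions make $\omega$ a priori depend on $e \bmod 4$, which would demand norm conditions from $\bQ(\zeta_4)$ rather than $\bQ(\zeta_2) = \bQ$. The point is that once the subgroups over which $E$ becomes semistable (equivalently $2 \mid e$) are correctly identified, where the true value of $\omega$ is $1$, the residual function jumps only according to the quadratic character attached to $\cL/\cK$, so the relevant norm conditions are from $\bQ(\zeta_2) = \bQ$ and hold automatically; reconciling this with the closed formula $q^{\lfloor e/2\rfloor f}$, as in \cite[\S3.iv]{tamroot}, is the delicate part.
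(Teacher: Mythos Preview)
Your overall strategy---show $a \sim_K 1$ and $\omega \sim_K 1$ separately---matches the paper, and your argument for $a \sim_K 1$ is correct (the paper phrases it as: $a$ is lifted from $D/D' \simeq C_2$, which has only rational irreducibles and no Brauer relations, so Proposition~\ref{prop:from_irr}(ii) applies vacuously; your direct computation via $\langle \chi, \bC[D/\Theta]\rangle \in 2\bZ$ is an equivalent unpacking).

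For $\omega \sim_K 1$, however, the paper takes a much shorter route that you miss: the function $\omega = (D, I, q^{\lfloor e/2 \rfloor f})$ is \emph{identical} to the differential term in the potentially good case with $\fe = 2$, $\delta = 6$ (since $\lfloor 6e/12\rfloor = \lfloor e/2\rfloor$), and in that case $V = 0$ so $d \sim_K 1$. Hence $\omega \sim_K 1$ follows immediately from Proposition~\ref{prop:pot_good} (more precisely, from the $\fe = 2$ case of Proposition~\ref{prop:case2C} in Appendix~\ref{appendix2}), with no new work required. Your plan to redo the reduction through $I$, $W$, and the tame quotient would essentially reproduce that appendix argument in the special case $\fe = 2$, which is fine in principle but unnecessary.

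Your final paragraph also contains some confused statements: $\lfloor e/2 \rfloor$ depends on $e \bmod 2$, not $e \bmod 4$, so no $\bQ(\zeta_4)$ conditions arise; and $\omega(H)$ is \emph{not} equal to $1$ when $2 \mid e$ (it equals $q^{(e/2)f}$, reflecting the genuine change of minimal model). These do not invalidate the strategy, but they suggest the detailed execution would not go as smoothly as you anticipate. The clean fix is simply to invoke the $\fe = 2$ case already established.
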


\begin{proof}
	As a function on $\B(D)$, $\omega = (D, I, q^{\left\lfloor e / 2 \right\rfloor f})$. One has $\omega \sim_K 1$, as follows from the case of $\fe = 2$, $\delta = 6$ in Proposition \ref{prop:case2C}. The function $a$ is lifted from $D / D' \simeq C_2$. Since $C_2$ has only rational representations and no Brauer relations, by Proposition \ref{prop:from_irr}(ii), $a \sim_K 1$. Thus $\omega \sim_K a \sim_K 1$.  
\end{proof}

It follows that $\cD_V \sim_K C_D$ when $E / \cK$ has additive reduction, completing the proof of Proposition \ref{prop:to_prove}.

\section{Applications}\label{sec:applications}

In this section, we explore the implications of Theorem \ref{corr:main}. We prove Theorem \ref{thm:weak_app} (Theorem \ref{thm:weak} from the introduction), which shows that the norm relations test is weaker than parity tests. Additionally, we show that the norm relations test follows from the parity conjecture for twists (Corollary \ref{corr:nrt_true}). We deduce that, under certain assumptions, the product of $\BSD$-quotients corresponding to a $K$-relation is the norm of an element in $K$ (Corollary \ref{corr:parity}). 

Let $F / L$ be a Galois extension of number fields with $G = \Gal(F / L)$ and let $E / L$ be an elliptic curve. Consider a $K$-relation $\Theta = \sum_i n_i H_i$ for $G$ with $K / \bQ$ quadratic. Recall that Theorem \ref{corr:main} says that 
$$\prod_i (C_{E / F^{H_i}})^{n_i} \mod N_{K / \bQ}(K^{\times})$$ can be computed as a product of regulator constants. 

\begin{remark}\label{rem:reg_consts}
The following summarises properties of regulator constants. 
For an arbitrary group $G$ and $K$-relation $\Theta = \sum_i n_i H_i$, we associate to a rational representation $\tau$ of $G$ its regulator constant 
$$\cC_{\Theta}(\tau) \in \bQ^{\times} / N_{K / \bQ}(K^{\times}).$$ 
This is independent of the choice of rational pairing on $\tau$. Moreover, if $\langle \bC[G / \Theta], \tau \rangle = 0$, then the regulator constant can be computed with respect to any pairing (not necessarily $\bQ$-valued) and is still independent of the choice of pairing. Regulator constants also satisfy the following properties:

\begin{enumerate}[(a)]
	\item $\cC_{\Theta}(\tau \oplus \tau') = \cC_{\Theta}(\tau)\cdot \cC_{\Theta}(\tau')$,
	\item $\cC_{\Theta}(\tau) = 1$ if $\tau = \chi \oplus \overline{\chi}$ with $\chi \not\simeq \overline{\chi}$,
	\item $\cC_{\Theta}(\tau) = 1$ if $\tau$ is symplectic,
	\item If $D$ is cyclic then $\cC_{\Theta}(\bQ[G / D]) = 1$,
	\item Let $G = \Gal(F / L)$ be the Galois group of an extensions of number fields, $E / L$ an elliptic curve, and $\tau = E(F) \otimes_{\bZ} \bQ$. If $\langle \tau, \bC[G / \Theta] \rangle = 0$, then $\cC_{\Theta}(\tau) \equiv \prod_i \Reg_{E / F^{H_i}} \mod N_{K / \bQ}(K^{\times})$. 
\end{enumerate}
\end{remark}

\begin{remark}\label{remark:root_numbers}
For the reader's convenience, we state some properties of twisted root numbers; proofs can be found in \cite[Proposition A.2]{tamroot}.  

Let $F / L$ be a Galois extension of number fields with $G = \Gal(F / L)$ and let $E / L$ be an elliptic curve. Let $\rho \colon G \to \GL_n(\bC)$ be an Artin representation. The twisted root number $w(E / L , \rho) \in \{ \pm 1 \}$ satisfies the following properties:  
\begin{enumerate}[(a)]
    \item (Multiplicativity) If $\rho = \rho_1 \oplus \rho_2$, then
   $ 
    w(E / L,\rho) = w(E / L,\rho_1) w(E / L,\rho_2).
   $ 
    
    \item (Induction) If $\rho = \Ind_H^G \tau$ for a subgroup $H \le G$, then
   $ 
    w(E / L,\rho) = w(E/F^H, \tau).
   $ 
   \item (Galois equivariance) If $\rho$ is self-dual and $\rho'$ is a Galois conjugate of $\rho$, then 
   $ 
    w(E / L,\rho) = w(E / L,\rho').
   $
\item (Trivial representation) If $\rho = \trivial$ is the trivial representation, then
	$ w(E / L, \trivial) = w(E / L).$ 
\end{enumerate}

\end{remark}

\subsection{Norm relations test}

The norm relations test, as discussed in the introduction and introduced in \cite{dok-wier-ev}, only considers elliptic curves over $\bQ$. We will now consider it for an arbitrary number field $L$.  

\begin{conjecture}[{\cite[cf. Theorem 33]{dok-wier-ev}}]\label{thm:dok-wier-ev}
	Let $E / L$ be an elliptic curve, $F / L$ a Galois extension with Galois group $G$, and $\rho$ an irreducible representation of $G$. Let
	$$ N_{\bQ(\rho) / \bQ}(\rho)^{\oplus m} = \bigg{(} \bigoplus_{\fg \in \Gal(\bQ(\rho) / \bQ)} \rho^{\fg} \bigg{)}^{\oplus m}
	= \bigg{(} \bigoplus_i \bC[G / H_i] \bigg{)} \ominus \bigg{(} \bigoplus_j \bC[G / H_j'] \bigg{)} 
$$
for some $m \in \bZ$ and $H_i$, $H_j' \leq G$. If either
$ \prod_i C_{E / F^{H_i}} \big{/} \prod_j C_{E / F^{H_j'}} $
is not a norm from a quadratic subfield $\bQ(\sqrt{D}) \subset \bQ(\rho)$ (when such a subfield exists), or is not a rational square when $m$ is even, then $E$ has a point of infinite order over $F$. 

\end{conjecture}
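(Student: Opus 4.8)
The plan is to deduce the statement from Theorem~\ref{corr:main} together with the parity conjecture for twists, under the additional hypothesis that $E$ has semistable reduction at the primes above $2$ and $3$ (this hypothesis is needed for Theorem~\ref{corr:main} to apply, and with it the statement becomes Corollary~\ref{corr:nrt_true}). First I would set $\Theta = \sum_i H_i - \sum_j H_j' \in \B(G)$. Since $N_{\bQ(\rho)/\bQ}(\rho)^{\oplus m} = N_{\bQ(\rho)/\bQ}(\rho^{\oplus m})$ and $\bQ(\rho^{\oplus m}) = \bQ(\rho)$, the hypothesis $\bC[G/\Theta] \simeq N_{\bQ(\rho)/\bQ}(\rho)^{\oplus m}$ exhibits $\Theta$ as a $\bQ(\rho)$-relation, and hence (since a $K$-relation is an $L$-relation for every intermediate $L$ with $L/\bQ$ Galois) as a $\bQ(\sqrt D)$-relation for every quadratic subfield $\bQ(\sqrt D) \subseteq \bQ(\rho)$.

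Next I would apply Theorem~\ref{corr:main} with $K = \bQ(\sqrt D)$, which gives
$$ \prod_i C_{E/F^{H_i}} \Big/ \prod_j C_{E/F^{H_j'}} \;\equiv\; \prod_{\tau \in \Irr_{\bQ}(G)} \cC_{\Theta}(\tau)^{u(E/L,\chi_\tau)} \mod N_{\bQ(\sqrt D)/\bQ}(\bQ(\sqrt D)^{\times}). $$
The argument is then a contrapositive. If $\rk E/F = 0$ then $E(F)_{\bC} = 0$, so the parity conjecture for twists forces $w(E/L,\chi_\tau) = (-1)^{\langle \chi_\tau, E(F)_{\bC}\rangle} = 1$ for every self-dual $\chi_\tau$; combined with the unconditional triviality of the root number of a non-self-dual or symplectic representation (\cite[Lemma~A.1, Lemma~A.2]{tamroot}), this yields $u(E/L,\chi_\tau) = 0$ for all $\tau$, so the product on the right is $1$ and the ratio is a norm from $\bQ(\sqrt D)$. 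Equivalently, if the ratio is not a norm from some quadratic subfield of $\bQ(\rho)$, then $E$ has a point of infinite order over $F$; in fact the same computation refines this to Theorem~\ref{thm:weak}, since the failure exhibits a $\tau$ with $u(E/L,\chi_\tau) = 1$, that is, an orthogonal $\chi_\tau$ with $w(E/L,\chi_\tau) = -1$.

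For the rational-squares clause when $m$ is even, I would observe that $\bC[G/\Theta] \simeq \eta^{\oplus 2}$ with $\eta = N_{\bQ(\rho)/\bQ}(\rho^{\oplus m/2})$, and that $\eta$ has rational character, so $\eta^{\sigma} \simeq \eta$ for every $\sigma$. Hence $N_{\bQ(\sqrt D)/\bQ}(\eta) \simeq \eta^{\oplus 2} \simeq \bC[G/\Theta]$ for \emph{every} quadratic field $\bQ(\sqrt D)$, not merely those inside $\bQ(\rho)$, so $\Theta$ is a $\bQ(\sqrt D)$-relation in every case. Running the previous paragraph for each $\bQ(\sqrt D)$ shows that when $\rk E/F = 0$ the ratio lies in $\bigcap_D N_{\bQ(\sqrt D)/\bQ}(\bQ(\sqrt D)^{\times}) = \bQ^{\times 2}$, the equality being the standard fact that a rational number which is a norm from every quadratic field is a square (e.g.\ by the Hasse norm theorem and non-degeneracy of Hilbert symbols). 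Contrapositively, if the ratio is not a rational square, then $E$ has a point of infinite order over $F$.

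The one genuine obstacle is Theorem~\ref{corr:main} itself, and behind it the local statement Theorem~\ref{thm:main}: one must match, prime by prime and reduction type by reduction type, the local fudge factor $C_v$ against the regulator constant of the representation appearing in Rohrlich's local root number formula, and it is this local analysis that forces the semistability restriction at $2$ and $3$. Granting that input, the deduction above is purely formal --- a contrapositive application of Theorem~\ref{corr:main} and the elementary statement about intersections of quadratic norm groups --- and requires no input from the theory of $L$-functions.
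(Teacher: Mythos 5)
Your proposal is correct and follows essentially the same route as the paper: the paper likewise proves the statement (under semistability at $2$ and $3$) by viewing $\Theta=\sum_i H_i-\sum_j H_j'$ as a $\bQ(\sqrt D)$-relation, invoking Theorem~\ref{corr:main}, and feeding the resulting root-number information into the parity conjecture for twists (Theorem~\ref{thm:weak_app} and Corollary~\ref{corr:nrt_true}). The only cosmetic differences are that the paper argues in the direct rather than contrapositive direction, and handles the even-$m$ case by exhibiting a single quadratic field in which the squarefree part of the ratio fails to be a norm, which is the same fact as your observation that the intersection of all quadratic norm groups is $\bQ^{\times 2}$.
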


Note that the outcome of the norm relations test can change when a Brauer relation is added.  
In the set up of Conjecture \ref{thm:dok-wier-ev}, one has 
\[
	N_{\bQ(\rho)/\bQ}(\rho)^{\oplus m} = \bC[G/\Theta] \oplus \bC[G/\Psi],
\] 
where $\Theta = \sum_i H_i - \sum_j H_j$ and $\Psi \in \B(G)$ is an arbitrary Brauer relation. Thus computing $C(\Theta)$ and $C(\Theta + \Psi) = C(\Theta)C(\Psi)$ may give different outcomes.  


We now show that if Conjecture \ref{thm:dok-wier-ev} predicts positive rank of $E / F$, then $E / L$ has a twisted root number equal to minus one. 

\begin{theorem}\label{thm:weak_app}
Let $E / L$ be an elliptic curve with semistable reduction at the primes above $2$ and $3$, and let $F / L$ be a Galois extension. Suppose that Conjecture \ref{thm:dok-wier-ev} predicts that $\rk E / F > 0$. Then there exists an irreducible orthogonal representation $\sigma$ of $\Gal(F / L)$ such that $w(E / L, \sigma) = -1$. \end{theorem}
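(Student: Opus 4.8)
The plan is to reinterpret the datum appearing in Conjecture~\ref{thm:dok-wier-ev} as a $K$-relation for a suitable quadratic field $K$, and then to feed it into Theorem~\ref{corr:main}.

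Write $G = \Gal(F/L)$ and set $\Theta = \sum_i H_i - \sum_j H_j' \in \B(G)$, so that $\bC[G/\Theta] \simeq N_{\bQ(\rho)/\bQ}(\rho)^{\oplus m}$, and put
$$
r \;=\; \prod_i C_{E/F^{H_i}}\Big/\prod_j C_{E/F^{H_j'}} \;=\; \prod_i (C_{E/F^{H_i}})^{n_i} \;\in\; \bQ^{\times},
$$
where $n_i$ is $+1$ on the $H_i$ and $-1$ on the $H_j'$. The first step is to extract from the hypothesis a quadratic extension $K/\bQ$ with two properties: (a) $\Theta$ is a $K$-relation, and (b) $r \notin N_{K/\bQ}(K^{\times})$. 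The prediction of Conjecture~\ref{thm:dok-wier-ev} splits into two cases. If $r$ fails to be a norm from a quadratic subfield $K = \bQ(\sqrt{D}) \subseteq \bQ(\rho)$, then (b) is immediate and (a) holds by the usual descent of norm representations: since $\bQ(\rho)/\bQ$ is abelian, the representation $\psi = \big(\bigoplus_{\sigma \in \Gal(\bQ(\rho)/K)} \rho^{\sigma}\big)^{\oplus m}$ has $\bQ(\psi) \subseteq K$ and $N_{K/\bQ}(\psi) \simeq N_{\bQ(\rho)/\bQ}(\rho)^{\oplus m} \simeq \bC[G/\Theta]$. If instead $m$ is even and $r$ is not a rational square, I would manufacture $K$ by a purely local argument: a rational number that is a square in every completion is a global square, so there is a place $v_0$ with $r \notin \bQ_{v_0}^{\times 2}$; non-degeneracy of the Hilbert symbol on $\bQ_{v_0}^{\times}/\bQ_{v_0}^{\times 2}$ together with weak approximation then produces $D \in \bQ^{\times}$ with $(r,D)_{v_0} = -1$, and Hasse's norm theorem gives $r \notin N_{\bQ(\sqrt{D})/\bQ}(\bQ(\sqrt{D})^{\times})$. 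For this $K = \bQ(\sqrt{D})$, property (a) holds trivially: $N_{\bQ(\rho)/\bQ}(\rho)$ has rational character, so with $\psi := N_{\bQ(\rho)/\bQ}(\rho)^{\oplus m/2}$ one has $\bQ(\psi) = \bQ \subseteq K$ and $N_{K/\bQ}(\psi) \simeq \psi^{\oplus 2} \simeq \bC[G/\Theta]$.

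With such a $K$ fixed, the second step is to invoke Theorem~\ref{corr:main} — the assumption that $E$ is semistable above $2$ and $3$ is precisely what that theorem needs — to obtain
$$
r \;\equiv\; \prod_{\tau \in \Irr_{\bQ}(G)} \cC_{\Theta}(\tau)^{u(E/L,\chi_{\tau})} \mod N_{K/\bQ}(K^{\times}).
$$
By (b) the right-hand side is nontrivial in $\bQ^{\times}/N_{K/\bQ}(K^{\times})$, so $u(E/L,\chi_{\tau}) = 1$ for at least one $\tau \in \Irr_{\bQ}(G)$, i.e. $w(E/L,\chi_{\tau}) = -1$. For the final step I would observe that such a $\chi_{\tau}$ is automatically orthogonal: by \cite[Lemma~A.1, Lemma~A.2(4)]{tamroot} the twisted root number $w(E/L,\chi_{\tau})$ equals $1$ whenever $\chi_{\tau}$ is not self-dual or is symplectic, so $w(E/L,\chi_{\tau}) = -1$ forces $\chi_{\tau}$ to be self-dual and non-symplectic. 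Setting $\rho := \chi_{\tau}$ then yields the desired irreducible orthogonal representation of $G$ with $w(E/L,\rho) = -1$.

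Essentially all of the analytic and representation-theoretic substance has already been absorbed into Theorem~\ref{corr:main}, so the only genuinely new ingredient is the reduction in the case ``$m$ even, $r$ not a rational square'': one must convert the failure of $r$ to be a square into the failure of $r$ to be a norm from a concrete quadratic field. This is the step I expect to require the most care, since it is the one place where local arithmetic (the Hilbert symbol and Hasse's norm theorem) enters, although the argument itself is short.
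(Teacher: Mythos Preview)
Your proposal is correct and follows essentially the same route as the paper: reduce to a $K$-relation for a suitable quadratic $K$, apply Theorem~\ref{corr:main}, and read off an orthogonal $\chi_\tau$ with $w(E/L,\chi_\tau)=-1$. The only cosmetic difference is in the ``$m$ even, $r$ not a square'' case, where the paper simply takes the squarefree part $n=p_1\cdots p_r$ of $r$ and picks $K'$ in which every $p_i$ is inert, whereas you invoke the Hilbert symbol and Hasse's norm theorem; both arguments are standard and yield the same conclusion.
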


\begin{proof}
	Consider any $\bQ(\sqrt{D}) \subset \bQ(\rho)$. Assuming the notation of Conjecture \ref{thm:dok-wier-ev}, $\Theta = \sum_i H_i - \sum_j H_j'$ is a $\bQ(\sqrt{D})$-relation for $G = \Gal(F / L)$. By Theorem \ref{corr:main}
$$
\prod_i C_{E / F^{H_i}}\Big{/}{\prod_j C_{E / F^{H_j'}}} \equiv \prod_{\tau \in \Irr_{\bQ}(G)} \cC_{\Theta}(\tau)^{u(E / L, \chi_{\tau})} \mod N_{\bQ(\sqrt{D}) / \bQ}(\bQ(\sqrt{D})^{\times}), 
$$
where $\chi_{\tau}$ is a complex irreducible constituent of $\tau$, and $u(E / L, \chi_{\tau}) \in \{0, 1\}$ satisfies $w(E / L, \chi_{\tau}) = (-1)^{u(E / L, \chi_{\tau})}$ if $\chi_\tau$ is self-dual, and $u(E  / L, \chi_\tau) = 0$ otherwise. 

If $\prod_i C_{E / F^{H_i}} / \prod_j C_{E / F^{H_j'}}$ is not a norm in $\bQ(\sqrt{D})$, there exists $\tau \in \Irr_{\bQ}(G)$ such that $\cC_{\Theta}(\tau) \not= 1$ and $w(E / L, \chi_{\tau}) = -1$. By Proposition \ref{prop:not_self_dual}, $\chi_{\tau}$ is orthogonal.   

Now suppose $m$ is even. Then $\Theta$ is a $K$-relation for every quadratic field $K$, as  
$$ \bC[G / \Theta] \simeq N_{K / \bQ}(N_{\bQ(\rho) / \bQ}(\rho)^{\oplus m / 2}). $$
Let $n$ be the squarefree part of $\prod_i C_{E / F^{H_i}} / \prod_j C_{E / F^{H_j'}}$ and assume $n \not=1$. Write $n = p_1 \cdots p_r$ for $p_i$ distinct primes. Then there exists a quadratic field $K'$ such that each $p_i$ is inert in $K'$ and so $n \not\in N_{K' / \bQ}(K'^{\times})$. Thus as in the case when $m$ is odd, there exists a rational irreducible representation $\tau$ with orthogonal irreducible constituent $\chi_{\tau}$ such that $w(E / L, \chi_{\tau}) = -1$. 
\end{proof}

If one additionally assumes that the parity conjecture for twists holds, then $w(E / L, \sigma) = -1$ implies that $\sigma$ appears as a subrepresentation of $E(F)_{\bC}$ and so $\rk E / F > 0$. Thus we get the following. 

\begin{corollary}\label{corr:nrt_true}
	Let $E / L$ be an elliptic curve with semistable reduction at the primes above $2$ and $3$, and let $F / L$ be a Galois extension. Suppose that the parity conjecture for twists holds for all self-dual representations of $\Gal(F / L)$. Then Conjecture \ref{thm:dok-wier-ev} holds for $E / L$ and $F / L$.  
\end{corollary}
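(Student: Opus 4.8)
The plan is to obtain this as a direct consequence of Theorem \ref{thm:weak_app} together with the parity conjecture for twists. First I would unwind what Conjecture \ref{thm:dok-wier-ev} actually asserts: given an irreducible representation $\rho$ of $G = \Gal(F/L)$ and a presentation $N_{\bQ(\rho)/\bQ}(\rho)^{\oplus m} = \bigoplus_i \bC[G/H_i] \ominus \bigoplus_j \bC[G/H_j']$, one must show that if $\prod_i C_{E/F^{H_i}}/\prod_j C_{E/F^{H_j'}}$ is not a norm from some quadratic subfield $\bQ(\sqrt D)\subset\bQ(\rho)$ (or is not a rational square when $m$ is even), then $E$ has a point of infinite order over $F$. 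These are precisely the hypotheses under which Theorem \ref{thm:weak_app} applies: the standing assumption that $E/L$ is semistable at the primes above $2$ and $3$ is exactly the hypothesis needed there, and the non-norm (resp. non-square) condition is what it means for Conjecture \ref{thm:dok-wier-ev} to ``predict $\rk E/F>0$''.

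Given this, Theorem \ref{thm:weak_app} produces an irreducible orthogonal representation $\rho$ of $G$ with $w(E/L,\rho) = -1$. Being orthogonal, $\rho$ is in particular self-dual, so the parity conjecture for twists — assumed in the hypotheses to hold for all self-dual representations of $G$ — yields $w(E/L,\rho) = (-1)^{\langle \rho, E(F)_{\bC}\rangle}$. Hence $\langle \rho, E(F)_{\bC}\rangle$ is odd, so in particular $\rho$ occurs as a subrepresentation of $E(F)_{\bC} = E(F)\otimes_{\bZ}\bC$. Therefore $E(F)\otimes_{\bZ}\bC \neq 0$, so $\rk E/F = \dim_{\bC}\big(E(F)\otimes_{\bZ}\bC\big) > 0$, i.e.\ $E$ has a point of infinite order over $F$. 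This is exactly the conclusion of Conjecture \ref{thm:dok-wier-ev}, so the conjecture holds for $E/L$ and $F/L$.

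There is essentially no genuine obstacle in this argument: all of the work has already been carried out in Theorem \ref{corr:main} (hence Theorem \ref{thm:main}) and in Theorem \ref{thm:weak_app}, and the present corollary is the routine combination of the latter with the parity conjecture for twists. The only points requiring attention are bookkeeping ones — confirming that the semistability hypothesis at the primes above $2$ and $3$ is exactly what Theorem \ref{thm:weak_app} requires, and recording that an orthogonal representation is self-dual, so that the assumed form of the parity conjecture for twists genuinely applies to the representation $\rho$ produced.
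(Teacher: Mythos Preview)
Your proposal is correct and follows exactly the same route as the paper: the corollary is stated immediately after the one-sentence observation that Theorem \ref{thm:weak_app} yields an orthogonal (hence self-dual) $\rho$ with $w(E/L,\rho)=-1$, and the parity conjecture for twists then forces $\rho$ into $E(F)_{\bC}$, giving positive rank. You have simply spelled out this deduction in slightly more detail than the paper does.
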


From the properties of regulator constants, we deduce some cases where the norm relations test cannot predict positive rank:  

\begin{corollary}\label{corr:fail}
Let $E / L$ be an elliptic curve, $F / L$ a Galois extension. Assuming $E / L$ has semistable reduction at the primes above $2$ and $3$, the norm relations test cannot predict that $\rk E / F > 0$ in the following cases:
\begin{enumerate}[(i)]
	\item $F / L$ is an odd order extension,
	\item $\Gal(F / L)$ is cyclic,
	\item $E / L$ has good reduction at the primes that ramify in $F$,
	\item Each prime $p$ at which $E$ has bad reduction has a cyclic or odd order decomposition group in $\Gal(F / L)$. 
\end{enumerate}
\end{corollary}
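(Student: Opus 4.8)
The plan is to reduce all four cases to the single claim that the function $C\colon\B(G)\to\bQ^{\times}$ of Example~\ref{ex:C}, given by $H\mapsto C_{E/F^{H}}$, is trivial on $K$-relations for \emph{every} quadratic field $K$. First I would explain why this claim suffices. A combination of subgroups $\Theta=\sum_i H_i-\sum_j H_j'$ occurring in the norm relations test satisfies $\bC[G/\Theta]=N_{\bQ(\rho)/\bQ}(\rho)^{\oplus m}$, so it is a $\bQ(\sqrt D)$-relation for every quadratic $\bQ(\sqrt D)\subset\bQ(\rho)$; hence if $C\sim_K 1$ for all quadratic $K$, then $C(\Theta)$ is a norm from each such $\bQ(\sqrt D)$ and the ``not a norm'' alternative in the test is never met. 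If moreover $m$ is even, then $N_{\bQ(\rho)/\bQ}(\rho)^{\oplus m/2}$ has rational character and exhibits $\Theta$ as a $K$-relation for \emph{every} quadratic $K$, so $C(\Theta)$ lies in $\bigcap_K N_{K/\bQ}(K^{\times})=\bQ^{\times2}$ (by the squarefree-part argument already used in the proof of Theorem~\ref{thm:weak_app}), and the ``not a rational square'' alternative is never met either.

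For (i) and (ii) I would apply Theorem~\ref{corr:main} directly: for a $K$-relation $\Theta$ with $K/\bQ$ quadratic it expresses $C(\Theta)$, modulo $N_{K/\bQ}(K^{\times})$, as $\prod_{\tau\in\Irr_{\bQ}(G)}\cC_{\Theta}(\tau)^{u(E/L,\chi_{\tau})}$, and each regulator constant $\cC_{\Theta}(\tau)$ is already a norm from $K$ — by Lemma~\ref{lem:odd_reg_const} when $G$ has odd order, and by Lemma~\ref{lem:cyclic_reg_const}(ii) when $G$ is cyclic. Hence $C\sim_K 1$ in both cases.

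For (iii) and (iv) I would argue locally, using the factorisation $C=\prod_v(D_v,C_v)$ of Example~\ref{ex2:C} over the (finitely many relevant) finite places $v$ of $L$, where $D_v\le G$ is a decomposition group at $v$ and $C_v$ the corresponding function on $\B(D_v)$. By Proposition~\ref{prop:res_quot}(i), together with the fact that a finite product of functions trivial on $K$-relations is again trivial on $K$-relations, it suffices to show $C_v\sim_K 1$ on $\B(D_v)$ for every $v$. I would observe that (iii) is the special case of (iv) in which every prime of bad reduction is unramified in $F$ — hence has cyclic decomposition group — and that (i) and (ii) are likewise special cases of (iv), so it is enough to treat (iv). If $E/L_v$ has good reduction then the module $\cV_v$ of Theorem~\ref{thm:main} is $0$ (case 1G of Lemma~\ref{lem:root_formula}), whence $C_v\sim_K 1$. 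If $E/L_v$ has bad reduction then, since $E$ is semistable at the places above $2$ and $3$, Theorem~\ref{thm:main} applies at $v$, and together with Lemma~\ref{lem:differential} it gives $C_v\sim_K\cD_{\cV_v}$ with $\cD_{\cV_v}(\Theta)=\cC_{\Theta}(\cV_v)$; as $D_v$ is cyclic or of odd order by hypothesis, $\cC_{\Theta}(\cV_v)\in N_{K/\bQ}(K^{\times})$ by Lemma~\ref{lem:cyclic_reg_const}(ii) or Lemma~\ref{lem:odd_reg_const}. Therefore $C_v\sim_K 1$ for all $v$, and so $C\sim_K 1$ on $\B(G)$.

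I do not expect a genuine obstacle here: all of the hard content is already contained in Theorems~\ref{corr:main} and~\ref{thm:main} and in the regulator-constant vanishing of Lemmas~\ref{lem:cyclic_reg_const} and~\ref{lem:odd_reg_const}. The only mildly delicate points are the bookkeeping around the ``$m$ even'' clause (which needs $\bigcap_K N_{K/\bQ}(K^{\times})=\bQ^{\times2}$) and the observation that passing from $C$ to the local functions $C_v$ replaces the possibly complicated group $G$ by the decomposition groups $D_v$, which under the hypotheses of (iii) and (iv) are precisely the cyclic or odd-order groups over which the relevant regulator constants are norms.
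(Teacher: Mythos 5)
Your proposal is correct and follows essentially the same route as the paper: reduce to showing $C(\Theta)\in N_{K/\bQ}(K^{\times})$ for all relevant quadratic $K$ (with the squarefree/inert-prime argument from Theorem~\ref{thm:weak_app} handling the ``$m$ even'' clause), use Theorem~\ref{corr:main} with Lemmas~\ref{lem:cyclic_reg_const} and~\ref{lem:odd_reg_const} for (i)--(ii), and localise via $C=\prod_v(D_v,C_v)$ and Theorem~\ref{thm:main} for (iii)--(iv), where the decomposition groups at bad primes are cyclic or of odd order. The only differences are cosmetic: you make explicit a few details the paper leaves implicit (the role of Lemma~\ref{lem:differential} and the observation that (iii) is a special case of (iv)).
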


\begin{proof}
	Let $G = \Gal(F / L)$. Let $\rho$ be an irreducible representation of $G$, and $\Theta  = \sum_i n_i H_i \in \B(G)$ such that 
	$$ N_{\bQ(\rho) / \bQ}(\rho)^{\oplus m} =  \bC[G / \Theta].$$
	If $\prod_i (C_{E / F^{H_i}})^{n_i} \in N_{K / \bQ}(K^{\times})$ for all quadratic subfields $K \subset \bQ(\rho)$, or is a rational square when $m$ is even, then the norm relations test cannot predict that $\rk E / F > 0$. 

	$(i), (ii)$: Assume that $G$ is cyclic or of odd order.  By Lemma \ref{lem:cyclic_reg_const} and Lemma \ref{lem:odd_reg_const}, $\cC_{\Theta}(\tau) \in N_{K / \bQ}(K^{\times})$ for all rational representations $\tau$ of $G$ and quadratic extensions $K / \bQ$ for which $\Theta$ is a $K$-relation.
	Thus by Theorem \ref{corr:main}, $$\prod_i (C_{E / F^{H_i}})^{n_i} \in N_{K / \bQ}(K^{\times})$$ for all quadratic extensions $K / \bQ$ for which $\Theta$ is a $K$-relation. This includes all $K \subset \bQ(\rho)$, and when $m$ is even $\Theta$ is a $K$-relation for every quadratic extension $K / \bQ$, implying that this product is a rational square (cf. proof of Theorem \ref{thm:weak_app}).  

$(iii), (iv)$: Recall that 
$$ \prod_i (C_{E / F^{H_i}})^{n_i} = \prod_v C_{D_v}(\Res_{D_v} \Theta) $$
is a product of local terms for each finite place $v$, where $D_v$ is a decomposition group at $v$. By Theorem \ref{thm:main}, the value of $C_{D_v}(\Res_{D_v} \Theta) \mod N_{K / \bQ}(K^{\times})$ coincides with the regulator constant of a certain rational representation of $D_v$ for all $K / \bQ$ quadratic with $\Theta $ a $K$-relation. The only contributions are at primes of bad reduction, and in both cases the primes of bad reduction have cyclic or odd order decomposition groups. The result follows by arguing as for $(i)$, $(ii)$.  
\end{proof}

The previous result explains why dihedral groups are the simplest examples of when the norm relations test can be used to predict positive rank. 

\begin{example}[Dihedral group $D_{pq}$]\label{ex:dihedral}
Let $G = D_{pq}$ be the dihedral group of order $2pq$ for $p$, $q$ odd primes. The irreducible rational representations are
$\{\trivial$, $\varepsilon$, $\sigma_p$, $\sigma_q$, $\sigma_{pq} \}$, where $\trivial$ is the trivial representation, $\varepsilon$ the sign representation, $\sigma_p$, $\sigma_q$ factor through $D_{p}$, $D_q$ quotients respectively, and $\sigma_{pq}$ is faithful.  

Suppose that $p \equiv q \equiv 3 \mod 4$. For a faithful complex irreducible representation $\chi_{pq}$ of $D_{pq}$, $\bQ(\chi_{pq}) = \bQ(\zeta_{pq})^{+}$ is the maximal totally real subfield of $\bQ(\zeta_{pq})$, and $K = \bQ(\sqrt{pq}) \subset \bQ(\chi_{pq})$.
Consider the $K$-relation
$$
\Theta = C_2 - D_p - D_q + D_{pq}, \quad \bC[G / \Theta] = \sigma_{pq}. 
$$
Using Example \ref{ex:perm_pair} (=Remark \ref{rem:reg_consts}(d)), one computes
\begin{center}
\begin{tabular}{l l l l l }
	$\cC_{\Theta}(\trivial) = 1$, &
	$\cC_{\Theta}(\epsilon) = 1$, & 
	$\cC_{\Theta}(\sigma_p) =  q^{\frac{p - 1}{2}}$,& 
	$\cC_{\Theta}(\sigma_q) = p^{\frac{q - 1}{2}}$, &
	$\cC_{\Theta}(\sigma_{pq}) = p^{\frac{q - 1}{2}} q^{\frac{p - 1}{2}}$. 
\end{tabular}
\end{center}

Consider a Galois extension $F / L$ with $G = \Gal(F / L)$ and an elliptic curve $E / L$ that has semistable reduction at the primes above $2$ and $3$ in $L$.  
Let $\chi_{p}$, $\chi_{q}$ be complex irreducible constituents of $\sigma_{p}$ and $\sigma_{q}$ respectively. 
Then by Theorem \ref{corr:main}, 
$$C(\Theta) := \frac{C_{E / F^{C_2}} \cdot C_{E / L}}{C_{E / F^{D_p}} \cdot C_{E / F^{D_q}}} \equiv p^{a + c} \cdot q^{b + c} \mod N_{K / \bQ}(K^{\times})$$ where $a, b, c \in \{0,1\}$ satisfy $(-1)^a = w(E / L, \chi_{q})$, $(-1)^b = w(E / L, \chi_p)$, $(-1)^c = w(E / L, \chi_{pq})$.

Suppose that $p$ is a quadratic residue mod $q$ (then $q$ is not a quadratic residue mod $p$).
One has $p \in N_{K / \bQ}(K^{\times})$ but $q \not\in N_{K / \bQ}(K^{\times})$, and so 
$$ C(\Theta) \equiv q^{b + c} \mod N_{K / \bQ}(K^{\times}).$$
Thus if $C(\Theta) \not\in N_{K / \bQ}(K^{\times})$ it follows that $w(E / L, \chi_{p}) = -1$, or $w(E / L, \chi_{pq}) = -1$, but not both. 
\end{example}

\begin{example}[Twisted root number formula]
	Continuing the previous example, we can express $w(E / L, \chi_p)$ in terms of the root number of $E$ over intermediate subfields. Indeed, since $\bC[G / D_{q}] = \trivial \oplus \sigma_p$, using Remark \ref{remark:root_numbers} we compute
	$$
	\begin{array}{l l l }
		w(E / F^{D_q}) &  = w(E / L, \Ind_{D_q}^G \trivial) & \quad  \text{(Induction)}\\
			       &  = w(E / L) w(E / L, \sigma_p) & \quad \text{(Multiplicativity)}\\
			       & = w(E / L)w(E/ L, \chi_p)^{\frac{p-1}{2}} & \quad \text{(Galois equivariance)}\\
			       & = w(E / L) w(E / L, \chi_p) & \quad (p \equiv 3 \mod 4).  
	\end{array}
		$$
The twisted root number $w(E / L, \chi_q)$ can also be similarly expressed.  
On the other hand, it is not difficult to see that $w(E / L, \chi_{pq})$ cannot be computed as a product of $w(E / F^{H})$ for $H \leq G$. 
We get a curious formula for $w(E / L, \chi_{pq})$ in terms of the outcome of the norm relations test and root numbers of subfields:
$$ w(E / L, \chi_{pq}) = (-1)^{n} \cdot w(E / F^{D_q})\cdot w(E / L), \qquad n = \begin{cases}
	0 & \text{if } C(\Theta) \in N_{K / \bQ}(K^{\times}), \\ 1 & \text{if }C(\Theta) \not\in N_{K / \bQ}(K^{\times}).
\end{cases}$$
\end{example}

\subsection{BSD-quotients}

Our second application of Theorem \ref{corr:main} is in understanding the ratio of BSD-quotients corresponding to a $K$-relation. Recall that if $E / F$ is an elliptic curve over a number field $F$, its BSD-quotient is
$$ \BSD(E / F) = \frac{\Reg_{E / F} | \sha_{E / F}| C_{E / F}}{|E(F)_{\tors}|^2}, $$
(assuming that $\sha_{E  /F}$ is finite).
Since regulators of elliptic curves can be computed via regulator constants, we get the following compatibility between regulators and Tamagawa numbers.  

\begin{corollary}\label{corr:parity}
Let $E$ be an elliptic curve over a number field $L$, and $F / L$ a finite Galois extension with $G = \Gal(F / L)$. Suppose that the parity conjecture for twists holds for all self-dual representations $\tau$ of $G$. 

Let $\Theta = \sum_i n_i H_i$ be a $K$-relation for $G$ with $K / \bQ$ quadratic. If $\langle \bC[G / \Theta] , E(F)_{\bC} \rangle = 0$, then 
$$
\prod_i (C_{E / F^{H_i}})^{n_i} \equiv \prod_i (\Reg_{E / F^{H_i}})^{n_i} \mod N_{K / \bQ}(K^{\times}).
 $$
 If one further assumes that $\sha_{E / F}$ is finite 
, then 
 $$
 \prod_i \BSD(E / F^{H_i})^{n_i} \in N_{K / \bQ}(K^{\times}).
 $$
\end{corollary}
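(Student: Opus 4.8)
The plan is to derive both assertions from Theorem~\ref{corr:main}, the multiplicativity of regulator constants, and the parity conjecture for twists. One elementary remark will be used repeatedly: since $K/\bQ$ is quadratic, $N_{K/\bQ}(q)=q^{2}$ for every $q\in\bQ^{\times}$, so $\bQ^{\times 2}\subseteq N_{K/\bQ}(K^{\times})$ and the group $\bQ^{\times}/N_{K/\bQ}(K^{\times})$ has exponent $2$; in particular all the products below depend only on the relevant exponents modulo $2$.

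For the first assertion, I would start from Theorem~\ref{corr:main},
$$\prod_i (C_{E/F^{H_i}})^{n_i}\ \equiv\ \prod_{\tau\in\Irr_{\bQ}(G)}\cC_{\Theta}(\tau)^{u(E/L,\chi_{\tau})}\ \mod N_{K/\bQ}(K^{\times}),$$
and decompose $E(F)\otimes_{\bZ}\bQ\simeq\bigoplus_{\tau\in\Irr_{\bQ}(G)}\tau^{\oplus m_{\tau}}$ as a $\bQ G$-module. Because $\Theta$ is a $K$-relation, $\bC[G/\Theta]\simeq N_{K/\bQ}(\rho)$ is a genuine (non-virtual) representation, so all of its multiplicities are non-negative; together with the hypothesis $\langle \bC[G/\Theta], E(F)_{\bC}\rangle=0$ this forces $\langle \bC[G/\Theta],\tau\rangle=0$ for every $\tau$ with $m_{\tau}>0$. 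Hence, by parts (a) and (e) of Remark~\ref{rem:reg_consts},
$$\prod_i(\Reg_{E/F^{H_i}})^{n_i}\ \equiv\ \cC_{\Theta}\big(E(F)\otimes_{\bZ}\bQ\big)\ =\ \prod_{\tau\in\Irr_{\bQ}(G)}\cC_{\Theta}(\tau)^{m_{\tau}}\ \mod N_{K/\bQ}(K^{\times}),$$
so the first assertion reduces to showing $\cC_{\Theta}(\tau)^{u(E/L,\chi_{\tau})}\equiv\cC_{\Theta}(\tau)^{m_{\tau}}$ modulo $N_{K/\bQ}(K^{\times})$ for each $\tau\in\Irr_{\bQ}(G)$. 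If $\chi_{\tau}$ is not orthogonal this is trivial, since $\cC_{\Theta}(\tau)\equiv 1$ by Proposition~\ref{prop:not_self_dual}. If $\chi_{\tau}$ is orthogonal, I would apply the parity conjecture for twists to the self-dual representation $\chi_{\tau}$ to get $w(E/L,\chi_{\tau})=(-1)^{\langle\chi_{\tau},E(F)_{\bC}\rangle}$, i.e.\ $u(E/L,\chi_{\tau})\equiv\langle\chi_{\tau},E(F)_{\bC}\rangle=m_{\tau}\,m_{\bQ}(\chi_{\tau})\mod 2$, where $m_{\bQ}(\chi_{\tau})$ is the Schur index. When $m_{\bQ}(\chi_{\tau})$ is odd the two exponents agree modulo $2$ and we are done; when $m_{\bQ}(\chi_{\tau})$ is even — necessarily equal to $2$, by Brauer--Speiser — the rational endomorphism algebra of $\tau$ is a quaternion division algebra, which supplies a non-degenerate alternating $G$-invariant $\bQ$-valued pairing on $\tau$, and then $\cC_{\Theta}(\tau)$ is a rational square by the argument of Proposition~\ref{prop:not_self_dual}, so again both sides are trivial. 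This proves the first assertion.

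For the $\BSD$-statement, I would expand $\BSD(E/F^{H})=\Reg_{E/F^{H}}\,|\sha_{E/F^{H}}|\,C_{E/F^{H}}/|E(F^{H})_{\tors}|^{2}$ for each $H\leq G$. The torsion contribution $\prod_i|E(F^{H_i})_{\tors}|^{-2n_i}$ is a rational square, and so is $\prod_i|\sha_{E/F^{H_i}}|^{n_i}$, since $|\sha_{E/F^{H}}|$ is a perfect square for every $H$ (a theorem of Cassels: the Cassels--Tate pairing on the Shafarevich--Tate group of an elliptic curve is alternating). Both therefore lie in $N_{K/\bQ}(K^{\times})$, so
$$\prod_i\BSD(E/F^{H_i})^{n_i}\ \equiv\ \Big(\prod_i(\Reg_{E/F^{H_i}})^{n_i}\Big)\Big(\prod_i(C_{E/F^{H_i}})^{n_i}\Big)\ \mod N_{K/\bQ}(K^{\times});$$
by the first assertion the right-hand side is congruent to $\big(\prod_i(\Reg_{E/F^{H_i}})^{n_i}\big)^{2}\in\bQ^{\times 2}\subseteq N_{K/\bQ}(K^{\times})$, whence $\prod_i\BSD(E/F^{H_i})^{n_i}\in N_{K/\bQ}(K^{\times})$.

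The step I expect to be the main obstacle is the comparison $\cC_{\Theta}(\tau)^{u(E/L,\chi_{\tau})}\equiv\cC_{\Theta}(\tau)^{m_{\tau}}$: it forces one to match a root number of the \emph{complex} twist $\chi_{\tau}$ against the multiplicity of the \emph{rational} irreducible $\tau$ in $E(F)\otimes_{\bZ}\bQ$, which is precisely where the parity conjecture for twists is brought in, and it requires care in the exceptional case of an orthogonal character of Schur index $2$ (where one must exhibit an alternating rational invariant pairing — a minor extension of Proposition~\ref{prop:not_self_dual}). The remaining ingredients — the reduction via Theorem~\ref{corr:main}, multiplicativity of regulator constants, and the observation that the $\sha$- and torsion-terms are rational squares — are routine.
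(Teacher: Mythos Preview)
Your proof follows the paper's approach closely: both start from Theorem~\ref{corr:main}, replace the exponent $u(E/L,\chi_{\tau})$ by $\langle\chi_{\tau},E(F)_{\bC}\rangle$ via the parity conjecture, identify the product of regulators with $\cC_{\Theta}(E(F)\otimes_{\bZ}\bQ)$ using the height pairing, switch to a rational pairing via Proposition~\ref{prop:indep}, and conclude by multiplicativity; the $\BSD$-statement is deduced in the same way from the fact that the remaining factors are rational squares.

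One point is wrong, though harmless. You assert that $\bC[G/\Theta]\simeq N_{K/\bQ}(\rho)$ is a genuine representation. This is false in general: in Definition~\ref{def:k_rel} the class $\rho$ lies in $\Rep(G)$ and may be virtual, so $\bC[G/\Theta]$ can have negative multiplicities (cf.\ the proof of Proposition~\ref{prop:norm_rel_irrs}, where the $n_{\tau}$ are arbitrary integers). Fortunately this claim is unnecessary: the hypothesis $\langle\bC[G/\Theta],E(F)_{\bC}\rangle=0$ is exactly what is needed to apply Proposition~\ref{prop:indep} (equivalently Remark~\ref{rem:reg_consts}(e)) to $E(F)\otimes_{\bZ}\bQ$ as a whole, and multiplicativity in $\bQ^{\times}/N_{K/\bQ}(K^{\times})$ for rational pairings (Corollary~\ref{corr_reg_mult}) requires no vanishing hypothesis on the individual summands. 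You should simply delete the offending sentence. (The paper's own proof makes the same per-$\tau$ assertion ``by assumption'', with the same issue.)

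Your treatment of the Schur index is more explicit than the paper's: the paper silently passes from the exponent $m_{\tau}$ (multiplicity of $\tau$ in $E(F)\otimes_{\bZ}\bQ$) to $\langle\chi_{\tau},E(F)_{\bC}\rangle=m_{\tau}\,m_{\bQ}(\chi_{\tau})$, whereas you correctly isolate the case of an orthogonal $\chi_{\tau}$ with $m_{\bQ}(\chi_{\tau})=2$ and argue that $\cC_{\Theta}(\tau)$ is then a square. That argument is valid, but deserves one more line: fixing a non-degenerate symmetric $G$-invariant $\bQ$-valued form $B$ on $\tau$, every $G$-invariant $\bQ$-bilinear form is $B_{d}(v,w)=B(dv,w)$ for some $d\in D=\End_{G}(\tau)$, and $B_{d}$ is alternating precisely when $d$ lies in the $(-1)$-eigenspace of the adjoint involution on the quaternion algebra $D$; this eigenspace is non-zero, and any such $d\neq 0$ is a unit, so $B_{d}$ is non-degenerate and the conclusion follows as in Proposition~\ref{prop:not_self_dual}.
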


\begin{proof}
	For each $\bQ G$-irreducible representation $\tau$, let $\chi_{\tau}$ be a complex irreducible constituent.
	By Theorem \ref{corr:main} and parity for twists we have   
$$
\prod_i (C_{E / F^{H_i}})^{n_i}  \equiv \prod_{\tau \in \Irr_{\bQ}(G)} \cC_{\Theta}(\tau)^{\langle \chi_{\tau} , E(F)_{\bC} \rangle} 
\mod N_{K / \bQ}(K^{\times}).
$$
Consider $\tau \in \Irr_{\bQ}(G)$ such that $\langle \chi_{\tau}, E(F)_{\bC} \rangle > 0$. Then $\langle \tau, \bC[G / \Theta] \rangle = 0$ by assumption, and so $\cC_{\Theta}(\tau)$ is independent of choice of pairing by Proposition \ref{prop:indep}. The product of regulators $\prod_i ({\Reg_{E / F^{H_i}}})^{n_i}$ is $\cC_{\Theta}(E(F) \otimes_{\bZ} \bQ)$ computed with respect to the height pairing. But by independence of pairing it is equivalent to compute this with respect to a rationally valued pairing and so 
$$
\prod_i(\Reg_{E / F^{H_i}})^{n_i} \equiv \prod_{\tau} \cC_{\Theta}(\tau)^{\langle \chi_{\tau}, E(F)_{\bC} \rangle}
\equiv \prod_i (C_{E / F^{H_i}})^{n_i} \mod N_{K / \bQ}(K^{\times}).  
$$
For the second statement, note that if $\sha_{E / F}$ is finite, then $\sha_{E / F^{H}}$ is also finite for all $H \leq G$ by \cite[Remark 2.10]{annals}. The result follows by observing that all other terms in $\BSD(E / F^{H})$ for $H \leq G$ are squares. 
\end{proof}

The following result was proved in \cite[Theorem 35]{dok-wier-ev}, under the assumption that $E / \bQ$ is semistable. It provides an example of a kind of reverse implication  
 of Corollary \ref{corr:parity}. 
Specifically, let $E / L$ be an elliptic curve, and let $F / L$ be a Galois extension with $G = \Gal(F / L)$. Suppose that $\Theta = \sum_i n_i H_i \in \B(G)$ is a $K$-relation for some quadratic extension $K / \bQ$, and assume that the relation
$$ \prod_i (C_{E / F^{H_i}})^{n_i} \equiv \prod_i (\Reg_{E / F^{H_i}})^{n_i} \mod N_{K / \bQ}(K^{\times}) $$ 
holds (for example this follows from \cite[Conjecture 4]{dok-wier-ev} when $L = \bQ$). Then one can try use this relation to obtain information about the multiplicity of a subrepresentation of $E(F)_{\bC}$.  

In particular we are able to shorten the proof of \cite[Theorem 35]{dok-wier-ev}, and to extend the result to allow $E$ to have additive reduction away from $2$ and $3$. 

\begin{theorem}[{\cite[Theorem 35]{dok-wier-ev}}]
	Assume \cite[Conjecture 4]{dok-wier-ev} holds. Let $F / \bQ$ be a Galois extension with $\Gal(F / \bQ) = D_{pq}$ for $p$, $q$ primes with $p, q \equiv 3 \mod 4$, and let $\rho$ be a faithful irreducible Artin representation factoring through $\Gal(F / \bQ)$.  

	Then for every elliptic curve $E / \bQ$ with semistable reduction at $2$ and $3$, if $\ord_{s = 1} L(E, \rho, s)$ is odd, then $\langle \rho, E(F)_{\bC} \rangle > 0 $.  
\end{theorem}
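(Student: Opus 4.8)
The plan is to argue by contradiction, using only the single $K$-relation already worked out in Example~\ref{ex:dihedral}, applied with base field $L=\bQ$. Set $G=\Gal(F/\bQ)=D_{2pq}$ and $K=\bQ(\sqrt{pq})\subset\bQ(\rho)=\bQ(\zeta_{pq})^{+}$, and identify $\rho$ with a complex-irreducible constituent $\chi_{pq}$ of the faithful rational irreducible $\sigma_{pq}$; since the Galois orbit of $\rho$ is $\sigma_{pq}$ and $D_{2pq}$ has trivial Schur indices, $\langle\rho,E(F)_{\bC}\rangle>0$ is equivalent to $\langle\sigma_{pq},E(F)_{\bC}\rangle>0$. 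After relabelling $p$ and $q$ I would arrange that $q\notin N_{K/\bQ}(K^{\times})$ and $p\in N_{K/\bQ}(K^{\times})$: when $p\equiv q\equiv 3\mod 4$, quadratic reciprocity shows exactly one of $p,q$ is a non-norm from $K$, as already used in Example~\ref{ex:dihedral}. The relevant relation is $\Theta=C_2-D_p-D_q+D_{pq}$ with $\bC[G/\Theta]\simeq\sigma_{pq}$, and the regulator constants $\cC_{\Theta}(\trivial)=\cC_{\Theta}(\varepsilon)=1$, $\cC_{\Theta}(\sigma_p)=q^{(p-1)/2}$, $\cC_{\Theta}(\sigma_q)=p^{(q-1)/2}$, $\cC_{\Theta}(\sigma_{pq})=p^{(q-1)/2}q^{(p-1)/2}$ are as recorded there.

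First I would write down two expressions, modulo $N_{K/\bQ}(K^{\times})$, for $C(\Theta)=C_{E/F^{C_2}}C_{E/\bQ}/(C_{E/F^{D_p}}C_{E/F^{D_q}})$. By Theorem~\ref{corr:main} (exactly the computation in Example~\ref{ex:dihedral}), $C(\Theta)\equiv q^{\,u(E/\bQ,\chi_p)+u(E/\bQ,\chi_{pq})}$, using that $p$ is a norm and $(p-1)/2$, $(q-1)/2$ are odd. The hypothesis that $\ord_{s=1}L(E,\rho,s)$ is odd, together with the functional equation of $L(E,\rho,s)$, forces $w(E/\bQ,\rho)=-1$; as $\rho$ is orthogonal this is $u(E/\bQ,\chi_{pq})=1$, so $C(\Theta)\equiv q^{\,u(E/\bQ,\chi_p)+1}$. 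On the other side, \cite[Conjecture 4]{dok-wier-ev} yields $C(\Theta)\equiv\prod_i(\Reg_{E/F^{H_i}})^{n_i}\mod N_{K/\bQ}(K^{\times})$. Now suppose, for contradiction, that $\langle\rho,E(F)_{\bC}\rangle=0$. Then $\langle\bC[G/\Theta],E(F)_{\bC}\rangle=0$, so by Remark~\ref{rem:reg_consts}(e) the right-hand side equals $\cC_{\Theta}(E(F)\otimes_{\bZ}\bQ)$; decomposing $E(F)\otimes\bQ$ into rational irreducibles, multiplicativity of regulator constants together with $\cC_{\Theta}(\trivial)=\cC_{\Theta}(\varepsilon)=1$, the vanishing of the $\sigma_{pq}$-multiplicity, and the facts that $p$ is a norm and $(p-1)/2$ is odd, gives $\cC_{\Theta}(E(F)\otimes\bQ)\equiv q^{\langle\chi_p,E(F)_{\bC}\rangle}$.

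Comparing the two expressions, and using that $q$ has order $2$ in $\bQ^{\times}/N_{K/\bQ}(K^{\times})$, I obtain $u(E/\bQ,\chi_p)+1\equiv\langle\chi_p,E(F)_{\bC}\rangle\mod 2$. To close the argument I would invoke the parity conjecture for the twist of $E$ by $\chi_p$ — a two-dimensional orthogonal, hence dihedral, Artin representation factoring through the $D_{2p}$-quotient of $G$ — which says $u(E/\bQ,\chi_p)\equiv\langle\chi_p,E(F)_{\bC}\rangle\mod 2$, giving the contradiction $1\equiv 0\mod 2$. Equivalently, since $\bC[G/D_q]\simeq\trivial\oplus\sigma_p$ and $(p-1)/2$ is odd, this reduces to the ordinary parity conjecture for $E/\bQ$ and $E/F^{D_q}$. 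Finally, when $E$ is allowed additive reduction away from $2$ and $3$ the primes in question have residue characteristic at least $5$, so Theorem~\ref{corr:main} applies verbatim and the entire argument is unchanged; this is the claimed extension, and the gain over \cite{dok-wier-ev} is precisely that the delicate local root-number bookkeeping is already absorbed into Theorem~\ref{corr:main}.

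The step I expect to be the main obstacle — or at least the one needing the most care — is the parity input in the last paragraph. The congruence coming from Theorem~\ref{corr:main} controls the root-number exponents $u(E/\bQ,\chi_p)$ and $u(E/\bQ,\chi_{pq})$, whereas \cite[Conjecture 4]{dok-wier-ev} converts the Tamagawa ratio into the multiplicities $\langle\chi_p,E(F)_{\bC}\rangle$, $\langle\chi_{pq},E(F)_{\bC}\rangle$; bridging the two is exactly the parity statement for the $\chi_p$-twist, which is known for the $p$-part (via regulator constants, the Dokchitser brothers) and in Mordell--Weil form rests on finiteness of the relevant part of $\sha$. Beyond that the remaining care is bookkeeping: relabelling $p,q$ so that $q$ is the non-norm; passing between complex multiplicities $\langle\chi_{\bullet},E(F)_{\bC}\rangle$ and rational multiplicities $\langle\sigma_{\bullet},E(F)_{\bC}\rangle$, which is harmless only because $(p-1)/2$ and $(q-1)/2$ are odd; and the reduction of ``$\ord_{s=1}L(E,\rho,s)$ odd'' to ``$w(E/\bQ,\rho)=-1$'' via the functional equation.
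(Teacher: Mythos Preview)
Your proposal is correct and follows essentially the same route as the paper's own proof: both use the $K$-relation $\Theta=C_2-D_p-D_q+D_{pq}$ from Example~\ref{ex:dihedral}, compare the Tamagawa side via Theorem~\ref{corr:main} against the regulator side via \cite[Conjecture~4]{dok-wier-ev}, and close the loop using the ordinary parity conjecture for $E/\bQ$ and $E/F^{D_q}$ (available under the finiteness of $\sha$ assumed in \cite[Conjecture~4]{dok-wier-ev}). The only cosmetic difference is that you invoke the functional equation at the start to convert ``$\ord_{s=1}$ odd'' into $w(E/\bQ,\rho)=-1$, whereas the paper derives $w(E/\bQ,\rho)=1$ at the end and then concludes ``$\ord_{s=1}$ even''.
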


\begin{proof}
	Suppose that $\langle \rho, E(F)_{\bC} \rangle = 0$. Let $\Theta$ be the $K$-relation given in Example \ref{ex:dihedral}, for $K = \bQ(\sqrt{pq}) \subset \bQ(\rho)$. Assume that $q$ is not a quadratic residue mod $p$ (interchanging the role of $p$ and $q$ if necessary). The assumption that Conjecture \cite[Conjecture 4]{dok-wier-ev} holds implies that
	$$ \frac{\Reg_{E / F^{C_2}} \cdot \Reg_{E / \bQ}}{\Reg_{E / F^{D_p}} \cdot \Reg_{E / F^{D_q}}} \equiv \frac{C_{E / F^{C_2}} \cdot C_{E / \bQ}}{C_{E / F^{D_p}} \cdot C_{E / F^{D_q}}} \equiv q^{b + c} \mod N_{K / \bQ}(K^{\times}),$$
	where the second congruence follows from Example \ref{ex:dihedral}, with $(-1)^b = w(E / \bQ, \chi_p)$, $(-1)^c = w(E / \bQ, \chi_{pq}) = w(E /\bQ, \rho)$. On the other hand, the product of regulators can also be computed in terms of regulator constants as in the proof of Corollary \ref{corr:parity}, yielding 
	$$ \frac{\Reg_{E / F^{C_2}} \cdot \Reg_{E / \bQ}}{\Reg_{E / F^{D_p}} \cdot \Reg_{E / F^{D_q}}} \equiv q^{\langle \chi_p, E(F)_{\bC} \rangle + \langle \rho, E(F)_{\bC} \rangle} \mod N_{K / \bQ}(K^{\times}).$$

	Conjecture \cite[Conjecture 4]{dok-wier-ev} also assumes that $|\sha_{E / F^{H}}| < \infty$ for all $H \leq \Gal(F / \bQ)$, and so the known cases of the parity conjecture (see \cite[Theorem 1.3]{tamroot}) for $E / \bQ$, $E / F^{D_q}$, imply that $w(E / \bQ, \chi_p) = (-1)^{\langle \chi_p, E(F)_{\bC} \rangle}$. Therefore we must have $c \equiv \langle \rho, E(F)_{\bC} \rangle = 0 \mod 2$. It follows that $w(E / \bQ, \rho) = 1$.  
	The $L$-function $L(E, \rho, s)$ admits an analytic continuation to $\bC$ and satisfies its functional equation (see proof of \cite[Theorem 35]{dok-wier-ev}). Thus $w(E / \bQ, \rho) = 1$ implies that $\ord_{s = 1} L(E, \rho, s)$ is even. 
\end{proof}

\begin{remark}
	In the above proof, the non-triviality of $\cC_{\Theta}(N_{\bQ(\rho)/ \bQ}(\rho)) \mod N_{K / \bQ}(K^{\times})$ is essential in relating the parity of $\langle \rho, E(F)_{\bC}\rangle $ to the twisted root number. Unfortunately, this doesn't extend to dihedral groups of more composite order. If $\rho$ is a faithful irreducible representation of $D_n$ with $n$ divisible by three or more primes, then it appears that $\cC_{\Theta}(N_{\bQ(\rho) / \bQ}(\rho)) \in \bQ^{\times 2}$ for all $K$-relations $\Theta$ for $D_n$ and quadratic extensions $K / \bQ$. 
\end{remark}

\appendix
\section{Tamagawa numbers}\label{appendix1}

To compute the Tamagawa number at a place of additive reduction, we use the following lemma stated in \cite[\S3.iv.4]{tamroot}, which is a consequence of Tate's algorithm. 

\begin{lemma}\label{lem_add_tam}\label{tamagawa-num}\cite[Lemma 3.22]{tamroot}
    Let $\cK' /\cK / \bQ_l$ be finite extensions and $l \geq 5$. Let
    $$E \colon  y^2 = x^3 + Ax + B, \qquad A, B \in \cK$$
    be an elliptic curve over $\cK$ with additive reduction, with $\Delta = -16(4A^3 + 27 B^2)$. Let $\delta=v_{\cK}(\Delta)$ be the valuation of the minimal discriminant, and $e$ the ramification index of $\cK'/\cK$.
   
    If $E / \cK$ has potentially good reduction, then 
        \[
        \begin{array}{l l l l}
            \gcd(\delta e, 12) = 2 & \implies & c(E / \cK') = 1, & \quad (\II, \II^*) \\
            \gcd(\delta e, 12) = 3 & \implies & c(E / \cK') = 2, & \quad (\III, \III^*) \\
	    \gcd(\delta e, 12) = 4 & \implies & c(E / \cK') = \begin{cases} 1, & \text{if }\sqrt{B} \notin \cK' 
	    \\ 3, & \text{if }\sqrt{B} \in \cK' \end{cases}, & \quad (\IV, \IV^*) \\
		    \gcd(\delta e, 12) = 6 & \implies & c(E / \cK') = \begin{cases} 2, & \text{if }\sqrt{\Delta} \notin \cK' 
		    \\ 1 \ \text{or} \ 4, & \text{if }\sqrt{\Delta} \in \cK' \end{cases}, & \quad (\I_0^*) \\
		    \gcd(\delta e, 12) = 12 & \implies & c(E / \cK') = 1, & \quad (\I_0)
        \end{array}
        \]
     
        If $E / \cK $ has potentially multiplicative reduction of type $\I_n^*$, then
        \[
            \begin{array}{l l l l}
		    2 \nmid e, 2 \nmid n & \implies & c(E / \cK') = \begin{cases} 2, & \text{if }\sqrt{B} \not\in \cK', \\ 4, & \text{if }\sqrt{B} \in \cK'. \end{cases} & \quad (\I_{en}^*) \\
		    2 \nmid e, 2 \mid n & \implies & c(E / \cK') = \begin{cases} 2 & \text{if }\sqrt{\Delta} \not\in \cK', \\ 4 & \text{if }\sqrt{\Delta} \in \cK' \end{cases} & \quad ({\I_{en}^*}) \\
                2 \mid e, \sqrt{-6 B} \not\in \cK' & \implies & c(E / \cK' ) = 2 & \quad (\I_{en}, \text{ non-split}) \\
                2 \mid e, \sqrt{-6 B} \in \cK' & \implies & c(E / \cK') = en & \quad (\I_{en}, \text{ split})
                \end{array} 
            \]
    \end{lemma}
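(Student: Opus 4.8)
The plan is to run Tate's algorithm (\cite[\S IV.9]{AdvancedTopics}), absorbing the passage to $\cK'$ into a single re-minimalisation at the outset; since $l \geq 5$ every step is tame, so the Kodaira type over $\cK'$ is controlled by the valuation of the minimal discriminant, and all the branchings in the algorithm can be read off a short Weierstrass model.

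\emph{Step 1 (minimal model over $\cK'$).} For $l \geq 5$, an equation $y^2 = x^3 + A'x + B'$ over a finite extension of $\bQ_l$ is minimal iff $v(\Delta') < 12$. Base change to $\cK'$ multiplies each of $v(c_4), v(c_6), v(\Delta)$ by $e$, and applying $(x,y)\mapsto(u^2x,u^3y)$ with $v_{\cK'}(u)$ maximal subject to minimality produces a minimal model with $A' = A/u^4$, $B' = B/u^6$, $\Delta' = \Delta/u^{12}$. In the potentially good case the resulting minimal discriminant valuation is $\delta' := e\delta \bmod 12 \in \{0,\dots,11\}$, with $\gcd(\delta',12) = \gcd(e\delta,12)$; in the potentially multiplicative case of type $\I_n^*$ over $\cK$ it is $en+6$ (so still type $\I^*$) when $e$ is odd, while for $e$ even the additive part disappears and $E/\cK'$ has multiplicative reduction of type $\I_{en}$ — this using that $E$ acquires multiplicative reduction exactly over $\cK(\sqrt{-c_6})$, which is ramified over $\cK$ since $v_\cK(c_6)$ is odd. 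In all cases $u^6$ and $u^{12}$ are squares, so $B'$ and $\Delta'$ lie in the same $(\cK')^{\times}$-square classes as $B$ and $\Delta$.

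\emph{Step 2 (Kodaira type and component group).} Tate's algorithm then attaches the type of $E/\cK'$: good reduction for $\delta'=0$; types $\II,\III,\IV,\I_0^*,\IV^*,\III^*,\II^*$ for $\delta'=2,3,4,6,8,9,10$ in the potentially good case, which sorts into the five values $\gcd(e\delta,12)\in\{12,2,3,4,6\}$ of the statement; and types $\I_{en}^*$ or $\I_{en}$ in the potentially multiplicative case as fixed in Step 1. The algorithm simultaneously produces the geometric component group $\Phi$ together with a prescription for $c_v(E/\cK') = \#\Phi(k_{\cK'})$: $\Phi$ is trivial for $\II,\II^*$ (so $c_v=1$), $\bZ/2\bZ$ for $\III,\III^*$ (so $c_v=2$), $\bZ/3\bZ$ for $\IV,\IV^*$, $(\bZ/2\bZ)^2$ for $\I_0^*$, of order $4$ for $\I_{en}^*$ with $en\geq 1$, and $\bZ/(en)\bZ$ for multiplicative $\I_{en}$.

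\emph{Step 3 (the branchings).} For the types $\IV,\IV^*,\I_0^*,\I_{en}^*,\I_{en}$ the Tamagawa number is fixed by whether the Frobenius action on $\Phi$ — equivalently, the splitting over the residue field $k_{\cK'}$ of an explicit polynomial attached to the minimal model — is as large as possible. By Hensel's lemma (valid as $l\nmid 6$) each such splitting question amounts to a square-class condition on an invariant of the minimal $\cK'$-model; identifying that invariant and pulling it back via Step 1 gives: for $\IV$ and $\IV^*$, $c_v=3$ if $\sqrt B\in\cK'$ and $c_v=1$ otherwise; for $\I_0^*$, $c_v=2$ if $\sqrt\Delta\notin\cK'$ and $c_v\in\{1,4\}$ if $\sqrt\Delta\in\cK'$ (according to whether the Galois action on the three non-identity components has order $2$, $1$, or $3$); for $\I_{en}^*$, $c_v\in\{2,4\}$ with obstruction $\sqrt B\in\cK'$ when $n$ is odd and $\sqrt\Delta\in\cK'$ when $n$ is even; and for multiplicative $\I_{en}$, $E/\cK'$ is split iff $\sqrt{-6B}\in\cK'$, giving $c_v=en$, and is non-split with $c_v=2$ otherwise. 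The genuine work is confined to this step: matching each residue-field splitting condition coming out of Tate's algorithm to the correct square class among $\sqrt B$, $\sqrt\Delta$, $\sqrt{-6B}$, and checking this is independent of the re-minimalisation parameter $u$ and of the short Weierstrass model chosen. Everything else is forced by tameness, and in fact this is precisely the statement of \cite[Lemma 3.22]{tamroot}, which one may also cite directly.
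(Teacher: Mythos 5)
Your proposal is correct and takes essentially the same route as the paper: the paper offers no proof of this lemma beyond quoting it as \cite[Lemma 3.22]{tamroot} with the remark that it ``is a consequence of Tate's algorithm,'' and your sketch (re-minimalise after base change using tameness, read the Kodaira type off $e\delta \bmod 12$ resp.\ the parity of $e$, then resolve each branching by a Hensel/square-class computation on the minimal $\cK'$-model) is precisely an expansion of that remark, ending with the same direct citation the paper relies on. The only substantive content you defer --- matching each splitting condition in Tate's algorithm to the stated square classes of $B$, $\Delta$, $-6B$ --- is exactly the content of the cited lemma, so nothing further is needed here.
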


\begin{lemma}\label{lem:3_tame}
Let $\cK / \bQ_l$ be a finite extension and $l \geq 5$. Let 
$$ 
E \colon y^2 = x^3 + A x + B, \qquad A, B \in \cK
$$
be an elliptic curve over $\cK$ with Type $\II$ or Type $\II^*$ additive reduction, i.e. the minimal discriminant has valuation $v_{\cK}(\Delta) = 2$ or $10$ respectively. 
If there exists a degree $3$ totally tamely ramified extension of $\cK$, then $\sqrt{\Delta} \in \cK$. 
\end{lemma}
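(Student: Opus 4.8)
The plan is to compute the unit part of $\Delta$, show that modulo squares in the residue field it equals the class of $-3$, and then match this against the hypothesis. Since $l\ge 5$, take the minimal model in the form $y^2=x^3+Ax+B$ with $A,B\in\mathcal{O}_{\cK}$, so that $\Delta=-16(4A^3+27B^2)$ and, $48$ and $864$ being prime to $l$, $v_{\cK}(c_4)=v_{\cK}(A)$ and $v_{\cK}(c_6)=v_{\cK}(B)$. First I would record the relevant valuations: for Type $\II$ ($v_{\cK}(\Delta)=2$), additive reduction forces $v_{\cK}(A)\ge 1$, and comparing valuations in $4A^3+27B^2$ then forces $v_{\cK}(B)=1$; for Type $\II^*$ ($v_{\cK}(\Delta)=10$), potential good reduction gives $3v_{\cK}(c_4)\ge v_{\cK}(\Delta)$, hence $v_{\cK}(A)\ge 4$, and then $v_{\cK}(B)=5$. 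In either case $v_{\cK}(4A^3)>v_{\cK}(27B^2)$, so writing $B=\pi^{v_{\cK}(B)}b$ with $b\in\mathcal{O}_{\cK}^{\times}$ and $w=1+4A^3/(27B^2)$ we get $w\equiv 1\pmod{\pi}$ and $\Delta=\pi^{v_{\cK}(\Delta)}u$ with $u=-2^4\,3^3\,b^2 w$. Writing $k$ for the residue field and reducing $u$, the factors $2^4$, $b^2$, $3^2$ and $\bar w=1$ are squares in $k^{\times}$, so $\bar u$ and $-3$ have the same image in $k^{\times}/(k^{\times})^2$.

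Next, since $v_{\cK}(\Delta)$ is even and $l$ is odd, Hensel's lemma gives $\sqrt{\Delta}\in\cK \iff u\in(\cK^{\times})^2 \iff \bar u\in(k^{\times})^2$, equivalently (by the first step) $-3\in(k^{\times})^2$, i.e.\ $\sqrt{-3}\in k$, i.e.\ $\zeta_3\in k$ (as $\zeta_3=\tfrac{-1+\sqrt{-3}}{2}$ and $2\in k^{\times}$), i.e.\ $q\equiv 1\pmod 3$. On the other hand, a Galois (equivalently, since $3$ is prime, cyclic) totally tamely ramified extension of $\cK$ of degree $3$ corresponds under local class field theory to a surjection $\mathcal{O}_{\cK}^{\times}\twoheadrightarrow C_3$ (the total-ramification condition); since $l\ge 5$, the higher unit group is uniquely $3$-divisible, so such a surjection exists iff $k^{\times}$ has one, i.e.\ iff $q\equiv 1\pmod 3$, and when $\mu_3\subset\cK$ one such extension is $\cK(\sqrt[3]{\pi})$. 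Comparing the two characterisations gives the lemma — in fact the equivalence that $\sqrt{\Delta}\in\cK$ iff $\mu_3\subset\cK$. Here ``totally tamely ramified extension'' is read as a Galois one, as befits the ambient Galois setting; note that the non-Galois extension $\cK(\sqrt[3]{\pi})$ is totally tamely ramified of degree $3$ over every such $\cK$ regardless of $q$.

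The step I expect to cost the most care is the first one: pinning down $v_{\cK}(A)$ and $v_{\cK}(B)$ exactly in Types $\II$ and $\II^*$ — in particular using potential good reduction to rule out configurations with $v_{\cK}(\Delta)=10$ but $v_{\cK}(A)$ small (which would instead be potentially multiplicative of type $\I_4^*$) — together with making precise the equivalence between the hypothesis and $\mu_3\subset\cK$.
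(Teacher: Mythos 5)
Your proof is correct and follows essentially the same route as the paper's: both pin down $v_{\cK}(B)\in\{1,5\}$ from potential good reduction, observe that the unit part of $\Delta$ is $-3$ times a square modulo $\pi$, apply Hensel, and reduce to $-3$ being a square in the residue field, i.e.\ $q\equiv 1\bmod 3$. Your closing caveat is a genuine and worthwhile observation: as literally stated the hypothesis is vacuous (the non-Galois extension $\cK(\sqrt[3]{\pi})$ is always totally tamely ramified of degree $3$ when $l\geq 5$), so the lemma only holds under the Galois/cyclic reading; the paper's proof silently makes this same assumption when it infers $q\equiv 1\bmod 3$, and that reading is the one consistent with the lemma's sole application (Case $\fe=6$ of Proposition \ref{tam-2C}, where $q\equiv 1\bmod 6$). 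Your class-field-theoretic justification of the equivalence with $q\equiv 1\bmod 3$ is more detail than the paper supplies, but is not a different method.
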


\begin{proof}
	$E / \cK$ has $j$-invariant $j = A^3 / \Delta$. As $E / \cK$ has potentially good reduction, $v_{\cK}(j) \geq 0$ so $3v_{\cK}(A) \geq v_{\cK}(\Delta)$. This ensures in each case that $v_{\cK}(\Delta) = v_{\cK}(4A^3 + 27 B^2) = 2 v_{\cK}(B)$, hence $v_{\cK}(B) \in \{1, 5\}$. Let $\pi$ be a uniformiser of $\cK$. One has
$$
\frac{\Delta}{16 \pi^{v_{\cK}(\Delta)}} = \frac{-4A^3 - 27 B^2}{\pi^{v_{\cK}(\Delta)}} \equiv \frac{-3(3B)^2}{\pi^{2 v_{\cK}(B)}} \mod \pi. 
$$
Denote the residue field of $\cK$ by $\kappa$ and let $q$ be its order.
As there exists a degree $3$ tamely ramified extension of $\cK$, $q \equiv 1 \mod 3$. If $l \equiv 1 \mod 3$ then $-3$ is a square in $\bF_l$ and hence in $\kappa$ also. Else $l \equiv -1 \mod 3$, implying that $\kappa / \bF_l$ is an even degree extension and so $-3$ is a square in $\kappa$ also. It follows by Hensel's Lemma that the unit $\frac{\Delta}{16 \pi^{v_{\cK}(\Delta)}}$ is a square in $\cK$ and so $\Delta \in \cK^{\times 2}$ as required. 
\end{proof}

To prove Proposition \ref{prop:tam_add} we prove that $c \sim_K a$ as functions on $\B(D)$ where $a$ is defined in Notation \ref{not:add_fns}, $I = C_{\fe} \leq D$, and $D$ is given by 
$$
	D = C_{\fe} \rtimes C_{2^k} = \langle x, y \mid x^{\fe} = y^{2^k} = 1, y x y^{-1} = x^{\pm 1}\rangle,
$$
with $x^{+1}$ in Case $2C$ and $x^{-1}$ in Case $2D$. 
We will need to understand the structure of $\hat{C}(D) = \Char_{\bQ}(D) / \Perm(D)$ in the following cases.

\begin{lemma}\label{lem:repC4C2k}
	The group $D = C_4 \rtimes C_{2^k} = \langle x, y \mid x^4 = y^{2^k} = 1, yxy^{-1} = x^{-1} \rangle$ for $k \geq 2$ has $\hat{C}(D) = \bZ / 2 \bZ$, generated by the lift of the faithful $2$-dimensional irreducible representation of $Q_8$ (the quaternion group) to $D$.  
\end{lemma}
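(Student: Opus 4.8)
The plan is to establish three facts, from which the lemma follows at once: (a) every irreducible rational representation of $D$ other than the lift $\chi$ to $D$ of the faithful $2$-dimensional irreducible $\chi_{Q_8}$ of $Q_8$ already lies in $\Perm(D)$, so that $\hat C(D)$ is cyclic generated by $[\chi]$; (b) $2[\chi]=0$; and (c) $[\chi]\neq 0$. For the structural setup I would first note that, with $D=\langle x,y\rangle$ as presented (order $2^{k+2}$), one has $D'=\langle x^2\rangle$, hence $D^{\mathrm{ab}}\cong C_2\times C_{2^k}$, and $x^2,y^2\in Z(D)$. The subgroup $N:=\langle x^2y^2\rangle$ is therefore central, hence normal, and a short check — the images $\bar x,\bar y$ in $D/N$ have order $4$ with $\bar x^2=\bar y^2$ and $\bar y\bar x\bar y^{-1}=\bar x^{-1}$ — gives $D/N\cong Q_8$. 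I take $\chi$ to be the inflation of $\chi_{Q_8}$ along $D\twoheadrightarrow D/N$; it has rational character, so $[\chi]\in\hat C(D)$ is defined.

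For (a): the irreducibles of $D$ split into the linear ones, which factor through $D^{\mathrm{ab}}$, and the $2$-dimensional ones, which are the $\Ind_H^D\psi$ for $H=\langle x,y^2\rangle$ (abelian of index $2$) and $\psi$ a linear character with $\psi(x)$ of order $4$. Correspondingly every irreducible rational representation of $D$ is of one of four types: (i) inflated from $D^{\mathrm{ab}}$; (ii) the inflation $\chi_+$ of the $2$-dimensional irreducible of the dihedral quotient $D/\langle y^2\rangle\cong D_4$; (iii) $\Ind_H^D$ of an irreducible rational representation of $H$ — this accounts for the complex irreducibles $\Ind_H^D\psi$ with $\psi(y^2)$ of order $\geq 4$, for which $\bQ(\psi)=\bQ(\Ind_H^D\psi)$, so that $N_{\bQ(\Ind_H^D\psi)/\bQ}(\Ind_H^D\psi)=\Ind_H^D\!\big(N_{\bQ(\psi)/\bQ}(\psi)\big)$ with $N_{\bQ(\psi)/\bQ}(\psi)\in R_\bQ(H)$; or (iv) $\chi$. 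Types (i) and (ii) lie in $\Perm$ of the corresponding quotient because $\hat C=1$ for abelian and for dihedral groups (Remark~\ref{rem:c_hat_finite}), hence in $\Perm(D)$ after inflation; type (iii) lies in $\Perm(D)$ since an irreducible rational representation of the abelian group $H$ lies in $\Perm(H)$ and $\Ind_H^D$ carries $\Perm(H)$ into $\Perm(D)$. As the distinct $N_{\bQ(\chi')/\bQ}(\chi')$ form a $\bZ$-basis of $\Char_\bQ(D)$ and all of them except $\chi$ now lie in $\Perm(D)$, the quotient $\hat C(D)=\Char_\bQ(D)/\Perm(D)$ is cyclic, generated by $[\chi]$. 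For (b), inflating the relation $\chi_{Q_8}^{\oplus 2}\simeq\bC[Q_8/C_1]\ominus\bC[Q_8/C_2]$ (from the remark following Proposition~\ref{prop:from_irr}) along $D\twoheadrightarrow D/N$ yields $\chi^{\oplus 2}\simeq\bC[D/N]\ominus\bC[D/\widetilde{C_2}]\in\Perm(D)$, where $\widetilde{C_2}$ is the preimage in $D$ of $Z(Q_8)$; hence $2[\chi]=0$.

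The crux is (c): $\chi\notin\Perm(D)$. Since $N\triangleleft D$ and we are in characteristic $0$, the assignment $M\mapsto M^N=e_NM$, with $e_N=\tfrac{1}{|N|}\sum_{n\in N}n$ a central idempotent of $\bQ D$, is an exact additive functor from $\bQ D$-modules to $\bQ[D/N]$-modules, and it carries permutation modules to permutation modules because $\bC[D/J]^N\cong\bC[(D/N)/(NJ/N)]$ for every $J\leq D$. Consequently, were $\chi$ a virtual permutation representation of $D$, then $\chi^N$ would be a virtual permutation representation of $D/N\cong Q_8$; but taking $N$-invariants undoes inflation, so $\chi^N=\chi_{Q_8}$, and $[\chi_{Q_8}]\neq 0$ in $\hat C(Q_8)=\bZ/2\bZ$ — a contradiction. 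This gives $[\chi]\neq 0$, and combining (a), (b), (c) we get $\hat C(D)\cong\bZ/2\bZ$ generated by $[\chi]$. The hard part will be the bookkeeping in (a): assembling the full list of $\Irr_\bC(D)$ and verifying that every $2$-dimensional irreducible other than $\chi_+$ and $\chi$ is induced from $H$ with no enlargement of character field (so that only $\chi$ can carry a nontrivial class); once $N$ and the isomorphism $D/N\cong Q_8$ are in hand, the reduction in the last paragraph is short.
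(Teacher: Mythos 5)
Your proposal is correct, but it takes a genuinely different route from the paper. The paper's proof is short and structural: since $D$ is a $2$-group, Segal's theorem gives $R_{\bQ}(D)=\Perm(D)$, so $\hat C(D)=\Char_{\bQ}(D)/R_{\bQ}(D)$ is determined entirely by Schur indices; one then checks that the linear characters have index $1$, that the $D_4$- and $Q_8$-lifts have index $1$ and $2$ respectively, and that every remaining $2$-dimensional irreducible has $\bQ(i)\subset\bQ(\chi)$ and hence index $1$ by Roquette's theorem. You avoid both Segal and Roquette by working directly with permutation modules: you exhibit each basis element of $\Char_{\bQ}(D)$ other than the $Q_8$-lift $\chi$ inside $\Perm(D)$ (via inflation from the abelian and dihedral quotients and via $\Ind_H^D$ from the abelian index-$2$ subgroup $H=\langle x,y^2\rangle$, using that $\bQ(\Ind_H^D\psi)=\bQ(\psi)$ when $\psi(y^2)$ has order $\geq 4$), you get $2[\chi]=0$ by inflating the explicit relation $\chi_{Q_8}^{\oplus2}\simeq\bC[Q_8/C_1]\ominus\bC[Q_8/C_2]$, and you get $[\chi]\neq0$ by pushing a putative permutation expression through the $N$-invariants functor down to $Q_8$, where $\hat C(Q_8)\neq1$ is known. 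All the individual steps check out (in particular $N=\langle x^2y^2\rangle$ is central of order $2^{k-1}$ with $D/N\cong Q_8$, and $(-)^N$ does carry $\Perm(D)$ into $\Perm(D/N)$, exactly as in the paper's Proposition \ref{prop:K_rels_properties}(4)). The trade-off: the paper's argument is shorter but leans on two cited theorems and on the fact that Schur index is insensitive to inflation; yours is more elementary and has the side benefit of producing explicit permutation-module expressions of the kind the paper needs anyway in the proof of Proposition \ref{tam-2C}, at the cost of the bookkeeping over $\Irr_{\bC}(D)$ that you flag yourself.
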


\begin{proof}
	Since $D$ is a $2$-group, every rational representation of $D$ lies in $\Perm(D)$ (\cite{Segal}). Thus any non-trivial generator of $\hat{C}(D)$ comes from a irreducible representation with non-trivial Schur index. 
	Let $X = C_4 \times C_{2^{k - 1}} \leq D$. The irreducible representations of $D$ are either $1$ dimensional, or $2$ dimensional and of the form $\Ind_{X} \chi$ where $\chi$ is a character of $X$ whose restriction to $C_4$ has order $4$.  Following \cite[\S8.3]{Serre}, the representations of dimension one have Schur index one. For the $2$-dimensional irreducible representations, there are two with rational character, lifted from the $D_{4}$ and $Q_8$ quotients of $D$. The former has order $1$ in $\hat{C}(D)$ and the latter has order $2$. The other irreducible $2$-dimensional representations $\chi$ of $D$ have $\bQ(i) \subset \bQ(\chi)$. Thus by 
	a result of Roquette \cite[Corollary 10.14]{isaacs} these $\chi$ have Schur index $1$. 
\end{proof}

\begin{lemma}\label{lem:repC6C2k}
	Let $D = C_6 \rtimes C_{2^k} = \langle x, y \mid x^6 = y^{2^k} = 1, y x y^{-1} = x^{-1} \rangle$ with $k \geq 2$. 
	Let $\tau_{S_3}$, $\tau_{D_{6}}$ be the lift of the faithful $2$-dimensional irreducible representation from the $S_3$-quotient and $D_{6}$-quotient respectively. Then for each $2$-dimensional irreducible representation $\tau$ of $D$ with $ \tau \not= \tau_{S_3}$, $\tau_{D_{6}}$, $N_{\bQ(\tau)  / \bQ}(\tau)$ has order $2$ in $\hat{C}(D)$.  
\end{lemma}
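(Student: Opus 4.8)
The plan is to compute $\hat{C}(D)$ by reducing to a group with very few subgroups. First I would record the product decomposition $D \cong C_2 \times H$, where $H = C_3 \rtimes C_{2^k} = \langle u, y \mid u^3 = y^{2^k} = 1,\ yuy^{-1} = u^{-1}\rangle$ (take $u = x^2$; then $\langle x^3 \rangle$ is a central $C_2$ complemented by $\langle x^2, y\rangle \cong H$). Writing $\epsilon$ for the sign character of the $C_2$-factor, every irreducible of $D$ has the form $\epsilon^{a} \otimes \psi$ with $a \in \{0,1\}$, $\psi \in \Irr_{\bC}(H)$, and the two excluded representations are $\tau_{S_3} = \trivial \otimes \psi_{S_3}$, $\tau_{D_6} = \epsilon \otimes \psi_{S_3}$, where $\psi_{S_3}$ is the inflation to $H$ of the standard representation of $H/\langle y^2\rangle \cong S_3$. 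I would then show $\Perm(D) = \Perm(H) \oplus (\epsilon \otimes \Perm(H))$, and likewise for $\Char_{\bQ}$, so that $\hat{C}(D) \cong \hat{C}(H) \oplus \hat{C}(H)$: the containment $\supseteq$ is immediate, and for $\subseteq$ one applies the two exact functors $V \mapsto V^{C_2}$ and $V \mapsto \Hom_{C_2}(\epsilon, V)$, checking on each $\bC[D/J]$ — via the trichotomy $J \subseteq 1\times H$, $J = C_2 \times (J\cap H)$, or $J$ a graph — that both land in $\Perm(H)$. Since $N_{\bQ(\epsilon^a \otimes \psi)/\bQ}(\epsilon^a \otimes \psi) = \epsilon^a \otimes N_{\bQ(\psi)/\bQ}(\psi)$, the lemma reduces to: for every $2$-dimensional $\psi \in \Irr_{\bC}(H)$ with $\psi \neq \psi_{S_3}$, the class of $N_{\bQ(\psi)/\bQ}(\psi)$ in $\hat{C}(H)$ has order exactly $2$.

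For this I would use the abelian index-$2$ normal subgroup $A = \langle u, y^2 \rangle \cong C_3 \times C_{2^{k-1}}$, which is cyclic since $\gcd(3, 2^{k-1}) = 1$. Every $2$-dimensional $\psi \in \Irr_{\bC}(H)$ is $\Ind_A^H(\omega \otimes \beta)$ with $\omega$ faithful on $\langle u\rangle$ and $\beta$ a character of $\langle y^2\rangle$ of order $2^m$; then $\bQ(\psi) = \bQ(\zeta_{2^m})$ (the $\langle u\rangle$-part contributes only $\omega + \omega^{-1} = -1$), and $\psi_{S_3}$ is the case $m = 0$. For the \emph{upper bound}, using Example \ref{ex:cyclic_groups1} to write $N_{\bQ(\zeta_{2^m})/\bQ}(\beta) = \bC[\langle y^2\rangle/Q_1] \ominus \bC[\langle y^2\rangle/Q_2]$ with $[Q_2:Q_1] = 2$, one gets $N_{\bQ(\psi)/\bQ}(\psi) = \Ind_{P_1}^H\tilde\omega \ominus \Ind_{P_2}^H\tilde\omega$ with $P_i = \langle u\rangle \times Q_i$ and $\tilde\omega$ the order-$3$ character of $P_i$ inflated from $\omega$; since $P_i \triangleleft H$ and $\tilde\omega^{\,y} = \overline{\tilde\omega}$, doubling gives $\Ind_{P_i}^H(\tilde\omega \oplus \overline{\tilde\omega}) = \Ind_{P_i}^H N_{\bQ(\zeta_3)/\bQ}(\tilde\omega) = \bC[H/Q_i] \ominus \bC[H/P_i] \in \Perm(H)$, using $N_{\bQ(\zeta_3)/\bQ}(\omega) = \bC[C_3/1] \ominus \bC[C_3/C_3]$. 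As the $1$-dimensional characters of $H$ also lie in $\Perm(H)$ (because $H^{\mathrm{ab}}$ is abelian, hence $\hat C$-trivial), this shows $\hat{C}(H)$ has exponent dividing $2$.

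For the \emph{lower bound} I would show $N_{\bQ(\psi)/\bQ}(\psi) \notin \Perm(H)$ when $m \geq 1$ by a parity obstruction, whose structural input is that the \emph{only} non-cyclic subgroup of $H$ is $H$ itself: any proper subgroup either lies in the cyclic group $A$, or is generated by $J\cap A$ together with one element $u^i y^j$ with $j$ odd, and since $(u^iy^j)^2 = y^{2j}$ has order $2^{k-1}$ one is forced to $J = H$. Granting this, $\dim\psi^{J}$ is even for \emph{every} subgroup $J \leq H$: for $J \leq A$ one has $\Res_A\psi = (\omega\otimes\beta)\oplus(\overline\omega\otimes\beta)$ with $\ker(\omega\otimes\beta) = \ker(\overline\omega\otimes\beta) = 1\times\ker\beta$ (cube roots and $2$-power roots of unity meet only at $1$), so $\dim\psi^J \in \{0,2\}$; $\dim\psi^H = 0$; and for cyclic $J \not\subseteq A$, $J\cap A = \langle y^{2j}\rangle$ with $j$ odd is not contained in $1\times\ker\beta = \langle y^{2^{m+1}}\rangle$ once $m \geq 1$, whence $\dim\psi^J = \langle\trivial_{J\cap A},(\omega\otimes\beta)|_{J\cap A}\rangle = 0$. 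Since $\langle\psi, N_{\bQ(\psi)/\bQ}(\psi)\rangle = 1$ is odd, it cannot equal $\sum_J n_J\dim\psi^J$ for integers $n_J$, so $N_{\bQ(\psi)/\bQ}(\psi)$ is not a virtual permutation module; with the upper bound its class has order exactly $2$. The main obstacle I expect here is the lower bound — specifically, establishing cleanly that $H$ has no proper non-cyclic subgroup and that all the fixed-space dimensions $\dim\psi^J$ are even; for the case $m = 1$ there is also the shortcut that $\psi$ is then the inflation of the faithful (quaternionic) $2$-dimensional representation of $C_3 \rtimes C_4 \cong Q_{12}$, which has Schur index $2$ over $\bQ$ and so is not even a virtual rational representation.
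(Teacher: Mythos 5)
Your argument is correct, and at its core it uses the same two mechanisms as the paper's proof: a parity obstruction for the lower bound (every permutation module contains $\tau$ with even multiplicity, whereas $N_{\bQ(\tau)/\bQ}(\tau)$ contains it with multiplicity one), and induction of a virtual permutation module from an abelian index-two subgroup for the upper bound. The routes differ in how the subgroup analysis is organized. The paper works directly in $D$, using $X = C_6\times C_{2^{k-1}}$ for the upper bound, and for the lower bound listing the three conjugacy classes of non-normal subgroups $\langle y\rangle$, $\langle x^3y\rangle$, $\langle x^3,y\rangle$, checking that the only two-dimensional constituents of the corresponding $\bC[D/H]$ are $\tau_{S_3}$ and $\tau_{D_6}$, and noting that for normal $H$ each constituent of $\bC[D/H]$ occurs with multiplicity equal to its dimension. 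You instead split $D\cong C_2\times H$ with $H = C_3\rtimes C_{2^k}$ and prove $\hat{C}(D)\cong\hat{C}(H)\oplus\hat{C}(H)$; this buys a cleaner lower bound, since every proper subgroup of $H$ is cyclic and the evenness of $\dim\psi^{J}$ follows uniformly by restricting to the cyclic index-two subgroup $A$ (and your fixed-space computations, including the case of the cyclic subgroups $\langle u^iy^j\rangle$ with $j$ odd, are correct). The price is the extra verification that $\Perm(C_2\times H)$ splits as $\Perm(H)\oplus(\epsilon\otimes\Perm(H))$ — the graph subgroups require the observation that $\trivial\oplus\phi' = \bC[J/\ker\phi']$, which does go through — together with the compatibility $N_{\bQ(\epsilon^a\otimes\psi)/\bQ}(\epsilon^a\otimes\psi)=\epsilon^a\otimes N_{\bQ(\psi)/\bQ}(\psi)$, which holds because $\epsilon$ is rational-valued. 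Both proofs are complete; yours is slightly longer but makes the structure of $\hat{C}(D)$ as two copies of $\hat{C}(C_3\rtimes C_{2^k})$ explicit, which the paper's argument leaves implicit.
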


\begin{proof}
	The irreducible representations of $D$ are of dimension $1$ or $2$, and the characters do not contribute to $\hat{C}(D)$. Following \cite[\S8.3]{Serre}, the two-dimensional irreducible representations of $D$ are of the form $ \tau = \Ind_{X}^D \chi$ where $X = C_6 \times C_{2^{k - 1}}$ and $\chi$ is a character of $X$ such that $\chi|_{C_3}$ is non-trivial. When $\chi$ has order $3$, $\tau = \tau_{S_3}$, and when $\chi$ has kernel $C_{2^{k - 1}} = \langle y^2 \rangle$, $\tau =  \tau_{D_{6}}$. 

It is clear that $\tau_{S_3}$ and $\tau_{D_{6}}$ have order $1$ in $\hat{C}(G)$ as they are lifts of irreducible representations of order $1$ in $\hat{C}(S_3)$ and $\hat{C}(D_{6})$ respectively. We claim that $N_{\bQ(\tau) / \bQ}(\tau)$ has order $2$ in $\hat{C}(D)$ for all other $2$-dimensional irreducible representations $\tau$. 
Up to conjugacy, the only non-normal subgroups of $D$ are $C_{2^{k}}^a = \langle y \rangle$, $ C_{2^{k}}^b = \langle x^3 y \rangle$, and $C_2 \times C_{2^k} = \langle x^3, y \rangle$. One has 
$$ \Ind_{C_{2^{k}}^a} \trivial = \trivial \oplus \varepsilon_1 \oplus \tau_{S_3} \oplus \tau_{D_{6}}, \ \Ind_{C_{2^{k}}^b} \trivial = \trivial \oplus \varepsilon_2 \oplus \tau_{S_3} \oplus \tau_{D_{6}}, \ \Ind_{C_2 \times C_{2^k}} \trivial = \trivial \oplus \tau_{S_3}.$$
Any other subgroup $H \leq D$ is normal, and so if $\tau$ is a subrepresentation of $\Ind_H \trivial$, it occurs with multiplicity two (equal to its dimension). This shows that $N_{\bQ(\tau) / \bQ}(\tau)$ is not an element of $\Perm(D)$ and hence has order $>1$ in $\hat{C}(D)$.
If $\Theta_{\tau} \in 
\B(X)$ is such that $\bC[X / \Theta_{\tau}] = N_{\bQ(\chi) / \bQ} (\chi)$, then $\bC[D / \Theta_{\tau}] = (N_{\bQ(\tau) / \bQ} (\tau) )^{\oplus 2}$ as $[\bQ(\chi) \colon \bQ(\tau)] = 2$. As $X$ is abelian, such $\Theta_{\tau}$ exist for each $\tau$, and so the order of $\tau$ in $\hat{C}(D)$ is $2$.  
\end{proof}

\begin{proposition}\label{tam-2C}
In Case $2C$, one has $c \sim_K a$. 
\end{proposition}

\begin{proof}
As $a = 1$ we must show that $c \sim_K 1$. We establish this for each possible $\fe$.

\underline{Case $\fe = 2, 4$}. The exponent of $D = C_{\fe} \times C_{2^k}$ is a power of $2$, and $c(H) \in \{ 1, 2, 4 \}$ for $H \leq D$ by Lemma \ref{lem_add_tam}. Thus for each $\chi \in \Irr_{\bC}(D)$, $2$ is a norm from $\bQ(\chi)$ and so $c$ is trivial on $\bQ(\chi)$-relations. By \cite[Proposition 3.16]{tamroot}, $c(\Psi) \in \bQ^{\times 2} \subset N_{K / \bQ}(K^{\times})$ whenever $\Psi$ is a Brauer relation. Hence by Proposition \ref{prop:from_irr}(i), $c \sim_K 1$ (noting that $\hat{C}(D) = 1$ since it is abelian). 

\underline{Case $\fe = 3$.} In this case $D \simeq C_{3\cdot 2^k}$ is cyclic.
Using the notation of Lemma \ref{lem_add_tam}, first assume that $\sqrt{B} \in \cK$. One has 
	$$
	c = \left(D, H \mapsto \left\{ \begin{smallmatrix} 3 & & \text{if }3 \nmid [D \colon H] \\ 1 & & \text{if }3 \mid [D \colon H] \end{smallmatrix}\right. \right).
	$$
	By Lemma \ref{lem:alpha_beta}, $c \sim_K 1$. 
	Otherwise assume $\sqrt{B} \not\in \cK$. The condition $\delta = v_{\cK}(\Delta_E) \in \{4, 8\}$ ensures that $v_{\cK}(B)$ is even. Thus $\sqrt{B} \in \cK'$ whenever $f_{\cK' / \cK}$ is even. By Lemma \ref{lem_add_tam} 
	\[ c = \left( D, I, \left\{\begin{smallmatrix} 3 & & \text{if }e = 1, 2 \mid f \\ 1 & & \text{if } e = 1, 2 \nmid f \\ 1 & & \text{if } e = 3\phantom{, 2\mid f} \end{smallmatrix}\right.\right) \equiv  \left(D, H \mapsto \left\{\begin{smallmatrix} 3 & & \text{if } 6   \mid   [D : H] \\ 1 & &  \text{if }6  \nmid   [D : H ] \end{smallmatrix}\right.\right) \cdot \left(D, H \mapsto \left\{\begin{smallmatrix} 3 & & \text{if }2 \mid [D : H] \\ 1 & & \text{if }2 \nmid [D : H] \end{smallmatrix}\right. \right) ,\]
	as functions to $\bQ^{\times} / \bQ^{\times 2}$. Again by Lemma \ref{lem:alpha_beta} both functions on the right are trivial on $K$-relations and so $c \sim_K 1$.  

	\underline{Case $\fe = 6$.} 
	In this case there exists a degree $3$ tamely ramified extension of $\cK$, and so $\sqrt{\Delta} \in \cK$ by Lemma \ref{lem:3_tame}.  
	One has 
	\[ c(H) = \begin{cases} 1 & \text{if } e = 1, 6 ,\\ \left\{\begin{smallmatrix} 1 & &  \text{if } \sqrt{B} \not\in \cF^H \\ 3 & & \text{if } \sqrt{B} \in \cF^H \end{smallmatrix}\right. & \text{if } e = 2, \\ 1 \text{ or } 4 & \text{if }e = 3 .
         \end{cases} \] 
	 By \cite[Proposition 3.16]{tamroot}, $c(\Psi) \in \bQ^{\times 2}$ whenever $\Psi$ is a Brauer relation. Using Proposition \ref{prop:from_irr}(ii), to show that $c \sim_K 1$ it suffices to show that for each $\chi \in \Irr_{\bC}(D)$ there exists $\Theta_{\chi} \in \B(D)$ with $\bC[D / \Theta_{\chi}] = N_{\bQ(\chi) / \bQ}(\chi)$ and such that $c(\Theta_{\chi})$ is a norm from each quadratic subfield of $\bQ(\chi)$. 
	 The irreducible characters of $D$ are of the form $\chi \psi$ where $\chi$ is a character inflated from the $C_6$ quotient and $\psi$ is a character inflated from the $C_{2^k}$ quotient. The possible quadratic subfields of any $\bQ(\chi\psi)$ are $\bQ(i)$, $\bQ(\sqrt{\pm 2})$, $\bQ(\sqrt{\pm 3})$ and $\bQ(\sqrt{\pm 6})$.  
If $\bQ(\chi\psi)$ contains any of the quadratic subfields $\bQ(\sqrt{3})$, $\bQ(i)$, $\bQ(\sqrt{\pm 2})$, $\bQ(\sqrt{\pm 6})$, then $\psi$ has order $2^l$ where $l \geq 2$ (and $k \geq 2$). If $\chi$ has order $n$ then $\bC[D / \Theta_{\chi \psi}] = N_{\bQ(\chi\psi) / \bQ}(\chi\psi)$ where
$$
\Theta_{\chi\psi} = \sum_{d | n} \mu(d) (C_{6 / d} \times C_{2^{k - l}} - C_{6 / d} \times C_{2^{k - l + 1}}) \in \B(D). $$
One can check that $c(\Theta_{\chi\psi}) \in \bQ^{\times 2}$ in these cases, hence is a norm from every quadratic subfield of $\bQ(\chi\psi)$.
Otherwise if $\bQ(\chi\psi)$ contains $\bQ(\sqrt{-3})$ then $c(H) \in \{ \square, 3 \}$ for all $H \leq D$, and so $c(\Theta') \in N_{\bQ(\sqrt{-3} ) / \bQ}(\bQ(\sqrt{-3} )^{\times})$ for any $\Theta' \in \B(D)$ with $\bC[D / \Theta'] = N_{\bQ(\chi\psi) / \bQ}(\chi\psi)$.
\end{proof}

\begin{proposition}\label{tam-2D}
In Case $2D$, one has $c \sim_K a $.
\end{proposition}

\begin{proof}
Again we argue for each possible $\fe$.

\underline{Case $\fe = 4$.} The group $D = C_4 \rtimes C_{2^k}$ has $\hat{C}(D) = 1$ when $k = 0, 1$. When $k \geq 2$, then by Lemma \ref{lem:repC4C2k}  
$\hat{C}(D) = \bZ / 2\bZ$ is generated by the lift of the faithful irreducible representation of $Q_8$ to $D$. Denote this representation by $\chi$. Let $\Theta_{\chi}$ be the lift of $C_1 - C_2 \in \B(Q_8)$ to $\B(D)$. Then $\chi^{\oplus 2} = \bC[D / \Theta_{\chi}]$.
By Lemma \ref{lem_add_tam}, 
$c(\Theta_{\chi}) \equiv 1 \text{ or } 4 \mod  \bQ^{\times 2}$. The function $a$ is lifted from $D / D' \simeq D_4$, and $a(\Theta_{\chi}) = 1$ since $D' \cdot \Theta_{\chi} / D' = 0 \in \B(D / D')$. It follows that $a / c(\Theta_{\chi}) \in \bQ^{\times 2}$. 

By \cite[Proposition 3.16]{tamroot}, $a / c (\Psi) \in \bQ^{\times 2} \subset N_{K / \bQ}(K^{\times})$ whenever $\Psi \in \B(D)$ is a Brauer relation. 
The ratio $a / c(H)$ is a power of $2$ for all $H \leq D$, which is a norm form any quadratic subfield of $\bQ(\psi)$ for every irreducible representation $\psi$, as $D$ is a $2$-group. 

If $\Theta \in \B(D)$ is an arbitrary $K$-relation then $k = \langle \bC[D / \Theta], \chi \rangle$ is even and $\Theta' = \Theta - (k / 2)\Theta_{\chi}$ has $\langle \bC[D / \Theta'], \chi \rangle = 0$ and $a / c(\Theta') \equiv a / c(\Theta) \mod \bQ^{\times 2}$. Following the proof of Proposition \ref{prop:from_irr}(ii), one has $a/ c(\Theta') \in N_{K / \bQ}(K^{\times})$ and so $a / c \sim_K 1$.  

\underline{Case $\fe = 3$.} One has $D = C_3 \rtimes C_{2^k}$ with $k \geq 1$. If $\sqrt{B} \in \cK$, then 
$$ 
c(H) = \left\{\begin{smallmatrix} 3 & &  \text{if }3 \nmid e_{\cF^{H} / \cK} \\ 1 & & \text{if } 3 \mid e_{\cF^{H} / \cK} \end{smallmatrix}\right. = \left\{\begin{smallmatrix} 3 & & \text{if } 3 \nmid e_{\cF^{H} \cap L / \cK} \\ 1 & & \text{if }3 \mid e_{\cF^{H} \cap \cL / \cL} \end{smallmatrix}\right. = c'(D' H / D') 
$$
where $c'$ is the function on $D / D'$ given by $c'(H') = \left\{\begin{smallmatrix} 3 & & \text{if }3 \nmid [D / D' \colon H'] \\ 1 & & \text{if }3 \mid [D / D' \colon H'] \end{smallmatrix} \right.$ for $H' \leq D / D'$.  As the function $a$ is also lifted from $D / D' \simeq S_3$  by Proposition \ref{prop:res_quot}(ii) it suffices to prove that $a / c \sim_K 1 $ on $K$-relations in $S_3$. 
Up to multiples $S_3$ has a unique Brauer relation given by $\Psi = 2S_3 + C_1 - 2C_2 - C_3$ and one computes that $a / c'(\Psi) \in \bQ^{\times 2}$. Thus $a / c' \sim_K 1$ by Proposition \ref{prop:from_irr}(ii) since all irreducible representations of $S_3$ have rational character, and so $a \sim_K c$ as functions on $\B(D)$. If $\sqrt{B} \not\in \cK$, then as in Case $2C$ we have 
$$
c(H) = \left( D, H \mapsto \left\{\begin{smallmatrix} 3 & & \text{if } 3 \nmid e_{\cF^{H} / \cK},\ 2\mid f_{\cF^{H} / \cK} \\ 1 & & \text{otherwise}\phantom{sssssssssss} \end{smallmatrix}\right.\right) =  \left( D, H \mapsto \left\{\begin{smallmatrix} 3 & & \text{if } 3 \nmid e_{\cF^{H} \cap \cL / \cK},\ 2\mid f_{\cF^{H} \cap \cL / \cK} \\ 1 & & \text{otherwise} \phantom{ssssssssssssssss}\end{smallmatrix}\right.\right),
$$
so that $c(H) = c'(D' H / D')$ where $c'$ is the function on $D / D'$ given by $$c'(H') = \left\{ \begin{smallmatrix} 3 & & \text{if } 3 \nmid [D / D' \colon H'], \ 2 \mid[D / D' \colon H'] \\ 1 & & \text{otherwise} \phantom{ssssssssssssssssssss} \end{smallmatrix}\right..$$
Once more $a / c'(\Psi) \in \bQ^{\times 2}$ for $\Psi$ the unique Brauer relation of $S_3 \simeq D/ D'$ and so $a / c' \sim_K 1$ by Proposition \ref{prop:from_irr}(ii) and hence $a \sim_K c$ on $D$ by Proposition \ref{prop:res_quot}(ii).

\underline{Case $\fe  = 6$.}
Let $\tau_{S_3}$, $\tau_{D_{6}}$ be the $2$-dimensional irreducible representations of $D$ inflated from the $S_3$-quotient and $D_{6}$-quotient respectively.
Let $\tau$ be a $2$-dimensional irreducible representation of $D$ with $\tau \not= \tau_{S_3}$, $\tau_{D_{6}}$. 
Then $\tau$ is of the form $\tau = \Ind_{X}^D \chi\psi$ where $\chi\psi$ is a character of $X =  C_6 \times C_{2^{k - 1}}$ with $\chi$ of order $n \in \{3, 6\}$ inflated from the $C_6$-quotient and $\psi$ of order $2^l$, $l \geq 1$, inflated from the $C_{2^{k - 1}}$-quotient. The representation $N_{\bQ(\tau) / \bQ}(\tau)$ has order $2$ in $\hat{C}(D)$ by Lemma \ref{lem:repC6C2k} and $N_{\bQ(\tau) / \bQ}(\tau)^{\oplus 2} = \bC[D / \Theta_{\tau}]$ where 
$$ \Theta_{\tau} = \sum_{d \mid n} \mu(d) \left( C_{6/d} \times C_{2^{k - l}} - C_{6 / d} \times C_{2^{ k - l + 1}}  \right).$$
One computes that $a / c (\Theta_{\tau}) \in \{1, 4 \} \subset \bQ^{\times 2}$. 

The derived subgroup of $D$ is $[D, D] = C_3 = \langle x^2 \rangle$, and the one-dimensional representations are inflated from $D / [D, D] \simeq C_2 \times C_{2^k}$. If $\varphi$ is a one-dimensional representation and $\bQ(\varphi)$ contains a quadratic subfield, then $\varphi$ is of the form 
$\varphi = \chi\psi$ where $\chi$ is of order $n \in \{1, 2 \}$ inflated from the $C_2 \simeq D / \langle x^2, y \rangle$ quotient and $\psi$ is of order $2^l$ for $l \geq 2$ inflated from the $C_{2^k} \simeq D / \langle x \rangle$ quotient. One can take $\Theta_{\varphi} \in \B(D)$ to be the lift of 
$$ \sum_{d \mid n} \mu(d) (C_{2 / d} \times C_{2^{k - l}} - C_{2 / d} \times C_{2^{k - l + 1}}) \in \B(D / [D, D]). $$
In this case one computes that $a / c(\Theta_{\varphi}) = 1$  when $l \geq 2$ and hence is a norm from every quadratic subfield of $\bQ(\varphi)$.

Since $\tau_{S_3}$, $\tau_{D_6}$ are rational, they appear with even multiplicity in $\bC[D / \Theta]$. Moreover these have order one in $\hat{C}(D)$, and so $\tau_{S_3} = \bC[D / \Theta_{\tau_{S_3}}]$, $\tau_{D_6} = \bC[D / \Theta_{\tau_{D_6}}]$ for some $\Theta_{\tau_{S_3}}$, $\Theta_{\tau_{D_6}} \in \B(D)$. By \cite[Proposition 3.16]{tamroot}, $a / c(\Psi) \in \bQ^{\times 2}$ for any Brauer relation $\Psi \subset \B(D)$. Now let $\Theta$ be an arbitrary $K$-relation for $D$. By writing $\Theta$ as a sum of a Brauer relation and the previously defined $\Theta_{\tau}$, $\Theta_{\varphi}$, $\Theta_{\tau_{S_3}}$, $\Theta_{\tau_{D_{12}}}$ and arguing as in Case $\fe = 4$, one obtains $a / c (\Theta) \in N_{K / \bQ}(K^{\times})$ and so $a \sim_K c$.  
\end{proof}

\begin{proposition}\label{prop:tam_d}
	Let $\cF / \cK / \bQ_l$ be finite extensions with $l \geq 5$ and $\cF / \cK$ Galois with Galois group $D$.  Let $E / \cK$ be an elliptic curve with potentially multiplicative reduction. Assume that $f_{\cF / \cK}$ is a power of $2$, and that the quadratic extension $\cL$ of $\cK$ over which $E$ attains semistable reduction is a subfield of $\cF / \cK$.  

Then $c \sim_K d$ as functions on $\B(D)$, where $d$ is defined in Notation \ref{not:add_fns}, and $V$ is the quadratic character of the quotient $\Gal(\cL / \cK)$ of $D$.  
\end{proposition}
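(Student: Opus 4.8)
The plan is to mirror the proof of the corresponding statement for Brauer relations in \cite[\S3.iv]{tamroot}, replacing the function-theoretic inputs of that argument by the $K$-relation versions developed in \S\ref{sec:rep_theory}, and to deal with the one genuinely new feature — a constant which is invisible on Brauer relations but not on $K$-relations — by a short parity computation in the Burnside ring. First I would record the local picture. Since $E/\cK$ has potentially multiplicative additive reduction and $l\ge5$, its reduction type is $\I_{n_0}^{*}$ with $n_0=v_{\cK}(\Delta_E)-6\ge1$ and $v_{\cK}(c_6)=3$; hence $\cL=\cK(\sqrt{-c_6})$ is ramified quadratic, $D/D'=\Gal(\cL/\cK)\cong C_2$ with nontrivial character $\chi=V$, and $E$ acquires split multiplicative reduction over $\cL$. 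Consequently $d(H)=|H|^{-1}$ for $H\le D'$ and $d(H)=1$ otherwise, while Tate's algorithm (Lemma \ref{lem_add_tam}) gives $c_v(H)=n_0\,e_{\cF^H/\cK}$ when $H\le D'$ (the curve $E/\cF^H$ being split multiplicative of type $\I_{n_0 e_{\cF^H/\cK}}$) and $c_v(H)\in\{2,4\}$ when $H\not\le D'$. Using $|I|=e_{\cF/\cK}$ and $e_{\cF^H/\cK}\,|H|=e_{\cF/\cK}\,f_{\cF/\cF^H}$, this yields $c_v(H)/d(H)=n_0\,e_{\cF/\cK}\,f_{\cF/\cF^H}$ for $H\le D'$, with $f_{\cF/\cF^H}=[H:H\cap I]$ a power of $2$, and $c_v(H)/d(H)=c_v(H)\in\{2,4\}$ for $H\not\le D'$.

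The key new step is to strip off the constant $n_0\,e_{\cF/\cK}$. By Mackey's formula $\langle\bC[D/H],\bC[D/D']\rangle=1+\mathbf{1}_{H\le D'}$, so for any $\Theta=\sum_i n_i H_i\in\B(D)$ one has $\sum_i n_i\,\mathbf{1}_{H_i\le D'}=\langle\bC[D/\Theta],\chi\rangle$; when $\Theta$ is a $K$-relation, say $\bC[D/\Theta]=N_{K/\bQ}(\rho)$, this equals $[K:\bQ]\langle\rho,\chi\rangle$ and is in particular even. Hence $H\mapsto\alpha^{\mathbf{1}_{H\le D'}}$ is trivial on $K$-relations for every $\alpha\in\bQ^{\times}$, so it suffices to show $g\sim_K1$ for the function $g$ defined by $g(H)=f_{\cF/\cF^H}$ when $H\le D'$ and $g(H)=c_v(H)$ when $H\not\le D'$. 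Since the first values are powers of $2$ and the second lie in $\{2,4\}$, the function $g$ is $\{1,2\}$-valued modulo $\bQ^{\times2}\subseteq N_{K/\bQ}(K^{\times})$.

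To finish I would reduce $g$ to the machinery of \S\ref{sec:rep_theory}. On the subgroups $H\le D'$, the function $g$ is, after absorbing one more $2$-power constant as above, the residue-degree function, which by $D$-locality and Corollaries \ref{cor:reg_perm} and \ref{corr:cyclic_f_const} is trivial on $K$-relations in $\B(D')$; on the subgroups $H\not\le D'$, the value of $c_v(H)$ is, by Lemma \ref{lem_add_tam}, governed by the ramification and residue degrees of $\cF^H/\cK$ and by whether a fixed quadratic subextension lies in $\cF^H$ — a $D$-local condition of the form $(D,I,\psi)$ handled by Corollary \ref{corr:cyclic_f_const}(iii)--(iv). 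Writing $g$ as a product of such pieces using Propositions \ref{prop:res_quot} and \ref{prop:rename_descent}, and absorbing any residual discrepancy via the Brauer-relation identity $c_v\equiv d\bmod\bQ^{\times2}$ established in \cite[\S3.iv]{tamroot}, gives $g\sim_K1$ and hence $c_v\sim_K d$.

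The main obstacle will be the case analysis in the last step: matching the several branches of Tate's algorithm for the subgroups $H\not\le D'$ (type $\I^{*}$ according as the relevant quadratic splits, and non-split multiplicative) to the precise functions of Corollary \ref{corr:cyclic_f_const}, and — since $D$ is only constrained to have $f_{\cF/\cK}$ a $2$-power, so that $\hat{C}(D)$ need not be trivial — checking that the functions at hand factor through a quotient of $D$ with trivial $\hat{C}$, so that Proposition \ref{prop:from_irr} applies, or else handling the few exceptional generators by hand as in Proposition \ref{tam-2D}. By contrast, the parity computation stripping the constant $n_0\,e_{\cF/\cK}$ is the only step with no counterpart in \cite{tamroot}, where the corresponding exponent is simply zero.
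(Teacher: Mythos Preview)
Your key new step — showing that the constant $n_0\,e_{\cF/\cK}$ contributes with exponent $\sum_i n_i\mathbf{1}_{H_i\le D'}=\langle\chi,\bC[D/\Theta]\rangle$, which is even on any $K$-relation because $\chi$ is rational — is precisely the new ingredient in the paper's proof, and your derivation of it is correct (the paper phrases the constant as $n/|D|$ and invokes Proposition~\ref{prop:norm_rel_irrs}, but the content is identical).

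Where your sketch diverges from the paper is in the finish. You propose to handle the remaining $\{1,2\}$-valued function $g$ by treating the branches $H\le D'$ and $H\not\le D'$ separately and then assembling the pieces via the Brauer-relation identity from \cite{tamroot}. But neither ``half'' of a $K$-relation is itself a $K$-relation, so the piecewise analysis does not directly apply; and the Brauer-relation identity only controls Brauer relations, so it cannot by itself absorb the residual discrepancy on a genuine $K$-relation. The paper instead observes (citing \cite[Proposition~3.21]{tamroot}) that $c_v/d$ modulo squares factors through the abelian $2$-group
\[
A=\Gal(\cL^u/\cK)\cong\Gal(\cL/\cK)\times\Gal(\cK^u/\cK)\cong C_2\times C_{2^k},
\]
where $\cL^u$, $\cK^u$ are the maximal unramified extensions of $\cL$, $\cK$ inside $\cF$. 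Once you are in $\B(A)$ you have $\hat C(A)=1$ and every irreducible character of $A$ has values in $\bQ(\zeta_{2^{k}})$; so Proposition~\ref{prop:from_irr}(ii), combined with the Brauer-relation result, gives $c_v/d\sim_K1$ immediately whenever $K\notin\{\bQ(i),\bQ(\sqrt{\pm 2})\}$. For these three remaining fields one has $2\in N_{K/\bQ}(K^\times)$, so the $\{1,2\}$-valued part of $c_v/d$ is already trivial modulo norms and only the constant term survives — which your parity computation then kills. This single reduction to $A$ replaces the entire case analysis you were anticipating and sidesteps the $\hat C(D)\ne 1$ worry altogether.
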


\begin{proof}
Let $n = v_{\cK}(\Delta)$ be the valuation of the minimal discriminant of $E / \cK$ and let $D' = \Gal(\cF / \cL)$. 
 Letting $e = e_{\cF^{H} / \cK}$, $f = f_{\cF^{H} / \cK}$, the values of $c / d$ are computed using Lemma \ref{lem_add_tam} as follows in \cite[Proposition 3.21]{tamroot}. 
	\begin{table}[H]
		\caption{Values of $c / d$.}
		\centering
		\begin{tabular}{l | l l l l}
			& $E / \cF^{H}$ & $c$ & $d$ & $c/ d$ \\
			\hline
			$\cL \subset \cF^{H}$ & $\I_{ne}$ split & $ne$ & $\tfrac{|D|}{ef}$ & $\tfrac{n}{|D|}e^2 f$ \\
			$\cL \not\subset \cF^{H}$ & $\I_{ne}$ non-split & $2$ & $1$ & $2$ \\
			$\cL \not\subset \cF^{H}$, $2 \mid e$, $f = 1$ & $\I_{ne}^*$ & $c(E / \cK)$ & $1$ & $c(E / \cK)$ \\
			$\cL \not\subset \cF^{H}$, $2 \mid e$, $2 \mid f$ & $\I_{ne}^*$ & $4$ & $1$ & $4$ 
		\end{tabular}
\end{table}
In the proof of \cite[Proposition 3.21]{tamroot} it is also shown that, as a function to $\bQ^{\times} / \bQ^{\times 2}$, $c / d$ factors through  
$$
\Gal(\cL^u / \cK) \simeq \Gal(\cL  / \cK) \times \Gal(\cK^u / \cK)  = C_2 \times C_{2^k} := A, 
$$
where $\cL^{u}$ and $\cK^{u}$ denote the maximal unramified extensions of $\cL$ and $\cK$ in $\cF$. 

As $A$ is an abelian $2$-group, if $K \not= \bQ(i)$, $\bQ(\sqrt{\pm 2})$ then $c / d \sim_K 1$ by Proposition \ref{prop:from_irr}(ii), since $c / d(\Psi) \in \bQ^{\times 2}$ whenever $\Psi$ is a Brauer relation by \cite[Proposition 3.21]{tamroot}, and no irreducible representation of $A$ contains $K$ in its field of values.

Thus we may assume that $K \in \{ \bQ(i), \bQ(\sqrt{\pm 2})\}$. Note that $c(E / \cK) \in \{2, 4\}$. Since $2$ is a norm from each of these subfields, to show that $c / d \sim_K 1$ it suffices to show that the term $n / |D|$ occurs evenly in $c / d(\Theta)$ whenever $\Theta$ is a $K$-relation for $D$.  
Let $\psi$ be the order two character of $D$ with kernel $D'$. Then 
$$
\langle \psi, \bC[D / H] \rangle = \begin{cases} 1 & \text{if } \cL \subset \cF^H, \\ 0 & \text{if }\cL \not\subset \cF^H. \end{cases}
$$
Thus $(c / d)(\Theta) \equiv (n/|D|)^{\langle \psi, \bC[D / \Theta]\rangle } \equiv 1 \mod N_{K / \bQ}(K^{\times}), 
$
since $\psi$ occurs with even multiplicity in $\bC[D / \Theta]$ by Proposition \ref{prop:norm_rel_irrs} as it is a rational character. 
\end{proof}

\section{Differential term}\label{appendix2}
We use the notation of \S\ref{sec:main}. To prove Proposition \ref{prop:pot_good}, we show that $d / \omega \sim_K 1$ as functions on $\B(D)$, where
$$ d / \omega \sim_K \begin{cases}
	(I, W, l^{\left\lfloor  \delta f / 12 \right\rfloor}), & \text{Case $2C$}, \\
	(I, W, l^{\left\lfloor \delta f /12 \right\rfloor})\cdot\left(I, W, \left\{\begin{smallmatrix} f, & \text{if } \fe \nmid f, \\ 1 & \text{if }\fe \mid f\end{smallmatrix}\right.\right), & \text{Case 2D}. 
\end{cases}
$$
Let $I / W \simeq C_r$. For each possible $\fe$, let $h_{\fe}$, $g_{\fe}$ be the functions  on $\B(C_r)$ given by 
$$ h_{\fe} \colon C_{r / n} \mapsto l^{\left\lfloor \delta n / 12 \right\rfloor}, \quad g_{\fe} \colon C_{r / n} \mapsto \left\{ \begin{smallmatrix} n, & \text{if }\fe \nmid n,  \\ 1 & \text{if }\fe \mid n \end{smallmatrix} \right. .$$ 
Let $\Theta \in \B(D)$ be a $K$-relation for $D$, so that $\Res_I \Theta \in \B(I)$ is a $K$-relation for $I$ by Proposition \ref{prop:K_rels_properties}(2). Let $W \cdot \Res_I \Theta / W = \sum_{n \mid r} a_n \Psi_n \in \B(C_r)$ where $\Psi_n$ are given in Example \ref{ex:cyclic_groups1}. Then
$$ 
\bC[I / \Res_I \Theta] = \bigoplus_{n \mid r} \chi_n^{\oplus a_n} \oplus A, 
$$
where $\chi_n$ is the rational representation inflated from $I / W$ with kernel $W \rtimes C_{r / n}$, and $A$ is a representation of $I$ with $A^{W} = 0$. Let $\psi_n$ be a linear character of order $n$ of $I$ such that $N_{\bQ(\zeta_n) / \bQ}(\psi_n) = \chi_n$. Then
$$ 
a_n = \langle \bC[I / \Res_I \Theta] , \psi_n \rangle = \langle \bC[D / \Theta], \Ind_{I}^D \psi_n \rangle, 
$$
and $a_n$ is even if $K \not\subset \bQ(\Ind_{I}^D \psi_n)$. Therefore
$$
d / \omega (\Theta) \equiv \begin{cases}
	\underset{n, K \subset \bQ(\Ind_I^{D} \psi_n)}{\prod} h_{\fe}(\Psi_n)^{a_n}  & \text{Case $2C$}, \\
	\underset{n, K \subset \bQ(\Ind_I^D \psi_n)}{\prod} (h_{\fe}\cdot g_{\fe})(\Psi_n)^{a_n} & \text{Case $2D$} 
\end{cases} 
\mod N_{K / \bQ}(K^{\times}).
$$

To prove that $d / \omega \sim_K 1$, we show that $h_{\fe}(\Psi_n)^{a_n}$, $(h_{\fe}\cdot g_{\fe})(\Psi_n)^{a_n} \in N_{K / \bQ}(K^{\times})$ for any quadratic subfield $K \subset \bQ(\Ind_I^D \psi_n)$. 
Note that $h_{\fe}(\Psi_n) \equiv l^a \mod \bQ^{\times 2}$ with $a \in \{0,1\}$. We first show that $l$ is the norm of an ideal of $K$.  

\begin{proposition}
	If $K \subset \bQ(\Ind_{I}^{D} \psi_n)$ then $l$ is the norm of an ideal of $K$. 
\end{proposition}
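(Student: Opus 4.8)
The plan is to reduce the claim to a Frobenius computation in the cyclotomic field $\bQ(\zeta_n)$. First I would observe that the character of $\Ind_I^D \psi_n$ takes values that are $\bZ$-linear combinations of $n$-th roots of unity, so $\bQ(\Ind_I^D\psi_n) \subseteq \bQ(\zeta_n)$, and hence $K \subseteq \bQ(\zeta_n)$. In the situation of this appendix — potentially good reduction, with $\cF$ replaced by $\cL^u$ so that $D$ is of the form \eqref{nice-D} — the inertia subgroup is $I = C_{\fe} = \langle x\rangle$, and the wild inertia $W$ is trivial because $\gcd(\fe, l) = 1$ for $l \geq 5$; hence $r = \fe$ and $n \mid \fe$. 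In particular $l \nmid n$, so $l$ is unramified in $\bQ(\zeta_n)$ (and therefore in $K$), with $\mathrm{Frob}_l \in \Gal(\bQ(\zeta_n)/\bQ) \cong (\bZ/n\bZ)^{\times}$ equal to the class of $l \bmod n$. Since $K/\bQ$ is quadratic and $l$ is unramified in $K$, it then suffices to show that $\mathrm{Frob}_l$ fixes $K$ pointwise: in that case $l$ splits in $K$ and $l = N_{K/\bQ}(\mathfrak p)$ for any prime $\mathfrak p \mid l$ of $K$.

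For Case $2C$ I would use that (as already reduced to in the text) $q$ is an odd power of $l$, so $l \equiv q \equiv 1 \bmod \fe$ and thus $l \equiv 1 \bmod n$; then $\mathrm{Frob}_l = 1$, and $l$ splits completely in $\bQ(\zeta_n) \supseteq K$. For Case $2D$ I would first note that the nontrivial generator $y$ of $D/I$ conjugates $x$ to $x^{-1}$, so $\Ind_I^D \psi_n \cong \Ind_I^D \overline{\psi_n}$ has real-valued character; hence $K \subseteq \bQ(\Ind_I^D\psi_n) \subseteq \bQ(\zeta_n)^{+}$, the maximal real subfield. Then, since $q \equiv -1 \bmod \fe$ with $\fe \in \{3,4,6\}$ — so that $(\bZ/\fe\bZ)^{\times}$ has exponent $2$, forcing $q$ to be an odd power of $l$ and $l \equiv -1 \bmod \fe$ — we get $l \equiv -1 \bmod n$, so $\mathrm{Frob}_l$ is complex conjugation, which acts trivially on $\bQ(\zeta_n)^{+} \supseteq K$. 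In both cases $\mathrm{Frob}_l$ restricts to the identity on $K$, as needed.

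The number-theoretic content (Frobenius elements in cyclotomic fields, and that a prime which splits or ramifies in a quadratic field is the norm of a prime ideal) is standard. I expect the only real friction to be the bookkeeping: confirming that after the reductions of \S\ref{sec:main} one genuinely has $I = C_\fe$, $W = 1$ and $n \mid \fe$; that $k \geq 1$ in Case $2D$, so that the inversion action on $I$ is actually present (this holds because the unramified quadratic extension $\cK(\mu_\fe)/\cK$ lies inside $\cL^u \subseteq \cF$); and that the known congruence $q \equiv (-1)^t \bmod \fe$, with $q$ a power of the residue characteristic $l$, really does pin down $l \bmod n$ in the way claimed.
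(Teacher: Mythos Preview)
Your argument rests on the claim that, in this appendix, $D$ has already been reduced to the form \eqref{nice-D} with $I = C_{\fe}$, $W = 1$, and hence $n \mid \fe$. That reduction, however, was made in \S\ref{sec:tam_add} \emph{only} to establish $c_v \sim_K a$ for the Tamagawa term; it does not carry over to the differential term treated here. In Appendix~\ref{appendix2} (and in \S3.2.2), $D = \Gal(\cF/\cK)$ is the original decomposition group, $I$ and $W$ are its inertia and wild inertia, and $I/W \simeq C_r$ with $r$ the tame part of $e_{\cF/\cK}$. The integer $n$ ranges over divisors of $r$, not of $\fe$: indeed Tables~\ref{table1} and~\ref{table2} list values such as $n = p^k$, $2p^k$, $3 \cdot 2^k$ for arbitrary primes $p$ and $k \geq 1$. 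So the congruences $l \equiv \pm 1 \bmod n$ on which your Case~2C/2D arguments hinge simply do not hold, and the proof collapses.

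The paper's argument supplies the missing idea: rather than controlling $l \bmod n$ directly, it exploits the general fact that in the tame quotient $D/H$ (with $H = \ker\psi_n$) a lift of Frobenius conjugates inertia by the $q$-th power map. An explicit character computation then shows $\bQ(\Ind_I^D \psi_n) \subset \bQ(\zeta_n)^{\langle \zeta_n \mapsto \zeta_n^q\rangle}$, so that $\mathrm{Frob}_q$ acts trivially on $K$. Since $q$ is an odd power of $l$ (by the standing reduction in \S3.2.2) and $\Gal(K/\bQ)$ has order $2$, this forces $\mathrm{Frob}_l|_K = 1$ as well, whence $l$ splits in $K$. Your instinct to reduce to a Frobenius computation is right, but the input has to be the $q$-power action on tame inertia, not a congruence $l \equiv \pm 1 \bmod n$ that is unavailable here.
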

	
\begin{proof} We show that $\bQ(\Ind_I^{D} \psi_n) \subset \bQ(\zeta_n)^{\langle \zeta_n \mapsto \zeta_n^{q} \rangle}$, that is that $\bQ(\Ind_I^D \psi_n)$ lies in the subfield of $\bQ(\zeta_n)$ fixed by $\langle q \rangle \leq (\bZ / n \bZ)^{\times} \simeq \Gal(\bQ(\zeta_n) / \bQ)$. Let $H = \ker \psi_n \leq I$. This is a characteristic subgroup of $I$ and so normal in $D$ also. Thus $\Ind_I^D \psi_n$ is the lift of $\Ind_{I / H}^{D / H} \psi_n$, viewing $\psi_n$ as a representation of the quotient $I / H$. One has $D / H = \Gal(\cK' / \cK)$, where $\cK' / \cK$ is a tamely ramified extension with inertia group $I / H$. Let $\sigma \in D / H$ be a lift of Frobenius from $(D / H) / (I / H)$. Then for $\tau \in I / H$, $\sigma \tau \sigma^{-1} = \tau^q$. The character of $\Ind_{I / H}^{D / H} \psi_n$ is given by 
$$
(\Ind_{I / H}^{D / H} \psi_n) (g) = \begin{cases} 0, & \text{if }g \not\in I / H, \\ \sum_{t \in T} \psi_n (t g t^{-1}), & \text{if } g \in I / H, \end{cases}
$$
where $T$ is a transversal for $(D / H )/ (I / H)$, which we choose to be $T = \langle \sigma \rangle$. For $\tau$ a generator the cyclic group $I / H$ with $\psi_n(\tau) = \zeta_n$, one has $\bQ(\Ind_I^D \psi_n) = \bQ(\Ind_{I / H}^{D / H} \psi_n (\tau)) = \bQ(\sum_{i = 0}^{2^k - 1} \zeta_n^{q^i}) \subset \bQ(\zeta_n)^{\langle \zeta_n \mapsto \zeta_n^q \rangle}$ (where $\sigma$ has order $2^k = f_{\cF / \cK}$ and $q^{2^{k}} \equiv 1 \mod n$ as the extension $\cK' / \cK$ is tamely ramified).  

Thus if $K \subset \bQ(\Ind_I^D \psi_n)$, then $K \subset \bQ(\zeta_n)^{\langle \zeta_n \mapsto \zeta_n^{q} \rangle}$ and so the Artin symbol for $q$ in $K / \bQ$, and hence also for $l$, is trivial. It follows that $l$ splits in $K$ and hence is the norm of an ideal of $K$. 
\end{proof}

We first consider $h_{\fe}(\Psi_n)$ when $\fe \nmid n$.  The following table details the $n$ with $\fe \nmid n$ and $h_{\fe}(\Psi_n) \equiv l \mod \bQ^{\times 2}$, where $p$ is an odd prime. 
\begin{table}[H]\renewcommand{\arraystretch}{1.2}
	\caption{Values of $n$ with $h_{\fe}(\Psi_n) \not\in \bQ^{\times 2}$ and $\fe \nmid n$.}
	\label{table1}
	\begin{tabular}{|l | l|}
	\hline
	& $\{ n \mid \fe \nmid n,\ h_{\fe}(\Psi_n) \equiv l \mod \bQ^{\times 2} \}$ \\
	\hline
	$\fe = 2$ & $\{ p^k \mid k \geq 1,\ p \equiv 3 \mod 4 \}$ \\
	\hline
	$\fe = 3$ & $\{ 2^{k + 1},\ 2 p^{k},\ p^{k} \mid k \geq 1,\ p \equiv -1 \mod 3 \}$ \\
	\hline
	$\fe = 4$ & $\{p^k,\ 2(p')^k \mid k \geq 1,\ p \not\equiv -1 \mod 8,\ p' \equiv \pm 3 \mod 8\}$\\
	\hline
	$\fe = 6$ & $\{2^{k + 2},\ 3^{k + 1},\ p^{k},\ 2(p')^{k}\mid k \geq 1, p \equiv -1, -5 \mod 12, p' \equiv \pm 5 \mod 12 \}$\\
	\hline
\end{tabular}
\end{table}

Observe that the only possible quadratic subfields of $\bQ(\zeta_n)$ for the $n$ in the table are $\bQ(i)$, $\bQ(\sqrt{\pm 2})$, $\bQ(\sqrt{p^*})$. The following lemma states that $l$ is a norm from these fields when they are contained in $\bQ(\Ind_I^D \psi_n)$ and so $h_{\fe}(\Psi_n)^{a_n}$, $(h_{\fe}\cdot g_{\fe})(\Psi_n)^{a_n} \in N_{K / \bQ}(K^{\times})$.    

\begin{lemma}\label{lem:good_fields}
	If $K \in \{\bQ(i), \bQ(\sqrt{\pm 2}), \bQ(\sqrt{p^*}) \}$ and $l$ is the norm of an ideal of $K$, then $l \in N_{K / \bQ}(K^{\times})$.
\end{lemma}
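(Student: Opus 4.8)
The plan is to deduce this from the Hasse norm theorem for the quadratic (hence cyclic) extension $K/\bQ$: a nonzero rational number lies in $N_{K/\bQ}(K^{\times})$ if and only if it is a local norm at every place of $\bQ$. We apply this with $x=l$. The case where $l$ ramifies in $K$ (so $l=2$ or $l=p$) is immediate --- e.g. $2=N(1+i)=N(\sqrt{\pm 2})$, and $p=N(\sqrt{-p})$ when $p\equiv 3\pmod 4$, while $p=N(\sqrt{p}\,\varepsilon)$ when $p\equiv 1\pmod 4$ since the fundamental unit $\varepsilon$ of $\bQ(\sqrt p)$ has norm $-1$ --- so we may assume $l$ is unramified in $K$. (In the application this is automatic: $l\ge 5$, and in the last case $l\ne p$ because $l$ is the residue characteristic while $p\mid n$ with $n$ prime to $l$.)

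First I would cut down the set of places that need to be checked. At the archimedean place $l>0$ is a local norm, since $K\otimes_{\bQ}\bR$ is either $\bR\times\bR$ or $\bC$ and in both cases the image of the norm map contains the positive reals. Since $l$ is the norm of an ideal and is unramified, it is not inert, hence splits in $K$; thus $K\otimes_{\bQ}\bQ_l\cong\bQ_l\times\bQ_l$ and $l$ is trivially a local norm at $l$. At any other finite place $v$, the element $l$ is a $v$-adic unit, and the norm map on the unit groups of the split-or-unramified extension $K_v/\bQ_v$ is surjective, so $l$ is a local norm at $v$ --- unless $v$ is the unique prime ramifying in $K$, namely $v_0=2$ if $K\in\{\bQ(i),\bQ(\sqrt{\pm 2})\}$ and $v_0=p$ if $K=\bQ(\sqrt{p^*})$ (recall $p^*\equiv 1\pmod 4$, so $\mathrm{disc}\,K$ is odd in that case). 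Hence it remains only to verify that $l$ is a local norm at $v_0$, i.e.\ that the Hilbert symbol $(d,l)_{v_0}$ equals $1$, where $K=\bQ(\sqrt d)$.

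To get this last symbol I would use the product formula $\prod_v (d,l)_v=1$ together with the evaluation of $(d,l)_v$ at every other place: $(d,l)_{\infty}=1$ because $l>0$; $(d,l)_l=\left(\tfrac{d}{l}\right)=1$ because $l$ splits in $K$, which is precisely the hypothesis that $l$ is the norm of an ideal; $(d,l)_v=1$ at every odd $v\notin\{l,v_0\}$ since $d$ and $l$ are both $v$-adic units there; and if $v_0=p$ is odd then also $(d,l)_2=(-1)^{\frac{p^{*}-1}{2}\cdot\frac{l-1}{2}}=1$ since $p^{*}\equiv 1\pmod 4$. The product formula then forces $(d,l)_{v_0}=1$, so $l$ is a local norm at every place and $l\in N_{K/\bQ}(K^{\times})$ by Hasse.

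The point to be careful about --- and the reason a direct ideal-theoretic argument fails --- is that ``$l$ is a norm of an element of $K$'' is strictly weaker than ``the prime $\mathfrak{l}\mid l$ is principal'': when $K=\bQ(\sqrt{p^*})$ has class number $>1$, a split $l$ may have $\mathfrak{l}$ non-principal and yet $l$ is still a norm, e.g.\ $13=N_{\bQ(\sqrt{-23})/\bQ}\!\bigl(\tfrac{1-3\sqrt{-23}}{4}\bigr)$ although the prime above $13$ in $\bQ(\sqrt{-23})$ (which has class number $3$) is not principal. This is exactly why routing through local conditions via the Hasse norm theorem is the natural approach here, and it is also what makes it harmless that the fields $\bQ(\sqrt{p^*})$ need not have class number one.
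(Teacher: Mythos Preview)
Your proof is correct and takes a genuinely different route from the paper. The paper argues via genus field theory: each of the listed fields is its own genus field (because the discriminant is a prime power), so a prime of residue degree~$1$ is the norm of a \emph{principal} ideal; one then disposes of the sign ambiguity in the real cases $\bQ(\sqrt{2})$ and $\bQ(\sqrt{p})$ ($p\equiv 1\bmod 4$) by invoking that $-1$ is a norm there. Your approach via the Hasse norm theorem and the Hilbert-symbol product formula is more direct: it never needs to locate a principal ideal at all, and the single ramified place $v_0$ is handled automatically by the product formula once the other local conditions are verified. This makes transparent \emph{why} the lemma holds---it is precisely the fact that $K$ has a unique finite ramified prime---whereas the genus-field argument encodes the same fact less explicitly. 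Both approaches end up invoking that the fundamental unit of $\bQ(\sqrt{p})$ ($p\equiv 1\bmod 4$) has norm $-1$: the paper needs it for the sign in the unramified case, you use it only for the boundary case $l=p$. Your closing remark and the example $13=N_{\bQ(\sqrt{-23})/\bQ}\bigl((1-3\sqrt{-23})/4\bigr)$ nicely highlight that the conclusion is about norms of \emph{elements}, not principality of the prime above $l$, which is exactly what the Hasse argument delivers without detour. One cosmetic point: your second paragraph recomputes the Hilbert symbols that your first paragraph already shows are trivial; the product formula alone suffices once you know $(d,l)_v=1$ for $v\neq v_0$.
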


\begin{proof}
	Given an abelian extension $K$ of $\bQ$, its genus field is the largest field $L$ contained in the Hilbert class field of $K$ such that $L / \bQ$ is an abelian extension. We will use the property that if a prime $l$ in $\bQ$ has residue degree $1$ in $L$, then it is the norm of a principal ideal of $K$ (this follows from \cite[VI.\S3, Theorem 3.3]{janusz}). 

	The fields $\bQ(i)$, $\bQ(\sqrt{\pm 2})$ , $\bQ(\sqrt{p^*})$ are equal to their genus fields. As $l$ is the norm of an ideal in these fields, it has residue degree $1$ and hence is the norm of a principal ideal. Since $\bQ(i)$, $\bQ(\sqrt{-2})$, and $\bQ(\sqrt{-p})$ for $p \equiv 3 \mod 4$ are imaginary, $l$ is the norm of an element from these fields. For $\bQ(\sqrt{2})$, $\bQ(\sqrt{p})$ with $p\equiv 1 \mod 4$, $-1$ is the norm of an element from these fields and so $l$ is also.      
\end{proof}

We now consider $n$ such that $\fe \mid n$. The following table details the $n$ with $\fe \mid n$ and $h_{\fe}(\Psi_n) \equiv l \mod \bQ^{\times 2}$, where $p$ is an odd prime.   
\begin{table}[H]\renewcommand{\arraystretch}{1.2}
	\caption{Values of $n$ with $h_{\fe}(\Psi_n) \not\in \bQ^{\times 2}$ and $\fe \mid n$.}    	\label{table2}
	\begin{tabular}{|l | l |}
                \hline
                 & $ \{ n \mid \fe \mid n,\ h(\Psi_n) \equiv l \mod \bQ^{\times 2} \}$ \\
                \hline  
                $\fe = 2$ & $\{ 2^2,\ 2p^k \mid k \geq 1,\ p \equiv 3 \mod 4 \}$  \\
                \hline
		$\fe = 3$ & $ \{ 3,\ 3\cdot 2^k,\ 3 p^k \mid k \geq 1,\ p \equiv -1 \mod 3 \}$ \\
                \hline
                $\fe = 4$ & $\{ 2^2,\ 2^3,\ 2^2(p)^k \mid k \geq 1,\ p \equiv -1 \mod 4\}$ \\
                \hline
		$\fe = 6$ & $\{ 2\cdot 3^k,\ 3\cdot 2^k,\ 2\cdot 3\cdot p^k  \mid k \geq 1,\ p \equiv -1, 5 \mod 12\}$ \\
                \hline
            \end{tabular}
\end{table}

\begin{proposition}\label{prop:case2C}
	In Case $2C$, $h_{\fe}(\Psi_n)^{a_n} \in N_{K / \bQ}(K^{\times})$ for all $n$ such that $\fe \mid n$. 
\end{proposition}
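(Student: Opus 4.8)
The plan is to follow the pattern used for the case $\fe\nmid n$ (Table \ref{table1}, Lemma \ref{lem:good_fields}): reduce to showing that $l$ is a genuine norm from the quadratic field $K$, and obtain this from the splitting of $l$ in the genus field of $K$. Write $h_{\fe}(\Psi_n)=l^{e_n}$. If $a_n$ or $e_n$ is even then $h_{\fe}(\Psi_n)^{a_n}$ is a rational square, hence lies in $N_{K/\bQ}(K^{\times})$; since $e_n$ is odd precisely for the $n$ appearing in Table \ref{table2}, this disposes of every $n$ not in the table as well as every $n$ with $a_n$ even. So I may assume $n$ is as in Table \ref{table2} and $a_n$ is odd. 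By the observation before Table \ref{table1} ($a_n$ is even whenever $K\not\subset\bQ(\Ind_I^{D}\psi_n)$), this forces $K\subset\bQ(\Ind_I^{D}\psi_n)\subseteq\bQ(\zeta_n)^{\langle\zeta_n\mapsto\zeta_n^{q}\rangle}$; since $q$ is an odd power of $l$ and has trivial image in $\Gal(\bQ(\zeta_n)^{\langle q\rangle}/\bQ)$, the Frobenius of $l$ in $K/\bQ$ is trivial, i.e. $l$ splits in $K$ (this is the Proposition established above). As $h_{\fe}(\Psi_n)^{a_n}\equiv l\mod\bQ^{\times2}$, it remains to prove $l\in N_{K/\bQ}(K^{\times})$.

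For this I would use the Hasse norm theorem, which applies as $K/\bQ$ is cyclic. Because $l$ splits in $K$ it is already a local norm at every prime where $K/\bQ$ is unramified and at the real place ($l>0$), so it is enough that $l$ be a local norm at the primes dividing the discriminant $d_K$. Writing $d_K=d_1\cdots d_t$ as a product of prime discriminants and $\widetilde K=\bQ(\sqrt{d_1},\dots,\sqrt{d_t})\subseteq\bQ(\zeta_n)$ for the genus field of $K$, the statement that $l$ split completely in $\widetilde K$ — equivalently that $l$ be a square modulo each odd $p\mid d_K$ and, when $2\mid d_K$, a norm from the relevant ramified quadratic extension of $\bQ_2$ — is exactly the local-norm condition at the ramified primes; when $t=1$, i.e. $K\in\{\bQ(i),\bQ(\sqrt{\pm2}),\bQ(\sqrt{p^{*}})\}$, this is just Lemma \ref{lem:good_fields}. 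So it suffices to show $l$ splits completely in $\widetilde K$.

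Running through the rows of Table \ref{table2}, for each admissible $n$ every quadratic $K\subset\bQ(\zeta_n)$ has $d_K$ a product of at most two prime discriminants, one of which — say $d_0$ — satisfies $\bQ(\sqrt{d_0})\subseteq\bQ(\zeta_{\fe})$ (concretely $d_0=-3$ for $\fe\in\{3,6\}$ and $d_0=-4$ for $\fe=4$; the row $\fe=2$ yields only the fields already handled by Lemma \ref{lem:good_fields}), so that $\widetilde K=K\cdot\bQ(\sqrt{d_0})$. Now $q$ acts trivially on $K$, since $K\subset\bQ(\zeta_n)^{\langle\zeta_n\mapsto\zeta_n^{q}\rangle}$, and acts trivially on $\bQ(\zeta_{\fe})$, since $\fe\mid n$ and $q\equiv1\mod\fe$ in Case $2C$; hence $q$ acts trivially on $\widetilde K$. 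As $q=l^{m}$ with $m$ odd and $\Gal(\widetilde K/\bQ)$ is an elementary abelian $2$-group, $\mathrm{Frob}_l=\mathrm{Frob}_l^m=\mathrm{Frob}_q$ is trivial in $\Gal(\widetilde K/\bQ)$, so $l$ splits completely in $\widetilde K$, giving $l\in N_{K/\bQ}(K^{\times})$ and hence $h_{\fe}(\Psi_n)^{a_n}\in N_{K/\bQ}(K^{\times})$. The main obstacle is this last case check: one must verify, row by row in Table \ref{table2}, the structural claim about the quadratic subfields of $\bQ(\zeta_n)$, and make sure the argument genuinely covers the "bad" real fields $\bQ(\sqrt3)$, $\bQ(\sqrt{\pm6})$ and $\bQ(\sqrt p)$ with $p\equiv3\mod4$, for which $-1$ is not a norm and the principal-ideal argument of Lemma \ref{lem:good_fields} fails — it is precisely the reduction to the Hasse norm theorem that makes these go through.
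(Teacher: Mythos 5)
Your proposal is correct, and its skeleton matches the paper's: reduce to the case $a_n$ odd, which forces $K\subset\bQ(\Ind_I^D\psi_n)\subset\bQ(\zeta_n)^{\langle\zeta_n\mapsto\zeta_n^q\rangle}$, note that $h_{\fe}(\Psi_n)^{a_n}\equiv l\bmod\bQ^{\times2}$ only for the $n$ in Table \ref{table2}, and then show $l\in N_{K/\bQ}(K^{\times})$ for every quadratic $K$ inside $\bQ(\zeta_n)^{\langle q\rangle}$. Where you genuinely diverge is the final step. The paper uses the genus-field criterion from Janusz (via Lemma \ref{lem:good_fields} and its extension in the proof): since $l$ splits completely in the genus field of $K$ it is the norm of a \emph{principal ideal}, so $\pm l\in N_{K/\bQ}(K^{\times})$, and for the real fields $\bQ(\sqrt3)$, $\bQ(\sqrt{\pm6})$, $\bQ(\sqrt{3p})$, $\bQ(\sqrt p)$ the sign must then be eliminated by hand, "considering the norm equation mod $3$" (resp.\ mod $4$) using $l\equiv q\equiv1\bmod\fe$. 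You instead invoke the Hasse norm theorem and verify the local condition at every place; this yields $l$ itself (not merely $\pm l$) as a norm in one stroke, the real place being handled automatically, so the sign analysis disappears. Both arguments rest on the same two inputs — $\sigma_q$ is trivial on $K$ because $K\subset\bQ(\zeta_n)^{\langle q\rangle}$, and trivial on $\bQ(\zeta_{\fe})$ because $q\equiv1\bmod\fe$ in Case $2C$ with $q$ an odd power of $l$ — and your structural observation, that for every $n$ in Table \ref{table2} the discriminant $d_K$ is a product of at most two prime discriminants one of which generates a subfield of $\bQ(\zeta_{\fe})$, packages the paper's row-by-row genus-field listings into a single uniform statement; I have checked that it holds in every row ($d_0=-3$ for $\fe\in\{3,6\}$, $d_0=-4$ for $\fe=4$, and only prime-discriminant fields for $\fe=2$). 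One small point worth flagging: your $\widetilde K=\bQ(\sqrt{d_1},\dots,\sqrt{d_t})$ is the narrow genus field, whereas the paper's Lemma \ref{lem:good_fields} defines the genus field inside the (wide) Hilbert class field, so that e.g.\ the genus field of $\bQ(\sqrt3)$ is $\bQ(\sqrt3)$ itself there; this discrepancy is harmless for you, since splitting completely in the narrow genus field is precisely the totality of local-norm conditions your Hasse argument requires, but the two notions should not be conflated when citing the lemma.
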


\begin{proof}
	We check that for each $n$ in Table \ref{table2}, $l$ is the norm of an element for all quadratic subfields of $\bQ(\zeta_n)^{\langle \zeta_n \mapsto \zeta_n^{q} \rangle}$. We consider each $\fe$ separately. 

	\underline{Case $\fe = 2$.} This follows from Lemma \ref{lem:good_fields}.

	\underline{Case $\fe = 3$.} The possible quadratic subfields that are not covered by Lemma \ref{lem:good_fields} are $\bQ(\sqrt{3})$, $\bQ(\sqrt{6})$, $\bQ(\sqrt{-6})$, $\bQ(\sqrt{-3p^*})$ for $p \equiv -1 \mod 3$. These have genus fields $\bQ(\sqrt{3})$, $\bQ(\sqrt{6})$, $\bQ(\sqrt{-6}, \sqrt{2})$, and $\bQ(\sqrt{3p})$ when $p \equiv 3 \mod 4$, $\bQ(\sqrt{-3}, \sqrt{p^*})$ when $p \equiv 1 \mod 4$. We know that $l$ has residue degree $1$ in the quadratic fields, and the congruence $q \equiv 1 \mod 3$ ensures that $l$ has residue degree $1$ in each genus field also. Thus $\pm l$ is a norm from each quadratic field, and again using $l \equiv 1 \mod 3$, one can show that $l$ is a norm from each quadratic field by considering the norm equation mod $3$. 

	\underline{Case $\fe = 4$.} We just need to check whether $l$ is a norm from $\bQ(\sqrt{p})$ for $p \equiv -1 \mod 4$. This is its own genus field and so $\pm l$ is the norm of an element from $\bQ(\sqrt{p})$. The congruence $l \equiv 1 \mod 4$ then ensures that $l$ is a norm.

	\underline{Case $\fe = 6$.} This follows from the arguments for $\fe = 3$. 
\end{proof}

\begin{corollary}
	In Case $2C$, $\omega \sim_K d$.  
\end{corollary}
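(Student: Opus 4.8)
The plan is to assemble the material already set up in Appendix \ref{appendix2} together with the reductions carried out in \S\ref{sec:main}, so the corollary itself is essentially bookkeeping. First I would record that in Case $2C$ the representation $V$ is zero, so $d\colon H\mapsto|H|^{-\dim V^H}$ is the constant function $1$ on $\B(D)$; hence $\omega\sim_K d$ is the same as $\omega\sim_K 1$, and by the computations of \S\ref{sec:main} it is enough to show $(I,W,l^{\lfloor\delta f/12\rfloor})\sim_K 1$. For an arbitrary $K$-relation $\Theta\in\B(D)$ the discussion preceding Table \ref{table1} already expresses
$$d/\omega(\Theta)\equiv\prod_{n,\,K\subset\bQ(\Ind_I^D\psi_n)}h_{\fe}(\Psi_n)^{a_n}\mod N_{K/\bQ}(K^\times),$$
so it suffices to check that each factor on the right is a norm from $K$.

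Next I would argue factor by factor. If $h_{\fe}(\Psi_n)\in\bQ^{\times 2}$ the factor is trivially a norm, so assume $h_{\fe}(\Psi_n)\equiv l\mod\bQ^{\times 2}$; then the proposition preceding Table \ref{table1} shows that, since $K\subset\bQ(\Ind_I^D\psi_n)$, the prime $l$ is the norm of an ideal of $K$, i.e. $l$ splits in $K$. If $\fe\nmid n$, Table \ref{table1} restricts $n$ to the listed forms, whose cyclotomic fields $\bQ(\zeta_n)$ have quadratic subfields only among $\bQ(i)$, $\bQ(\sqrt{\pm 2})$, $\bQ(\sqrt{p^*})$, and Lemma \ref{lem:good_fields} upgrades ``$l$ is the norm of an ideal'' to $l\in N_{K/\bQ}(K^\times)$, whence $h_{\fe}(\Psi_n)^{a_n}\in N_{K/\bQ}(K^\times)$. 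If $\fe\mid n$, the claim $h_{\fe}(\Psi_n)^{a_n}\in N_{K/\bQ}(K^\times)$ is exactly Proposition \ref{prop:case2C}. Since these two cases exhaust all $n$ with $h_{\fe}(\Psi_n)\notin\bQ^{\times 2}$, the product lies in $N_{K/\bQ}(K^\times)$, so $d/\omega(\Theta)\in N_{K/\bQ}(K^\times)$; as $\Theta$ was arbitrary, $d\sim_K\omega$, and combined with $d\sim_K 1$ this gives $\omega\sim_K d$.

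I do not expect any genuine obstacle in the corollary, since all the real work sits in Lemma \ref{lem:good_fields} and Proposition \ref{prop:case2C}; the point worth flagging is \emph{why} those lemmas are needed rather than a one-line appeal to triviality on cyclic $K$-relations. The subtlety is that the field of values of the representation lifted back to $\B(D)$ is $\bQ(\Ind_I^D\psi_n)$, which can be strictly smaller than $\bQ(\zeta_n)$, so the multiplicity $a_n$ is only guaranteed to be divisible by $[K:K\cap\bQ(\Ind_I^D\psi_n)]$; this is precisely what makes it necessary to verify that $l$ is an honest element-norm (not merely an ideal-norm) from the small quadratic fields involved. The genus-theory argument — $l$ splits completely in the relevant genus field because $q\equiv1\bmod\fe$ (hence $l\equiv1\bmod\fe$), and one supplies a unit of norm $-1$ in the real cases — is the technical heart, and the case split over $\fe\in\{2,3,4,6\}$ is routine.
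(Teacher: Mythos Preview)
Your proposal is correct and follows essentially the same route as the paper: the corollary is indeed just bookkeeping once the reduction to $\prod_n h_{\fe}(\Psi_n)^{a_n}$ is in place, with the $\fe\nmid n$ case handled by Table~\ref{table1} together with Lemma~\ref{lem:good_fields}, and the $\fe\mid n$ case by Proposition~\ref{prop:case2C}. Your closing paragraph on why genus-theory is needed (the gap between $\bQ(\Ind_I^D\psi_n)$ and $\bQ(\zeta_n)$) is a helpful gloss but not required for the argument.
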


We now turn to Case $2D$.
When $\fe \nmid n$, $g_{\fe}$ is the function on $C_r$ sending a subgroup to its index, and so $g_{\fe}(\Psi_n)^{a_n} = (C_{r / k} \mapsto k)(\Psi_n)^{a_n} \in N_{K / \bQ}(K^{\times})$ by Lemma \ref{lem:f_on_cyclic}.
\begin{proposition}
	In Case $2D$, $(h_{\fe} \cdot g_{\fe})(\Psi_n)^{a_n} \in N_{K / \bQ}(K^{\times})$ for all $n$ such that $\fe \mid n$. Therefore in Case $2D$, $\omega \sim_K d$.  
\end{proposition}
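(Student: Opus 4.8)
The plan is to treat the ranges $\fe\nmid n$ and $\fe\mid n$ separately and then assemble, in parallel with Case $2C$. For $\fe\nmid n$ there is nothing new: $g_\fe$ is then the index function, so $g_\fe(\Psi_n)^{a_n}\in N_{K/\bQ}(K^\times)$ by Lemma~\ref{lem:f_on_cyclic}, while $h_\fe(\Psi_n)^{a_n}\in N_{K/\bQ}(K^\times)$ was already obtained from Table~\ref{table1}, Lemma~\ref{lem:good_fields}, and the proposition identifying $l$ as the norm of an ideal of any quadratic $K\subset\bQ(\Ind_I^D\psi_n)$. Once the displayed norm membership is proved for $\fe\mid n$, the decomposition of $(d/\omega)(\Theta)$ as a product of the $(h_\fe g_\fe)(\Psi_n)^{a_n}$ over those $n$ with $K\subset\bQ(\Ind_I^D\psi_n)$, recorded before Table~\ref{table1}, gives $(d/\omega)(\Theta)\in N_{K/\bQ}(K^\times)$ for every $K$-relation $\Theta$; hence $\omega\sim_K d$ in Case $2D$, and together with Case $2C$ this completes Proposition~\ref{prop:pot_good} and therefore Proposition~\ref{prop:to_prove} in the potentially good case.

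It remains to establish the displayed norm membership for $\fe\mid n$. First I would compute $g_\fe(\Psi_n)$ modulo $\bQ^{\times2}$ from $\Psi_n=\sum_{d\mid n}\mu(n/d)C_{r/d}$: only divisors $d$ with $n/d$ squarefree and $\fe\nmid d$ contribute, and a short M\"obius computation shows that $g_\fe(\Psi_n)$ is a square except for a few shapes of $n$, for which $g_\fe(\Psi_n)\equiv 2$ or $\equiv p$ with $p$ an odd prime (for instance, when $\fe=3$ one finds $g_3(\Psi_{3\cdot2^k})\equiv2$, $g_3(\Psi_{3p^k})\equiv p$, and $g_3(\Psi_n)\in\bQ^{\times2}$ for all other $n$ with $3\mid n$). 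Multiplying by the value of $h_\fe(\Psi_n)$, which by Table~\ref{table2} is $1$ or $l$ modulo $\bQ^{\times2}$, the product $(h_\fe g_\fe)(\Psi_n)$ is, modulo squares, one of $1,\,2,\,p,\,l,\,2l,\,pl$, and the task is to check that each such value lies in $N_{K/\bQ}(K^\times)$ for every quadratic subfield $K$ of $\bQ(\Ind_I^D\psi_n)=\bQ(\zeta_n)^{\langle\zeta_n\mapsto\zeta_n^{q}\rangle}$.

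This last step is the genus-theoretic bookkeeping already used in Proposition~\ref{prop:case2C} and in the $\fe\nmid n$ range, and it is where the main difficulty lies. Two facts guide it. First, in Case $2D$ we have $q\equiv -1\pmod{\fe}$, which forces $q$ to be an odd power of $l$ and $l\equiv -1\pmod{\fe}$, and moreover the order of $q$ modulo $n$ is a power of $2$ (it divides $f_{\cF/\cK}$); these congruences exclude the obstructive quadratic subfields from $\bQ(\zeta_n)^{\langle q\rangle}$, in particular those lying in $\bQ(\zeta_\fe)$, such as $\bQ(\sqrt{-3})$ when $\fe\in\{3,6\}$, from which $2$ fails to be a norm. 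Second, for each surviving subfield $K$, of the form $\bQ(i)$, $\bQ(\sqrt{\pm2})$, $\bQ(\sqrt{p^{*}})$, or a mixed field such as $\bQ(\sqrt{-3p^{*}})$ or $\bQ(\sqrt6)$, one checks membership in $N_{K/\bQ}(K^\times)$ as in Proposition~\ref{prop:case2C}: $l$ has the appropriate behaviour in the relevant genus field, using $l\equiv -1\pmod{\fe}$ and, where needed, $\left(\tfrac{l}{p}\right)$, so that $\pm l$ and the small primes $2,\,p$ are norms up to sign --- equivalently, the relevant local norm symbols vanish --- and the overall sign in the governing norm equation is pinned down by reducing it modulo $\fe$ and, when $p$ occurs, modulo $p$. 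I would carry this out case-by-case for $\fe=3$, $4$, and $6$, using the entries of Table~\ref{table2} and the $g_\fe(\Psi_n)$-values above, reducing the $\fe=6$ analysis to the $\fe=3$ one exactly as in Proposition~\ref{prop:case2C}. The effort is entirely computational; the only conceptual input beyond the Brauer-relation case and the $\fe\nmid n$ analysis is that $q\equiv-1\pmod{\fe}$ prevents the obstructive quadratic subfields from occurring.
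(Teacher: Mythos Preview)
Your proposal is correct and follows essentially the same approach as the paper. Both arguments handle $\fe\nmid n$ by the earlier results, then for $\fe\mid n$ compute $(h_\fe\cdot g_\fe)(\Psi_n)$ modulo squares via the M\"obius formula, use the congruence $q\equiv -1\pmod{\fe}$ to restrict the quadratic subfields of $\bQ(\zeta_n)^{\langle\zeta_n\mapsto\zeta_n^q\rangle}$, and verify norm membership by the genus-field method of Proposition~\ref{prop:case2C}; the paper simply records the outcome of this case-by-case check in a table rather than describing the strategy.
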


\begin{proof}
	As in the previous proof, to show $\omega \sim_K d$ it suffices to show that $(h_{\fe} \cdot g_{\fe})(\Psi_n)$ is a norm from all quadratic subfields of $L_n := \bQ(\zeta_n)^{\langle \zeta_n \mapsto \zeta_n^{q} \rangle}$ when $n$ is such that $(h_{\fe} \cdot g_{\fe})(\Psi_n) \not\in \bQ^{\times 2}$. This is true for $\fe \nmid n$ by previous results. It is not hard to check that $g_{\fe}(\Psi_n) \not\in \bQ^{\times 2}$ and $\fe \mid n$ implies that $n / \fe$ has at most one prime divisor.  

	The congruence $q \equiv -1 \mod \fe$ restricts the possible quadratic subfields of $L_n$. The result is summarised in the following table for each $\fe \in \{ 3, 4, 6\}$. The second column gives the $n$ such that $(h_{\fe} \cdot g_{\fe})(\Psi_n) \not\in \bQ^{\times 2}$. Depending on some congruence conditions given in the third column, the fourth column gives the quadratic subfields of $L_n$. Arguing similarly to Proposition \ref{prop:case2C}, one can determine that $(h_{\fe} \cdot g_{\fe})(\Psi_n)$ is a norm from the quadratic subfields of $L_n$ in each case. Note in each table that $k \geq 1$, and $p$ is an odd prime unless stated otherwise.
	\begin{table}[H]
		\caption{Non-square values of $(h_{\fe} \cdot g_{\fe})(\Psi_n)$ for $\fe \mid n$,}
		\begin{tabular}{|l |l | l | l | l|}
			\hline
			$\fe$ & $n$ & Conditions & $K \subset L_n$ & $(h_{\fe} \cdot g_{\fe})(\Psi_n)$ \\ \hline
			$ \fe = 4$ & & $(q \mod p,\ p \mod 4)$ &  & \\
			\hline
				   &	$n = 4p^k$ & $(1,\pm 1)$ & $\bQ(\sqrt{p^*})$ & $p$\\ 
				   & & $(-1,1)$ & $\bQ(\sqrt{p})$ & $p$\\
				   & & $(-1, -1)$ & $\bQ(\sqrt{p})$ & $pl$\\ 
				   \hline
			$\fe =  3$ or $6$ & & $q \mod 8$ &  & \\ \hline
				   & $n = 12$ & $1, 5$ & $\bQ(i)$ & $2l$ \\
				   & & $-1,-5$ & $\bQ(\sqrt{3})$ & $2l$ \\
				   \hline
				   & $n = 12\cdot 2^k$ & $1$ & $\bQ(i)$, $\bQ(\sqrt{\pm 2})$ & $2l$\\ 
				   & & $5$ & $\bQ(i)$, $\bQ(\sqrt{\pm 6})$  & $2l$ \\
				   & & $-1$ & $\bQ(\sqrt{2})$, $\bQ(\sqrt{3})$, $\bQ(\sqrt{6})$ & $2l$\\
				   & & $-5$ & $\bQ(\sqrt{-2})$, $\bQ(\sqrt{-6})$, $\bQ(\sqrt{3})$ & $2l$ \\   

		\hline
				   &  & $(q \mod p,\ p \mod 12)$ &  &  \\
		\hline
				   & $n = \fe \cdot p^k,$ $p \not=3$ & $(1, *)$ & $\bQ(\sqrt{p^*})$ & $p$ or $pl$ \\
				   & & $(-1, 1$ or $5)$ & $\bQ(\sqrt{p})$ & $p$ or $pl$ \\
				   &	& $(-1, -1)$ & $\bQ(\sqrt{3p})$ & $pl$ \\
				   &	& $(-1, -5)$ & $\bQ(\sqrt{3p})$ & $p$\\  
		\hline 
			$\fe = 3$ 
				   & $n = 3, 6$ & & none &  $l$ or $2l$\\
		\hline
				 $\fe = 6$ & $n = 6$ &  & none &  $6l$\\
		\hline
				  & $n = 2 \cdot 3^k$ &  & none & $3l$\\ \hline
	\end{tabular}
\end{table}

\end{proof}


\begin{thebibliography}{99}

\bibitem{bartel-selmer}
A. Bartel, Large Selmer groups over number fields, Math. Proc. Cambridge Philos.
Soc. 148 (2010), 73–86.

\bibitem{bartel-dihedral}
A. Bartel, On Brauer-Kuroda type relations of S-class numbers in dihedral extensions,
J. reine angew. Math. 668 (2012), 211–244.

\bibitem{bartel-smit}
A. Bartel, B. de Smit, Index formulae for integral Galois modules, J. London Math.
Soc. 88 (2013), 845–859.

\bibitem{Brauer}
A. Bartel, T. Dokchitser, Brauer relations in finite groups. J. Eur. Math. Soc. 17 (2015), no. 10, pp. 2473–2512

\bibitem{Bartel}
A. Bartel and T. Dokchitser, Rational representations and permutation representations of finite groups, Math. Ann. {\bf 364} (2016), no.~1-2, 539--558; MR3451397

\bibitem{german-brauer}
R. Brauer, Beziehungen zwischen Klassenzahlen von Teilkorpern eines galoisschen
K¨orpers, Math. Nachr. 4 (1951), 158–174.

\bibitem{lbd}
L. Cowland~Kellock and V. Dokchitser, Root numbers and parity phenomena, Bull. Lond. Math. Soc. {\bf 55} (2023), no.~6, 2557--2597; MR4689540

\bibitem{smit}
B. de Smit, Brauer-Kuroda relations for S–class numbers, Acta Arith. 98 no. 2 (2001),
133–146.

\bibitem{annals}
T. Dokchitser and V. Dokchitser, On the Birch-Swinnerton-Dyer quotients modulo squares, Ann. of Math. (2) {\bf 172} (2010), no.~1, 567--596; MR2680426

\bibitem{tamroot} 
T. Dokchitser and V. Dokchitser, Regulator constants and the parity conjecture, Invent. Math. {\bf 178} (2009), no.~1, 23--71; MR2534092

\bibitem{sweep}
T. Dokchitser and V. Dokchitser, Root numbers and parity of ranks of elliptic curves, J. Reine Angew. Math. {\bf 658} (2011), 39--64; MR2831512

\bibitem{dok-wier-ev}
V. Dokchitser, R.~H. Evans and H. Wiersema, On a BSD-type formula for $L$-values of Artin twists of elliptic curves, J. Reine Angew. Math. {\bf 773} (2021), 199--230; MR4237970

\bibitem{dgkm}
V. Dokchitser, H. Green, A. Konstantinou, \& A. Morgan. Parity of ranks of Jacobians of curves.  (2024), https://arxiv.org/abs/2211.06357 

\bibitem{frohlich}
A. Fr\"{o}hlich, J. Queyrut, On the functional equation of the Artin L-function for char- acters of real representations, Invent. Math. 20 (1973), 125–138.

\bibitem{isaacs}
I.~M. Isaacs, {\it Character theory of finite groups}, Pure and Applied Mathematics, No. 69, Academic Press, New York-London, 1976; MR0460423

\bibitem{janusz}
  G.J.Janusz, Algebraic Number Fields, Advances in the Mathematical Sciences, American Mathematical Society, 1996, isbn = 9780821804292.

\bibitem{RohG}
D.~E. Rohrlich, Galois theory, elliptic curves, and root numbers, Compositio Math. {\bf 100} (1996), no.~3, 311--349; MR1387669

\bibitem{Segal}
G.~B. Segal, Permutation representations of finite $p$-groups, Quart. J. Math. Oxford Ser. (2) {\bf 23} (1972), 375--381; MR0322041

\bibitem{Serre}
J.-P. Serre, {\it Linear representations of finite groups}, translated from the second French edition by Leonard L. Scott, 
Graduate Texts in Mathematics, Vol. 42, Springer, New York-Heidelberg, 1977; MR0450380

\bibitem{AdvancedTopics}
Joseph H. Silverman, Advanced Topics in the Arithmetic of Elliptic Curves, Graduate Texts in Mathematics, 1994, Springer New York, NY.

\bibitem{blockmatrices}
John R Silvester, Determinants of Block Matrices, Mathematical Gazette, 2000, 84 (501), pp.460-467, 10.2307/3620776, hal-01509379

\end{thebibliography}
\end{document}